\newcommand{\rk}{{\mathbb R}^2}
\newcommand{\R}{{\mathbb R}}
\newcommand{\haus}{{\mathcal H}}
\newcommand{\dhaus}{{\, d \mathcal H}}
\newcommand{\hausk}{{\mathcal H}^2}
\newcommand{\dhausk}{{\,d \mathcal H}^2}
\newcommand{\modu}{\operatorname{mod}}
\newtheorem{theorem}{\textbf{THEOREM}}[section]
\newtheorem{lemma}[theorem]{\textsc{Lemma}}
\newtheorem{proposition}[theorem]{\textsc{Proposition}}
\newtheorem{corollary}[theorem]{\textsc{Corollary}}
\theoremstyle{definition}
\newtheorem{definition}[theorem]{\textsc{Definition}}
\newtheorem{question}[theorem]{\textsc{Question}}
\newtheorem{example}[theorem]{\textsc{Example}}
{\theoremstyle{remark} \newtheorem{remark}[theorem]{Remark}}
\def\charfn_#1{{\raise1.2pt\hbox{$\chi_{\kern-1pt\lower3pt\hbox{{$\scriptstyle#1$}}}$}}}
\def\leq{\leqslant }
\def\geq{\geqslant }
\def\XXint#1#2#3{{\setbox0=\hbox{$#1{#2#3}{\int}$}
\vcenter{\hbox{$#2#3$}}\kern-.5\wd0}}
\begin{document}

\title{Uniformization of two-dimensional metric surfaces}
\author{Kai Rajala} \thanks{Research supported by the Academy of Finland, project number 257482. Parts of this research were 
carried out while the author was visiting the University of Michigan. He thanks the Department of Mathematics for hospitality. 
\newline {\it 2010 Mathematics Subject Classification.} Primary 30L10, Secondary 30C65, 28A75, 51F99, 52A38. 
}
\date{}

\begin{abstract} 
We establish uniformization results for metric spaces that are homeomorphic to the Euclidean plane or sphere and have locally finite Hausdorff $2$-measure. Applying the geometric definition of quasiconformality, we give a necessary and sufficient condition for such spaces to be QC equivalent to the Euclidean plane, disk, or sphere. Moreover, we show that if such a QC parametrization exists, then the dilatation can be bounded by $2$. As an application, we show that the Euclidean upper bound for measures of balls is a sufficient condition for the existence of a $2$-QC parametrization. This result gives a new approach to the Bonk-Kleiner theorem on parametrizations of Ahlfors $2$-regular spheres by quasisymmetric maps. \end{abstract}
\maketitle

\renewcommand{\baselinestretch}{1.2}

\tableofcontents

\section{Introduction}

\subsection{Background}

One of the main problems in Analysis in Metric Spaces is to find conditions under which a metric space can be mapped to a 
Euclidean space by homeomorphisms with good geometric and analytic properties. In particular, non-smooth versions of 
the classical uniformization theorem have found applications in several different areas of mathematics. This problem is very 
difficult in general, and many basic questions remain open. 

Without the presence of smoothness, the parametrizations one looks for are usually quasiconformal (QC) or quasisymmetric (QS) homeomorphisms, which distort shapes in a controlled manner (see Sections \ref{defsection} and \ref{quasisym} 
for definitions), or bi-Lipschitz homeomorphisms, which also distort distances in a controlled manner.  We will here concentrate on QC and QS maps. Concerning the existence of bi-Lipschitz parametrizations, we only briefly note that interesting sufficient conditions and counterexamples have been found both in the $2$-dimensional (\cite{BL}, \cite{Fu}, \cite{La}, \cite{MS}, \cite{To1}, \cite{To2})  and higher-dimensional cases (\cite{ABT}, \cite{HeiKe}, \cite{HeiRi}, \cite{HeiSu}, \cite{Se4}). 

Uniformization problems concerning QC and QS maps have received considerable attention in recent years, and they have found significant applications in geometry, complex dynamics, geometric topology and geometric measure theory, among other areas. In particular, several problems in the theory of hyperbolic groups can be interpreted as uniformization problems concerning boundaries of the groups in question, cf. \cite{Bo}, \cite{Bon}, \cite{BK2}, \cite{BonMer}, \cite{HP}, \cite{Kl}. 

The theory of QC and QS maps $f:Y \to \R^n$ can be roughly divided into two parts depending on 
the metric space $Y$. If $Y$ ``has dimension $n$", meaning that $Y$ shares some metric or geometric properties with $\R^n$ and in particular is not a fractal, then analytic methods can be used to study such maps $f$. On the other hand, if $Y$ has fractal-like behavior, then one mainly has to rely on weaker methods. Also, the infinitesimal or analytic definitions of quasiconformality do not give a good theory in this case, and one has to concentrate on QS maps. See \cite{TV} and \cite{Hei} for the basic properties of QS maps in metric spaces, and \cite{BM}, \cite{DT}, \cite{Me2}, \cite{Me3} for results on QS parametrizations of fractal spaces. 

In this paper we consider the first part described above. A theory of QC maps between metric spaces 
equipped with the Hausdorff $Q$-measure has been established by Heinonen and Koskela \cite{HeKo}, based on two assumptions: \emph{Ahlfors regularity} and the \emph{Loewner condition}. The first assumption requires balls $B(x,r)$ in the space to have mass comparable $r^Q$. The second assumption is a certain estimate concerning the $Q$-modulus of path families (see 
Section \ref{defsection}) resembling change of variables by polar coordinates. These assumptions lead to strong results and can often be verified assuming purely geometric conditions on the space. Heinonen and Koskela showed that the Loewner condition is equivalent to a suitable \emph{Poincar\'e inequality}. They also proved that the infinitesimal QC condition and the QS condition are equivalent under the above assumptions, at least locally. 

We now come back to the uniformization problem. After previous results by Semmes \cite{Se1} and David and Semmes \cite{DS}, Bonk and Kleiner \cite{BK} gave a satisfactory answer in the case $Y$ Ahlfors $2$-regular and homeomorphic to $\mathbb{S}^2$. Namely, they proved that under these assumptions $Y$ is QS equivalent to $\mathbb{S}^2$ if and only if $Y$ is \emph{linearly locally contractible}. This is a geometric condition which in particular rules out cusp-like behavior, see Section \ref{quasisym}. This result has been extended in several consequent works cf. \cite{BK2}, \cite{MeWi}, \cite{Wil}, \cite{Wil2}. There the Ahlfors regularity condition is combined with varying geometric conditions on the space $Y$. 

In higher dimensions, the uniformization problem does not have a satisfactory answer even for Ahlfors regular spaces. Examples by Semmes \cite{Se2} show that the result of Bonk and Kleiner mentioned above does not generalize to dimension $3$. Heinonen and Wu \cite{HeWu} and Pankka and Wu \cite{PW} gave further examples of geometrically nice spaces without QS 
parametrizations. 

In this paper we take a slightly different approach to the uniformization problem in dimension two. We would like to find minimal hypotheses under which a result resembling the classical uniformization theorem as much as possible could be proved. This 
means giving up the geometric conditions such as Ahlfors regularity and linear local contractibility, and instead of QS maps seek for parametrizations by conformal or QC maps which do not in general have good global 
properties. 

There are two main reasons for using such an approach. First, while the geometric conditions are good tools to work with, assuming them and Ahlfors regularity in particular is too restrictive in many situations. Secondly, if one knows the existence of QC 
parametrizations in general spaces, then one can try to upgrade their properties using QC invariants together with whatever conditions the underlying spaces satisfy. 

We consider metric spaces $X$ homeomorphic to $\R^2$. Also, we work with the Hausdorff $2$-measure and assume that it 
is locally finite on $X$. This is natural since the Hausdorff measure is related to the metric in $X$, but also to plane topology via coarea estimates and separation properties. This guarantees that QC maps in $X$ are closely related to the metric and topology. 
Under these minimal assumptions, we define conformal and QC maps $f:X \to \R^2$ using the \emph{geometric definition}. This is a standard definition of quasiconformality involving conformal modulus of path families, see Section \ref{defsection}. 

QC maps between general metric spaces are often defined using the \emph{metric definition}, see Section \ref{quasisym}. The advantage of the geometric definition is that it automatically gives a QC invariant that can be used to prove estimates in the presence of geometric or other conditions. This is not true with the metric definition which in general implies very few properties by itself. In the case of Ahlfors regular Loewner spaces the two definitions of quasiconformality coincide (\cite{HeKo}, \cite{Ty2}). See \cite{BKR} and \cite{Willi} for more general results concerning the equivalence between different definitions. 

The uniformization problem now asks for conditions on $X$ under which there exists a QC map $f:X \to \R^2$. It follows from the results mentioned above that such a map exists if $X$ is Ahlfors $2$-regular and linearly locally contractible. However, as discussed above, it is of great interest to consider more general spaces that do not satisfy such strong conditions. One could hope that a QC map always exists under the minimal assumptions that $X$ be homeomorphic to $\R^2$ with locally finite Hausdorff $2$-measure. This is not true, however, as shown in Example \ref{decomp}. 

Our main result, Theorem \ref{main}, gives a necessary and sufficient condition called \emph{reciprocality}: whenever $Q \subset X$ is a topological square, let $M_1$ be the modulus of all paths joining two opposite sides in $Q$, and $M_2$ the modulus of all paths joining the other two sides in $Q$. Then we require that $M_1\cdot M_2$ is bounded from above by $\kappa$ and below by $\kappa^{-1}$, with constant $\kappa$ depending only on $X$. We also assume that the modulus of a point is always zero, in a suitable sense. 

A basic exercise in classical QC theory shows that planar rectangles satisfy the reciprocality condition with constant $1$. Applying the 
Riemann mapping theorem, or arguing directly, one sees that this holds for all Jordan domains in the plane. Then it is easy to deduce that reciprocality is necessary for the existence of a QC map $f: X \to \R^2$. Theorem \ref{main} shows that it is also sufficient. Reciprocality of a general space $X$ implies that $X$ cannot be too ``squeezed" and concentrated too much around a small set of zero Hausdorff measure, cf. Example \ref{decomp}. 

Methods applying reciprocality in connection with quasiconformality have previously been used in Euclidean spaces and also in more general situations, cf. \cite{Ca}, \cite{Sch}, although they usually do not appear explicitly. Indeed, reciprocality is connected to the fact that conjugate functions can be defined for harmonic functions. Also related is the fact that capacities are dual to the moduli 
of separating hypersurfaces, cf. \cite{Ge3}, \cite{Z}, \cite{Der}, \cite{Der2}. In this paper we show that the reciprocality condition can be isolated and applied to prove uniformization results in a very general setting.  

The reciprocality condition is much weaker than Ahlfors regularity. Although it is sometimes difficult to determine whether the condition holds, it can be verified in several important cases. In Theorem \ref{rectifi}, we show that reciprocality holds 
if the measures of balls $B(x,r)$ are bounded from above by a constant times $r^2$, without assuming further geometric conditions on $X$. Consequently, such spaces admit QC parametrizations. In Theorem \ref{mainmini} we show that if a QC 
parametrization exists, it can be always chosen to have dilatation bounded from above by $2$. 

As an application of our results, we can reprove the QS uniformization theorem of Bonk and Kleiner discussed above. Indeed, it follows from the theory of Heinonen and Koskela that QC maps between Ahlfors regular, linearly locally contractible spaces are QS. Now Theorem \ref{rectifi} gives 
a QC map even without the connectivity condition, so under its presence the quasiconformality can be ``upgraded" to 
quasisymmetry.  

\newpage


\subsection{Definitions} \label{defsection}

Throughout this paper, $X$ denotes a metric space homeomorphic to $\rk$. It then follows that the Hausdorff $2$-measure $\hausk$ of every ball 
$B \subset X$ is positive (see Remark \ref{nakuttaja}). In this paper we always assume that $\hausk(B)$ is also finite whenever $\overline{B} \subset X$ is compact. Notice that $X$ is not assumed to be complete or proper. 

\begin{definition}
\label{moddef} Let $\Gamma$ be a family of continuous paths in $X$. The (conformal) modulus of $\Gamma$ is 
$$
\modu(\Gamma)= \inf_{\rho} \int_X \rho^2 \, d\hausk, 
$$
where the infimum is taken over all admissible functions for $\Gamma$, i.e., all non-negative Borel functions $\rho$ satisfying 
$$
\int_{\gamma} \rho \, ds \geq 1 
$$
for all locally rectifiable $\gamma \in \Gamma$. 
\end{definition}

If $E, F, G \subset X$, we denote by $\Delta(E,F;G)$ the family of all continuous paths joining $E$ and $F$ in $G$, 
and $\modu(E,F;G):=\modu\Delta(E,F;G)$. 

\begin{definition}
\label{qcdef}
Let $\Omega' \subset X$ be a domain, and $f:\Omega' \to \Omega \subset \rk$ a homeomorphism. We say that $f$ and $f^{-1}$ are $K$-\emph{quasiconformal}, or $K$-QC (conformal if $K=1$), if 
$$
K^{-1} \modu(\Gamma) \leq \modu(f\Gamma) \leq K \modu(\Gamma)
$$ 
for every path family $\Gamma$ in $\Omega'$.  Here $f\Gamma= \{f \circ \gamma: \, \gamma \in \Gamma \}$. 
\end{definition}


We will abuse terminology by calling an injective map $f$ QC if $f$ is a QC homeomorphism onto its image. 
QC maps between metric spaces are usually defined using the so-called metric definition, see Remark \ref{intti}. It turns out that in the setting of this paper the so-called geometric definition given above is more natural. 

\begin{definition}
\label{rec} 
We say that $X$ is $\kappa$-\emph{reciprocal}, if the conditions $(1)$-$(3)$ hold: if $Q \subset X$ is homeomorphic 
to a closed square, let $\zeta_1,\ldots, \zeta_4$ be the boundary edges in cyclic order. Then the moduli of 
opposite edges satisfy
\begin{eqnarray}
\label{ylaraja}
\modu(\zeta_1,\zeta_3;Q) \cdot \modu(\zeta_2,\zeta_4;Q) &\leq& \kappa, \quad \text{and} \\
\label{alaraja}
\modu(\zeta_1,\zeta_3;Q) \cdot \modu(\zeta_2,\zeta_4;Q)&\geq& \kappa^{-1}. 
\end{eqnarray}  
If $a \in X$ and $X \setminus B(a,R) \neq \emptyset$,  
then 
\begin{equation}
\label{nollamoduli} 
\lim_{r \to 0}\modu(\overline{B}(a,r),X \setminus B(a,R);\overline{B}(a,R)) =0. 
\end{equation}
We say that $X$ is reciprocal if $X$ is $\kappa$-reciprocal for some $\kappa$. 
\end{definition}

It follows from the Riemann mapping theorem, or can be proved directly, that simply connected domains in $\mathbb{R}^2$ are $1$-reciprocal, as well as smooth surfaces. $\mathbb{R}^2$ equipped with a non-Euclidean norm is always $\kappa$-re\-ci\-pro\-cal for some $\kappa$, and $1$-reciprocal if and only if the norm is induced by an inner product. We discuss further examples in the next sections. 

\subsection{Main Results} \label{resultsection}
The main result of this paper reads as follows. 
\begin{theorem} \label{main}
There exists a QC homeomorphism $f:X \to \Omega \subset \R^2$ if and only if $X$ is reciprocal. 
\end{theorem}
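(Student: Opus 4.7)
\emph{Necessity.} A Euclidean rectangle $R = [0,a]\times[0,b]$ satisfies
\[ \modu(\zeta_1,\zeta_3;R)\,\modu(\zeta_2,\zeta_4;R) = (a/b)(b/a) = 1 \]
by a classical extremal-length calculation. The Riemann mapping theorem plus conformal invariance of modulus extends this to arbitrary Jordan domains in $\rk$, and condition (\ref{nollamoduli}) for $\rk$ is a standard logarithmic computation. If $f: X \to \Omega$ is $K$-QC, each modulus is distorted by a factor of at most $K$ under $f$, so $X$ inherits $K^2$-reciprocality together with (\ref{nollamoduli}) via the continuity of $f^{-1}$.

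\emph{Sufficiency: local construction.} On a topological square $Q \subset X$ with edges $\zeta_1,\ldots,\zeta_4$ and moduli $M_i = \modu(\zeta_i,\zeta_{i+2};Q)$, I would take an almost-extremal admissible density $\rho_1$ for $\Delta(\zeta_1,\zeta_3;Q)$ and set
\[ u(x) = \inf\Bigl\{ \int_\gamma \rho_1 \, ds : \gamma \subset Q \text{ joins } \zeta_1 \text{ to } x \Bigr\}. \]
Then $u$ is continuous, $u = 0$ on $\zeta_1$, $u = 1$ on $\zeta_3$, and each level set is a continuum separating $\zeta_2$ from $\zeta_4$. An analogous construction produces a conjugate coordinate $v: Q \to [0, 1/M_1]$ built from an extremal density for $\Delta(\zeta_2,\zeta_4;Q)$. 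The candidate parametrization is $f = (u,v): Q \to R_0 := [0,1]\times[0,1/M_1]$, and the upper and lower reciprocality bounds on $M_1 M_2$ provide, via length-area and its dual, the two directional modulus inequalities of Definition \ref{qcdef}.

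\emph{From local to global.} I would exhaust $X$ by an increasing sequence of topological squares $Q_n$ with interiors covering $X$, build maps $f_n = (u_n,v_n): Q_n \to R_n \subset \rk$ with dilatation bounded uniformly by some $K = K(\kappa)$, and extract a subsequential limit via a normal families argument. Pinning three boundary points in each $f_n$ ensures compatibility of the limit, and condition (\ref{nollamoduli}) guarantees that injectivity survives in the limit, yielding the global QC homeomorphism $f: X \to \Omega \subset \rk$; the image $\Omega$ is open and simply connected since $X$ is homeomorphic to $\rk$.

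\emph{Main obstacle.} The hardest step is showing that the local candidate $f = (u,v)$ is genuinely a homeomorphism onto $R_0$ rather than merely a continuous surjection. One must exclude the possibility that a level set of $u$ meets a level set of $v$ in more than one point, i.e.\ that $f$ collapses a non-trivial continuum. Both halves of reciprocality are needed in tandem with (\ref{nollamoduli}): a collapsed continuum would produce nested sub-squares whose opposite-side moduli violate either the upper or lower product bound, while (\ref{nollamoduli}) prevents single points from being blown up to non-degenerate fibres. A second delicate issue is upgrading the modulus distortion estimate from the two extremal families across a single square to \emph{all} path families in $X$ as demanded by Definition \ref{qcdef}; this reduction is exactly where the uniformity of $\kappa$ over every topological square in $X$ pays off, via a careful decomposition of arbitrary families along the level-set foliations of $u$ and $v$.
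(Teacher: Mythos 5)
Your necessity argument and your global exhaustion/normal-families step match the paper's. The local construction, however, contains a structural misstep and leaves the genuinely hard analytic work unaddressed. First, you propose to build $v$ from an extremal density for the \emph{dual} family $\Delta(\zeta_2,\zeta_4;Q)$, symmetrically to $u$. The paper points out that this symmetric construction only yields a homeomorphism when $X$ is $1$-reciprocal: for general $\kappa$, the level sets of two independently-defined minimizers have no reason to form a coordinate grid, and there is no change-of-variables formula linking them. Instead the paper defines $v$ as a genuine \emph{conjugate} of $u$, namely the flux $\lim_{\epsilon\to 0}\liminf_{h\to 0}\int_{N_{\epsilon,T}(\gamma_t)\cap A_{t-h,t}}\rho^2/h\,d\hausk$ along the level curves of $u$ (Sections \ref{setsofu}--\ref{conjug}); establishing that this is well defined, constant on components of $Q\setminus\overline U$, and continuous occupies a large part of the proof and relies on almost every level set of $u$ being a simple rectifiable curve. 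Relatedly, your target rectangle $[0,1]\times[0,1/M_1]$ is wrong in general ($M_2$ is only comparable to $1/M_1$); the paper's $f$ maps onto $[0,1]\times[0,M_1]$. Second, you work with an ``almost-extremal'' density $\rho_1$, but the paper's arguments repeatedly perturb the \emph{exact} minimizer $\rho$ and exploit first-variation identities (e.g.\ $M_{s,t}=(t-s)^{-1}M_1$ in Lemma \ref{dividingmod}, the duality $\modu\cdot\overline{\modu}=1$ in Lemma \ref{aap}, and the admissibility tests in Propositions \ref{constbound} and \ref{levelsets}); with only an approximate minimizer these exact identities, and hence the definition of $v$, break down.

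Finally, the passage from ``$f$ is a homeomorphism onto a rectangle'' to the two modulus inequalities of Definition \ref{qcdef} is asserted rather than proved, and this is where most of the paper's effort and the single use of condition \eqref{ylaraja} lie: one needs a change-of-variables formula via a dyadic decomposition of the image (Section \ref{condo}), a duality between conformal and ``variational'' modulus to show that $C\rho$ is a weak upper gradient of $f$ (Sections \ref{vari}--\ref{regoff}), and an ACL argument for $f^{-1}$ in the spirit of V\"ais\"al\"a but with \eqref{alaraja} replacing the classical geometric hypotheses (Section \ref{regoffmiinus1}). Your closing paragraph correctly identifies injectivity and the ``all path families'' upgrade as the main obstacles, but the mechanism you sketch (nested sub-squares violating reciprocality, decomposition along level-set foliations) is not the one that works; in particular injectivity is proved via a logarithmic modulus estimate on the fibres $f^{-1}(z)$ (Lemma \ref{jencri}), not by a sub-square argument. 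As it stands the proposal is an outline with the central constructions either missing or pointed in a direction that fails outside the $1$-reciprocal case.
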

There are spaces $X$ for which the conditions of Theorem \ref{main} are not satisfied, see Example \ref{decomp}. 
The necessity of reciprocality for the existence of a QC parametrization follows directly from the $1$-reciprocality of 
Euclidean plane domains and the definition of quasiconformality. Sufficiency is the actual content of Theorem \ref{main}. 

Combining results from Sobolev and Lipschitz analysis in metric spaces, the measurable Riemann mapping theorem, and John's 
theorem on symmetric convex bodies, one can give a universal bound for the QC dilatation in Theorem \ref{main}. 

\begin{theorem} \label{mainmini}
There exists a QC homeomorphism $f:X \to \Omega \subset \R^2$ if and only if there exists a 
$2$-QC homeomorphism $f_0:X \to \Omega \subset \R^2$. If moreover $X \subset R^N$ for some $N \geq 2$, then $2$ can be replaced 
by $1$. 
\end{theorem}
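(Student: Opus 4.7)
The plan is to start with any QC parametrization furnished by Theorem \ref{main} and upgrade it via a Beltrami-type correction adapted to its pointwise infinitesimal geometry. Let $f : X \to \Omega \subset \R^2$ be a $K$-QC homeomorphism. The first step is to show, using Sobolev and Lipschitz analysis in metric spaces, that $f^{-1} : \Omega \to X$ has a metric derivative $\mathrm{md}_y$ at almost every $y \in \Omega$. This derivative is a seminorm on $\R^2$, and because $f$ is a QC homeomorphism I expect it to be a non-degenerate norm $N_y$ at a.e.\ point, genuinely controlling infinitesimal length and $\hausk$-area on small scales.

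Next I apply John's theorem in dimension two: every centrally symmetric convex body $B \subset \R^2$ admits an ellipse $E$ with $E \subset B \subset \sqrt{2}\, E$. Applied to the unit balls $B_y = \{v : N_y(v) \leq 1\}$ this produces a measurable field of ellipses on $\Omega$; their eccentricity and orientation define a Beltrami coefficient $\mu$ on $\Omega$ with $\|\mu\|_\infty \leq (\sqrt{2}-1)/(\sqrt{2}+1) < 1$. The measurable Riemann mapping theorem then produces a QC homeomorphism $h : \Omega \to \Omega'$ solving $\bar\partial h = \mu\, \partial h$ a.e. The composition $f_0 := h \circ f$ is designed precisely so that the metric derivative of $f_0^{-1}$ at a.e.\ point has unit ball Euclidean up to the $\sqrt 2$ John error. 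A modulus change-of-variables then converts this pointwise bound to a QC dilatation at most $2$: the extra factor over $\sqrt 2$ arises from the mismatch between $\hausk$ on $(\R^2, N)$ and Lebesgue measure when $N$ is only approximately Euclidean.

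In the embedded case $X \subset \R^N$, the norm $N_y$ is automatically an inner product norm, being the restriction of the ambient Euclidean metric to a $2$-dimensional tangent subspace of $\R^N$ at the corresponding point. John's theorem is then unnecessary: the Beltrami coefficient $\mu$ records the genuine conformal structure that the $N_y$'s induce on $\Omega$, no density correction is needed, and the composition $h \circ f$ is precisely $1$-QC, i.e., conformal.

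The principal obstacle is the first step. In the absence of any a priori rectifiability of $X$, one must produce metric differentiability for $f^{-1}$ solely from QC modulus control, and verify that the resulting derivative is a.e.\ a non-degenerate norm which simultaneously governs $\hausk$-area (needed to compare planar and $X$-moduli) and infinitesimal length (needed for admissibility in modulus estimates). Once this pointwise Sobolev/Lipschitz structure is available, the John + measurable Riemann mapping correction is routine, and the constants $2$ and $1$ are dictated respectively by the two-dimensional John bound $\sqrt{2}$ and by the automatic inner-product nature of $N_y$ in the embedded case.
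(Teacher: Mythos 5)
Your proposal follows essentially the same route as the paper: decompose $f^{-1}$ into countably many Lipschitz pieces, apply Kirchheim's metric differentiability to obtain an a.e.\ non-degenerate norm field (non-degeneracy coming from the positivity of the volume derivative via the change-of-variables formula), inscribe maximal ellipses, correct by the measurable Riemann mapping theorem, and invoke the planar John bound $\sqrt{2}$ together with the Hausdorff-versus-Lebesgue area comparison to land at dilatation $2$, with the embedded case giving inner-product norms and hence dilatation $1$. This matches the paper's argument in Section \ref{bensajumala} in both structure and in the source of each constant.
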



The constant $2$ in Theorem \ref{mainmini} is not best possible. The best constant for the space $X=(\R^2,||\cdot||_{\infty})$ 
is $\pi/2$, see Example \ref{norm}. This suggests that $\pi/2$ may also be the sharp constant in the theorem. See Section \ref{bensajumala} for further discussion. It follows from Theorems \ref{main} and \ref{mainmini} that if $X$ is reciprocal then $X$ is always $4$-reciprocal and if moreover $X \subset \R^N$ then $X$ is $1$-reciprocal. 

Theorems \ref{main} and \ref{mainmini} can be applied to the class of spaces satisfying upper Euclidean mass bounds. 

\begin{theorem}
\label{rectifi}
Suppose there exists $C_U>0$ such that 
\begin{equation}
\label{upperbound}
\hausk(B(x,r)) \leq C_U r^2  
\end{equation}
 for every $x \in X$ and $r>0$. Then $X$ is reciprocal.  
\end{theorem}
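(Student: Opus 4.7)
The plan is to verify each of the three conditions of Definition~\ref{rec} directly, using the upper mass bound to produce admissible functions with controlled $L^2$-energies.

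For condition~\eqref{nollamoduli}, the standard logarithmic test function should work. Given $a\in X$ and $0<r<R$ with $X\setminus B(a,R)\neq\emptyset$, I would take
$$
\rho(x)=\frac{\charfn_{\{r\le d(a,x)\le R\}}(x)}{d(a,x)\log(R/r)}.
$$
The $1$-Lipschitz property of $d(a,\cdot)$ shows that along any locally rectifiable path joining $\overline{B}(a,r)$ to $X\setminus B(a,R)$ the distance function attains every value in $[r,R]$, and a change of variables in the distance yields $\int_\gamma\rho\,ds\ge 1$, so $\rho$ is admissible. A layer-cake decomposition in $t=d(a,x)$ combined with the bound $\hausk(B(a,t))\le C_U t^2$ and integration by parts produces $\int_X\rho^2\,d\hausk\le C(C_U)/\log(R/r)$, which tends to $0$ as $r\to 0$.

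For conditions~\eqref{ylaraja}--\eqref{alaraja}, fix a topological square $Q\subset X$ with opposite-edge families $\Gamma_1,\Gamma_2$ and write $M_i=\modu(\Gamma_i)$. For the lower bound $M_1M_2\ge\kappa^{-1}$, I would exploit the topological fact that every $\gamma_1\in\Gamma_1$ meets every $\gamma_2\in\Gamma_2$. Given admissible $\rho_i$ for $\Gamma_i$, the Cauchy--Schwarz inequality $\int_{\gamma_1}\rho_1\,ds\cdot\int_{\gamma_2}\rho_2\,ds\ge 1$ along each intersecting pair, integrated against a double foliation of $Q$ by path pairs and combined with Fubini, should reduce (after using the upper mass bound to control transversal density) to the desired estimate. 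For the upper bound $M_1M_2\le\kappa$, the plan is to construct explicit admissible functions for each family based on distances to the defining edges. A naive choice $\rho_i=1/d(\zeta_i,\zeta_{i+2})$ is admissible and gives $M_i\le\hausk(Q)/d(\zeta_i,\zeta_{i+2})^2$, but this is not scale-invariant enough, so one must refine to test functions supported in a tubular band between opposite edges, with its area controlled by the upper mass bound.

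The main obstacle is the upper bound~\eqref{ylaraja}: without Ahlfors regularity or a Loewner-type connectivity hypothesis there is no scale-invariant covering lemma directly available, and a geometric argument is required to rule out simultaneous blow-up of both moduli. This is precisely where the quadratic upper mass bound must be used in an essential, non-trivial way, whereas condition~\eqref{nollamoduli} and the lower bound~\eqref{alaraja} use the mass bound more softly, through integral quantities rather than covering combinatorics.
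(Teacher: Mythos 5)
Your treatment of condition \eqref{nollamoduli} is exactly the paper's (Lemma \ref{ballmoduli}): the logarithmic test function plus a dyadic annular decomposition using \eqref{upperbound}. That part is complete. The other two conditions, however, contain genuine gaps, and in both cases the missing ingredient is the same object: the energy minimizer $\rho$ for $M_1=\modu(\zeta_1,\zeta_3;Q)$ together with a (modified) maximal function. For the upper bound \eqref{ylaraja} you explicitly concede that you have no argument, and the route you sketch (explicit densities built from distances to the edges) cannot work: the bound $M_1M_2\le \hausk(Q)^2/(d_1^2d_2^2)$ has no reason to be controlled by a universal $\kappa$. The paper's idea is dual rather than constructive: one shows that $C(C_U)\,\mathcal{M}\rho/M_1$ is admissible for the conjugate family, where $\mathcal{M}$ is a maximal operator with a $5r$-enlarged ball in the denominator (so that its $L^2$-boundedness, Lemma \ref{maxx}, needs no doubling). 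The two steps are (i) testing the minimality of $\rho$ against $\epsilon^{-1}\operatorname{dist}(\cdot,|\gamma|)\chi_{N_\epsilon(\gamma)}$ to get $\int_{N_\epsilon(\gamma)}\rho\,d\hausk\ge\epsilon M_1$ for every curve $\gamma$ joining $\zeta_2$ and $\zeta_4$, and (ii) covering $N_\epsilon(\gamma)$ by balls centered on $|\gamma|$ and using \eqref{upperbound} to convert that area integral into $\int_\gamma\mathcal{M}\rho\,ds\gtrsim M_1/C_U$. This yields $M_2\le C(C_U)/M_1$, i.e.\ \eqref{ylaraja}; see Proposition \ref{upperboundforus}.

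For the lower bound \eqref{alaraja}, your ``double foliation of $Q$ by path pairs'' does not exist a priori: in a general metric square there is no canonical transversal family of rectifiable curves over which to apply Fubini, and producing one is the heart of the matter. The paper manufactures it from the level sets of the minimizer $u$: one first needs continuity of $u$ (which under \eqref{upperbound} comes from Lemma \ref{kovanalka}), connectedness of level sets, and the oscillation estimate $r\,\haus^1(u(B(x,r)))\le\int_{B(x,2r)}\rho\,d\hausk$ (Lemma \ref{hest}), which feeds a covering argument using \eqref{upperbound} to give the coarea-type inequality $\int_0^1\int_{u^{-1}(t)}g\,d\haus^1\,dt\le C(C_U)\int_Q g\,\mathcal{M}\rho\,d\hausk$ (Proposition \ref{minicoarea}). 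Since almost every $u^{-1}(t)$ contains a rectifiable curve joining $\zeta_2$ and $\zeta_4$, any $g$ admissible for that family satisfies $1\le C(C_U)\int_Q g\,\mathcal{M}\rho\,d\hausk\le C'(C_U)\|g\|_2 M_1^{1/2}$, which is \eqref{alaraja}. So the mass bound enters both remaining conditions through covering combinatorics and the maximal function, not ``softly'' as you suggest for \eqref{alaraja}; without the minimizer-plus-maximal-function mechanism neither estimate goes through.
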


The proofs of Theorems \ref{main}, \ref{mainmini} and \ref{rectifi} show that Theorem \ref{rectifi} remains true if \eqref{upperbound} is assumed for balls inside compact subsets $E$ of $X$, such that the constant $C_U$ is allowed to depend on $E$. Several examples of reciprocal spaces can be constructed that do not satisfy \eqref{upperbound} even locally. Theorems \ref{main}, \ref{mainmini} and \ref{rectifi} hold also when $X$ is homeomorphic to the Riemann sphere $\mathbb{S}^2$, with obvious modifications. 

Bonk and Kleiner \cite{BK} gave an excellent characterization for quasispheres among the topological spheres satisfying 
\eqref{upperbound}. Theorems \ref{main} and \ref{rectifi} yield a new proof to their result. 

\begin{corollary}[\cite{BK}, Theorem 1.1]
\label{bonkkleiner}
Assume that $Y$ is homeomorphic to $\mathbb{S}^2$ and satisfies \eqref{upperbound}. Then 
there exists a QS homeomorphism $f:Y \to \mathbb{S}^2$ if and only if $Y$ is linearly locally contractible.  
\end{corollary}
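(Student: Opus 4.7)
The plan is to use the sphere versions of Theorems \ref{rectifi}, \ref{main}, and \ref{mainmini} to produce a QC parametrization, and then upgrade it to a quasisymmetric one by invoking the Heinonen--Koskela equivalence of QC and QS maps in the Loewner setting.

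Necessity of linear local contractibility is classical: LLC is preserved under quasisymmetric homeomorphisms between compact doubling metric spaces, and $\mathbb{S}^2$ is itself LLC, so any space QS equivalent to $\mathbb{S}^2$ inherits the condition.

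For the substantive direction, assume $Y$ is LLC and satisfies \eqref{upperbound}. By the sphere analogue of Theorem \ref{rectifi}, $Y$ is reciprocal; the sphere analogues of Theorems \ref{main} and \ref{mainmini} then produce a $2$-QC homeomorphism $f : Y \to \mathbb{S}^2$. To upgrade $f$ to a quasisymmetry we apply Heinonen--Koskela: a QC homeomorphism between two Ahlfors $2$-regular, linearly locally contractible, $2$-Loewner spaces is quasisymmetric locally, and on a compact topological sphere this is automatically global. The round sphere satisfies all these hypotheses, so the task reduces to verifying them on $Y$. The upper bound \eqref{upperbound} is half of Ahlfors regularity; for the matching lower bound $\hausk(B(x,r)) \geq c r^2$ one transports admissible path families from $\mathbb{S}^2$ back to $Y$ through $f^{-1}$, and combines LLC with the reciprocality inequality of Definition \ref{rec} to force a definite amount of $2$-modulus across every small annulus around $x$. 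Integrating any admissible density against this modulus then yields the desired mass estimate. Ahlfors $2$-regularity together with reciprocality promotes to the Loewner/Poincar\'e condition on $Y$, and the Heinonen--Koskela machinery converts $f$ into a QS homeomorphism onto $\mathbb{S}^2$.

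The main obstacle is the derivation of the lower mass bound: this is where the topological hypothesis and LLC enter essentially, and without the QC parametrization the argument would more or less have to reconstruct the core of the Bonk--Kleiner regularity theorem by hand. With $f$ in place one can pull the modulus analysis back and forth between $Y$ and the smooth sphere, which is precisely the sense in which Theorems \ref{main} and \ref{rectifi} give a conceptually new route to the corollary.
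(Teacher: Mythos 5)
Your overall strategy (QC parametrization from Theorems \ref{rectifi}, \ref{main}, \ref{mainmini}, then an upgrade to quasisymmetry via Heinonen--Koskela) is the one the paper announces and follows. However, the execution has a genuine gap in the direction in which you apply the Heinonen--Koskela machinery. You propose to verify that $Y$ itself is a $2$-Loewner space, asserting that ``Ahlfors $2$-regularity together with reciprocality promotes to the Loewner/Poincar\'e condition on $Y$.'' That step is unjustified and is essentially the hard part of the corollary if attempted head-on: reciprocality is a statement about conformal moduli of topological quadrilaterals, while the Loewner condition demands a lower modulus bound for \emph{arbitrary} pairs of continua quantified by their relative distance. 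To convert one into the other you would need to relate relative distance in $Y$ to relative distance in $\mathbb{S}^2$, which already presupposes quasisymmetry of $f$ --- a circularity. The correct, and much cheaper, move is to run the argument through $f^{-1}:\mathbb{S}^2\to Y$: the \emph{source} is then the round sphere, whose Loewner property \eqref{loe} is classical, and on the target $Y$ one only needs the LLC conditions plus the two-sided mass bounds. This is exactly what the paper does (explicitly, rather than by citation): given three points with $|y_1-y_2|\le|y_1-y_3|$, LLC produces two continua in $Y$, \eqref{loe} bounds the modulus of the connecting family in $\mathbb{S}^2$ from below, quasiconformality transfers this to $Y$, and Lemma \ref{ballmoduli} (from \eqref{upperbound} alone) bounds the modulus of the relevant annulus in $Y$ from above, yielding weak quasisymmetry of $f^{-1}$; V\"ais\"al\"a's theorem on doubling spaces and Remark \ref{intti} then finish.

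A second, smaller issue: your derivation of the lower mass bound $\hausk(B(x,r))\geq c\,r^2$ by ``transporting admissible path families through $f^{-1}$'' is more convoluted than necessary and again threatens to lean on quantitative properties of $f$ that are not yet available. The paper obtains it directly and independently of $f$: LLC forces the spheres $S(x,s)$ to contain separating continua of diameter comparable to $s/\lambda'$, so $\haus^1(S(x,s))\gtrsim s$, and integrating in $s$ via the coarea inequality (Proposition \ref{coarea}, as in Remark \ref{nakuttaja}) gives the lower bound with constant depending only on $\lambda'$. Keeping this step independent of $f$ is also what makes the final quasisymmetry function $\eta$ depend only on $C_U$ and the contractibility constant, as claimed after the statement of Corollary \ref{bonkkleiner}.
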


See Section \ref{quasisym} for the definitions of quasisymmetry and linear local contractibility. Again, the actual content 
of Corollary \ref{bonkkleiner} is the existence of the required quasisymmetric map. Corollary \ref{bonkkleiner} is quantitative: $f$ can be chosen to be $\eta$-quasisymmetric with $\eta$ depending only on $C_U$ and the 
linear local contractibility constant. In contrast to Theorem \ref{main}, it is clear that Corollary \ref{bonkkleiner} does not hold 
with a universal quasisymmetry function $\eta$. 

\subsection{Organization of the paper}
In Section \ref{exsection} we give two examples illustrating Theorems \ref{main} and 
\ref{mainmini}. In the first example we construct a surface $X$ that cannot be parametrized by a QC map. This is done 
by fixing a Cantor set of positive Lebesgue measure in $\R^2$, and choosing a continuous weight vanishing on the Cantor set. Taking 
the path metric with respect to this weight yields a non-reciprocal space $X$. In the second example we consider $\R^2$ 
equipped with the $L^{\infty}$-norm, and find the best possible dilatation for QC maps between this space and the Euclidean plane. 

Theorem \ref{main} is proved in Sections \ref{pathsection}--\ref{globalsection}. First, in Section \ref{pathsection} we apply coarea estimates to find paths with positive modulus in $X$. Both the results and the methods in this section are frequently applied in the following sections. 

We construct the map in Theorem \ref{main} in several steps. We first show the existence of a QC map 
in a given topological square $Q \subset X$. We start in Section \ref{minimizer} by defining the real part $u$ of $f$. Applying Heinonen and Koskela's notion of upper gradients, we show that $u$ can be defined as an energy minimizer among functions taking value $0$ on a fixed edge of $\partial Q$ and value $1$ on the opposite edge. In $\R^2$ this would mean finding the harmonic function with minimal energy under such boundary conditions. We also prove a maximum principle for $u$ that later allows us to develop its main properties. The results in this section hold in great generality, and at this point we do not assume any of the reciprocality conditions. 

In Section \ref{continofu} we apply the maximum principle, together with conditions \eqref{alaraja} and \eqref{nollamoduli}, to prove 
continuity of the function $u$ in $Q$. Moreover, in Section \ref{setsofu} we show that under these conditions almost every level set 
of $u$ is a simple curve. This helps us define a conjugate function $v$ for $u$. Indeed, our method for defining $v$ in $\R^2$ 
would simply be integrating $|\nabla u|$ over the level sets of $u$. It turns out that a similar approach works also in our generality, 
although the actual definition is more involved. In Section \ref{conjug} we carry out the construction of $v$ and prove continuity. 
The map $f=(u,v)$ then maps $Q$ onto a rectangle $[0,1] \times [0,M_1]$, where $M_1$ depends on $Q$. 

Once we have constructed the map $f$, we need to show that it is QC. In particular, we need to establish some analytic properties for 
$f$. Section \ref{condo} is the first step in this direction. There we apply a dyadic decomposition of the image to show that $f$ maps 
sets of measure zero to sets of measure zero, and that the change in area is what corresponds to $|\nabla u|^2$ in $\R^2$. 
This leads to a change of variables formula that by itself does not imply quasiconformality of $f$ but plays a role in the proof. The first application of the formula appears in Section \ref{homeosection} where we prove that $f$ is a homeomorphism. 

To prove quasiconformality of $f$, we need to show the validity of the modulus inequalities 
$\modu(\Gamma) \leq K \modu(f\Gamma)$ 
and $\modu(f \Gamma) \leq K\modu(\Gamma)$. These depend on the analytic (Sobolev) regularity of $f$ and $f^{-1}$, respectively. To prove 
the regularity of $f$, we introduce in Section \ref{vari} a modification of conformal modulus, called variational modulus. Although 
the variational modulus is not as easy to work with as the conformal modulus, it has the advantage of being exactly the dual 
of conformal modulus, in a suitable sense. In Section \ref{regoff}, we use this duality together with the reciprocality conditions to 
prove regularity of $f$, and, consequently, the first of the modulus inequalities. It is worth noticing that this is the only step in the proof 
where condition \eqref{ylaraja} is assumed. 

We complete the proof of quasiconformality of $f$ in Section \ref{regoffmiinus1}, by showing regularity of the map $f^{-1}$ and the second modulus inequality. To prove Theorem \ref{main}, we exhaust the space $X$ with squares $Q$ as above, and give normal 
family arguments to show the existence of a QC map from the whole space $X$ as a limit of maps $f$ constructed above; we do this in Section \ref{globalsection}. 

We prove Theorem \ref{mainmini} in Section \ref{bensajumala}. In contrast to other parts of this paper, which are mostly elementary 
and self-contained, here we rely on results from different areas. We apply the differentiability results of Kirchheim \cite{Ki}, the measurable Riemann mapping theorem, and John's theorem on convex bodies to find a QC map in $X$ with small dilatation. 

In Section \ref{massboundsection} we prove Theorem \ref{rectifi} by checking that spaces satisfying \eqref{upperbound} satisfy the reciprocality conditions. In Section \ref{quasisym} we consider quasisymmetric maps and apply Theorems \ref{main} and 
\ref{rectifi} to prove Corollary \ref{bonkkleiner}. Finally, in Section \ref{huri} we briefly discuss the absolute continuity properties of QC 
maps in the current generality, as well as the reciprocality conditions.

\vskip 10pt
\noindent
{\bf Acknowledgement.}
We are grateful to the anonymous referee for the careful reading of the manuscript and the thoughtful comments that lead to several improvements. We thank Mario Bonk, Changyu Guo, Jeff Lindquist, Dimitrios Ntalampekos, Martti Rasimus and Vyron Vellis for their comments and corrections. 
\newpage


\section{Examples} \label{exsection}

We first introduce some basic notation and terminology. If $Y=(Y,d)$ is a metric space, $k \in \{1,2\}$, and $E \subset Y$, the \emph{Hausdorff $k$-measure} $\haus^k(E)$ of $E$ is 
$$
\haus^k(E) = \lim_{\delta \to 0} \inf \Big\{\sum_{j=1}^{\infty} a_k \operatorname{diam}(A_j)^k : \, E \subset \bigcup_{j=1}^{\infty}A_j, \, 
\operatorname{diam}(A_j) < \delta  \Big\}, 
$$
where $a_1=1$ and $a_2=\pi/4$. $\haus^2$ coincides with the Lebesgue measure $|\cdot|$ in $\R^2$. We always assume that $X=(X,d)$ is homeomorphic to 
$\R^2$ and that the Hausdorff $2$-measure of every compact $E \subset X$ is finite. If $x \in X$ and $r>0$, we denote $B(x,r)=\{y \in X: d(x,y)<r\}$, and 
$S(x,r)=\{y \in X: d(x,y)=r\}$. We denote the image of a continuous path $\gamma$ by $|\gamma|$. We call a continuous, injective path $\gamma:[a,b] \to Y$ simple. Moreover, a simple curve is the 
image of a simple path. We call a connected set a domain if it is open,  and a continuum if it is compact. A continuum is non-trivial if it contains more than one point. 

In this section we give the following examples to illustrate the sharpness of our main results. 

\begin{example}
\label{decomp} 
Let $\mathcal{C} \subset \R^2$ be a Cantor set defined as follows: At the first step we divide 
the unit square $Q_0=[0,1]^2$ to four congruent subsquares with disjoint interiors. 
For each of these subsquares $\hat{Q}$, we choose a square $Q$ with the same center as $\hat{Q}$ 
and with sidelength $(1-a_1)/2$. Then we remove everything except for the four squares $Q$. 

At the second step, we repeat the process with the unit square replaced by each of the squares 
$Q$ remaining after the first step.  Continuing this way, after $n$ steps we have $4^n$ squares remaining, each of sidelength 
$$
2^{-n}\prod_{j=1}^n (1-a_j). 
$$
Taking the intersection of all the remaining squares gives the Cantor set $\mathcal{C}$. 
We choose the sequence $(a_j)$ such that $\mathcal{C}$ has positive Lebesgue measure. 

Now let $0 \leq \omega \leq 1$ be a continuous function in 
$\R^2$ such that $\omega(x)=0$ if and only if $x \in \mathcal{C}$ and $\omega=1$ near infinity. Define $d=d_{\omega}$ in $\R^2$ by setting 
$$
d(x,y)= \inf \int_{\gamma_{x,y}} \omega \, ds,  
$$
where the infimum is taken over all rectifiable paths in $\R^2$ joining $x$ and $y$. We check that 
$d$ is a metric in $\R^2$. First, since $0 \leq \omega \leq 1$, $d(x,y)$ is finite for every $x$ and $y$. 
Also, the triangle inequality follows directly from the definition. It remains to show that $d(x,y)>0$ if $x \neq y$. 

Let $x \neq y$ be points in $\R^2$. If $x \notin \mathcal{C}$ or $y \notin \mathcal{C}$, then there exists an $\epsilon >0$ such that $\omega \geq \epsilon$ on some disc $B(x,\delta)$ or $B(y,\delta)$, with $0< \delta < |x-y|/2$. Consequently, 
$$
d(x,y)\geq \epsilon \delta. 
$$
If both $x,y \in \mathcal{C}$, then there is some step $n$ remaining square $Q$ in the construction of $\mathcal{C}$ such that $x \in Q$ but $y \notin Q$. It follows that there is a slightly larger square $\hat{Q}$ with same center as $Q$, such that 
$$
(\hat{Q} \setminus Q) \cap \mathcal{C} = \emptyset. 
$$
Since $\omega$ is continuous and positive on $\hat{Q} \setminus Q$, there exists $\epsilon>0$ such that for every path $\gamma$ joining the two boundary components, 
$$
\int_\gamma \omega \, ds \geq \epsilon. 
$$
Consequently, 
$$
d(x,y) \geq \epsilon >0. 
$$ 
We conclude that $d$ is a metric on $\R^2$. Moreover, $d(x,y)\leq |x-y|$ for all $x,y \in \R^2$, so 
the identity map $I:\R^2 \to (\R^2,d)$ is a homeomorphism and $d$ is a length metric. Recalling that $\omega=1$ near infinity and applying the Hopf-Rinow theorem, we conclude that $(\R^2,d)$ is a geodesic metric space. 

We have shown that $(\R^2,d)$ is a geodesic metric space homeomorphic to $\R^2$. The $1$-Lipschitz continuity of the identity map $I$ also shows that the Hausdorff $2$-measure 
$\hausk_d$ in the space $(\R^2,d)$ is locally finite. In fact, 
$$
\int_A g \, d\hausk_d =\int_A g \omega^2 \, dx
$$ 
for all Borel measurable $A \subset \R^2$ and $g \geq 0$. Here and in what follows $dx$ refers to integration with respect to Lebesgue measure in $\mathbb{R}^2$. We show that there are no QC maps from $(\R^2,d)$ into $\R^2$ by proving that $(\R^2,d)$ is not reciprocal. Indeed, let $M>0$. Since $\mathcal{C}$ has density points, we can choose a square $Q$ such that $|Q \setminus \mathcal{C}| \leq M^{-1}|Q|$. Without loss of generality, $Q=[0,1]^2$. In $(\R^2,d)$, we give a lower bound for the modulus $\modu(\Gamma_1)$ of all paths joining the vertical edges of $Q$ in $Q$. Namely, if $\rho$ is an admissible function, we integrate $\rho$ over the horizontal segments of height $t$; 
$$
1 \leq \int_{[0,1]\times \{t\}} \rho \omega \, ds. 
$$
We then integrate over $t$ and apply H\"older's inequality to get 
\begin{eqnarray*}
1 &\leq& \int_{Q} \rho \omega \, dx = \int_{Q \setminus \mathcal{C}} \rho \omega \, dx \leq 
|Q \setminus \mathcal{C}|^{1/2} \Big( \int_{Q} \rho^2 \omega^2 \, dx\Big)^{1/2} \\ 
&\leq& M^{-1/2} \Big( \int_{Q} \rho^2 \, d\hausk_d \Big)^{1/2}.  
\end{eqnarray*}
Minimizing over $\rho$, we get $\modu(\Gamma_1) \geq M$. Similarly, if $\Gamma_2$ is the family of paths joining the horizontal edges, we get $\modu(\Gamma_2) \geq M$. Therefore, 
$$
\modu(\Gamma_1) \cdot \modu(\Gamma_2) \geq M^2. 
$$
Letting $M \to \infty$, we conclude that $(\R^2,d)$ is not reciprocal. 
\end{example}


\begin{example}
\label{norm}
We equip $\R^2$ with the $\ell^{\infty}$-norm $||(x_1,x_2)||_{\infty}=\max\{|x_1|,|x_2|\}$. If $\hausk$ denotes Hausdorff measure on 
$(\R^2,||\cdot||_\infty)$ and $|\cdot|$ the Lebesgue $2$-measure, then $\hausk(A)=\pi|A|/4$ for every Borel set $A \subset \R^2$, see \cite[Lemma 6]{Ki}. We now claim that the identity map $f : (\R^2,||\cdot||_\infty) \to \R^2$ is $\pi/2$-QC, where it is understood that the image is equipped with Euclidean norm $||\cdot||$. 
We have 
$$
L_f=L_f(x):=\limsup_{r \to 0} \sup_{||x-y||_{\infty} \leq r}\frac{||f(x)-f(y)||}{||x-y||_{\infty}}=\sqrt{2} \text{ and } J_f=J_f(x)= \frac{4}{\pi} 
$$
for every $x \in \R^2$, for the maximal stretching $L_f$ and volume derivative $J_f$ of $f$. A standard change of variables argument now shows that 
\begin{equation}
\label{triv}
\modu(\Gamma) \leq \frac{L_f^2}{J_f} \modu(f \Gamma) =\frac{\pi}{2} \modu(f \Gamma)
\end{equation}
whenever $\Gamma$ is a path family in $(\R^2, ||\cdot||_{\infty})$. Indeed, if $\rho$ is an admissible function for $f\Gamma$, then 
the function $L_f (\rho\circ f)$ is admissible for $\Gamma$, and moreover 
$$
\int L_f^2 (\rho \circ f)^2 \, \dhausk \leq \frac{L_f^2}{J_f} \int \rho^2 \, dx. 
$$
Since this holds true for all admissible functions $\rho$, \eqref{triv} follows. Similarly, we see that 
$\modu(f\Gamma) \leq \frac{4}{\pi} \modu(\Gamma)$, 
so 
$$
\frac{2}{\pi} \modu(\Gamma) \leq \modu(f \Gamma) \leq \frac{4}{\pi} \modu(\Gamma) 
$$
for every path family $\Gamma$. We conclude that $f$ is $\pi/2$-QC. 

We next show that there are no $K$-QC maps with $K < \pi/2$. Denote by $\phi$ the counterclockwise rotation of $\R^2$ by $\pi/4$, and let $Q=\phi([0,1]^2)$. We give a lower bound for the modulus 
$\modu(\Gamma_1)$ in 
$(\R^2, ||\cdot||_{\infty})$ of the family of paths $\Gamma_1$ joining 
$\phi(\{0\}\times [0,1])$ and $\phi(\{1\}\times [0,1])$ in $Q$. Let $\rho$ be an admissible function. Then 
$$
1 \leq \int_{\phi(\{t\}\times [0,1])} \rho \, d\haus^1. 
$$
Integrating over $t$ and applying the co-area formula \cite[Theorem 9.4]{AK} (with the co-area factor of $L^{\infty}$), we see that 
\begin{eqnarray*}
1 &\leq& \int_0^1  \int_{\phi(\{t\}\times [0,1])} \rho \, d\haus^1 \, dt = \frac{4}{\sqrt{2}\pi} \int_{Q} \rho \, d\hausk \\
&\leq&  \frac{4}{\sqrt{2}\pi}\Big( \int_{Q} \rho^2 \, d\hausk\Big)^{1/2} \hausk(Q)^{1/2}= \frac{\sqrt{2}}{\sqrt{\pi}} \Big( \int_{Q} \rho^2 \, d\hausk\Big)^{1/2}. 
\end{eqnarray*}
Minimizing over $\rho$ gives $\modu(\Gamma_1) \geq \pi/2$. Similarly, if $\Gamma_2$ is the family of paths joining the other two sides of $Q$, then $\modu(\Gamma_2) \geq \pi/2$. Hence, if $f:Q \to \R^2$ is $K$-QC, then using the $1$-reciprocality of $\R^2$ we get 
$$
\frac{\pi^2}{4} \leq \modu(\Gamma_1) \cdot \modu(\Gamma_2) \leq K^2 \modu(f\Gamma_1) 
\cdot \modu(f \Gamma_2) = K^2, 
$$
i.e., $K \geq \pi /2$. 
\end{example}


\section{Existence of rectifiable paths} \label{pathsection}
Recall that we assume that $X$ is homeomorphic to $\R^2$ and has locally finite $2$-measure. In this section we show that under these mild conditions one can find large families of rectifiable paths in $X$ (see \cite{Se3} for much deeper results along these lines). 
We will later prove qualitative estimates, such as continuity, using such families. We will frequently use the following results. These are \cite[Proposition 15.1]{Se3} and \cite[Proposition 3.1.5]{AT} (slightly modified), respectively. 
 
\begin{proposition}
\label{findcurves}
Let $x,y \in X$ be given, $x \neq y$. Suppose that $E \subset X$ is a continuum with $\haus^1(E) < \infty$ and $x,y \in E$. Then there is an $L>0$, $L \leq \haus^1(E)$, and an injective $1$-Lipschitz 
mapping $\gamma:[0,L] \to X$ such that $\gamma(t)\in E$ for all $t$, $\gamma(0)=x$, $\gamma(L)=y$, and 
$\haus^1(\gamma(F))=\haus^1(F)$ for all measurable sets 
$F \subset [0,L]$. 
\end{proposition}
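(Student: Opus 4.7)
The plan is to reduce the statement to the classical topological fact that a continuum $E \subset X$ with $\haus^1(E) < \infty$ is arcwise connected, and, more precisely, that any two distinct points of $E$ can be joined by a simple (injective continuous) path lying in $E$ whose length is at most $\haus^1(E)$. Once such an arc is in hand, arc-length reparametrization yields the required map $\gamma$.

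The first step, which is also the heart of the argument, is to produce an injective continuous path $\alpha:[0,1]\to E$ from $x$ to $y$ with length bounded by $\haus^1(E)$. I would proceed by discrete approximation. For each $\epsilon>0$, cover $E$ by finitely many balls of radius $\epsilon$ centered in $E$ and form the finite graph whose vertices are these centers and whose edges join centers lying in intersecting balls. Connectedness of $E$ forces this graph to be connected, so one can extract a graph-path from a vertex near $x$ to a vertex near $y$; a counting argument, using that the collection of centers can be arranged essentially disjointly on $E$, bounds the $\haus^1$-length of the corresponding polygonal path by $\haus^1(E)+o(1)$ as $\epsilon\to 0$. Reparametrizing each polygonal path by arc length so it becomes $1$-Lipschitz on a uniformly bounded interval and applying Arzel\`a--Ascoli produces, in the limit, a continuous path from $x$ to $y$ in $E$ of length at most $\haus^1(E)$. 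Removing loops from this limit path by the standard Moore/Menger--N\"obeling arc-construction procedure yields the simple path $\alpha$.

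Given $\alpha$, let $L=\ell(\alpha)\le\haus^1(E)$ and let $\gamma:[0,L]\to X$ be its arc-length reparametrization. Then $\gamma$ is $1$-Lipschitz, injective because $\alpha$ is, and satisfies $\gamma(0)=x$, $\gamma(L)=y$, $\gamma(t)\in E$. The $1$-Lipschitz bound gives $\haus^1(\gamma(F))\le\haus^1(F)$ for every measurable $F\subset[0,L]$; the reverse inequality is the classical identity, for injective rectifiable curves parametrized by arc length, between the length of the curve and the $\haus^1$-measure of its image, applied to $\gamma|_F$ (first for intervals, then extended to measurable $F$ by monotone class arguments). The principal obstacle is the first step: making precise the intuition that a continuum of finite $\haus^1$-measure behaves like a finite graph in a way that yields arcs of controlled length rather than only abstract arcwise connectivity. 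This is exactly the classical content borrowed from \cite{Se3} and \cite{AT}, and once it is cited the remainder of the argument is routine.
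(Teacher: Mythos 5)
The paper does not prove Proposition \ref{findcurves} at all: it is quoted verbatim from \cite[Proposition 15.1]{Se3}, so there is no in-paper argument to compare yours against. Your sketch is essentially the standard proof of that cited result (as in Ambrosio--Tilli or David--Semmes): $\epsilon$-chains from connectedness, a length bound from $\haus^1(E)<\infty$, Arzel\`a--Ascoli, loop removal, and arc-length reparametrization. The final steps --- extracting a simple arc, reparametrizing by arc length, and deducing $\haus^1(\gamma(F))=\haus^1(F)$ from the $1$-Lipschitz bound together with the identity between length and $\haus^1$ of the image for injective curves --- are correct as you describe them (for the last point the cleanest route is complementation: $L=\haus^1(\gamma([0,L]))\le\haus^1(\gamma(F))+\haus^1(\gamma(F^c))\le |F|+|F^c|=L$ forces equality).

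Two points in your first step need more care. First, $X$ carries no linear or geodesic structure, so ``the corresponding polygonal path'' is undefined: consecutive chain points cannot be joined by segments in $X$, and joining them by curves inside $E$ is circular. The standard fix is to embed $E$ isometrically into $L^\infty(E)$, build the polygonal paths there, and note that a uniform limit of paths lying within $\epsilon$ of the closed set $E$ has image in $E$. Second, the counting bound is not $\haus^1(E)+o(1)$: taking a maximal $\epsilon$-separated set of centers and using the density lower bound $\haus^1(E\cap \overline{B}(z,r))\ge r$ for continua, a simple graph path has at most about $2\haus^1(E)/\epsilon$ vertices with steps of length at most $2\epsilon$, which yields only a bound of the form $C\,\haus^1(E)$. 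This is harmless --- uniform boundedness is all that Arzel\`a--Ascoli requires, and the sharp bound $L\le\haus^1(E)$ is recovered at the end from $L=\ell(\alpha)=\haus^1(|\alpha|)\le\haus^1(E)$ for the injective limit arc --- but as stated the intermediate claim is not what the counting gives. With those repairs the argument is complete.
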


\begin{proposition}
\label{coarea}
Let $A \subset X$ be Borel measurable. If $m:X \to \mathbb{R}$ is $L$-Lipschitz and $g:A \to [0,\infty]$ Borel measurable, 
then 
$$
\int_{\mathbb{R}} \int_{A \cap m^{-1}(t)} g(s) \, d\haus^1(s)\, dt \leq  \frac{4 L}{\pi}  \int_A g(x) \, d\hausk(x). 
$$
\end{proposition}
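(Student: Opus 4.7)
The plan is to prove the inequality first for $g = \chi_A$, and then extend to general nonnegative Borel $g$ by approximation with simple functions and the monotone convergence theorem. So the core task is to establish
\[
\int_{\mathbb{R}} \haus^1(A \cap m^{-1}(t)) \, dt \leq \frac{4L}{\pi} \hausk(A).
\]

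To do this, I would work directly from the definition of Hausdorff measure. Fix $\delta > 0$ and choose a cover $\{A_j\}_{j=1}^\infty$ of $A$ with $\operatorname{diam}(A_j) < \delta$ and
\[
\sum_{j=1}^\infty \frac{\pi}{4} \operatorname{diam}(A_j)^2 \leq \hausk_{\delta}(A) + \varepsilon.
\]
For each $j$, two elementary observations control the slice $A_j \cap m^{-1}(t)$. First, since $m$ is $L$-Lipschitz, $m(A_j)$ is contained in an interval of length at most $L \operatorname{diam}(A_j)$, so the $t$-set on which $A_j \cap m^{-1}(t)$ is nonempty has Lebesgue measure at most $L \operatorname{diam}(A_j)$. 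Second, $\operatorname{diam}(A_j \cap m^{-1}(t)) \leq \operatorname{diam}(A_j) < \delta$, so $\haus^1_\delta(A_j \cap m^{-1}(t)) \leq \operatorname{diam}(A_j)$. Multiplying these two bounds and using the subadditivity of $\haus^1_\delta$ and Fubini/Tonelli on the left,
\[
\int_{\mathbb{R}} \haus^1_\delta(A \cap m^{-1}(t)) \, dt \leq \sum_j \int_{\mathbb{R}} \haus^1_\delta(A_j \cap m^{-1}(t)) \, dt \leq L \sum_j \operatorname{diam}(A_j)^2.
\]
Rewriting $L \operatorname{diam}(A_j)^2 = \tfrac{4L}{\pi} \cdot \tfrac{\pi}{4} \operatorname{diam}(A_j)^2$ and using the choice of cover yields
\[
\int_{\mathbb{R}} \haus^1_\delta(A \cap m^{-1}(t)) \, dt \leq \frac{4L}{\pi}\bigl(\hausk_\delta(A) + \varepsilon\bigr).
\]
Letting $\varepsilon \to 0$ and then applying Fatou's lemma as $\delta \to 0$ (noting that $\haus^1_\delta \uparrow \haus^1$ on each slice) gives the desired inequality for $g = \chi_A$.

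For general $g$, I would first upgrade the previous step to $g = \chi_E$ for any Borel $E \subset A$ by applying the inequality to $E$ in place of $A$ (this is why we need Borel measurability, so that the slices remain Borel and all integrals make sense). By linearity the inequality then holds for nonnegative Borel simple functions supported in $A$. An increasing sequence of simple functions converging to $g$ pointwise, combined with the monotone convergence theorem on both sides (and the fact that the inner integral is monotone in the integrand while the outer integral is monotone in its integrand), finishes the proof.

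The main subtlety is making sure the slice-measurability issues do not cause trouble: $t \mapsto \haus^1(A \cap m^{-1}(t))$ needs to be Lebesgue measurable for the left side to make sense, and similarly for the slice integrals involving $g$. The standard argument is that the $\delta$-premeasures $\haus^1_\delta$ of the slices are measurable in $t$ (since one can take coverings from a countable family when $X$ is separable, as it is here, being homeomorphic to $\R^2$), and then $\haus^1$ of the slices is measurable as an increasing limit. This is the one point where some care is required; the rest is bookkeeping with the constant $\pi/4$ built into the definition of $\hausk$.
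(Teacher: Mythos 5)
Your argument is correct and is exactly the standard Eilenberg-type covering proof: the paper does not prove this proposition itself but cites Ambrosio--Tilli (Proposition 3.1.5), whose proof is the same reduction to a $\delta$-cover, the bound $|m(A_j)|\leq L\operatorname{diam}(A_j)$ times $\haus^1_\delta(A_j\cap m^{-1}(t))\leq\operatorname{diam}(A_j)$, subadditivity, and a limit in $\delta$, with the constant $4L/\pi$ coming from $a_2=\pi/4$. The only delicate point is the one you flag -- measurability of $t\mapsto\haus^1(A\cap m^{-1}(t))$ -- which the references handle either by stating the result with an upper integral or by noting that for Borel (indeed Suslin) $A$ this slice function is universally measurable, so your proof stands.
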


We next show that the family of paths joining two continua always has positive modulus. We need the following topological lemma, cf. 
\cite[IV Theorem 26]{Moo}. 

\begin{lemma}
\label{separates}
Let $U,V \subset \R^2$ be disjoint continua, and suppose that a compact set $F \subset \R^2 \setminus (U \cup V)$ separates $U$ and $V$ in $\R^2$. Then $F$ contains a continuum $G$ separating $U$ and $V$ in $\R^2$.  
\end{lemma}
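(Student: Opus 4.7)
The plan is to produce a minimal compact separator $G \subset F$ via Zorn's lemma and then to verify that any such minimal separator is automatically connected using Janiszewski's theorem from planar topology.

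Let $\mathcal{F}$ denote the family of compact sets $G \subset F$ that separate $U$ and $V$ in $\R^2$, ordered by inclusion; it is nonempty since $F \in \mathcal{F}$. To invoke Zorn's lemma I need every chain $\{G_\alpha\}_\alpha \subset \mathcal{F}$ to have a lower bound, and the natural candidate is $G_* = \bigcap_\alpha G_\alpha$, which is compact as a closed subset of $F$. If $G_*$ failed to separate $U$ from $V$, then a point of $U$ and a point of $V$ would lie in a common component of the open set $\R^2 \setminus G_*$; since open connected subsets of the plane are path connected, a continuous arc $\gamma$ would join them inside $\R^2 \setminus G_*$. The compact image $|\gamma|$ would be covered by the open sets $\{\R^2 \setminus G_\alpha\}_\alpha$, hence by finitely many of them with indices $\alpha_1,\dots,\alpha_n$; the chain property forces one of $G_{\alpha_1},\dots,G_{\alpha_n}$ to be contained in all the others, and its complement would then contain $|\gamma|$, contradicting its membership in $\mathcal{F}$. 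Zorn's lemma therefore produces a minimal element $G \in \mathcal{F}$.

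Suppose, for contradiction, that $G$ is disconnected, so $G = G_1 \sqcup G_2$ with both $G_i$ nonempty and relatively clopen in $G$. Each $G_i$ is a compact subset of $\R^2$, and by minimality of $G$ neither $G_i$ separates $U$ from $V$ on its own. Pass to the one-point compactification $S^2 = \R^2 \cup \{\infty\}$; since all the sets involved are compact in $\R^2$, adjoining $\infty$ only enlarges the unbounded complementary component, so the nonseparation of $U$ from $V$ by each $G_i$ persists in $S^2$, as does the separation of $U$ from $V$ by $G$. Now $G_1$ and $G_2$ are closed in $S^2$ with $G_1 \cap G_2 = \emptyset$ (vacuously connected), and neither separates the connected sets $U$ and $V$; Janiszewski's theorem then yields that $G = G_1 \cup G_2$ does not separate $U$ from $V$ in $S^2$, a contradiction. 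Hence $G$ must be connected.

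The main obstacle is the disconnectedness step: the Zorn part is routine, whereas the conclusion that a minimal separator is connected genuinely requires planar topology. Janiszewski's theorem is the cleanest input; an alternative, more self-contained route is to enclose $G_1$ and $G_2$ in disjoint open neighborhoods with piecewise-linear boundaries lying in $\R^2 \setminus G$, and to splice arcs avoiding $G_1$ with arcs avoiding $G_2$ across the finitely many boundary components, but this essentially reproduces the classical argument recorded in \cite[IV Theorem 26]{Moo}.
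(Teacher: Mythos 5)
Your argument is correct, but note that the paper does not actually prove this lemma: it is quoted verbatim from the literature, with a pointer to \cite[IV Theorem 26]{Moo}, so there is no in-paper proof to match. Your two-step scheme is sound. The Zorn step is airtight: the intersection $G_*$ of a chain of compact separators is compact, and if it failed to separate, a compact arc from $U$ to $V$ in $\R^2\setminus G_*$ would be covered by finitely many of the nested complements $\R^2\setminus G_\alpha$, hence would avoid the smallest $G_{\alpha_i}$, a contradiction; this legitimately produces a minimal compact separator $G\subset F$. The connectedness step is also fine: a clopen splitting $G=G_1\sqcup G_2$ gives two disjoint compact sets, neither of which separates (by minimality), and Janiszewski's theorem on $S^2$ (in the standard form allowing $G_1\cap G_2=\emptyset$) shows their union cannot separate either; the reduction from continua to points and the passage between $\R^2$ and $S^2$ are both handled correctly, since $U$ and $V$ are connected and disjoint from $G$, and compact separators separate the same pairs of points before and after one-point compactification. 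The classical route — the one Moore takes, and the one usually derived from unicoherence of $S^2$ — proves the stronger statement that some \emph{component} of $F$ already separates $U$ and $V$; your minimal-separator argument yields only a continuum contained in $F$, which is exactly what the lemma asserts and all the paper ever uses. The trade-off is that your approach needs the axiom of choice but isolates all the planar topology into the single invocation of Janiszewski's theorem, which is of comparable depth to what any proof of this lemma must use.
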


\begin{remark}
\label{nakuttaja}
Let $x \in X$. Then, by Lemma \ref{separates}, there exists $r_0>0$ such that $\haus^1(S(x,r))>0$ for almost every $0<r<r_0$. Applying Proposition \ref{coarea} with 
$m=d(\cdot,x)$, we see that $\hausk(B)>0$ for every ball $B \subset X$. See \cite{HW} for further connections between topological dimension and Hausdorff measures. 
\end{remark}

\begin{proposition}
\label{continuum}
Let $\alpha$ and $\beta$ be two nontrivial continua in a topological closed square $Q \subset X$. Then 
$$
\modu(\alpha,\beta;Q)>0. 
$$
\end{proposition}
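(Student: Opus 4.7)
If $\alpha\cap\beta\neq\emptyset$, any constant path at a common point lies in $\Delta(\alpha,\beta;Q)$ and is locally rectifiable with length zero, so no non-negative Borel function can be admissible and $\modu(\alpha,\beta;Q)=+\infty$. I may therefore assume $\alpha\cap\beta=\emptyset$ and set $d_0:=d(\alpha,\beta)>0$. The plan is to foliate $Q$ by the level sets $E_r:=Q\cap m^{-1}(r)$ of the $1$-Lipschitz function $m(x):=d(x,\alpha)$, extract a rectifiable separating arc at almost every level, and then close the argument by Cauchy--Schwarz. For $r\in(0,d_0)$ the sets $\{m<r\}\cap Q$ and $\{m>r\}\cap Q$ are disjoint open neighborhoods of $\alpha$ and $\beta$ respectively, so $E_r$ topologically separates $\alpha$ from $\beta$ in $Q$. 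Proposition~\ref{coarea} applied with $g\equiv 1$ yields
\begin{equation*}
\int_0^{d_0}\haus^1(E_r)\,dr\leq \frac{4}{\pi}\hausk(Q)<\infty,
\end{equation*}
so $\haus^1(E_r)<\infty$ for almost every \emph{good} $r\in(0,d_0)$.

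For each good $r$, I would transfer the separation question to $\R^2$ via a homeomorphism $h:Q\to[0,1]^2$ (augmenting the image of $E_r$ by $\partial[0,1]^2$ when necessary) and apply Lemma~\ref{separates} to extract a continuum $F_r\subset E_r$ that still separates $\alpha$ from $\beta$ inside $Q$. Because $\alpha$ and $\beta$ are both nontrivial, the planar topology of the closed disk $Q$ forces any such separating continuum to have diameter bounded below by a positive constant depending only on $\operatorname{diam}(\alpha)$, $\operatorname{diam}(\beta)$, and $Q$. Since $F_r$ is a continuum with $\haus^1(F_r)<\infty$, Proposition~\ref{findcurves} furnishes a simple $1$-Lipschitz rectifiable arc $\sigma_r\subset F_r$, which (by suitable choice of its endpoints and the planarity of $Q$) can be taken to separate $\alpha$ from $\beta$ in $Q$.

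Now let $\rho$ be admissible for $\Gamma:=\Delta(\alpha,\beta;Q)$. The main claim is that
\begin{equation*}
\int_Q \rho\,d\hausk\geq c
\end{equation*}
for a constant $c=c(\alpha,\beta,Q)>0$; once this is proved, Cauchy--Schwarz gives $c^2\leq \hausk(Q)\int_Q\rho^2\,d\hausk$, hence $\modu(\Gamma)\geq c^2/\hausk(Q)>0$. The lower bound on $\int_Q\rho\,d\hausk$ is obtained by a Fubini-type construction along the level sets $F_r$: for each good $r$ and $\haus^1$-almost every $p\in\sigma_r$ one builds a rectifiable path $\gamma_p\in\Gamma$ through $p$, with length controlled uniformly in $p$ using Proposition~\ref{findcurves} to traverse $\sigma_r$ together with the scales $r$ and $d_0-r$ to reach $\alpha$ and $\beta$; admissibility gives $\int_{\gamma_p}\rho\,ds\geq 1$, and averaging these inequalities along $\sigma_r$, integrating over good $r$, and applying Proposition~\ref{coarea} with $g=\rho$ converts them into the required bound on $\int_Q\rho\,d\hausk$. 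The main obstacle is this last step: without a Poincar\'e or Loewner hypothesis, constructing the family $\{\gamma_p\}$ with uniform length control must be done by hand using only the planar separation structure of $Q$ and the parametrizations furnished by Proposition~\ref{findcurves}, and this is the only point at which the nontriviality of both $\alpha$ and $\beta$, as well as the specific two-dimensional geometry of $Q\subset X$, is essential.
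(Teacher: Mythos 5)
Your argument has a genuine gap at exactly the point you yourself flag as ``the main obstacle,'' and it is not a technical loose end but the entire content of the proposition. You foliate $Q$ by the level sets $E_r$ of $m=d(\cdot,\alpha)$; these \emph{separate} $\alpha$ from $\beta$, i.e.\ they run transverse to the paths of $\Gamma=\Delta(\alpha,\beta;Q)$, so admissibility of $\rho$ says nothing about $\int_{\sigma_r}\rho\,d\haus^1$. To recover a lower bound you are forced to manufacture, through $\haus^1$-a.e.\ point $p$ of each arc $\sigma_r$, a rectifiable path $\gamma_p\in\Gamma$ with uniformly controlled length. But in a metric surface with only locally finite $\hausk$ there is no reason such paths exist: a given point need not be joined to $\alpha$ or to $\beta$ by any rectifiable path at all, and producing rectifiable paths in $\Gamma$ is precisely what the proposition asserts. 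Moreover, even granting the family $\{\gamma_p\}$, the final bookkeeping does not close: Proposition~\ref{coarea} with $g=\rho$ controls $\int_0^{d_0}\int_{E_r}\rho\,d\haus^1\,dr$ by $\int_Q\rho\,d\hausk$, i.e.\ it integrates $\rho$ over the level sets, whereas your admissibility inequalities $\int_{\gamma_p}\rho\,ds\ge 1$ live on the transverse curves $\gamma_p$, so ``averaging along $\sigma_r$ and integrating over $r$'' does not reduce to the coarea inequality.

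The paper's proof avoids all of this by choosing the Lipschitz function differently: it fixes a continuous arc $\eta$ in $\operatorname{int}Q$ \emph{joining} a point $a\in\alpha$ to a point $b\in\beta$ and takes $m(x)=\operatorname{dist}(x,|\eta|)$. For small $t$ the level set $m^{-1}(t)$ separates $|\eta|$ from $\partial Q$, has finite $\haus^1$-measure for a.e.\ $t$ (your coarea step), and by Lemma~\ref{separates} contains a separating continuum $G_t$; since $\alpha$ and $\beta$ are nontrivial continua through $a$ and $b$, $G_t$ must meet both, and Proposition~\ref{findcurves} applied \emph{inside} $G_t$ produces a rectifiable injective path $\gamma_t\subset G_t$ joining $\alpha$ to $\beta$. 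Thus the connecting paths lie \emph{in} the level sets, admissibility gives $\int_{\gamma_t}\rho\,ds\ge1$ for a.e.\ small $t$, and integrating in $t$ via Proposition~\ref{coarea} plus Cauchy--Schwarz finishes the proof in three lines. If you want to salvage your outline, replace $d(\cdot,\alpha)$ by $d(\cdot,|\eta|)$; the rest of your machinery (separation, Lemma~\ref{separates}, Proposition~\ref{findcurves}) then goes through, and your preliminary reduction to $\alpha\cap\beta=\emptyset$ becomes unnecessary.
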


\begin{proof}
We first assume that both $\alpha$ and $\beta$ lie in the interior of $Q$, henceforth denoted by $\operatorname{int} Q$. 
Fix points $a \in \alpha$ and $b \in \beta$, and a continuous path $\eta:[0,1]\to \operatorname{int} Q$ joining $a$ and $b$. Let 
$m(x)=\operatorname{dist}(x,|\eta|)$, where $|\eta|$ is the image of $\eta$. Then $m$ is $1$-Lipschitz. Moreover, there exists $\epsilon >0$ such that 
$F_t:=m^{-1}(t) \subset Q$ and $F_t$ separates $\partial Q$ and $|\eta|$ for every $0<t<\epsilon$. Applying Proposition \ref{coarea} to $m$ and $g=1$, we see that $\haus^1(F_t)$ is finite for almost every $t$. Therefore, since $Q$ is homeomorphic to a planar square, Lemma \ref{separates} shows that $F_t$ contains a continuum $G_t$ which also separates. Since $\alpha$ and $\beta$ are nontrivial continua, there exists $0<\epsilon'<\epsilon$ such that for every $0<t<\epsilon'$ there are points $a_t \in \alpha \cap G_t$ and $b_t \in \beta \cap G_t$. Applying Proposition \ref{findcurves}, we find for almost every $0<t<\epsilon'$ a rectifiable, injective path 
$\gamma_t$ joining $a_t$ and $b_t$ in $G_t$. Denote by $\Gamma$ the family of all such $\gamma_t$. Then 
$$
\Gamma \subset \Delta(\alpha, \beta;Q). 
$$

Now let $g: Q \to [0,\infty]$ be admissible for $\Gamma$. Then, applying Proposition \ref{coarea} and H\"older's inequality, we have 
$$
\epsilon' \leq \int_0^{\epsilon'} \int_{\gamma_t} g \, ds \, dt \leq \frac{4}{\pi} \int_Q g(x)\, d\hausk(x)
\leq \frac{4}{\pi}\hausk(Q)^{1/2} \Big(\int_Q g(x)^2 \, d\hausk(x) \Big)^{1/2}. 
$$
Since the estimate holds for all admissible functions $g$, we conclude that 
$$
\modu(\alpha,\beta;Q) \geq \modu(\Gamma) \geq \Big(\frac{\pi \epsilon'}{4\haus^2(Q)^{1/2}}\Big)^2 >0. 
$$
If $\alpha$ touches the boundary of $Q$ but $\beta$ does not, then we modify the proof as follows: if $\alpha$ contains a point in $\operatorname{int}Q$, then we can find a subcontinuum in the interior and the proof above applies. Otherwise, $\alpha$ contains a topological line segment $I \subset \partial Q$. Now, we can choose the point $a$ to be the center of $I$, and we can choose 
$\eta:[0,1] \to \operatorname{int}Q \cup\{a\}$ such that the $\epsilon$-neighborhood of $|\eta|$ does not intersect $\partial Q \setminus I$ 
when $\epsilon$ is small enough. Now the proof above applies. We proceed similarly if both $\alpha$ and $\beta$ touch the boundary of $Q$. 
\end{proof}





\section{Energy minimizer $u$ on a topological square}
\label{minimizer}

In this section we define a suitable energy minimizing, ``harmonic" function $u$ in our general setting. We also develop some basic properties for $u$. Later, we define a ``conjugate function" $v$ of $u$, and show that, under our reciprocality assumption, the resulting map $f=(u,v)$ is QC. 

Let $\Omega \subset X$. Recall that a Borel function $g \geq 0$ is an \emph{upper gradient} of a function $u$ in $\Omega$, if 
\begin{equation}
\label{upperg}
|u(y)-u(x)| \leq  \int_{\gamma} g \, ds
\end{equation}
for every $x$ and $y \in \Omega$ and every locally rectifiable path $\gamma$ joining $x$ and $y$ in $\Omega$. Here by joining we mean that both $x$ and $y \in |\gamma|$. Also, we agree that the left term in \eqref{upperg} equals $\infty$ if $|u(x)|=\infty$ or $|u(y)|=\infty$. We say that $g$ is a weak upper gradient of $u$, if there exists a path family $\Gamma_0$ with modulus zero such that \eqref{upperg} holds for every $x$ and $y$ and every $\gamma \notin \Gamma_0$. Similarly, we say that a property holds for almost every path in a path family $\Gamma$, if there exists $\Gamma_0 \subset \Gamma$ of modulus zero such that the property holds for all 
$\gamma \in \Gamma \setminus \Gamma_0$. Furthermore, we say that a Borel function $\rho$ is weakly admissible for $\Gamma$, if 
the integral of $\rho$ over $\gamma$ is at least $1$ for almost every $\gamma \in \Gamma$. 

We now construct the function $u$. Let $Q \subset X$ be homeomorphic to a closed square in $\rk$, and $\zeta_1,\ldots, \zeta_4$ the boundary edges as in \eqref{ylaraja} and \eqref{alaraja}. At this point we do not assume any of the reciprocality conditions. 
We consider the modulus 
$$
M_1=\modu(\zeta_1,\zeta_3; Q). 
$$
A standard method now shows that there exists a weakly admissible function realizing $M_1$. More precisely, let $(\rho^j)$ be a minimizing sequence of admissible functions. Then, after passing to a subsequence, $\rho^j$ converges to $\rho \in L^2(Q)$ weakly in $L^2$. Moreover, by Mazur's lemma \cite[Page 19]{HKSTbook}, there exists a sequence $(\rho_k)$ of convex combinations of the $\rho^j$; 
$$
\rho_k = \sum_{j=1}^{N(k)} \lambda_j \rho^j, \quad \sum_{j=1}^{N(k)} \lambda_j=1, \quad \lambda_j \geq 0, 
$$
such that $\rho_k \to \rho$ strongly in $L^2$. 

Now it follows by Fuglede's lemma \cite[Page 131]{HKSTbook} that 
\begin{equation}
\label{fuglede}
\int_{\gamma}  \rho_k \, ds \to \int_{\gamma} \rho \, ds <\infty
\end{equation}
for almost every $\gamma$ in $Q$. In particular, 
\begin{equation}
\label{kellonelja}
\int_{\gamma} \rho \, ds \geq 1 
\end{equation}
for almost every $\gamma$ joining $\zeta_1$ and $\zeta_3$ in $Q$, so 
$$
\int_Q \rho^2 \, d\hausk =M_1. 
$$


We would now like to define the function $u$ by integrating the minimizing function $\rho$ over paths. This is possible although some technicalities arise. Denote by $\Gamma_0$ the family of paths in $Q$ that have a subpath for which \eqref{fuglede} does not hold. Then $\modu(\Gamma_0)=0$. 

We will be working with paths that do not belong to the exceptional family $\Gamma_0$. For instance, we show in Lemma \ref{kipu} that the upper gradient inequality \eqref{upperg} holds for the function $u$, weak upper gradient $\rho$, and all paths $\gamma$ outside $\Gamma_0$. Since $\rho$ is integrable on such paths $\gamma$, it follows that $u$ will be absolutely continuous there. The subpath property in the definition of $\Gamma_0$ is given to guarantee that paths outside $\Gamma_0$ can be concatenated succesfully. 

Define $u$ as follows: For $x \in Q$, first assume there exists 
$$
\gamma \in \Delta(\zeta_1,\zeta_3;Q) \setminus \Gamma_0
$$ 
such that some subpath $\gamma_x$ of $\gamma$ joins $\zeta_1$ and $x$. Then define  
\begin{equation}
\label{tuska}
u(x)= \inf_{\gamma_x} \int_{\gamma_x} \rho \, ds, 
\end{equation}
where the infimum is taken over all possible $\gamma$ and $\gamma_x$. If $u(x)$ cannot be defined this way for $x \in Q$, let 
$$
u(x)=\liminf_{y \in E, y \to x} u(y), 
$$
where $E$ is the set of points $y$ for which $u(y)$ is already defined. 


\begin{lemma}
\label{udefined}
The function $u:Q \to [0,\infty]$ is well-defined. 
\end{lemma}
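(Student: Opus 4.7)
The plan is to check that each of the two defining clauses for $u$ assigns a unique element of $[0,\infty]$ to every $x \in Q$, and that the clauses together give a partition of $Q$.

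For the primary clause \eqref{tuska}, suppose there exists $\gamma \in \Delta(\zeta_1,\zeta_3;Q) \setminus \Gamma_0$ containing a subpath $\gamma_x$ from $\zeta_1$ to $x$. By the definition of $\Gamma_0$ as the family of paths possessing at least one subpath on which \eqref{fuglede} fails, $\gamma \notin \Gamma_0$ forces every subpath of $\gamma$ to satisfy \eqref{fuglede}; in particular $\int_{\gamma_x}\rho\,ds < \infty$. Combined with $\rho \ge 0$, this shows that the set of integrals appearing in \eqref{tuska} is a non-empty subset of $[0,\infty)$, so its infimum is an unambiguously defined element of $[0,\infty)$. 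The set $E$ of points to which this first clause applies is therefore specified without ambiguity; moreover, Proposition \ref{continuum} applied to the non-trivial boundary arcs $\zeta_1,\zeta_3$ gives $\modu(\zeta_1,\zeta_3;Q) > 0$, so after removing the modulus-zero exceptional family $\Gamma_0$ the collection $\Delta(\zeta_1,\zeta_3;Q)\setminus \Gamma_0$ is still non-empty and $E$ contains the image of at least one such path.

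For the fallback clause, if $x \in Q \setminus E$ then $u(x)$ is defined as $\liminf_{y \in E,\, y \to x} u(y)$, which is a well-defined element of $[0,\infty]$ under the standard convention that the liminf over an empty collection equals $+\infty$. The phrase ``if $u(x)$ cannot be defined this way'' makes the two clauses mutually exclusive, and together they plainly cover $Q$. Hence $u : Q \to [0,\infty]$ is well-defined.

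The only subtle point, and the sole place where care is required, is that one must exploit the \emph{subpath} condition baked into the definition of $\Gamma_0$, not merely the endpoint convergence \eqref{fuglede} for $\gamma$ itself. Without this strengthening, $\int_{\gamma_x}\rho\,ds$ could a priori be infinite even for $\gamma \notin \Gamma_0$, and the infimum in the first clause would not be guaranteed to lie in $[0,\infty)$. Because $\Gamma_0$ was deliberately enlarged to discard every path with a single offending subpath, this obstruction does not materialize, and the bookkeeping closes.
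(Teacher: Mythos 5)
Your argument reduces the lemma to a triviality by invoking the convention that a liminf over an empty index set equals $+\infty$, but that convention is precisely what the lemma is meant to rule out. The actual content of Lemma \ref{udefined} is that the set $E$ of points where the primary clause \eqref{tuska} applies is \emph{dense} in $Q$, so that the fallback clause $u(x)=\liminf_{y\in E,\,y\to x}u(y)$ is a genuine limit inferior along points of $E$ approaching $x$. Your proof only shows that $E$ is non-empty (it contains the image of one path in $\Delta(\zeta_1,\zeta_3;Q)\setminus\Gamma_0$), which leaves open the possibility that whole regions of $Q$ are at positive distance from $E$; there $u$ would equal $+\infty$ by your convention, and the subsequent development would collapse — Lemma \ref{nollayksi} asserts $0\le u\le 1$, and the maximum principle argument in Lemma \ref{epatoivo} explicitly needs, near any given point, points where $u$ is defined by \eqref{tuska} with nearly the same value.

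The missing step is the construction the paper carries out: for every $x\in Q$ and $\epsilon>0$ one must produce $y\in B(x,\epsilon)$ with $u(y)$ defined by \eqref{tuska}. This is done by taking a non-trivial continuum $G\subset B(x,\epsilon)\cap Q$, applying Proposition \ref{continuum} to get $\modu(\zeta_1,G;Q)>0$, and using Fuglede's lemma to extract a path $\gamma_a$ from $\zeta_1$ to $G$ all of whose subpaths satisfy \eqref{fuglede}; then repeating the argument to join a non-trivial component $F$ of $|\gamma_a|\cap\overline{B}(x,\epsilon)$ to $\zeta_3$ by a path $\gamma_b$ with the same property; concatenating a suitable subpath of $\gamma_a$ with $\gamma_b$ yields a path in $\Delta(\zeta_1,\zeta_3;Q)\setminus\Gamma_0$ passing through $B(x,\epsilon)$, so that \eqref{tuska} applies to every point of its image. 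Your observation about the subpath condition in the definition of $\Gamma_0$ (guaranteeing $\int_{\gamma_x}\rho\,ds<\infty$ and hence a finite infimum in \eqref{tuska}) is correct and relevant, but it does not substitute for the density argument.
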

\begin{proof}
We have to show that for every $x \in Q$ and every $\epsilon >0$ there exists $y \in B(x,\epsilon)$ such that $u(y)$ is defined by 
\eqref{tuska}. First notice that $B(x,\epsilon)\cap Q$ contains a non-trivial continuum $G$. Therefore, by Proposition \ref{continuum}, there exists a family $\Gamma$ of paths joining $\zeta_1$ and $G$ in $Q$, such that $\modu(\Gamma) >0$. Then Fuglede's lemma guarantees that for some $\gamma_a \in \Gamma$ \eqref{fuglede} holds for all subpaths of $\gamma_a$. Let $F$ be a non-trivial 
component of $|\gamma_a| \cap \overline{B}(x,\epsilon)$. Then, applying Proposition \ref{continuum} and Fuglede's lemma again gives a path $\gamma_b$ joining $F$ and $\zeta_3$ such that \eqref{fuglede} holds for all subpaths of $\gamma_b$. Now we can define 
$\gamma$ by concatenating a suitable subpath of $\gamma_a$ with $\gamma_b$. Then $\gamma$ 
joins $\zeta_1$ and $\zeta_3$ and $|\gamma|$ intersects $B(x,\epsilon)$. Moreover, all subpaths of $\gamma$ satisfy 
\eqref{fuglede}. Therefore, $u(y)$ can be defined by \eqref{tuska} for all $y \in |\gamma|$. 
\end{proof}

\begin{lemma}
\label{uudefined}
Let $\gamma' \notin \Gamma_0$ be a rectifiable path in $Q$. Then for every $x \in |\gamma'|$ there exists a path 
$\gamma \notin \Gamma_0$ joining $\zeta_1$ and $\zeta_3$ such that $x \in |\gamma|$. In particular, $u(x)$ is defined by 
\eqref{tuska}. 
\end{lemma}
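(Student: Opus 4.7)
The plan is to use $\gamma'$ as a ready-made Fuglede-good bridge through $x$ and to extend it on both sides to reach $\zeta_1$ and $\zeta_3$, obtaining the extensions from Proposition \ref{continuum} and Fuglede's lemma \eqref{fuglede} in exactly the same spirit as the proof of Lemma \ref{udefined}; the new ingredient here is that we use $\gamma'$ itself to force the constructed path to hit $x$ and not merely a point near $x$.

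More concretely, I would parametrize $\gamma':[a,b]\to Q$ with $\gamma'(c_0)=x$, and in the main case (when $x$ is not a parametrization endpoint of $\gamma'$) choose parameters $c_1<c_0<c_2$ such that $F_1=\gamma'([c_1,c_0])$ and $F_2=\gamma'([c_0,c_2])$ are non-trivial continua meeting at $x$. Proposition \ref{continuum} gives $\modu(F_1,\zeta_1;Q)>0$ and $\modu(F_2,\zeta_3;Q)>0$, and since $\modu(\Gamma_0)=0$, Fuglede's lemma provides paths $\gamma_a\in\Delta(F_1,\zeta_1;Q)\setminus\Gamma_0$ and $\gamma_b\in\Delta(F_2,\zeta_3;Q)\setminus\Gamma_0$. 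Let $y_1=\gamma'(s_1)\in F_1$ and $y_2=\gamma'(s_2)\in F_2$ be their endpoints on the respective continua, so that $s_1\leq c_0\leq s_2$. Then the concatenation $\gamma:=\gamma_a\ast\gamma'|_{[s_1,s_2]}\ast\gamma_b$ joins $\zeta_1$ to $\zeta_3$ and passes through $x=\gamma'(c_0)$.

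To verify $\gamma\notin\Gamma_0$, any subpath $\sigma$ of $\gamma$ decomposes as $\sigma=\sigma_a\ast\sigma'\ast\sigma_b$, with each (possibly degenerate) piece a subpath of the corresponding one of $\gamma_a,\gamma',\gamma_b$. Since none of these three paths lies in $\Gamma_0$, applying \eqref{fuglede} to each piece and summing yields $\int_\sigma\rho_k\,ds\to\int_\sigma\rho\,ds<\infty$, so \eqref{fuglede} holds for $\sigma$. Thus $\gamma\in\Delta(\zeta_1,\zeta_3;Q)\setminus\Gamma_0$, and taking $\gamma_x$ to be the initial subarc of $\gamma$ from $\zeta_1$ to $x$ (namely $\gamma_a$ followed by $\gamma'|_{[s_1,c_0]}$) shows that $u(x)$ is defined by \eqref{tuska}.

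The only point requiring extra care is the parametrization-endpoint case, in which only one non-trivial side of $\gamma'$ at $x$ is available: I would then take a single non-trivial $F\subset|\gamma'|$ having $x$ as an endpoint, produce $\gamma_a,\gamma_b$ landing at $y_1,y_2\in F$, and use as the middle piece the concatenation of the subarc of $\gamma'$ from $y_1$ back to $x$ with the one from $x$ forward to $y_2$; the subpath-decomposition argument is then unchanged. I expect this routing issue---and, more fundamentally, the need to hit $x$ exactly rather than merely arbitrarily close to it as in Lemma \ref{udefined}---to be the main (though minor) obstacle.
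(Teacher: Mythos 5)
Your proof is correct and follows essentially the same route as the paper's: apply Proposition \ref{continuum} and Fuglede's lemma to produce paths $\gamma_a,\gamma_b\notin\Gamma_0$ from $\zeta_1$ and $\zeta_3$ to $|\gamma'|$, concatenate them with a piece of $\gamma'$, and use the subpath property in the definition of $\Gamma_0$ to see the concatenation is again outside $\Gamma_0$. The paper's proof is terser (it only says ``a suitable subpath of $\gamma'$''), and your splitting of $|\gamma'|$ into the two subcontinua $F_1,F_2$ meeting at $x$, together with the back-and-forth routing in the endpoint case, is a correct way of making precise the detail the paper leaves implicit, namely that the resulting path actually passes through $x$.
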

\begin{proof}
The argument is similar to the previous lemma. Proposition \ref{continuum} gives path families $\Gamma_1$ and $\Gamma_2$ of positive modulus joining $\zeta_1$ and $|\gamma'|$, and $\zeta_2$ and $|\gamma'|$, respectively. Moreover, Fuglede's lemma gives paths 
$\gamma_a \in \Gamma_1 \setminus \Gamma_0$ and $\gamma_b \in \Gamma_2 \setminus \Gamma_0$. Now $\gamma$ can be defined by concatenating $\gamma_a$, $\gamma_b$, and a suitable subpath of $\gamma'$. That $\gamma \notin \Gamma_0$ follows because $\gamma_a$, $\gamma_b$ and $\gamma'$ all have the same property. 
\end{proof}


\begin{lemma}
\label{kipu}
The function $\rho$ is a weak upper gradient of $u$ in $Q$. In fact, \eqref{upperg} holds (with $\rho$) for all rectifiable paths $\gamma \notin \Gamma_0$.  
\end{lemma}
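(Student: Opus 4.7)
The plan is to prove the stronger claim directly: for every rectifiable $\gamma \notin \Gamma_0$ joining $x,y \in Q$, show $|u(y)-u(x)| \le \int_\gamma \rho\,ds$. By Lemma \ref{uudefined} applied to $\gamma$, both $u(x)$ and $u(y)$ are defined by the infimum formula \eqref{tuska}, so there is something concrete to compare. Moreover, since $\gamma \notin \Gamma_0$, condition \eqref{fuglede} forces $\int_\gamma \rho\,ds < \infty$, so the inequality to be proved is meaningful.

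The construction is by concatenation. Fix $\epsilon > 0$ and choose $\gamma^{*} \in \Delta(\zeta_1,\zeta_3;Q) \setminus \Gamma_0$ together with a subpath $\gamma_x$ of $\gamma^*$ joining $\zeta_1$ to $x$ such that $\int_{\gamma_x} \rho\,ds \le u(x) + \epsilon$. Let $\gamma^{*}_t$ denote the complementary subpath of $\gamma^*$ from $x$ to $\zeta_3$, and form $\delta := \gamma_x \cdot \gamma \cdot \gamma^{*}_t$. Then $\delta$ is a continuous path from $\zeta_1$ to $\zeta_3$ whose initial portion $\gamma_x \cdot \gamma$ is a subpath joining $\zeta_1$ to $y$, with
$$\int_{\gamma_x \cdot \gamma} \rho\,ds \;\le\; u(x) + \epsilon + \int_\gamma \rho\,ds.$$
Provided $\delta \notin \Gamma_0$, the definition \eqref{tuska} immediately gives $u(y) \le u(x) + \epsilon + \int_\gamma \rho\,ds$. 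Letting $\epsilon \to 0$ and then interchanging the roles of $x$ and $y$ (using the reversed path, which is also outside $\Gamma_0$ since subpath structure and path integrals are invariant under reversal) yields the claimed two-sided inequality.

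The main technical point, and the only real obstacle, is verifying $\delta \notin \Gamma_0$. By the definition of $\Gamma_0$, one must show that \eqref{fuglede} holds for every subpath of $\delta$. Any such subpath decomposes as a concatenation of a subpath of $\gamma_x$, a subpath of $\gamma$, and a subpath of $\gamma^{*}_t$ (some of which may be trivial); the integral of any Borel function over the concatenation is the sum of the integrals over the pieces. Since $\gamma^*, \gamma \notin \Gamma_0$, the convergence $\int \rho_k \to \int \rho$ holds along each of the three pieces individually, and additivity upgrades this to convergence along the full subpath. Hence $\delta \notin \Gamma_0$, and the argument closes. Since $\Gamma_0$ has modulus zero, this in particular shows $\rho$ is a weak upper gradient of $u$.
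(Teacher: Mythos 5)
Your argument is essentially the paper's proof: the paper compresses the whole thing into the single line $u(y)\le\inf_{\gamma_x}\big(\int_\gamma\rho\,ds+\int_{\gamma_x}\rho\,ds\big)$, leaving the concatenation and the verification that the concatenated path avoids $\Gamma_0$ implicit, which you correctly supply via the subpath-decomposition and additivity observation. One small repair: as written, $\delta=\gamma_x\cdot\gamma\cdot\gamma^{*}_t$ is discontinuous, since $\gamma$ ends at $y$ while $\gamma^{*}_t$ starts at $x$; insert the reversal of $\gamma$ between them (or replace $\gamma^{*}_t$ by any path from $y$ to $\zeta_3$ outside $\Gamma_0$, which exists by the argument of Lemma \ref{uudefined}), after which $\gamma_x\cdot\gamma$ is still an initial subpath of $\delta$ and the rest of your argument goes through unchanged.
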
 

\begin{proof}
Let $x$ and $y \in Q$. Since we only require the upper gradient inequality outside a set of modulus zero, we may assume that there is a rectifiable path $\gamma \notin \Gamma_0$ joining $x$ and $y$ in $Q$ . Then, by Lemma \ref{uudefined}, $u(x)$ and $u(y)$ are defined by \eqref{tuska}. We may assume that $u(y)>u(x)$. Then, by the definition of $u$,  
$$
u(y) \leq \inf_{\gamma_x}  \Big(\int_{\gamma} \rho \, ds + \int_{\gamma_x} \rho \, ds  \Big) \leq \int_{\gamma} \rho \, ds + u(x), 
$$
where the infimum is taken as in \eqref{tuska}. 
\end{proof}


We need the following auxiliary result to prove further properties for $u$. 

\begin{lemma}
\label{ykskolme}
Let $L>0$ and $\epsilon>0$, and denote the interior of $\{u>L\}$ by $E$. If $\eta$ is a rectifiable path in $\{u\geq L+\epsilon\}$, then 
$$
\haus^1(|\eta| \setminus E)=0. 
$$
\end{lemma}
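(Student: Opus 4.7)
I will argue by contradiction. Suppose $\haus^1(F)>0$ where $F=|\eta|\setminus E$. Since $F\subset|\eta|$ lies in a rectifiable curve, we have $\hausk(F)=0$, so the tube $T_\delta:=\{z\in Q:d(z,F)<\delta\}$ satisfies $\hausk(T_\delta)\to 0$ as $\delta\to 0$. Each $x\in F$ has $u(x)\geq L+\epsilon$ (since $x\in|\eta|$) yet $x\notin\operatorname{int}\{u>L\}$, so every neighborhood of $x$ contains a point $y$ with $u(y)\leq L$, i.e.\ $u(x)-u(y)\geq\epsilon$.

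The proof sandwiches the modulus of an auxiliary path family. Let $\Gamma^*_\delta$ be the family of rectifiable paths in $T_\delta$ joining a point of $F$ to a point of $\{u\leq L\}$. Lemma \ref{kipu} gives $\int_\gamma\rho\,ds\geq\epsilon$ for every $\gamma\in\Gamma^*_\delta\setminus\Gamma_0$, so $\rho/\epsilon$ is weakly admissible for $\Gamma^*_\delta$ (as $\modu(\Gamma_0)=0$). Truncating admissible functions outside $T_\delta$ yields
$$
\modu(\Gamma^*_\delta) \;\leq\; \epsilon^{-2}\int_{T_\delta}\rho^2\dhausk \;\longrightarrow\; 0 \qquad (\delta\to 0).
$$

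For the matching lower bound I plan to use a Vitali-type covering of $F$ by pairwise disjoint balls $B(x_i,r_i)$ with $x_i\in F$, $r_i<\delta$, $\sum_i r_i\gtrsim \haus^1(F)$, each containing a point $y_i\in\{u\leq L\}$. Within each $B(x_i,r_i)$, using the homeomorphism $Q\cong[0,1]^2$, I select a topological sub-square $D_i\subset Q$ that contains both $x_i$ and $y_i$ and has $d$-diameter at most $2r_i$, so that $D_i\subset T_\delta$. Applying Proposition \ref{continuum} in $D_i$ to non-trivial continua taken from $|\eta|$ near $x_i$ and near $y_i$ produces a sub-family $\Gamma_i\subset\Gamma^*_\delta$ of positive modulus $m_i$ (Fuglede's lemma ensures almost every path in $\Gamma_i$ avoids $\Gamma_0$). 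Disjointness of the $D_i$ and modulus superadditivity give $\modu(\Gamma^*_\delta)\geq\sum_i m_i$.

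The hard part will be securing a uniform positive lower bound $\sum_i m_i\geq c_0$ that contradicts the upper bound. Proposition \ref{continuum} only yields $m_i\gtrsim (\epsilon'_i)^2/\hausk(D_i)$, and without any Ahlfors-regularity hypothesis $\hausk(D_i)$ need not scale like $r_i^2$, so individual $m_i$ can shrink faster than the covering produces them. I expect this obstacle is resolved by exploiting the $1$-rectifiability of $F$: replace $\eta$ by a $1$-Lipschitz injection via Proposition \ref{findcurves}, and refine the Vitali covering so that the sub-squares $D_i$ match the arclength combinatorics of $\eta$, extracting a uniform lower bound on the sum. Combined with the upper bound, this forces $\haus^1(F)=0$.
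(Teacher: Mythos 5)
Your proposal has a genuine gap, and it is one you cannot patch within the strategy you chose. The contradiction you are after requires a lower bound $\modu(\Gamma^*_\delta)\geq c_0>0$ uniform in $\delta$, deduced solely from $\haus^1(F)>0$. But this lemma sits in Section \ref{minimizer}, where \emph{no} reciprocality or mass bound is assumed, and in that generality positive $\haus^1$-measure of a set simply does not force a positive modulus of paths emanating from it: Proposition \ref{continuum} gives $m_i\gtrsim (\epsilon_i')^2/\hausk(D_i)$ with a topological, non-quantitative $\epsilon_i'$ and with no control on $\hausk(D_i)$ from below or above, so the sum $\sum_i m_i$ can collapse. Indeed your own upper bound $\modu(\Gamma^*_\delta)\leq \epsilon^{-2}\int_{T_\delta}\rho^2\to 0$ shows that the modulus you would need to bound below genuinely tends to zero; the desired contradiction cannot be extracted from modulus estimates at all. (A secondary issue: near $y_i$ you need a non-trivial continuum on which $u\leq L$, not just the single point $y_i$; this is available from the path definition of $u$, but you do not address it.)

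The paper's proof avoids modulus entirely and works at the level of $\int\rho\,d\hausk$ rather than $\int\rho^2\,d\hausk$. Fix $y\in|\eta|\setminus E$ and small $r$. Since $y$ is not interior to $\{u>L\}$, some $z\in B(y,r/2)$ has $u(z)\leq L$, and the definition of $u$ yields a curve $\alpha$ exiting $B(y,2r)$ with $u\leq L$ on it. By Lemma \ref{separates}, for every $s\in(r/2,r)$ the sphere $S(y,s)$ contains a continuum meeting both $|\eta|$ (where $u\geq L+\epsilon$) and $\alpha$ (where $u\leq L$); Proposition \ref{findcurves} and the upper gradient inequality then give $\int_{S(y,s)}\rho\,d\haus^1\geq\epsilon$ for a.e.\ such $s$. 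Integrating over $s$ with Proposition \ref{coarea} produces the key linear estimate
\[
r\epsilon \;\leq\; \frac{8}{\pi}\int_{B(y,r)}\rho\,d\hausk,
\]
which requires no control whatsoever on $\hausk(B(y,r))$. A $5r$-covering of $|\eta|\setminus E$ by disjoint balls $B(y_j,r_j)$ of radius $<\delta$ then gives $\haus^1_{5\delta}(|\eta|\setminus E)\leq 10\sum_j r_j\lesssim \epsilon^{-1}\int_{N_{5\delta}(|\eta|)}\rho\,d\hausk$, and the right-hand side vanishes as $\delta\to 0$ because $\hausk(|\eta|)=0$ and $\rho\in L^1(Q)$. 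This first-power trick is the idea your argument is missing.
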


\begin{proof}
First, let $y \in |\eta|$ and $0<r < \operatorname{diam}|\eta|/2$. Assume that $u(z) \leq L$ at some $z \in B(y,r/2)$. Then, by the definition of $u$, there exists a curve $\alpha$ joining 
$B(y,r/2)$ and $Q \setminus B(y,2r)$ such that $u \leq L$ everywhere on $\alpha$. Moreover, 
Lemma \ref{separates} implies that for every $r/2<s<r$ some continuum $C(s) \subset S(y,s)$ intersects both $|\eta|$ and $\alpha$. That is, there are $a_s, b_s \in C(s)$ such that 
$u(b_s)-u(a_s) \geq \epsilon$. Therefore, Proposition \ref{continuum} and Lemma \ref{findcurves} 
show that for almost every such $s$ there are rectifiable curves in $C(s)$ joining $a_s$ and $b_s$. 
Furthermore, for almost every such $s$, the upper gradient inequality gives 
$$
\epsilon \leq \int_{S(y,s)} \rho \, d\haus^1. 
$$
Integrating from $r/2$ to $r$ and applying Proposition \ref{coarea}, we have 
\begin{equation}
\label{ironm}
r \epsilon \leq  \frac{8}{\pi} \int_{B(y,r)} \rho \, d\hausk. 
\end{equation}

Now for every $\delta >0$ and every $y \in |\eta| \setminus E$ there exists $r_y< \delta$ 
such that \eqref{ironm} holds for every $r<r_y$. By the $5r$-covering lemma, among all such balls $B(y,r)$ we can find a finite or countable subcollection $\{B_j=B(y_j,r_j)\}$ such that the balls 
$B_j$ are pairwise disjoint and 
$$
 \Big( |\eta| \setminus E \Big) \subset \bigcup_j B(y_j,5r_j). 
$$
Applying \eqref{ironm} in all $B_j$ and summing the estimates gives  
$$
\haus_{5\delta}^1(|\eta| \setminus E) \leq 10 \sum_j r_j \leq \frac{80}{\epsilon \pi} \sum_j \int_{B_j} \rho \, d\hausk. 
$$
By the disjointness of the balls $B_j$, the sum on the right can be estimated from above by 
$$
\int_{N_{5\delta}(|\eta|)} \rho \, d\hausk, 
$$
where $N_{5\delta}(|\eta|)$ is the closed $5\delta$-neighborhood of $|\eta|$. Since $\eta$ is rectifiable, 
this integral converges to zero when $\delta \to 0$. Combining the estimates gives the claim. 
\end{proof}

\begin{lemma} 
\label{nollayksi}
We have $0\leq u(x) \leq 1$ for every $x \in Q$. 
\end{lemma}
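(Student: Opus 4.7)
The lower bound $u \geq 0$ is immediate: every integrand in \eqref{tuska} is non-negative, and $\liminf$ preserves non-negativity.

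For the upper bound $u \leq 1$ the plan is a truncation argument exploiting the $L^2$-minimality of $\rho$ among weakly admissible functions. Let $A = \{x \in Q : u(x) > 1\}$ and set $\rho' := \rho \chi_{Q \setminus A}$. I first claim that $\rho'$ is weakly admissible for $\Delta(\zeta_1, \zeta_3; Q)$. Indeed, pick any $\gamma \in \Delta(\zeta_1, \zeta_3; Q) \setminus \Gamma_0$. Lemma \ref{uudefined} shows $u$ is given by \eqref{tuska} along $|\gamma|$; taking the constant subpath at $\gamma(0) \in \zeta_1$ forces $u(\gamma(0)) = 0$, while Lemma \ref{kipu} renders $u \circ \gamma$ continuous. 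If $u \leq 1$ on $|\gamma|$, then $\rho' \equiv \rho$ along $\gamma$ and $\int_\gamma \rho' \, ds = \int_\gamma \rho \, ds \geq 1$. Otherwise continuity produces a first time $t^*$ with $u(\gamma(t^*)) = 1$, and the upper gradient inequality on $\gamma|_{[0, t^*]}$, where $\rho' = \rho$, yields $\int_\gamma \rho' \, ds \geq \int_{\gamma|_{[0, t^*]}} \rho \, ds \geq u(\gamma(t^*)) - u(\gamma(0)) = 1$.

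Since the modulus $M_1$ can be computed over weakly admissible functions, $\int_Q (\rho')^2 \dhausk \geq M_1 = \int_Q \rho^2 \dhausk$. Combined with the pointwise bound $\rho' \leq \rho$ this forces equality, so $\rho = 0$ $\hausk$-a.e.\ on $A$. A standard Fuglede transfer then guarantees that the family of paths $\gamma$ with $\int_\gamma \rho \chi_A \, ds > 0$ has conformal modulus zero.

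To finish, suppose $x \in Q$ has $u(x) > 1$. If $x \notin E$, then $u(x) = \liminf_{y \in E, y \to x} u(y) \leq 1$ by induction on the claim for $E$, so we may assume $x \in E$. Proposition \ref{continuum} together with Fuglede's lemma yields a good path $\gamma$ through $x$ lying outside both $\Gamma_0$ and the Fuglede exceptional family above. By continuity of $u \circ \gamma$ let $t^* < t_x$ be the last time before $x$ with $u(\gamma(t^*)) = 1$; then $u > 1$ on $(t^*, t_x]$. The upper gradient inequality gives
\[
u(x) - 1 = u(x) - u(\gamma(t^*)) \leq \int_{\gamma|_{[t^*, t_x]}} \rho \, ds = \int_{\gamma|_{[t^*, t_x]}} \rho \chi_A \, ds = 0,
\]
contradicting $u(x) > 1$. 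The main obstacle I anticipate is the Fuglede transfer step — verifying that $\hausk$-a.e.\ vanishing of $\rho$ on $A$ implies path-integral vanishing for a sufficiently rich curve family, and that this family still meets the family of good paths through a prescribed $x$ guaranteed by Proposition \ref{continuum}.
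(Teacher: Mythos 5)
Your overall strategy --- truncate $\rho$ to zero on $\{u>1\}$, use $L^2$-minimality to force $\rho=0$ a.e.\ there, then run the upper gradient inequality along a good path --- is the same as the paper's, but two steps do not go through as written. The first is measurability: at this stage $u$ is defined by an uncountable infimum over path integrals followed by a $\liminf$ extension, and nothing yet guarantees that $A=\{u>1\}$ is Borel (continuity of $u$ is proved only in Section \ref{continofu}, and under reciprocality hypotheses that are explicitly not assumed here). Hence $\rho'=\rho\chi_{Q\setminus A}$ need not be a Borel function, so it is not a legitimate competitor in the modulus problem and $\int_Q(\rho')^2\,d\hausk$ is not even well defined. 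The paper truncates instead on the \emph{interior} $E$ of $\{u>1\}$, which is open and hence Borel, and supplies Lemma \ref{ykskolme} precisely to show that a rectifiable path lying in $\{u\geq 1+\epsilon\}$ spends $\haus^1$-almost all of its length inside $E$; that lemma is the bridge between $\{u>1\}$ and its interior that your argument omits.

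The second, more serious, gap is in your final step. You assert that Proposition \ref{continuum} together with Fuglede's lemma yields a path through the \emph{prescribed} point $x$ avoiding both $\Gamma_0$ and the new exceptional family. These tools do not give that: Proposition \ref{continuum} produces positive-modulus families joining two nontrivial continua, and Fuglede lets you select from a positive-modulus family a path outside any null family, but the family of all paths through a single fixed point can itself have modulus zero (under \eqref{nollamoduli} it does), so you cannot guarantee that any of the paths defining $u(x)$ in \eqref{tuska} escapes the exceptional family attached to ``$\rho=0$ a.e.\ on $A$''. The paper circumvents this by first using the definition of $u$ and the absolute continuity of $u$ along a near-optimal path to extract a nontrivial \emph{curve} $\alpha$ on which $u\geq 1+2\epsilon$, and then applying Proposition \ref{continuum} to $\Delta(\zeta_1,\alpha;Q)$, which has positive modulus; the contradiction with minimality then comes from $\int_E\rho^2>0$ versus the weak admissibility of $\rho\chi_{Q\setminus E}$. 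Your argument can be repaired along exactly these lines, but as written the selection of a good path through $x$ is unjustified. (The step you flagged as the main obstacle --- transferring ``$\rho=0$ a.e.'' to path integrals --- is in fact the unproblematic part once $A$ is replaced by a Borel set.)
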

\begin{proof}
Suppose to the contrary that $u(x_0) \geq 1+3\epsilon$ for some $\epsilon>0$ and $x_0 \in Q$. Then, by the definition of $u$, we find a curve $\alpha$ in the interior of $Q$ such that $u \geq 1+ 2\epsilon$ everywhere on $\alpha$. Proposition \ref{continuum} shows that 
\begin{equation}
\label{pepti}
\modu(\zeta_1,\alpha;Q) >0. 
\end{equation}
Given $\eta':[0,1] \to Q$ in $\Delta(\zeta_1,\alpha;Q) \setminus \Gamma_0$ such that 
$\eta'(0) \in \alpha$, let  
$$
t_0=\inf\{t: \, u(\eta'(t)) \leq 1+ \epsilon \} \quad \text{and}  \quad \eta=\eta'|[0,t_0].
$$ 
By the upper gradient inequality and the absolute continuity of $u$ on $\eta'$, 
$0<t_0<1$ and 
$$
\int_{\eta} \rho \, ds \geq \epsilon. 
$$
Combining with Lemma \ref{ykskolme}, we conclude that $\rho \chi_{E}/\epsilon$ is weakly admissible for $\Delta(\zeta_1,\alpha;Q)$, where $E$ is the interior of $\{u > 1\}$. 
In particular 
\begin{equation}
\label{ouwie}
\int_E \rho^2 \, d\hausk >0 
\end{equation}
by \eqref{pepti}. On the other hand, the function 
$$
\rho_0:= \rho \chi_{\{Q \setminus E\}} 
$$
is weakly admissible for $\Delta(\zeta_1,\zeta_3;Q)$ by the definition of $u$. But now \eqref{ouwie} gives 
$$
\int_Q \rho_0^2 \, d\hausk < \int_Q \rho^2 \, d\hausk. 
$$
This contradicts the minimizing property of $\rho$. The proof is complete. 
\end{proof}


We next establish a maximum principle. Let $\Omega \subset X$ be open. We denote 
$$
\partial_* \Omega=(\partial \Omega \cap Q) \cup (\overline{\Omega} \cap (\zeta_1 \cup \zeta_3)).   
$$

\begin{lemma}
\label{epatoivo}
Let $\Omega \subset X$ be open. Then 
$$
\sup_{x\in \Omega \cap Q} u(x)= \sup_{y \in \partial_* \Omega} u(y) \quad \text{and} \quad 
\inf_{x\in \Omega \cap Q} u(x)= \inf_{y \in \partial_* \Omega} u(y). 
$$
\end{lemma}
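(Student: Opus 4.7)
I concentrate on the maximum principle $\sup_{\Omega \cap Q} u \leq L := \sup_{\partial_* \Omega} u$, which is the heart of the statement; the opposite inequality $\sup_{\Omega \cap Q} u \geq L$ follows from the approximability of $u(y)$ at $y \in \partial_* \Omega$ by values $u(x)$ with $x \in \Omega \cap Q$ near $y$, via the definition \eqref{tuska} and the density of defined points from Lemma \ref{udefined}, and the inf statement is symmetric (apply the argument with $\zeta_1$ and $\zeta_3$ interchanged and decreasing level sets). If $L \geq 1$ we are done by Lemma \ref{nollayksi}, so I take $L + \epsilon < 1$ for some small $\epsilon > 0$ and suppose toward contradiction that $u(x_0) > L + 5\epsilon$ for some $x_0 \in \Omega \cap Q$. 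Following Lemma \ref{nollayksi}, I extract a non-trivial continuum $\alpha \subset \Omega$ on which $u \geq L + 4\epsilon$, using the definition of $u$ and the openness of $\Omega$ to keep $\alpha$ inside $\Omega$.

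Set $E := \operatorname{int}\{u > L + \epsilon\}$ and $E^* := E \cap \Omega$. The crucial observation is that $u \leq L < L + \epsilon$ on $\partial_* \Omega$, so $E$ is disjoint from $\partial_* \Omega$, in particular from $\partial \Omega \cap Q$. The two ingredients that together contradict the minimality of $\rho$ are (a) $\int_{E^*} \rho^2 \, d\hausk > 0$, and (b) weak admissibility of $\rho \chi_{Q \setminus E^*}$ for $\Delta(\zeta_1, \zeta_3; Q)$: combined they yield $M_1 = \int_Q \rho^2 > \int_{Q \setminus E^*} \rho^2 \geq M_1$, a contradiction.

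For (a), take $\gamma \in \Delta(\zeta_1, \alpha; Q) \setminus \Gamma_0$ and let $\eta$ be the tail of $\gamma$ starting at the last parameter where $u \circ \gamma = L + 2\epsilon$; continuity of $u \circ \gamma$ from Lemma \ref{kipu} gives $u \geq L + 2\epsilon$ on $\eta$, and the upper gradient inequality yields $\int_\eta \rho \, ds \geq 2\epsilon$. Since $u > L$ on $\eta$ but $u \leq L$ on $\partial \Omega \cap Q$, and $\eta$ terminates inside $\alpha \subset \Omega$, connectedness of $\eta$ forces $\eta \subset \Omega$. Lemma \ref{ykskolme} applied with shifted parameters gives $\haus^1(|\eta| \setminus E) = 0$, so $\int_\gamma \rho \chi_{E^*} \, ds \geq 2\epsilon$; hence $\rho \chi_{E^*}/(2\epsilon)$ is weakly admissible for $\Delta(\zeta_1, \alpha; Q)$, whose modulus is positive by Proposition \ref{continuum}.

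For (b), I first check that $u = 1$ on $\zeta_3$: given $y \in \zeta_3$ and any $\gamma \in \Delta(\zeta_1, \zeta_3; Q) \setminus \Gamma_0$ through $y$, the initial subpath $\gamma_y$ again lies in $\Delta(\zeta_1, \zeta_3; Q) \setminus \Gamma_0$, so weak admissibility gives $\int_{\gamma_y} \rho \, ds \geq 1$, hence $u(y) \geq 1$; Lemma \ref{nollayksi} supplies the reverse inequality. Thus for $\gamma \in \Delta(\zeta_1, \zeta_3; Q) \setminus \Gamma_0$ the endpoint satisfies $u(\gamma(T)) = 1 > L$, forcing $\gamma(T) \notin \overline{\Omega}$ (else $\gamma(T) \in \partial_* \Omega$ with $u \leq L$). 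Define along $\gamma$ the truncated function $\psi(s) := u(\gamma(s))$ when $\gamma(s) \notin \Omega$ and $\psi(s) := \min\bigl(u(\gamma(s)), L + \epsilon\bigr)$ when $\gamma(s) \in \Omega$; the inequality $u \leq L < L + \epsilon$ on $\partial \Omega \cap Q$ makes the two formulae agree at crossings, so $\psi$ is absolutely continuous with $\psi(0) = 0$ and $\psi(T) = 1$. Its derivative vanishes on $\gamma^{-1}(E^*) \subseteq \{s : \gamma(s) \in \Omega,\, u(\gamma(s)) > L + \epsilon\}$, where $\psi$ is constant, and is bounded by $\rho \circ \gamma$ elsewhere, whence
\[
\int_\gamma \rho \, \chi_{Q \setminus E^*} \, ds \;\geq\; \int_0^T |\psi'(s)| \, ds \;\geq\; \psi(T) - \psi(0) \;=\; 1.
\]
The main obstacle is the careful verification that $\psi$ is genuinely absolutely continuous across boundary crossings of $\Omega$ and that the derivative bound transfers correctly to $\rho \chi_{Q \setminus E^*}$; both rest on the strict gap $L < L + \epsilon$ enforced on $\partial \Omega \cap Q$ by the choice of $\epsilon$, which keeps the support of the modification $E^*$ well separated from the region where $\psi$ recovers its boundary values.
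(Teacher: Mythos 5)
Your maximum-principle argument for the sup is correct and is essentially the paper's: assume an interior excess, extract a continuum $\alpha\subset\Omega$ with $u$ large, show $\int_{E^*}\rho^2>0$ by producing, from a.e.\ path in $\Delta(\zeta_1,\alpha;Q)$, a tail on which $u\geq L+2\epsilon$ that stays in $\Omega$ and (via Lemma \ref{ykskolme}) lies $\haus^1$-a.e.\ in $E^*$, and then contradict the minimality of $\rho$ with the competitor $\rho\chi_{Q\setminus E^*}$. Your step (b) — the truncated function $\psi$ along a path, constant on $\gamma^{-1}(E^*)$ and dominated by $\rho\circ\gamma$ elsewhere — is a careful expansion of what the paper disposes of in one clause ("$\rho\chi_{Q\setminus E}$ is weakly admissible because $u\leq M$ on $\partial_*\Omega$"), and it checks out: the continuity of $u\circ\gamma$ across $\gamma^{-1}(\partial\Omega\cap Q)$, where $u\leq L<L+\epsilon$, is exactly what makes the gluing absolutely continuous.

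The one soft spot is your dismissal of the infimum statement as "symmetric." The function $u$ is built asymmetrically by integrating $\rho$ from $\zeta_1$, so interchanging $\zeta_1$ and $\zeta_3$ does not hand you the minimum principle: the swapped minimizer need not be $1-u$, and Lemma \ref{ykskolme} is a statement about superlevel sets whose sublevel analogue is not available. The paper instead proves the inf equality by a much more elementary route that does not touch the minimality of $\rho$ at all: given $x_0\in\Omega\cap Q$ with $u(x_0)=m$, pick a nearby $x\in\Omega\cap Q$ where \eqref{tuska} applies and a near-optimal path $\gamma_x$ from $\zeta_1$ to $x$ with $\int_{\gamma_x}\rho\leq m+2\epsilon$; then $u\leq m+2\epsilon$ at every point of $|\gamma_x|$, and $|\gamma_x|$ must meet $\partial_*\Omega$ (either at its initial point on $\zeta_1$ or at a crossing of $\partial\Omega\cap Q$), giving $\inf_{\partial_*\Omega}u\leq\inf_{\Omega\cap Q}u$. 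You should replace the symmetry remark with this direct argument; with that substitution your proof is complete and matches the paper's.
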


\begin{proof}
To prove the second equality, let $x_0 \in \Omega \cap Q$ and $u(x_0)=m$. Then there is 
$x \in \Omega \cap Q$ such that $u(x) \leq m +\epsilon$ is defined by \eqref{tuska}. Moreover, there exists a path $\gamma_x$ joining $\zeta_1$ and $x$ such that $u \leq m+2\epsilon$ on $|\gamma_x|$. But $|\gamma_x|$ must intersect $\partial_*\Omega$. The second equality follows. 

The proof of the first equality is similar to the proof of Lemma \ref{nollayksi}. Let 
$M=\sup_{y \in \partial_* \Omega} u(y) \leq 1$, and suppose there is $\delta >0$ such that $u(x)\geq M + 2 \delta$ for some 
$x \in \Omega \cap Q$. Then, by the definition of $u$, we can choose a curve $\alpha$ in $\Omega \cap Q$ such that $u > M+ \delta$ on $\alpha$. Applying Proposition \ref{continuum}, we see that $\modu(\alpha,\zeta_1;Q)>0$. Arguing as in the proof of Lemma \ref{nollayksi}, we see that 
$$
\int_{E} \rho^2 \, d\hausk >0, 
$$
where $E$ is the interior of $\Omega \cap \{u > M\}$. On the other hand, $\rho \chi_{ Q \setminus E}$ is weakly admissible for the minimizing problem, because $u \leq M$ on $\partial_* \Omega$. This contradicts the minimality of $\rho$. 
\end{proof}


\section{Continuity of $u$} \label{continofu}
Let $u$ be the function defined in Section \ref{minimizer}. 
In this section we show that $u$ is continuous, assuming conditions \eqref{alaraja} and \eqref{nollamoduli}. 

\begin{theorem}
\label{continuity}
Suppose that $X$ satisfies \eqref{alaraja} and \eqref{nollamoduli}. Then $u:Q \to [0,1]$ is continuous. Moreover, $u=0$ on the boundary component $\zeta_1$ and $u=1$ on the boundary component $\zeta_3$. 
\end{theorem}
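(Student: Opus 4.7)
The theorem has two parts: the boundary values $u|_{\zeta_1}=0$, $u|_{\zeta_3}=1$, and continuity on $Q$. The boundary values come almost directly from the definition. For $\zeta_1$: if $y \in E$ (the set where \eqref{tuska} applies) lies close to $\zeta_1$, one can choose the admissible subpath $\gamma_y$ arbitrarily short (of length close to $\operatorname{dist}(y,\zeta_1)$), yielding $u(y) \to 0$ as $y \to \zeta_1$. For $\zeta_3$: any subpath $\gamma_y$ from $\zeta_1$ to $y \in E$ can be extended by a short connector $\beta$ from $y$ to $\zeta_3$, and the resulting concatenation $\gamma_y \cup \beta$ lies in $\Delta(\zeta_1,\zeta_3;Q) \setminus \Gamma_0$; weak admissibility of $\rho$ gives $\int_{\gamma_y \cup \beta} \rho \geq 1$, so $\int_{\gamma_y} \rho \geq 1 - \int_\beta \rho \to 1$ as $y \to \zeta_3$, forcing $u(y) \to 1$. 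In both cases, the existence of short connectors of negligible $\rho$-integral uses Proposition \ref{continuum} and Fuglede's lemma, exactly as in Lemma \ref{uudefined}. Combined with Lemma \ref{nollayksi} and the liminf part of the definition of $u$, this gives the claimed boundary values on all of $\zeta_1$ and $\zeta_3$.

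For continuity at an interior point $x_0 \in Q$, I would split into lower and upper semicontinuity. Lower semicontinuity is essentially automatic from the liminf extension and concatenation of short admissible connector paths (Proposition \ref{continuum} and Fuglede). Upper semicontinuity is the crux. Assume, toward contradiction, there exist $x_0 \in \operatorname{int}(Q)$, $\delta>0$, and $y_n \to x_0$ with $u(y_n) \geq u(x_0) + 3\delta$. Fix $R>0$ with $\overline{B}(x_0,R) \subset \operatorname{int}(Q)$. For every small $s$, the maximum principle (Lemma \ref{epatoivo}) applied to $B(x_0,s) \cap Q$, together with the liminf at $x_0$, forces $\operatorname{osc}(u, S(x_0,s) \cap Q) \geq \delta$. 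Then I would combine Proposition \ref{coarea} (finiteness of $\haus^1(S(x_0,s))$ for a.e. $s$), Lemma \ref{separates} transported through the homeomorphism $Q \cong [0,1]^2$ (existence of a separating continuum $G_s \subset S(x_0,s)$), a second application of Lemma \ref{epatoivo} to the two components of $B(x_0,s) \cap Q$ cut out by $G_s$ (realizing the oscillation on $G_s$ itself), and Proposition \ref{findcurves}, to produce for almost every small $s$ a rectifiable curve $\gamma_s \subset G_s$ on which the upper gradient inequality (Lemma \ref{kipu}) yields $\int_{\gamma_s} \rho \, d\haus^1 \geq \delta$.

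The contradiction is then derived from the reciprocality hypotheses. The function $\rho/\delta$ is weakly admissible for the separating-curve family $\{\gamma_s\}_{s \in (r,R)}$, bounding its modulus above by $\|\rho\|_{L^2(B(x_0,R))}^2/\delta^2$, a quantity uniform in $r$. Converting this upper bound on separating-curve modulus into a positive lower bound on the joining modulus $\modu(\overline{B}(x_0,r), X \setminus B(x_0,R); \overline{B}(x_0,R))$ is achieved by decomposing a thin topological annular neighborhood of $\partial B(x_0,s)$ in $X$ into finitely many topological squares and applying \eqref{alaraja} to each. The resulting lower bound is uniform in $r$, contradicting \eqref{nollamoduli}. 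Boundary cases $x_0 \in \zeta_2 \cup \zeta_4$ are handled symmetrically after interchanging the roles of the two pairs of opposite edges. The main obstacle is this final duality step: one must realize the oscillation on a \emph{single} separating continuum $G_s$ (delicate because level sets of $u$ are not yet known to be connected), and then convert a bound on separating-curve modulus into a bound on joining modulus, which is not automatic in a general metric space but is precisely what \eqref{alaraja} makes available.
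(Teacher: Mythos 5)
There is a genuine gap, and it occurs twice in the same form. First, your treatment of the boundary values and of lower semicontinuity rests on ``short connectors of negligible $\rho$-integral'': a connector $\beta$ of small length need not have small $\int_\beta\rho\,ds$, since $\rho$ is merely an $L^2$ minimizer and Proposition \ref{continuum} plus Fuglede's lemma only produce \emph{some} path with \emph{finite} $\rho$-integral, not one with small $\rho$-integral. The existence of connectors with small $\rho$-integral is itself a consequence of the reciprocality hypotheses: the paper derives from \eqref{alaraja} and \eqref{nollamoduli} that for a half-annulus $D$ with inner circle in $B(x,r)$ and outer circle outside $B(x,R)$ one has $\modu(I,J;D)\geq\Phi(r)\to\infty$ as $r\to 0$ (Lemma \ref{dualnollamoduli}), and only this blow-up forces the existence of a path joining the flat sides with $\int\rho<\epsilon$ (this is exactly how Lemma \ref{oree} proves $u\to 1$ at $\zeta_3$). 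Without it, your claim that $u(y)\to 0$ near $\zeta_1$ and $\int_{\gamma_y}\rho\geq 1-\int_\beta\rho\to 1$ near $\zeta_3$ does not follow.

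Second, and more seriously, the concluding ``duality'' step of your interior argument points the modulus inequality in the wrong direction. You produce \emph{one} separating curve $\gamma_s\subset S(x_0,s)$ per radius with $\int_{\gamma_s}\rho\geq\delta$, obtain the upper bound $\modu(\{\gamma_s\})\leq\delta^{-2}\|\rho\|_2^2$, and then want to convert this into a lower bound on $\modu(\overline B(x_0,r),X\setminus B(x_0,R);\overline B(x_0,R))$ via \eqref{alaraja}. But \eqref{alaraja} applied to a half-annulus $A$ gives $\modu(T_r,T_R;A)\geq\kappa^{-1}/\modu(I,J;A)$, and since every separating curve merely \emph{contains} a subcurve joining $I$ and $J$, overflowing yields $\modu(\{\gamma_s\})\leq\modu(I,J;A)$ --- an upper bound on the left side gives no upper bound on $\modu(I,J;A)$, hence no lower bound on the joining modulus. (The same obstruction persists even for the full family of separating curves.) What actually closes the argument is the reverse quantifier: one must show that \emph{every} curve in $\Delta(I,J;A)$ witnesses oscillation $\geq\epsilon$ of $u$, so that $\rho/\epsilon$ is weakly admissible and $\modu(I,J;A)\leq\epsilon^{-2}M_1$, contradicting $\modu(I,J;A)\geq\Phi(r)\to\infty$. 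For the maximum principle (Lemma \ref{epatoivo}) to transfer the oscillation to every such curve, the flat sides $I,J$ must be placed where $u$ is already known to be close to $u(x_0)$; this is the whole point of the paper's Lemmas \ref{sennus} and \ref{nytkosejo} (continuity along a reference path from $\zeta_1$, then choosing $I,J$ in a small neighborhood of that path), and of the choice $I,J\subset\partial Q$ in Lemma \ref{boundcont1}. Your sketch omits this construction, and without it the final contradiction cannot be reached.
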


The rest of this section is devoted to the proof of Theorem \ref{continuity}. We say that $D \subset X$ is a \emph{half-annulus}, if $D$ is homeomorphic to 
$$
 \{(s,\varphi): \, 1 \leq s \leq 2, \, 0 \leq  \varphi \leq \pi \} \subset \R^2,
$$
defined in polar coordinates. The boundary of $D$ consists of \emph{inner and outer circles}, and the two \emph{flat components}.  

\begin{lemma}
\label{dualnollamoduli}
Suppose $X$ satisfies \eqref{alaraja} and \eqref{nollamoduli}, and fix $x \in X$ and $R>0$. Moreover, let $r<R/2$ and assume that $D$ is a half-annulus with inner circle $T_r \subset B(x,r)$, outer circle $T_R \subset X \setminus B(x,R)$, and flat components $I$ and $J$. Then 
\begin{equation}
\label{needforcont}
\modu(I,J;D) \geq \Phi(r) \to \infty \quad \text{as } r \to 0, 
\end{equation}
where $\Phi$ depends on $r$, $R$, $\kappa$ and $x$. 
\end{lemma}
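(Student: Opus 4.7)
The plan is to combine the two reciprocality hypotheses directly. Since $D$ is a half-annulus, it is homeomorphic to a closed square in $\R^2$, and its four boundary arcs $T_r, I, T_R, J$ appear in cyclic order, so $(T_r, T_R)$ and $(I, J)$ form the two pairs of opposite edges. Condition \eqref{alaraja} applied to the topological square $D$ therefore gives
$$
\modu(T_r, T_R; D) \cdot \modu(I, J; D) \ge \kappa^{-1}.
$$
The lemma thus reduces to an upper bound on $\modu(T_r, T_R; D)$ that is (a) independent of the particular half-annulus $D$, and (b) tends to zero as $r \to 0$.

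For such a uniform upper bound I will use the standard subpath monotonicity of modulus together with \eqref{nollamoduli}. Given any $\gamma \in \Delta(T_r, T_R; D)$, the path starts in $T_r \subset B(x,r)$ and ends in $T_R \subset X \setminus B(x,R)$, so by continuity there is a first time $t_1$ at which $d(x, \gamma(t_1)) = R$. The initial segment $\gamma|_{[0, t_1]}$ lies in $\overline{B}(x,R)$ and joins $\overline{B}(x,r)$ to $X \setminus B(x,R)$, hence belongs to $\Delta(\overline{B}(x,r), X \setminus B(x,R); \overline{B}(x,R))$. Consequently every admissible function for this latter family is admissible for $\Delta(T_r, T_R; D)$, so
$$
\modu(T_r, T_R; D) \le \modu\bigl(\overline{B}(x,r),\, X \setminus B(x,R);\, \overline{B}(x,R)\bigr),
$$
and crucially the right-hand side depends only on $x$, $r$, $R$ and not on $D$.

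Combining the two displays, it suffices to set
$$
\Phi(r) \;=\; \kappa^{-1} \Big/ \modu\bigl(\overline{B}(x,r),\, X \setminus B(x,R);\, \overline{B}(x,R)\bigr),
$$
with the convention $\Phi(r) = +\infty$ if the denominator vanishes. Then $\Phi(r) \to \infty$ as $r \to 0$ by \eqref{nollamoduli}, and since $T_R \subset X \setminus B(x,R)$ is nonempty the hypothesis of \eqref{nollamoduli} is satisfied. I do not anticipate any substantial obstacle here: the proof is essentially a one-line combination of \eqref{alaraja} and \eqref{nollamoduli}, and the only minor technical point is verifying by continuity of $\gamma$ that the chosen initial subpath lands on $S(x,R) \subset X \setminus B(x,R)$ while remaining in $\overline{B}(x,R)$.
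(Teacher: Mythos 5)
Your proof is correct and follows essentially the same route as the paper: apply \eqref{alaraja} to the half-annulus viewed as a topological square with opposite edge pairs $(T_r,T_R)$ and $(I,J)$, bound $\modu(T_r,T_R;D)$ from above by the modulus in \eqref{nollamoduli} via subpath monotonicity, and divide. Your version just spells out the overflowing/subpath step in a bit more detail than the paper does.
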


\begin{proof}
By condition \eqref{nollamoduli}, 
$$
\modu(T_R,T_r;D) \leq \modu(S(x,R),S(x,r);\overline{B}(x,R)) \leq \epsilon(x,r,R) \to 0  
$$
as $r \to 0$. On the other hand, by \eqref{alaraja}, 
$$
\modu(T_R,T_r;D) \cdot \modu(I,J;D) \geq \kappa^{-1}. 
$$
The lemma follows by combining the estimates. 
\end{proof}

\begin{remark}
For future reference, we note that Lemma \ref{dualnollamoduli} holds if the assumptions are replaced by assumption \eqref{upperbound}. 
See Section \ref{massboundsection} for further details. 
\end{remark}

\begin{lemma}
\label{boundcont1}
Suppose $X$ satisfies \eqref{needforcont}. Then $u$ is continuous in $(\zeta_1 \cup \zeta_2 \cup \zeta_4) \setminus \zeta_3$. 
\end{lemma}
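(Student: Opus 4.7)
The plan is to argue by contradiction, exploiting the diverging modulus in \eqref{needforcont} to extract a path of small $\rho$-integral across a half-annulus around $x_0$. Closing this path with a sub-arc of the boundary edge through $x_0$ yields a Jordan region, and the maximum principle (Lemma~\ref{epatoivo}) forces the oscillation of $u$ inside to be small.

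Fix $x_0 \in (\zeta_1 \cup \zeta_2 \cup \zeta_4) \setminus \zeta_3$ on the edge $\zeta_i$, and suppose for contradiction that $u$ is not continuous at $x_0$. Choose $R > 0$ small enough that $\overline B(x_0,R) \cap \zeta_3 = \emptyset$. For each $r \in (0,R/2)$ let $D_r = (B(x_0,R) \setminus \overline B(x_0,r)) \cap Q$ denote the half-annulus with flats $I, J \subset \zeta_i$, inner arc $T_r$, and outer arc $T_R$. By \eqref{needforcont}, $\modu(\Delta(I,J;D_r)) \geq \Phi(r) \to \infty$ as $r \to 0$. Since $\rho/K$ is weakly admissible for the subfamily $\{\gamma \in \Delta(I,J;D_r) : \int_\gamma \rho > K\}$, that subfamily has modulus at most $M_1/K^2$, where $M_1 = \int_Q \rho^2$. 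Setting $K_r := \sqrt{2M_1/\Phi(r)} \to 0$, subadditivity of modulus and $\modu(\Gamma_0) = 0$ together with Proposition~\ref{findcurves} yield an injective path $\gamma_r \in \Delta(I,J;D_r) \setminus \Gamma_0$ with $\int_{\gamma_r} \rho \leq K_r$. Let $a_r, b_r \in \zeta_i$ be its endpoints.

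Since $X$ is homeomorphic to $\mathbb{R}^2$, the simple arc $|\gamma_r|$ concatenated with the sub-arc of $\zeta_i$ joining $b_r$ and $a_r$ through $x_0$ is a Jordan curve bounding an open region $V_r \subset X$; as $|\gamma_r| \subset D_r$ is disjoint from $\overline B(x_0,r)$, we have $B(x_0,r) \cap Q \subset V_r$. Applying Lemma~\ref{epatoivo} to $V_r$ (with $\overline V_r \cap \zeta_3 = \emptyset$) and using Lemma~\ref{kipu} together with $\int_{\gamma_r} \rho \leq K_r$ to bound $\operatorname{osc}_{|\gamma_r|} u \leq K_r$, I obtain
$$
\operatorname{osc}_{V_r \cap Q} u \;\leq\; K_r + \operatorname{osc}_{\zeta_i[a_r,b_r]} u.
$$

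When $\zeta_i = \zeta_1$, the definition of $u$ via degenerate subpaths forces $u \equiv 0$ on $\zeta_1$, so the second term vanishes and $\operatorname{osc}_{V_r \cap Q} u \leq K_r \to 0$; combined with $u \geq 0$ (Lemma~\ref{nollayksi}) and $B(x_0,r) \cap Q \subset V_r$, this contradicts the failure of continuity. When $\zeta_i = \zeta_2$ or $\zeta_4$, $u$ is not pinned on the boundary arc, and the hard part is to establish $\operatorname{osc}_{\zeta_i[a_r,b_r]} u \to 0$. I plan to handle this by using the positive-modulus reserve of low-energy paths to select $\gamma_r$ whose endpoints $a_r, b_r$ lie close to the inner arc $T_r$, so that the boundary sub-arc shrinks toward $\{x_0\}$ as $r \to 0$, and iterating the half-annulus construction at finer scales to absorb the residual one-dimensional oscillation; alternatively one can contradict the minimality of $\rho$ via a truncation argument in the spirit of Lemma~\ref{nollayksi}. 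The $\zeta_1$ case is the clean model that the free-boundary cases must be reduced to.
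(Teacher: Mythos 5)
Your setup (the half-annulus around $x_0$, the divergence of $\modu(I,J;D_r)$ from \eqref{needforcont}, and the extraction of a path with small $\rho$-integral) matches the paper's, but the proof has a genuine gap exactly where you flag it: for $x_0\in(\zeta_2\cup\zeta_4)\setminus\zeta_3$ you are left with the uncontrolled term $\operatorname{osc}_{\zeta_i[a_r,b_r]}u$, and none of the three strategies you sketch is carried out (nor is it clear they can be: the endpoints of a path in $\Delta(I,J;D_r)$ lie on the flats $I,J$, and there is no reason the subfamily of paths with endpoints near $T_r$ retains large modulus, so you cannot in general force $a_r,b_r\to x_0$). Since the $\zeta_1$ case is the easy one, the unproved case is the main content of the lemma. (A smaller issue: $u\equiv 0$ on $\zeta_1$ is not purely definitional either — for $x\in\zeta_1$ not lying on any good path, $u(x)$ is defined by the $\liminf$ clause — so even your "clean model" quietly uses a fact the paper only establishes as part of Theorem \ref{continuity}.)

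The missing idea is that the maximum principle of Lemma \ref{epatoivo} is a \emph{mixed} Dirichlet--Neumann principle: the controlling set $\partial_*\Omega=(\partial\Omega\cap Q)\cup(\overline\Omega\cap(\zeta_1\cup\zeta_3))$ deliberately omits the portions of $\zeta_2$ and $\zeta_4$ met by $\Omega$, because $u$ is an unconstrained (free-boundary) minimizer there. Applying it to the domain $\Omega_\gamma$ bounded by $|\gamma|$ and $\partial Q$, whose closure avoids $\zeta_3$ and $\zeta_4$, the paper gets
$$
\sup_{B(x_0,r)\cap Q}u\;\le\;\sup_{|\gamma|\cup\zeta_1}u,\qquad \inf_{B(x_0,r)\cap Q}u\;\ge\;\inf_{|\gamma|\cup\zeta_1}u,
$$
with the $\zeta_1$-terms removable ($u=0$ there for the sup; for the inf, if $\zeta_1$ meets $\partial\Omega_\gamma$ then $\gamma$ itself meets $\zeta_1$), so that $\delta_r\le\int_\gamma\rho\,ds$ for almost every $\gamma\in\Delta(I,J;A)$ and hence $\modu(I,J;A)\le\delta_r^{-2}\int_Q\rho^2\,d\hausk$, which contradicts \eqref{needforcont}. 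In short, the "hard part" you isolate is an artifact of applying a purely Dirichlet maximum principle over the full Jordan boundary of $V_r$; with the correct $\partial_*$ the boundary sub-arc of $\zeta_2$ or $\zeta_4$ never enters the estimate. (The paper also does not need a single good path: it bounds the modulus of the whole family at once. That difference is cosmetic.)
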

\begin{proof}
Without loss of generality, $x \in (\zeta_1 \cup \zeta_2) \setminus (\zeta_3 \cup \zeta_4)$, otherwise we replace $\zeta_2$ by $\zeta_4$. We choose a topological closed disk $D' \subset X$ such that $x \in \operatorname{int}D'$. Moreover, we require that $D'$ does not intersect $\zeta_4$ or $\zeta_3$. Then the boundary circle $T'$ of $D'$ satisfies 
$T' \subset X \setminus  B(x,R)$ for some $R>0$. Let $r<R/2$, and choose another topological disk $D'' \subset B(x,r)$ containing $x$, with boundary circle $T''$. Then the two boundary circles and $\partial Q$ bound a half-annulus $A$ in $Q$. Denote by $T_R$ and $T_r$ the circular boundary components (the restrictions of $T'$ and $T''$, respectively), and by $I, J \subset \partial Q$ the flat components. Moreover, let 
$\gamma \in \Delta(I,J;A)$. Then, $\gamma$ and $\partial Q$ bound a domain $\Omega_{\gamma}$ in $Q$ containing $B(x,r) \cap Q$. Moreover, since $\gamma$ does not intersect $\zeta_4$, Lemma \ref{epatoivo} shows that 
\begin{eqnarray*}
\sup_{ y \in B(x,r) \cap Q} u(y) & \leq & \sup_{y \in \Omega_{\gamma}} u(y) \leq \sup_{y \in |\gamma| \cup \zeta_1} u(y),  \\
\inf_{ y \in B(x,r) \cap Q} u(y) & \geq & \inf_{y \in \Omega_{\gamma}} u(y) \geq \inf_{y \in |\gamma| \cup \zeta_1} u(y).  
\end{eqnarray*}
But $u=0$ on $\zeta_1$, so the first estimate above holds true without $\zeta_1$ on the last term. Also, if $\zeta_1$ intersects the boundary of $\Omega_{\gamma}$, then $\gamma$ intersects $\zeta_1$. So also the second estimate holds without $\zeta_1$. In other words, 
$$
\delta_r := \sup_{y,z \in B(x,r)} |u(y)-u(z)| \leq \sup_{y,z \in |\gamma|} |u(y)-u(z)|. 
$$
Since $\rho$ is a weak upper gradient of $u$, it follows that 
$$
\delta_r \leq \int_{\gamma} \rho \, ds
$$
for almost every $\gamma \in \Delta(I,J;A)$. Consequently, we have 
$$
\modu(I,J;A) \leq \delta_r^{-2} \int_Q \rho^2 \, d\hausk. 
$$
On the other hand, by \eqref{needforcont} we have 
$$
\modu(I,J;A) \geq \Phi(r) \to \infty \quad \text{as } r \to 0. 
$$
We conclude that $\delta_r \to 0$ as $r \to 0$, showing that $u$ is continuous at $x$. 
\end{proof}


\begin{lemma}
\label{oree}
Suppose $X$ satisfies \eqref{needforcont}. Then $u$ is continuous in $\zeta_3$ and equals $1$ there. 
\end{lemma}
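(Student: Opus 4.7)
The plan is to reduce Lemma~\ref{oree} to Lemma~\ref{boundcont1} applied to a suitable companion function. Introduce
$$
\tilde u(x) = \inf_{\gamma_x} \int_{\gamma_x} \rho \, ds,
$$
where $\gamma_x$ ranges over subpaths of some $\gamma \in \Delta(\zeta_1,\zeta_3;Q)\setminus\Gamma_0$ that joins $\zeta_3$ to $x$, with the same liminf extension as in the definition of $u$. Because $\Delta(\zeta_1,\zeta_3;Q)$ is unoriented and the minimizer $\rho$ depends only on this family, every result of Section~\ref{minimizer} and Lemma~\ref{boundcont1} applies verbatim to $\tilde u$ after exchanging the roles of $\zeta_1$ and $\zeta_3$: in particular $0 \le \tilde u \le 1$, $\rho$ is a weak upper gradient of $\tilde u$, the maximum principle in Lemma~\ref{epatoivo} holds for $\tilde u$ (its formulation is symmetric in $\zeta_1$ and $\zeta_3$), and $\tilde u$ is continuous on $(\zeta_2\cup\zeta_3\cup\zeta_4)\setminus\zeta_1$, hence at every point of $\zeta_3$ including the two corners.

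The crucial new ingredient is the pointwise bound $u(x)+\tilde u(x)\ge 1$ on the set $E$ of points where $u$ is defined by the formula \eqref{tuska} (which coincides with the corresponding set for $\tilde u$, since any $\gamma \in \Delta(\zeta_1,\zeta_3;Q)\setminus\Gamma_0$ through $x$ splits into a subpath from $\zeta_1$ to $x$ and a subpath from $x$ to $\zeta_3$). Given any admissible $\beta_1$ for $u(x)$ and $\beta_2$ for $\tilde u(x)$, the concatenation $\beta_1\cdot\beta_2^{-1}$ joins $\zeta_1$ and $\zeta_3$ through $x$; every one of its subpaths either lies entirely in $\beta_1$ or in $\beta_2$ and therefore inherits \eqref{fuglede} from its parent path in $\Delta\setminus\Gamma_0$, or it crosses the joint at $x$ and \eqref{fuglede} follows by summing the two Fuglede-convergent pieces. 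Hence the concatenation lies outside $\Gamma_0$, \eqref{kellonelja} gives $\int_{\beta_1}\rho + \int_{\beta_2}\rho \ge 1$, and taking independent infima over $\beta_1,\beta_2$ yields $u(x)+\tilde u(x)\ge 1$. The trivial-subpath argument underlying $u=0$ on $\zeta_1\cap E$ equally gives $\tilde u = 0$ on $\zeta_3\cap E$, and density of $\zeta_3\cap E$ in $\zeta_3$ (via Proposition~\ref{continuum} and Fuglede's lemma) together with the liminf definition propagates this to $\tilde u \equiv 0$ on all of $\zeta_3$.

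To conclude, fix $x\in\zeta_3$ and $\epsilon>0$. Continuity of $\tilde u$ at $x$ combined with $\tilde u(x)=0$ provides $r>0$ so that $\tilde u<\epsilon$ on $B(x,r)\cap Q$. For $y\in E\cap B(x,r)\cap Q$ the inequality $u+\tilde u\ge 1$ immediately gives $u(y)>1-\epsilon$. For $y\in (Q\setminus E)\cap B(x,r)$ the same bound is inherited through the liminf: any $z\in E$ with $d(z,y)<r-d(y,x)$ lies in $B(x,r)\cap Q$, hence $u(z)>1-\epsilon$, so $u(y)=\liminf_{z\in E,\,z\to y} u(z)\ge 1-\epsilon$. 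Combined with the upper bound $u\le 1$ from Lemma~\ref{nollayksi}, this proves $u(y)\to 1$ as $y\to x$, giving both continuity at $x$ and $u(x)=1$. The main subtlety is verifying that the symmetric setup is consistent throughout Section~\ref{minimizer} and that $\Gamma_0$, defined via closure under subpaths rather than under concatenation, remains stable under the joining step driving the bound $u+\tilde u\ge 1$; this is routine once one unpacks the Fuglede-convergence definition of $\Gamma_0$, but it is the only place where the minimality of $\rho$ is being exploited symmetrically in both directions.
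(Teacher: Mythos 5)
Your proof is correct, but it takes a genuinely different route from the paper's. The paper argues by contradiction directly at a point $x\in\zeta_3$: if $u(x_j)\le 1-3\epsilon$ along a sequence $x_j\to x$, the definition of $u$ yields a path $\eta\notin\Gamma_0$ from $\zeta_1$ into a small disk around $x$ with $\int_\eta\rho\,ds\le 1-2\epsilon$; the half-annulus bounded by $|\eta|$, $\zeta_3$ and two circles around $x$ together with \eqref{needforcont} then produces a crossing path $\gamma\notin\Gamma_0$ with $\int_\gamma\rho\,ds<\epsilon$, and concatenating gives a path in $\Delta(\zeta_1,\zeta_3;Q)\setminus\Gamma_0$ violating \eqref{kellonelja}. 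You instead introduce the reflected function $\tilde u$ measured from $\zeta_3$, note that Section \ref{minimizer} and Lemma \ref{boundcont1} are symmetric in $\zeta_1$ and $\zeta_3$ (the extremal family, the minimizer $\rho$, and the exceptional family $\Gamma_0$ are unchanged), and prove the duality inequality $u+\tilde u\ge 1$ on the set where both are given by \eqref{tuska}; continuity and vanishing of $\tilde u$ on $\zeta_3$ then force $u\to 1$ there. The admissibility mechanism is ultimately the same appeal to \eqref{kellonelja} for a concatenated path outside $\Gamma_0$ (which is legitimate, since \eqref{fuglede} is additive over concatenation and $\Gamma_0$ is defined through subpaths), but packaged as a global inequality rather than a local contradiction. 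Your route buys a statement of independent interest ($u+\tilde u\ge 1$ is the discrete analogue of the classical identity for the harmonic measures of opposite sides) and treats the corners $\zeta_3\cap\zeta_2$, $\zeta_3\cap\zeta_4$ uniformly via the symmetric Lemma \ref{boundcont1}; the cost is the bookkeeping you acknowledge, namely re-deriving the whole of Section \ref{minimizer} for $\tilde u$ and checking $\tilde u\equiv0$ on all of $\zeta_3$, which you correctly reduce to the density of $\zeta_3\cap E$ in $\zeta_3$ obtained from Proposition \ref{continuum} and Fuglede's lemma. Both proofs consume \eqref{needforcont}, the paper directly and yours through the symmetric Lemma \ref{boundcont1}.
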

\begin{proof}
Let $x \in \zeta_3$. In view of Lemma \ref{nollayksi}, it suffices to show that 
\begin{equation}
\label{frappe}
\liminf_{y \to x} u(y) \geq 1. 
\end{equation}
Without loss of generality, $x \notin \zeta_4$. If \eqref{frappe} does not hold, there exists $\epsilon>0$ and a sequence of points $x_j \to x$ in $Q$ such that $u(x_j)\leq 1-3\epsilon$ for 
every $j$. 

We choose a topological closed disk $D'$ such that $x \in \operatorname{int}D'$. Moreover, we require that $D'$ does not intersect $\zeta_4$. Then the boundary circle $T'$ of $D'$ satisfies 
$T' \subset X \setminus  B(x,R)$ for some $R>0$. Let $r<R/2$, and choose another topological disk $D'' \subset B(x,r)$ containing $x$, with boundary circle $T''$ such that $x \in D''$. 

By the definition of $u$ and Lemma \ref{findcurves}, there is a simple path $\eta \notin \Gamma_0$ joining $\zeta_1$ 
and $D''$ in $Q$ such that 
\begin{equation}
\label{lampsi}
\int_{\eta} \rho \, ds \leq 1-2 \epsilon. 
\end{equation}

We may assume that $\eta$ does not intersect $\zeta_3$, since otherwise \eqref{lampsi} violates \eqref{kellonelja}. Now $|\eta|$, $T'$, $T''$ and $\zeta_3$ bound a half-annulus $A$ with flat boundary components $I \subset |\eta|$ 
and $J \subset \zeta_3$. We claim that when $r$ is small enough there exists a path $\gamma \in \Delta(I,J;A) \setminus \Gamma_0$ satisfying 
\begin{equation}
\label{faarao}
\int_{\gamma} \rho \, ds < \epsilon. 
\end{equation}
Indeed, otherwise we would have 
$$
\modu(I,J;A) \leq \epsilon^{-2} \int_Q \rho^2 \, d\hausk. 
$$
But this contradicts \eqref{needforcont} when $r$ is small enough, so \eqref{faarao} holds. 

Concatenating $\gamma$ with a subpath of $\eta$ and applying \eqref{lampsi} and \eqref{faarao} now gives a path $\gamma' \notin \Gamma_0$ joining 
$\zeta_1$ and $\zeta_3$ in $Q$ such that 
$$
\int_{\gamma'} \rho \, ds \leq 1-\epsilon. 
$$
This contradicts \eqref{kellonelja}, and so \eqref{frappe} holds. The proof is complete. 
\end{proof}
Continuity of $u$ in the interior of $Q$ is proved using the methods above. However, the proof is more technical and we need an auxiliary lemma. 

\begin{lemma}
\label{sennus}
Suppose $X$ satisfies \eqref{needforcont}, and fix $x \in \operatorname{int}Q$. 
Moreover, suppose there is a simple, rectifiable path $\gamma:[0,1] \to \operatorname{int}Q$, $\gamma \notin \Gamma_0$ 
such that $\gamma(c)=x$ for some $0<c<1$. Then $u$ is continuous at $x$. 
\end{lemma}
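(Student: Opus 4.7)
The plan is to adapt the arguments of Lemmas \ref{boundcont1} and \ref{oree} to interior points, using the simple path $\gamma$ through $x$ to cut a small punctured neighborhood of $x$ into two half-annuli.

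Fix $\epsilon>0$. Since $\gamma\notin\Gamma_0$, Lemma \ref{kipu} gives that $u\circ\gamma$ is absolutely continuous, hence continuous at $c$. I would first choose a topological closed disk $D'\subset\operatorname{int}Q$ with $x\in\operatorname{int}D'$, small enough that the oscillation of $u$ on $|\gamma|\cap D'$ is less than $\epsilon$, arranged so that $|\gamma|\cap D'$ is a single simple subarc of $\gamma$ meeting the boundary circle $T'$ of $D'$ in exactly two points. Fix $R>0$ with $T'\subset X\setminus B(x,R)$. For $r<R/2$ I pick a topological disk $D''\subset B(x,r)$ containing $x$, with boundary circle $T''$, such that $|\gamma|\cap D''$ is a simple subarc meeting $T''$ in exactly two points. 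The two subarcs of $\gamma$ lying in $D'\setminus\operatorname{int}D''$ then divide that annular region into two half-annuli $A_1,A_2$, each having arcs of $T''$ and $T'$ as inner and outer circles and flat components $I_i,J_i\subset|\gamma|$.

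Since \eqref{needforcont} applies to each $A_i$, $\modu(I_i,J_i;A_i)\geq\Phi(r)\to\infty$ as $r\to 0$. Arguing as in Lemma \ref{oree}, if every path in $\Delta(I_i,J_i;A_i)\setminus\Gamma_0$ had $\int_\sigma\rho\,ds\geq\epsilon$, then $\rho\chi_{A_i}/\epsilon$ would be weakly admissible, giving the contradiction $\modu(I_i,J_i;A_i)\leq\epsilon^{-2}\int_Q\rho^2\,d\hausk$ for small $r$. Hence I select $\sigma_i\in\Delta(I_i,J_i;A_i)\setminus\Gamma_0$ with $\int_{\sigma_i}\rho\,ds<\epsilon$. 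Together with the subarc $\tau_i\subset|\gamma|\cap D'$ of $\gamma$ joining the endpoints of $\sigma_i$ through $x$, the path $\sigma_i$ bounds a Jordan domain $\Omega_i\subset\operatorname{int}Q$ on one side of $|\gamma|$, and $\overline{\Omega_1}\cup\overline{\Omega_2}\supset D''$. Because $\overline{\Omega_i}\cap(\zeta_1\cup\zeta_3)=\emptyset$, we have $\partial_*\Omega_i\subset|\sigma_i|\cup\tau_i$, and Lemma \ref{epatoivo} combined with Lemma \ref{kipu} bounds the oscillation of $u$ on $\overline{\Omega_i}$ by $\int_{\sigma_i}\rho\,ds$ plus the oscillation of $u$ on $|\gamma|\cap D'$, i.e., by at most $2\epsilon$. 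Since $\overline{\Omega_1}\cap\overline{\Omega_2}\neq\emptyset$ (they share part of $\gamma$), the oscillation of $u$ on $D''$ is at most $4\epsilon$. As $\epsilon$ is arbitrary and $D''\subset B(x,r)$ with $r$ free to be small, this yields continuity of $u$ at $x$.

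The main obstacle is the topological setup: arranging that $\gamma$ meets $T'$ and $T''$ transversally in exactly two points, so that $D'\setminus\operatorname{int}D''$ really splits into two half-annuli, and that $\sigma_i$ together with $\tau_i$ bounds a Jordan domain on the intended side of $\gamma$. This is handled by taking $D'$ and $D''$ from a neighborhood basis of disks adapted to the simple arc $\gamma$ in the topologically planar space $X$.
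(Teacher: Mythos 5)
Your proof is correct and follows essentially the same route as the paper: the same splitting of $D'\setminus\operatorname{int}D''$ into two half-annuli with flat components on $|\gamma|$, the same appeal to \eqref{needforcont}, and the same use of the maximum principle (Lemma \ref{epatoivo}) together with the upper gradient property of $\rho$ and the control of $u$ along $\gamma$ near $x$. The only cosmetic difference is that you extract a single path $\sigma_i$ with $\int_{\sigma_i}\rho\,ds<\epsilon$ and bound the oscillation directly (as in Lemma \ref{oree}), whereas the paper argues contrapositively that large oscillation in $D''_i$ would force $\modu(I,J;A_i)\leq\epsilon^{-2}\int_Q\rho^2\,d\hausk$ for all $r$, contradicting \eqref{needforcont}.
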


\begin{proof}
Fix $\epsilon>0$. We choose a topological closed disk $D' \subset \operatorname{int} Q$ such that $x \in \operatorname{int}D'$. Then the boundary circle $T'$ of $D'$ satisfies $T' \subset X \setminus  B(x,R)$ for some $R>0$. Mapping $|\gamma|$ to a segment in $\R^2$ if necessary, we can choose $D'$ so that $|\gamma|$ separates $D'$ into two components $D'_1$ and $D'_2$. Also, 
since $\int_{\gamma} \rho \, ds < \infty$, we can choose $D'$ small enough such that 
$$
\int_\gamma \rho \chi_{D'} \, ds < \epsilon. 
$$
It then follows from the definition of $u$ and Lemma \ref{uudefined} that 
\begin{equation}
\label{hjk}
|u(y)-u(x)| \leq \epsilon \quad \text{for every } y \in |\gamma| \cap D'. 
\end{equation}
Let $r<R/2$, and choose another topological disk $D'' \subset B(x,r)$ with boundary circle $T''$, such that $x \in D''$. Again, we can choose $D''$ such that $|\gamma|$ separates $D''$ into two components $D''_1 \subset D'_1$ and $D''_2 \subset D'_2$. We control the oscillation of $u$ in $D''_1$ and $D''_2$ separately. Since the estimates are identical, we only consider the case $D''_1$.  

Now $D'_1$ contains a half-annulus $A_1$ bounded by $T'$, $T''$, and $|\gamma|$. The flat boundary components 
are $I,J \subset |\gamma|$. Then, if 
$$
u(z) \geq u(x)+2 \epsilon \quad \text{or} \quad u(z) \leq u(x)-2 \epsilon
$$ 
for some $z \in D''_1$, then Lemma \ref{epatoivo} and \eqref{hjk} yield 
$$
\sup_{a,b \in |\eta|} |u(a)-u(b)| \geq \epsilon 
$$
for every $\eta \in \Delta(I,J;A_1)$. Since $\rho$ is a weak upper gradient of $u$, we moreover have 
$$
\int_{\eta} \rho \, ds \geq \epsilon 
$$
for $\eta \notin \Gamma_0$, so 
$$
\modu(I,J;A_1) \leq \epsilon^{-2} \int_Q \rho^2 \, d\hausk. 
$$
This contradicts \eqref{needforcont}. Applying the same argument to $A_2$, we conclude that 
$$
\sup_{z \in D''} |u(z)-u(x)| \leq \epsilon(r) \to 0 \quad \text{as } r \to 0. 
$$
We conclude that $u$ is continuous at $x$. 
\end{proof}

\begin{lemma}
\label{nytkosejo}
Suppose $X$ satisfies \eqref{needforcont}. Then $u$ is continuous in $\operatorname{int}Q$. 
\end{lemma}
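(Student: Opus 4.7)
The goal is to show $\operatorname{osc}(u,B(x,r)\cap Q)\to 0$ as $r\to 0$ for each interior point $x\in\operatorname{int}Q$, adapting the half-annulus argument of Lemma~\ref{boundcont1} to a full annulus surrounding $x$. Fix $R>0$ with $\overline{B(x,R)}\subset\operatorname{int}Q$ and, for $0<r<R/2$, set $A_r=\overline{B(x,R)}\setminus B(x,r)$.

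I plan to produce, for every $\epsilon>0$ and all sufficiently small $r$, a rectifiable simple closed curve $\gamma\subset A_r$ with $\gamma\notin\Gamma_0$ that separates $B(x,r)$ from $X\setminus B(x,R)$ and satisfies $\int_\gamma\rho\,ds<\epsilon$. Once such $\gamma$ is in hand, it bounds a topological disk $\Omega\ni x$ contained in $\operatorname{int}Q$, so $\partial_*\Omega=\partial\Omega=|\gamma|$ and Lemma~\ref{epatoivo} gives
\[
\operatorname{osc}(u,B(x,r)\cap Q\cap \Omega)\;\leq\;\operatorname{osc}(u,|\gamma|).
\]
Because $\gamma$ is a simple closed curve outside $\Gamma_0$, any two points of $|\gamma|$ are joined by two complementary subarcs of $\gamma$ whose $\rho$-integrals sum to $\int_\gamma\rho\,ds$; applying the upper gradient inequality along the shorter subarc yields $\operatorname{osc}(u,|\gamma|)\leq\tfrac12\int_\gamma\rho\,ds<\epsilon/2$, whence continuity of $u$ at $x$.

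To construct $\gamma$, I choose two disjoint rectifiable arcs $\alpha_1,\alpha_2\notin\Gamma_0$ joining $S(x,R)$ toward $x$ through $\operatorname{int}Q$, whose existence follows from Proposition~\ref{continuum} applied to suitable small continua together with Fuglede's lemma. For $r$ small, these arcs split $A_r$ into two half-annulus-like regions $A_r^{1},A_r^{2}$ with flat boundary components contained in $\alpha_1\cup\alpha_2$. By \eqref{needforcont} applied to each region, the flat-to-flat modulus in $A_r^{i}$ is at least $\Phi(r)\to\infty$ as $r\to 0$; combined with $\int_X\rho^2\,\dhausk = M_1<\infty$, the weak admissibility principle used throughout Section~\ref{minimizer} produces flat-to-flat paths $\gamma_i\notin\Gamma_0$ in $A_r^{i}$ with $\int_{\gamma_i}\rho\,ds\leq(M_1/\Phi(r))^{1/2}\to 0$. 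The curve $\gamma$ is then formed by concatenating $\gamma_1,\gamma_2$ with subarcs of $\alpha_1,\alpha_2$ linking their endpoints, which produces a closed separating curve in $A_r$.

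The main obstacle is controlling the $\rho$-integrals on the subarcs of $\alpha_1,\alpha_2$ used to close up $\gamma$, since $\int_{\alpha_j}\rho\,ds$ is merely finite, not small. I plan to manage this by arranging the endpoints of $\gamma_1,\gamma_2$ on each $\alpha_j$ to lie arbitrarily close to a common prescribed point: one restricts to flat-to-flat subfamilies whose endpoints are confined to prescribed short subarcs of $\alpha_j$, and applies \eqref{needforcont} to narrower half-annuli pinched around these prescribed endpoints to see that the restricted modulus still diverges. Since $\alpha_1,\alpha_2$ are rectifiable with finite $\rho$-integrals, the connecting subarcs then contribute arbitrarily little, and $\int_\gamma\rho\,ds\to 0$ as $r\to 0$.
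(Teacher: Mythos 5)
Your overall strategy is genuinely different from the paper's and the skeleton is sound: a simple closed curve $\gamma\notin\Gamma_0$ separating $B(x,r)$ from $X\setminus B(x,R)$ with $\int_\gamma\rho\,ds<\epsilon$ would indeed finish the proof via Lemma~\ref{epatoivo} and the two-complementary-subarcs estimate. The gap is in the one step you flag as the main obstacle, and the mechanism you propose for it does not work as stated. If the ``prescribed points'' $p_j\in\alpha_j$ are at fixed positive distance from $x$ and $I'\subset\alpha_1$, $J'\subset\alpha_2$ are fixed short subarcs near them, then the restricted family is contained in $\Delta(I',J';X)$, whose modulus is a finite number independent of $r$; so the restricted modulus does \emph{not} diverge as $r\to0$. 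Moreover, \eqref{needforcont} cannot be applied to a ``half-annulus pinched around the prescribed endpoints'': in Lemma~\ref{dualnollamoduli} the flat components must join the inner circle (inside $B(x,r)$) to the outer circle (outside $B(x,R)$), so they can never be short subarcs confined near a fixed point. As written, the endpoint-control step therefore fails, and with it the bound on the closing subarcs of $\alpha_1,\alpha_2$.

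The approach can be repaired, but by a different device: a two-radius argument. First choose $\tilde R$ so small that $\int_{\alpha_j\cap\overline{B}(x,\tilde R)}\rho\,ds<\epsilon/8$ for $j=1,2$ (possible because $\alpha_j\notin\Gamma_0$ gives $\int_{\alpha_j}\rho\,ds<\infty$, and the tails of $\alpha_j$ near their inner termini have small integral by absolute continuity). Then apply \eqref{needforcont} to the two half-annuli between $B(x,r')$ and $X\setminus B(x,\tilde R)$, letting $r'\to0$ with $\tilde R$ fixed, to obtain flat-to-flat paths $\gamma_1,\gamma_2\notin\Gamma_0$ with $\int_{\gamma_i}\rho\,ds\leq(M_1/\Phi(r'))^{1/2}<\epsilon/8$. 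Their endpoints automatically lie on $\alpha_j\cap\overline{B}(x,\tilde R)$, so the closing subarcs contribute less than $\epsilon/4$ in total; one must still check that the resulting loop stays in $\overline{B}(x,\tilde R)\setminus B(x,r'')$ for some $r''>0$ so that it encloses a neighborhood of $x$, and extract a simple closed curve from its image. For comparison, the paper avoids the closing-up problem entirely: it takes a single path $\gamma$ from $\zeta_1$ into $B(x,r)$ along which $u\leq u(x)+\epsilon$ (this is free from the infimum definition \eqref{tuska}, not from smallness of $\int_\gamma\rho\,ds$), proves continuity of $u$ along $|\gamma|$ (Lemma~\ref{sennus}), cuts the annulus along a thin neighborhood of $|\gamma|$ to produce one genuine half-annulus, and applies \eqref{needforcont} together with the maximum principle there. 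That route needs only half-annulus moduli and one cut, at the cost of the auxiliary Lemma~\ref{sennus}; yours, once repaired, proves the stronger statement that separating loops of small $\rho$-length exist around every interior point.
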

\begin{proof}
Fix $x \in \operatorname{int} Q$ and let $\epsilon>0$. Choose a topological disk $D \subset Q$ containing $x$, with boundary circle $T'$. Moreover, let $r>0$ and let $D'\subset B(x,r) \subset D$ be another disk containing $x$. Denote the boundary circle of $D'$ by $T'_r$. Then, by the definition of $u$, there exists 
a rectifiable path $\gamma' \notin \Gamma_0$ joining $\zeta_1$ and $D'$ such that 
\begin{equation}
\label{summert}
u(y) \leq u(x) + \epsilon
\end{equation} 
for every $y \in |\gamma'|$. Moreover, by Lemma \ref{findcurves}, we find a simple path $\gamma \notin \Gamma_0$ joining $\zeta_1$ and $D'$ with $|\gamma| \subset |\gamma'|$. By Lemma \ref{sennus}, $u$ is continuous on $|\gamma|$. We would like to repeat the argument used in the previous lemmas, applying the maximum principle, \eqref{summert} and Lemma \ref{epatoivo} in 
the domain bounded by $T'$, $T'_r$, and $|\gamma|$. But this domain is not a half-annulus, so Lemma \ref{epatoivo} does not apply directly. 

To correct this, notice that by the uniform continuity of $u$ on $|\gamma|$ and \eqref{summert} there is a neighborhood $V$ of $|\gamma| \cap D$ such that 
\begin{equation}
\label{laxe}
u(y) \leq u(x)+2\epsilon
\end{equation}
for all $y \in V$. We choose simple paths $I$ and $J$ in $V$ connecting $T'$ and $T'_r$ such that $|\gamma|$ separates $I$ and $J$ in $V$. 

Now $I$, $J$, $T'$ and $T'_r$ bound a half-annulus $A$, with flat boundary components $I$ and $J$. 
As before, the maximum principle and \eqref{laxe} imply that if $u(y) \geq u(x)+3 \epsilon$
for some $y \in D'$, then 
\begin{equation}
\label{armen}
\sup_{a,b \in |\eta|} |u(a)-u(b)| \geq \epsilon 
\end{equation}
for every $\eta$ joining $I$ and $J$ in $A$. Applying \eqref{armen} to all such paths, together with the weak upper gradient property of $\rho$, gives 
$$
\modu(I,J;A) \leq \epsilon^{-2} \int_Q \rho^2 \, d\hausk. 
$$
This contradicts \eqref{needforcont} when $r$ is small enough. We conclude that $u$ is continuous in $x$. 
\end{proof}

\begin{proof}[Proof of Theorem \ref{continuity}]
Combine Lemmas \ref{nollayksi}, \ref{boundcont1}, \ref{oree} and \ref{nytkosejo}. 
\end{proof}

\section{Level sets of $u$} \label{setsofu}

In this section we examine the properties of the level sets of $u$, and in particular show that almost every level set is a 
rectifiable curve. This helps us define the conjugate function $v$ in the next section. 

\begin{proposition}
\label{levelsets}
Suppose that $X$ satisfies \eqref{alaraja} and the minimizer $u$ satisfies the conclusions of Theorem \ref{continuity}. Then for 
$\haus^1$-almost every $t$ the level set $u^{-1}(t)$ is a simple rectifiable curve
$|\gamma_t|$ joining $\zeta_2$ and $\zeta_4$. 
\end{proposition}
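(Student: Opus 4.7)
The plan is to combine planar topology with a coarea-type estimate for $u$ and the minimality of $\rho$.

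First, separation is automatic: by Theorem \ref{continuity}, $u$ is continuous on $Q$ with $u\equiv 0$ on $\zeta_1$ and $u\equiv 1$ on $\zeta_3$, so every continuous path from $\zeta_1$ to $\zeta_3$ meets $u^{-1}(t)$ for each $t \in (0,1)$. Hence $u^{-1}(t)$ is a closed set separating $\zeta_1$ from $\zeta_3$ in the topological square $Q$, and any separating continuum in $Q$ must in particular meet both $\zeta_2$ and $\zeta_4$.

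Next, I would establish a coarea-type bound
$$
\int_0^1 \haus^1(u^{-1}(t))\, dt \leq C \int_Q \rho\, d\hausk \leq C\, \hausk(Q)^{1/2} \Big(\int_Q \rho^2\, d\hausk\Big)^{1/2} < \infty,
$$
which would yield $\haus^1(u^{-1}(t)) < \infty$ for $\haus^1$-a.e. $t$. Proposition \ref{coarea} does not apply verbatim since $u$ is only continuous, but the $5r$-covering technique from the proof of Lemma \ref{ykskolme}---invoking the weak upper-gradient inequality on separating circles $S(y,s)$ around points $y$ of a level set---adapts to produce this estimate. For each such $t$, Lemma \ref{separates} then yields a continuum $F_t \subset u^{-1}(t)$ that still separates $\zeta_1$ from $\zeta_3$; since $F_t$ meets $\zeta_2$ and $\zeta_4$, Proposition \ref{findcurves} extracts a simple rectifiable subarc $\gamma_t \subset F_t$ joining these two edges.

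The main obstacle is upgrading $u^{-1}(t) \supset |\gamma_t|$ to equality for $\haus^1$-a.e. $t$, that is, ruling out redundant branches or extra components of the level sets. Here I would exploit the minimality of $\rho$ together with the lower reciprocality bound \eqref{alaraja}, which guarantees $\modu(\zeta_2,\zeta_4;Q) \geq \kappa^{-1}/M_1 > 0$ and hence an abundance of rectifiable transversals to the level sets. Heuristically, any redundant set $u^{-1}(t) \setminus |\gamma_t|$, assembled over a positive-measure set of levels, would form a subset of positive $\hausk$-measure on which $u$ is constant; along any level subarc the integral of $\rho$ equals the oscillation of $u$, which is zero, so $\rho$ could be truncated to zero on the redundant set without destroying weak admissibility of the competing paths for $\Delta(\zeta_1,\zeta_3;Q)$---they can be rerouted through $|\gamma_t|$ with no extra cost---contradicting the minimality of $\rho$. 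Turning this heuristic into a clean measurable truncation argument, and handling potential branching of level sets carefully, is the delicate step of the proof.
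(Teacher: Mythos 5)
Your first step (separation, and extraction of a simple rectifiable subarc once $\haus^1(u^{-1}(t))<\infty$) is fine, but both of the substantive steps have gaps. The coarea-type bound $\int_0^1\haus^1(u^{-1}(t))\,dt\leq C\int_Q\rho\,d\hausk$ is not available in this generality. The $5r$-covering argument of Lemma \ref{ykskolme} produces the key inequality $r\epsilon\leq\frac{8}{\pi}\int_{B(y,r)}\rho\,d\hausk$ only because $u$ is known to oscillate by at least $\epsilon$ on circles around the points being covered; at a point of a \emph{single} level set $u^{-1}(t)$ there is no such oscillation lower bound, so the technique does not adapt. The paper does prove a coarea inequality for $u$ (Lemma \ref{hest} and Proposition \ref{minicoarea}), but only under the mass upper bound \eqref{upperbound}, which is exactly what is needed to convert $\sum_i\int_{2B_i}\rho$ into $\sum_i r_i^2\,\mathcal{M}\rho(x_i)$. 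In the general reciprocal setting the paper obtains $\haus^1(u^{-1}(t))<\infty$ by an entirely different mechanism (Lemma \ref{mauko}): at a differentiability point $t$ of $\varphi(B)=\hausk(u^{-1}(B))$ one traps $u^{-1}(t)$ between nearby level curves, uses the modulus identity $M_{s,t}=(t-s)^{-1}M_1$ of Lemma \ref{dividingmod} together with \eqref{alaraja} to bound the lengths $\ell_h$ of shortest crossings by $\ell_h^2\leq 4\kappa h^{-1}\varphi((t-h,t+h))M_1$, and passes to a limit via Lemma \ref{lowersemi}; the level set is then shown to equal the union of the two limit curves, whence it is rectifiable.

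The branch-removal step is also not salvageable as sketched. For a.e.\ $t$ one has $\hausk(u^{-1}(t))=0$ (again by Lemma \ref{volder}), so truncating $\rho$ to zero on the redundant set $u^{-1}(t)\setminus|\gamma_t|$ changes nothing in $L^2$; and the upper gradient inequality gives only $\int_{\text{subarc}}\rho\,ds\geq\operatorname{osc}u=0$, not equality, so there is no reason $\rho$ vanishes near a branch. The paper's contradiction runs in the opposite direction: a branch $\eta\subset u^{-1}(t)$ emanating from $|\gamma_t|$ bounds, together with a nearby level curve $|\gamma_{t-h}|$ and a transversal $I$, a thin Jordan domain $V\subset A_{t-h,t}$; since crossings of $V$ parallel to $\eta$ have length bounded below while $\hausk(V)=O(h)$, that crossing modulus is $O(h)$, so \eqref{alaraja} forces the conjugate modulus to be $\gtrsim(\kappa h)^{-1}$, and admissibility of $2h^{-1}\rho$ for the conjugate family yields $\int_V\rho^2\,d\hausk\gtrsim h$. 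One then builds an explicit competitor $\rho'=\rho\chi_{Q\setminus V}+\epsilon^{-1}h\chi_{A_{t-h,t}}$, checks it is weakly admissible (a path that enters $V$ must first traverse a definite distance inside $A_{t-h,t}$, picking up the lost mass $h$ from the added term), and verifies its $L^2$ cost exceeds the savings only by $O(h^{3/2})$, beating $\rho$ for small $h$. Making your heuristic rigorous would require reproducing something like this quantitative trade-off; a measure-zero truncation cannot do it.
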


We will later show that $u$ is the real part of a homeomorphism, so in particular $u^{-1}(t)$ is a simple curve for all $0<t<1$. The rest of this section is devoted to the proof of Proposition \ref{levelsets}. 

Recall our notation $M_1=\modu(\zeta_1,\zeta_3;Q)$. Moreover, for $0\leq s <t \leq 1$ we denote $A_{s,t}=\{x \in Q: \, s < u(x) < t \}$ and  
$$
M_{s,t}:= \modu(u^{-1}(s),u^{-1}(t);\overline{A}_{s,t}). 
$$

\begin{lemma}
\label{dividingmod}
Suppose $0\leq s <t \leq 1$, and that $u$ satisfies the conclusions of Theorem \ref{continuity}. Then 
\begin{equation}
\label{jeehee}
M_{s,t}= (t-s)^{-2} \int_{A_{s,t}} \rho^2 \, d\hausk =(t-s)^{-1}M_1. 
\end{equation}
\end{lemma}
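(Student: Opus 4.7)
The plan is to establish both equalities simultaneously by combining a concatenation-of-minimizers argument with the Cauchy–Schwarz inequality. Write $E(s,t) := \int_{A_{s,t}} \rho^2 \, d\hausk$, so that $E(0,1)=M_1$ and $E$ is additive in the interval. It suffices to show, for any partition $0 = s_0 < \ldots < s_n = 1$ with $M_i := M_{s_i,s_{i+1}}$, $E_i := E(s_i,s_{i+1})$, $A_i := A_{s_i,s_{i+1}}$, that $M_i = E_i/(s_{i+1}-s_i)^2 = M_1/(s_{i+1}-s_i)$; applying this to the partition $\{0,s,t,1\}$ (with the obvious reduction when $s=0$ or $t=1$) then gives \eqref{jeehee}.

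The easy upper bound is $M_i \leq E_i/(s_{i+1}-s_i)^2$: since $\rho$ is a weak upper gradient of $u$ (Lemma \ref{kipu}) and $u$ takes the values $s_i$, $s_{i+1}$ on the two level sets (Theorem \ref{continuity}), almost every $\gamma \in \Delta(u^{-1}(s_i),u^{-1}(s_{i+1});\overline{A_i})$ satisfies $\int_\gamma \rho \, ds \geq s_{i+1}-s_i$, so $\rho \chi_{A_i}/(s_{i+1}-s_i)$ is weakly admissible. For the opposite direction I would carry out a concatenation: apply the minimizer construction of Section \ref{minimizer} to each sub-problem to obtain a weakly admissible $\rho_i$ on $\overline{A_i}$ realizing $M_i$, and for $a_i\ge 0$ with $\sum_i a_i = 1$ set $\rho^* := \sum_i a_i \rho_i \chi_{A_i}$. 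Continuity of $u$ provides, for each rectifiable $\gamma \in \Delta(\zeta_1,\zeta_3;Q)$, sub-paths $\gamma_i$ in $\overline{A_i}$ joining $u^{-1}(s_i)$ to $u^{-1}(s_{i+1})$ (e.g.\ take the sub-path from the last time $u\circ\gamma$ hits $s_i$ until the next time it hits $s_{i+1}$). Outside a modulus-zero family of such $\gamma$, each $\gamma_i$ misses the exceptional family of $\rho_i$, since the extension by zero of a function admissible for a subfamily in $\overline{A_i}$ is admissible for the pulled-back family in $Q$, so modulus zero is preserved. Consequently $\int_\gamma \rho^* \, ds \geq \sum_i a_i \int_{\gamma_i}\rho_i \, ds \geq \sum_i a_i = 1$, i.e., $\rho^*$ is weakly admissible for $M_1$; disjointness of the $A_i$ gives $M_1 \leq \sum_i a_i^2 M_i$, and optimizing over $a_i$ (minimum $1/\sum_i 1/M_i$, attained at $a_i \propto 1/M_i$) yields $\sum_i 1/M_i \leq 1/M_1$.

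Finally, Cauchy–Schwarz applied to the sums $\sum_i E_i = M_1$ and $\sum_i (s_{i+1}-s_i) = 1$ gives $\sum_i (s_{i+1}-s_i)^2/E_i \geq 1/M_1$, and combining this with the previous two steps,
$$\frac{1}{M_1} \;\geq\; \sum_i \frac{1}{M_i} \;\geq\; \sum_i \frac{(s_{i+1}-s_i)^2}{E_i} \;\geq\; \frac{1}{M_1},$$
so every inequality is an equality. Equality in the first step forces $M_i = E_i/(s_{i+1}-s_i)^2$, while the Cauchy–Schwarz equality case forces $E_i/(s_{i+1}-s_i)$ to be independent of $i$, hence equal to $M_1$. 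The main technical obstacle is the weak admissibility of $\rho^*$: one must select the sub-paths $\gamma_i$ so that, outside a single modulus-zero family of $\gamma$, they simultaneously avoid the finitely many exceptional families attached to the $\rho_i$. This implicitly requires $M_i>0$, which follows from Proposition \ref{continuum} once one notes that $u^{-1}(s_i)$ contains a continuum separating $\zeta_1$ from $\zeta_3$ in $Q$.
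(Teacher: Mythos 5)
Your argument is correct in substance but follows a genuinely different route from the paper. The paper proves both equalities by direct perturbation of the single minimizer $\rho$: for the first equality it splices an alleged better competitor $g$ into $\rho$ on $A_{s,t}$ via $\rho\chi_{\overline{A}_{0,s}\cup\overline{A}_{t,1}}+(t-s)g\chi_{A_{s,t}}$; for the second it rescales $\rho$ by $(1+\delta)$ on $A_{s,t}$, renormalizes by $1+\delta(t-s)$, and observes that the energy strictly drops for small $\delta$ of one sign unless $\int_{A_{s,t}}\rho^2\,d\hausk=(t-s)M_1$. You instead prove the serial rule $\sum_i 1/M_i\le 1/M_1$ by convex combinations of minimizers on the pieces, combine it with Cauchy--Schwarz, and read off both equalities from the equality cases. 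Both proofs hinge on the same key fact---almost every path from $\zeta_1$ to $\zeta_3$ accumulates at least $t-s$ of $\rho$-mass inside $A_{s,t}$, via the canonical subpath and the upper-gradient inequality---but the paper's version needs no auxiliary minimizers $\rho_i$ and no exceptional-family bookkeeping on the pieces, whereas yours is closer to the classical ``extremal lengths in series'' picture and makes the rigidity mechanism transparent.

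Two small points should be tightened. First, you assert at the outset that $E(0,1)=M_1$ and that $E$ is additive over the partition; a priori a level set $u^{-1}(\tau)$ could carry positive $\rho^2$-mass, so at that stage only $\sum_i E_i\le M_1$ is available. This is harmless: Cauchy--Schwarz still yields $\sum_i (s_{i+1}-s_i)^2/E_i\ge 1/\sum_i E_i\ge 1/M_1$, the chain of inequalities closes, and equality throughout forces $\sum_i E_i=M_1$ a posteriori (which is how one learns that the level sets are $\rho^2$-null). Second, Proposition \ref{continuum} controls moduli of path families in a full topological square, not families confined to $\overline{A}_i$, so it does not directly give $M_i>0$; but your own concatenation does: if some $M_i=0$, taking $a_i=1$ and $a_j=0$ for $j\neq i$ gives $M_1\le M_i=0$, contradicting Proposition \ref{continuum} applied to $\zeta_1$ and $\zeta_3$ in $Q$.
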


\begin{proof}
First, we have 
$$
M_{s,t} \leq (t-s)^{-2} \int_{A_{s,t}} \rho^2 \, d\hausk, 
$$
since $(t-s)^{-1}\rho$ is weakly admissible. The reverse inequality also holds, since if there was an admissible function $g$ such that 
$$
\int_{A_{s,t}} g^2 \, d\hausk < (t-s)^{-2}\int_{A_{s,t}} \rho^2 d \hausk, 
$$
then 
$$
\rho'=\rho \chi_{\overline{A}_{0,s} \cup \overline{A}_{t,1}}+ (t-s) g \chi_{A_{s,t}} 
$$
would be weakly admissible for $\Delta(\zeta_1,\zeta_3;Q)$ (because $u=0$ in $\zeta_1$ and $u=1$ in $\zeta_3$), and 
$$
\int_Q (\rho')^2 \, d\hausk < \int_Q \rho^2 \, d\hausk.  
$$
This contradicts the minimizing property of $\rho$. Therefore, the first equality in \eqref{jeehee} holds. To prove the second equality, we denote  
$$
I_{s,t}:= \int_{A_{s,t}} \rho^2 \, d\hausk. 
$$
Let $\delta >0$, and 
$$
\rho_{\delta} = \frac{(1+\delta)\rho \chi_{A_{s,t}}+ \rho \chi_{Q \setminus A_{s,t}} }{1+\delta(t-s)}. 
$$
Then $\rho_{\delta}$ is weakly admissible for $\Delta(\zeta_1,\zeta_3;Q)$, and 
$$
\int_Q \rho_{\delta}^2 \, d\hausk = \frac{(1+\delta)^2I_{s,t} + M_1-I_{s,t}}{(1+\delta(t-s))^2}. 
$$
If $I_{s,t} < (t-s)M_1$, then the right term is strictly smaller than $M_1$ when $\delta>0$ is small enough. This contradicts the minimizing property of $\rho$. Similarly, if $I_{s,t}>(t-s)M_1$, we get a contradiction by the above argument, replacing $A_{s,t}$ with $Q \setminus A_{s,t}$. 
\end{proof}


\begin{lemma}
\label{simpcon}
Suppose $0<s<t<1$, and that $u$ satisfies the conclusions of Theorem \ref{continuity}. Then $A_{s,t}$ and $u^{-1}(t)$ are connected and simply connected sets connecting $\zeta_2$ and $\zeta_4$ in $Q$. 
Moreover, the sets $\zeta_2 \cap A_{s,t}$, $\zeta_2 \cap u^{-1}(t)$, $\zeta_4 \cap A_{s,t}$ and $\zeta_4 \cap u^{-1}(t)$ are all connected. 
\end{lemma}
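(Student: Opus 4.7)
My strategy combines the maximum/minimum principle (Lemma~\ref{epatoivo}) with the continuity and boundary values of $u$ from Theorem~\ref{continuity} ($u=0$ on $\zeta_1$, $u=1$ on $\zeta_3$), together with the planar topology of $Q$, viewed via a homeomorphism as a closed topological disk in $\mathbb R^2$, and Lemma~\ref{separates}. For this particular statement I do not expect to invoke \eqref{alaraja} or Lemma~\ref{dividingmod}. First I would show that $\{u>s\}$ is connected for every $s\in[0,1)$, and dually $\{u<t\}$ for every $t\in(0,1]$. Let $W$ be a component of $\{u>s\}$ not containing $\zeta_3$: since $\zeta_3\subset\{u>s\}$ is connected it lies in one other component, and openness of components gives $\overline{W}\cap\zeta_3=\emptyset$; continuity together with $u=0$ on $\zeta_1$ yields $\overline{W}\cap\zeta_1=\emptyset$, so $\partial_*W=\partial W\cap Q$. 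For $p\in\partial W\cap\operatorname{int}Q$, continuity and the openness of $\{u>s\}$ force $u(p)=s$; for $p\in\partial W\cap(\zeta_2\cup\zeta_4)$, if $u(p)>s$ then a connected half-disc neighbourhood of $p$ in $Q$ would lie in $\{u>s\}$ and hence in $W$, contradicting $p\in\partial W$. So $u\equiv s$ on $\partial_*W$ and Lemma~\ref{epatoivo} forces $\sup_W u=s$, contradicting $u>s$ on the nonempty open $W$. The connectedness of $\{u\le s\}$ and $\{u\ge t\}$ then follows, since $\{u\le s\}=\bigcap_n\overline{\{u<s+1/n\}}$ and $\{u\ge t\}=\bigcap_n\overline{\{u>t-1/n\}}$ are nested decreasing intersections of connected compacta in $Q$.

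\smallskip

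\noindent Next I treat $A_{s,t}$. For any component $W$ of $A_{s,t}$, the same boundary analysis gives $\overline{W}\cap(\zeta_1\cup\zeta_3)=\emptyset$ and $\partial_*W\subset u^{-1}(\{s,t\})$, and the max and min principles force both $s$ and $t$ to be attained on $\partial_*W$. Suppose $A_{s,t}$ had two distinct components $W_1,W_2$. I would choose disjoint closed balls $B_i\subset W_i$ and note that, in the planar model of $Q$, the closed set $F=\{u\le s\}\cup\{u\ge t\}$ separates $B_1$ from $B_2$ in $\mathbb R^2$ (any path between them that stays in $Q$ must leave $A_{s,t}$). Lemma~\ref{separates} then yields a continuum $G\subset F$ still separating $B_1$ from $B_2$; since $\{u\le s\}$ and $\{u\ge t\}$ are disjoint closed sets and $G$ is connected, $G$ lies entirely in one of them, say $\{u\le s\}$. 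But $\{u>s\}\supset B_1\cup B_2$ is connected (by the previous paragraph) and disjoint from $G$, so $B_1,B_2$ lie in a single component of $\mathbb R^2\setminus G$, contradicting the separating property of $G$. Hence $A_{s,t}$ is connected. For simple connectivity: $Q\setminus A_{s,t}=\{u\le s\}\cup\{u\ge t\}$ is a disjoint union of two connected closed sets each meeting $\partial Q$ (along $\zeta_1$ and $\zeta_3$ respectively), so every component of $Q\setminus A_{s,t}$ meets $\partial Q$; for an open connected subset of a topological closed disk this is equivalent to simple connectivity.

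\smallskip

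\noindent Finally, $A_{s,t}$ meets both $\zeta_2$ and $\zeta_4$ since $u|_{\zeta_2}$ is continuous and runs from $u=0$ at $\zeta_1\cap\zeta_2$ to $u=1$ at $\zeta_2\cap\zeta_3$, hence attains every value in $(s,t)$ along $\zeta_2$ (similarly on $\zeta_4$). The assertions for $u^{-1}(t)$ follow by writing $u^{-1}(t)=\bigcap_n\overline{A_{t-1/n,t+1/n}}$, a nested decreasing intersection of connected, simply connected continua in $Q$ each joining $\zeta_2$ to $\zeta_4$; all three properties survive the intersection in a planar disk. The connectedness of $\zeta_2\cap A_{s,t}$ and $\zeta_2\cap u^{-1}(t)$ (and the corresponding $\zeta_4$-claims) reduces to the monotonicity of $u|_{\zeta_2}$, which I would prove by ruling out an interior local extremum via a half-disc argument: cut a sub-disc $D$ from $Q$ along a short sub-arc of $\zeta_2$ containing the extremum and an interior arc furnished by Proposition~\ref{continuum}, and derive a contradiction from the max or min principle of Lemma~\ref{epatoivo} applied to $D$. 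I expect this last step---monotonicity of $u$ on $\zeta_2$ and $\zeta_4$---to be the main obstacle, since constructing the interior arc with a controlled value of $u$ requires combining the energy minimization encoded by $\rho$ with the existence of large families of interior curves; the other assertions follow more directly from the max/min principle and planar topology.
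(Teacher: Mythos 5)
Your opening step --- connectedness of $\{u>s\}$ and $\{u<t\}$ by applying Lemma~\ref{epatoivo} to a putative extra component, on whose $\partial_*$-boundary $u$ is forced to be constant --- is correct and is a genuinely different (and clean) route; likewise your derivation of simple connectivity of $A_{s,t}$ from the fact that both pieces of $Q\setminus A_{s,t}=\{u\le s\}\sqcup\{u\ge t\}$ meet $\partial Q$ is a valid replacement for the paper's direct loop/maximum-principle argument. The problem is the connectedness of $A_{s,t}$ itself. You assert that $F=\{u\le s\}\cup\{u\ge t\}$ separates $B_1$ from $B_2$ \emph{in $\mathbb R^2$}, but your parenthetical only shows that every path between them \emph{inside $Q$} meets $F$. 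These are not the same: in the planar model $Q$ is a closed square with connected complement, and a path from $B_1$ to $B_2$ may exit $Q$ through a point of $(\zeta_2\cup\zeta_4)\cap A_{s,t}$ (such points exist, by your own intermediate-value observation), wander through $\mathbb R^2\setminus Q$, and re-enter elsewhere. Thus $F$ fails to separate in $\mathbb R^2$ precisely when the two components both reach $\zeta_2\cup\zeta_4$ --- which is the main configuration the lemma is about --- so Lemma~\ref{separates} cannot be invoked. Patching $F$ by adjoining $\overline D\setminus\operatorname{int}Q$ for a large disk $D$ restores compactness and separation, but then the continuum $G$ produced by Lemma~\ref{separates} need not lie in one of $\{u\le s\}$, $\{u\ge t\}$, since the added annulus is connected and meets both $\zeta_1\subset\{u\le s\}$ and $\zeta_3\subset\{u\ge t\}$; the final ``$G$ lies entirely in one of them'' step collapses.

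The gap is repairable with what you already have. Either (i) note that $\{u>s\}$ and $\{u<t\}$ are relatively open, connected, and cover the simply connected space $Q$, so their intersection $A_{s,t}$ is connected by Mayer--Vietoris; or (ii) follow the paper: first show every component $W$ of $A_{s,t}$ meets both $\zeta_2$ and $\zeta_4$, by extracting (via Lemma~\ref{separates}) a continuum $Y\subset\partial_*W$ separating $W$ from $\zeta_4$, observing that $u$ takes only the values $s,t$ on $Y$ and hence, by connectedness, is constant there, and then contradicting the maximum principle on the domain bounded by $Y$ and $\zeta_2$; second, note that two disjoint such components would each separate $\zeta_1$ from $\zeta_3$ in $Q$ and so trap a region of $Q\setminus A_{s,t}$ whose $\partial_*$-boundary lies in $\overline{A_{s,t}}$, again contradicting Lemma~\ref{epatoivo}. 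Finally, you only sketch the connectedness of $\zeta_2\cap A_{s,t}$, $\zeta_2\cap u^{-1}(t)$, etc.\ (via monotonicity of $u|_{\zeta_2}$); the paper also leaves these claims to the reader, so that omission is on par with the source, but the separation step above is a genuine error, not a deferral.
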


\begin{proof}
First, if there is a simple loop $\gamma$ not contractible in $A_{s,t}$, then $\gamma$ bounds a domain $V \subset Q$ containing points outside $A_{s,t}$. But then $\partial V \subset A_{s,t}$. This violates the maximum principle, Lemma \ref{epatoivo}, and so $A_{s,t}$ must be simply connected. The same argument shows that $u^{-1}(t)$ is simply connected. 

Next, suppose $W$ is a connected component of $A_{s,t}$. We claim that $W$ has to intersect 
both $\zeta_2$ and $\zeta_4$. Notice that by the maximum principle, $W$ has to intersect either 
$\zeta_2$ or $\zeta_4$. We lose no generality by assuming that $W$ intersects $\zeta_2$. To show that $W$ also intersects $\zeta_4$, suppose to the contrary that this was not the case. 

Then, by Lemma \ref{separates}, there is a continuum $Y \subset \partial_* W$ separating $W$ and 
$\zeta_4$, where $\partial_*$ is as in Lemma \ref{epatoivo}. Now, if $s<u(x)<t$ at some point $x \in Y$, 
there is a neighborhood $B$ of $x$ such that $s<u<t$ everywhere on $B$. This contradicts the definition of $W$. Therefore, $u$ only takes values $s$ and $t$ on $Y$. But $Y$ is connected, so 
$u$ is constant on $Y$. On the other hand, $Y$ and $\zeta_2$ bound a domain in $Q$ that includes  $W$, and the maximum principle implies that $u$ equals either $t$ or $s$ everywhere in this domain 
This is a contradiction, since $W \subset A_{s,t}$ belongs to this domain. We conclude that $W$ intersects $\zeta_4$. 

Now let $V_1$ and $V_2$ be disjoint connected components of $A_{s,t}$. Then, since both separate $\zeta_1$ and $\zeta_3$, there exists $x \in Q \setminus A_{s,t}$ such that $A_{s,t}$ separates $x$ from both $\zeta_1$ and $\zeta_3$. This contradicts the maximum principle, Lemma \ref{epatoivo}. We conclude that $A_{s,t}$ is connected. To show that $u^{-1}(t)$ is connected and connects $\zeta_2$ and $\zeta_4$, it suffices to notice that the same holds for $\overline{A}_{t-1/j,t+1/j}$ and express $u^{-1}(t)$ as the intersection. 

The remaining claims can be proved by applying the maximum principle as in the previous paragraphs. We leave the details to the reader. 
\end{proof}


To prove Proposition \ref{levelsets}, we recall the compactness property of a family of paths with bounded length, and lower semicontinuity of path length under uniform convergence. The first property follows from the Arzela-Ascoli theorem, while the second property is a simple consequence of the definition of path length. 
\begin{lemma}
\label{lowersemi}
Let $\gamma_j:[0,1] \to Q$, $j \in \mathbb{N}$, be rectifiable paths with 
$$
A= \liminf_{j \to \infty} \ell(\gamma_j) < \infty.
$$
Then the paths $\gamma_j$ can be reparametrized so that the sequence of the re\-pa\-ra\-met\-rized paths has a subsequence converging uniformly to a rectifiable path $\gamma:[0,1] \to Q$ with $\ell(\gamma) \leq A$. 
\end{lemma}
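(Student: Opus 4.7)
This is a classical Arzel\`a--Ascoli type compactness combined with lower semicontinuity of length, adapted to paths of possibly varying lengths. The strategy is to first reparametrize each $\gamma_j$ by constant speed on $[0,1]$, which makes the reparametrized paths uniformly Lipschitz, and then apply Arzel\`a--Ascoli in the compact space $Q$, and finally obtain the length bound from the definition of length.

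First, pass to a subsequence (not relabeled) so that $\ell(\gamma_j) \to A$, and discard the finitely many indices with $\ell(\gamma_j) > A+1$. For each remaining $j$, let $\psi_j(s) = \ell(\gamma_j|_{[0,s]})$; this is a nondecreasing continuous function with $\psi_j(1) = \ell(\gamma_j)$. If $\ell(\gamma_j) > 0$, define the constant-speed reparametrization $\tilde\gamma_j : [0,1] \to Q$ by $\tilde\gamma_j(t) = \gamma_j(s)$ for any $s$ with $\psi_j(s) = t\cdot \ell(\gamma_j)$; this is well defined because $\gamma_j$ is necessarily constant on every maximal interval where $\psi_j$ is constant. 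If $\ell(\gamma_j) = 0$, take $\tilde\gamma_j$ to be the constant path. By construction $\tilde\gamma_j$ is a reparametrization of $\gamma_j$ satisfying the Lipschitz bound $d(\tilde\gamma_j(t),\tilde\gamma_j(t')) \leq \ell(\gamma_j)\,|t-t'| \leq (A+1)|t-t'|$.

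Second, apply Arzel\`a--Ascoli. The family $\{\tilde\gamma_j\}$ is equicontinuous by the uniform Lipschitz bound, and uniformly bounded since each image lies in the compact set $Q$ (compact because it is homeomorphic to a closed planar square). Hence there is a subsequence $\tilde\gamma_{j_k}$ converging uniformly to some continuous $\gamma:[0,1] \to Q$.

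Finally, check that $\ell(\gamma) \leq A$. For any partition $0 = t_0 < t_1 < \cdots < t_n = 1$, the uniform convergence and the triangle inequality give
$$
\sum_{i=1}^n d(\gamma(t_i),\gamma(t_{i-1})) = \lim_{k\to\infty} \sum_{i=1}^n d(\tilde\gamma_{j_k}(t_i),\tilde\gamma_{j_k}(t_{i-1})) \leq \liminf_{k\to\infty} \ell(\tilde\gamma_{j_k}) = A,
$$
and taking the supremum over partitions yields $\ell(\gamma) \leq A$. The only mildly delicate point is the well-definedness of the constant-speed reparametrization, which is handled by the observation above; the rest is routine. I do not expect any serious obstacle here, since the result is essentially the standard Hilbert--Arzel\`a compactness theorem for rectifiable curves in a compact metric space, phrased for the present setting.
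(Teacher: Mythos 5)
Your proof is correct and follows exactly the route the paper indicates: the paper gives no detailed argument for this lemma, stating only that compactness follows from the Arzel\`a--Ascoli theorem and that lower semicontinuity of length is a direct consequence of the definition of path length. Your constant-speed reparametrization, the uniform Lipschitz bound, and the partition argument for $\ell(\gamma)\leq A$ are all standard and fill in precisely what the paper leaves implicit.
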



The following differentiation result will be frequently applied, see \cite[Theorem 3.22] {Fol} for the proof. 
Suppose $A \subset Q$ is a Borel set. Moreover, suppose $\phi:A \to [0,\infty]$ is Borel measurable and integrable, and $\psi: A \to \mathbb{R}$ Borel measurable. Define 
$$
\varphi(B)=\int_{\psi^{-1}(B)} \phi \, d\hausk 
$$ 
for Borel sets $B \subset \mathbb{R}$. We say that $\varphi'(t)$ is the \emph{differential} of $\varphi$ at $t \in \mathbb{R}$, if 
$$
\varphi'(t)=\lim_{j \to \infty} \frac{\varphi((a_j,b_j))}{|b_j-a_j|}
$$
whenever $t \in (a_j,b_j)$ and $|b_j-a_j| \to 0$. 

\begin{lemma}
\label{volder}
Suppose $\varphi$ is defined as above. Then the differential $\varphi'(t)< \infty$ exists for almost every $t \in \R$ 
and defines a measurable function such that  
\begin{equation}
\label{roha}
\int_B \varphi'(t) \, dt \leq \varphi(B) 
\end{equation}
for all Borel sets $B \subset \R$. If moreover $\varphi(B)=0$ whenever $\haus^1(B)=0$, then equality holds in \eqref{roha}.

\end{lemma}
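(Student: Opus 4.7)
The plan is to recognize $\varphi$ as a finite Borel measure on $\mathbb{R}$ and then invoke the classical Lebesgue--Radon--Nikodym theory. Since $\phi$ is Borel measurable and integrable over $A$ and $\psi$ is Borel, the preimages $\psi^{-1}(B)$ are Borel, so $\varphi$ is well-defined on the Borel $\sigma$-algebra. Countable additivity follows from monotone convergence, so $\varphi$ is a finite Borel (hence Radon) measure on $\mathbb{R}$, with total mass $\int_A \phi \, d\hausk < \infty$.

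Next, apply the Lebesgue decomposition theorem to split $\varphi = \varphi_{ac} + \varphi_s$, where $\varphi_{ac}$ is absolutely continuous with respect to one-dimensional Lebesgue measure $\haus^1$ and $\varphi_s$ is singular with respect to it. By the Radon--Nikodym theorem there is a measurable $h \in L^1(\mathbb{R})$ with $\varphi_{ac}(B) = \int_B h(t)\, dt$ for every Borel $B$. The classical Lebesgue differentiation theorem for Radon measures on $\mathbb{R}$ (Folland, Theorem 3.22) then tells us that the symmetric derivative, and in fact the limit along any sequence of intervals $(a_j,b_j)$ containing $t$ with $b_j - a_j \to 0$, converges for $\haus^1$-a.e. $t$ to $h(t) < \infty$. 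This is precisely the definition of $\varphi'(t)$ used in the statement, so $\varphi'$ exists a.e., is finite, and defines a measurable function (it coincides a.e.\ with the Radon--Nikodym density $h$).

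For the inequality, integrating the identification $\varphi'(t) = h(t)$ over a Borel set $B \subset \mathbb{R}$ gives
\[
\int_B \varphi'(t)\, dt = \int_B h(t)\, dt = \varphi_{ac}(B) \leq \varphi_{ac}(B) + \varphi_s(B) = \varphi(B),
\]
which is \eqref{roha}. Finally, if $\varphi$ vanishes on every $\haus^1$-null set, then by definition $\varphi \ll \haus^1$, the singular part $\varphi_s$ of the Lebesgue decomposition is the zero measure, and the above inequality becomes an equality.

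There is no genuine obstacle in this argument; it is a direct application of the cited Folland result, with the only non-routine step being the verification that $\varphi$ really is a (finite) Borel measure. That in turn is immediate from the measurability assumptions on $\phi$ and $\psi$ together with monotone convergence, so the full proof amounts to little more than citing Theorem 3.22 of \cite{Fol} after noting that the limit prescription in the statement is exactly the derivative of $\varphi$ with respect to Lebesgue measure.
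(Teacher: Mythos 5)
Your argument is correct and is essentially the paper's: the paper gives no proof of Lemma \ref{volder} but simply cites \cite[Theorem 3.22]{Fol}, and your write-up is exactly the standard Lebesgue decomposition / Radon--Nikodym / differentiation-of-Radon-measures argument behind that citation, including the observation that intervals containing $t$ of length tending to $0$ shrink nicely so the derivative exists along all such sequences. Nothing to add.
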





Towards the proof of Proposition \ref{levelsets}, we first show that almost every level set of $u$ 
has finite $1$-measure and contains a rectifiable path as in the statement of the proposition. 

\begin{lemma}
\label{mauko}
Suppose that $X$ satisfies \eqref{alaraja} and the minimizer $u$ satisfies the conclusions of Theorem \ref{continuity}. Then for $\haus^1$-almost every $t$ the level set $u^{-1}(t)$ has finite $\haus^1$-measure and contains a simple rectifiable curve $|\gamma_t|$ joining $\zeta_2$ and $\zeta_4$. 
\end{lemma}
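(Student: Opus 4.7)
The plan is to prove two things for a.e. $t\in(0,1)$: first, the existence of a simple rectifiable curve $|\gamma_t|\subset u^{-1}(t)$ joining $\zeta_2$ and $\zeta_4$; second, the finiteness $\haus^1(u^{-1}(t))<\infty$. I would tackle the existence of the curve first, since it falls out cleanly from a modulus--duality argument.

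For any $0<s<t<1$, Lemma \ref{simpcon} implies that $\overline{A}_{s,t}$ is a topological closed square with boundary edges $u^{-1}(s)$, $\zeta_2\cap \overline{A}_{s,t}$, $u^{-1}(t)$, $\zeta_4\cap \overline{A}_{s,t}$ in cyclic order. Reciprocality \eqref{alaraja}, combined with Lemma \ref{dividingmod}, then yields
\[
\modu(\zeta_2\cap\overline{A}_{s,t},\zeta_4\cap\overline{A}_{s,t};\overline{A}_{s,t}) \;\geq\; \frac{t-s}{\kappa M_1}.
\]
A standard modulus--length comparison (testing with $\rho\equiv 1/L$, where $L$ is the infimal curve length, gives $\modu \leq \hausk(A_{s,t})/L^2$) produces a rectifiable curve $\beta_{s,t}\subset \overline{A}_{s,t}$ joining $\zeta_2$ to $\zeta_4$ with $\ell(\beta_{s,t}) \leq \sqrt{\kappa M_1 \hausk(A_{s,t})/(t-s)}$. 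Setting $F(r) = \hausk(u^{-1}([0,r]))$, Lemma \ref{volder} gives $F'(t_0)<\infty$ for a.e. $t_0$. For each such $t_0$ and $s=t_0-\epsilon$, $t=t_0+\epsilon$, the lengths $\ell(\beta_\epsilon)$ stay uniformly bounded. Applying Lemma \ref{lowersemi}, a subsequential uniform limit $\beta$ exists; continuity of $u$ combined with $\beta_\epsilon\subset \overline{A}_{t_0-\epsilon,t_0+\epsilon}$ forces $|\beta|\subset u^{-1}(t_0)$, still joining $\zeta_2$ to $\zeta_4$. Proposition \ref{findcurves} then extracts the simple rectifiable curve $|\gamma_{t_0}|\subset u^{-1}(t_0)$.

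For the second part, note that $\{t:\hausk(u^{-1}(t))>0\}$ is at most countable, so for a.e. $t_0$ the level set $u^{-1}(t_0)$ has empty interior. At each $y\in u^{-1}(t_0)\cap \operatorname{int}Q$, Lemma \ref{epatoivo} applied to small disks around $y$ forces $u$ to take values both above and below $t_0$ in every neighborhood. The plan is to adapt the covering argument of Lemma \ref{ykskolme}: decompose $u^{-1}(t_0)$ dyadically according to the oscillation scale of $u$ near each point; on each scale, the core inequality $r\epsilon \leq \frac{8}{\pi}\int_{B(y,r)} \rho\, d\hausk$ applies, and a $5r$-covering together with $\int_Q \rho\, d\hausk \leq \hausk(Q)^{1/2}M_1^{1/2}$ bounds the contribution from that scale. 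Summing over scales controls $\haus^1(u^{-1}(t_0))$.

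The main obstacle is the second part. The oscillation of $u$ near a level-set point can be arbitrarily small, so Lemma \ref{ykskolme}'s estimate does not apply uniformly; the dyadic summation requires careful bookkeeping. An alternative strategy would be to prove an integrated coarea inequality $\int_0^1 \haus^1(u^{-1}(t))\, dt<\infty$ directly from the weak upper gradient $\rho$ and then deduce pointwise finiteness via Fubini, but verifying such an inequality without a Lipschitz bound on $u$ is itself nontrivial.
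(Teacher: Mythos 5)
Your first half is essentially the paper's argument, but one step is not justified as written: Lemma \ref{simpcon} only gives that $A_{s,t}$ and the level sets are connected and simply connected; it does \emph{not} make $\overline{A}_{s,t}$ a topological closed square with edges $u^{-1}(s)$ and $u^{-1}(t)$. At this stage the level sets could still be quite wild (that they are simple curves is precisely what is being proved), so condition \eqref{alaraja} cannot be applied to $\overline{A}_{s,t}$ directly. The paper's fix is to sandwich a genuine Jordan domain between nearby level sets: Lemma \ref{simpcon} yields simple paths $\alpha\subset A_{t-h,t-h/2}$ and $\alpha'\subset A_{t-h/4,t-h/8}$ joining $\zeta_2$ and $\zeta_4$, and together with subarcs of $\zeta_2$, $\zeta_4$ these bound a Jordan domain $D_h$ with $A_{t-h/2,t-h/4}\subset D_h\subset A_{t-h,t-h/8}$, to which \eqref{alaraja} and Lemma \ref{dividingmod} do apply. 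With that correction, your length--modulus comparison, the differentiation via Lemma \ref{volder}, the Arzel\`a--Ascoli limit, and the extraction of a simple subcurve via Proposition \ref{findcurves} all match the paper.

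The genuine gap is the finiteness of $\haus^1(u^{-1}(t))$. Your proposed adaptation of Lemma \ref{ykskolme} does not work: the core estimate $r\epsilon\leq\frac{8}{\pi}\int_{B(y,r)}\rho\,d\hausk$ only yields $\sum_j r_j\leq C\epsilon^{-1}\int\rho\,d\hausk$, so summing over dyadic oscillation scales $\epsilon=2^{-k}$ produces a factor $\sum_k 2^k$ with no compensating decay in the integrals; you acknowledge the difficulty but do not resolve it, and the alternative integrated coarea route is likewise left unverified. The paper sidesteps all of this with a topological argument: run the limiting construction from both sides of $t$ to obtain rectifiable limit curves $\tilde{\gamma}_{t,-}$ and $\tilde{\gamma}_{t,+}$ contained in $u^{-1}(t)$, and then show that $u^{-1}(t)=|\tilde{\gamma}_{t,-}|\cup|\tilde{\gamma}_{t,+}|$. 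The identity holds because $u^{-1}(t)=\bigcap_k\overline{\Omega}_k$, where $\Omega_k$ is the region bounded by the $k$-th approximating curves from either side together with arcs of $\zeta_2$ and $\zeta_4$; since $\hausk(u^{-1}(t))=0$ for the chosen $t$, the level set has empty interior, so every point of it is a limit of points on the approximating curves and hence lies on one of the two limit curves by uniform convergence. Finiteness of $\haus^1(u^{-1}(t))$ is then immediate from the rectifiability of the two limit curves. This "two-sided limit plus empty interior" idea is the missing ingredient in your proposal.
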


\begin{proof}
We apply Lemma \ref{volder} with $\psi=u$, $\phi=1$, and choose $0<t<1$ such that $\varphi'(t)$ exists. Then $\hausk(u^{-1}(t))=0$, so $u^{-1}(t)$ does not have interior points. 

Let $h>0$ such that $[t-h,t+h]\subset (0,1)$. By Lemma \ref{simpcon}, $A_{t-h,t-h/2}$ and $A_{t-h/4,t-h/8}$ contain simple paths $\alpha$ and $\alpha'$, respectively, both joining $\zeta_2$ and $\zeta_4$. Let $D_{h}$ be the Jordan domain bounded by 
$\alpha$, $\alpha'$, $\beta$ $\beta'$, where $\beta$ is a subpath of $\zeta_2$ and $\beta'$ is a subpath of $\zeta_4$. Then 
\begin{equation}
\label{zeppe}
A_{t-h/2,t-h/4} \subset D_{h} \subset A_{t-h,t-h/8}. 
\end{equation}
Hence, by Lemma \ref{dividingmod}, 
$$
\modu(\alpha,\alpha'; D_{h}) \leq M_{t-h/2,t-h/4}=4h^{-1}M_1.  
$$
Here we use notation $M_{s,t}$ introduced before Lemma \ref{dividingmod}. Combining with \eqref{alaraja}, we have 
$$
\kappa^{-1} \leq \modu(\alpha,\alpha'; D_{h})\cdot \modu(\beta,\beta'; D_{h}) \leq 4h^{-1}M_1 \cdot \modu(\beta,\beta'; D_{h}), 
$$
i.e., 
$$
\modu(\beta,\beta'; D_{h}) \geq \frac{h}{4\kappa M_1}. 
$$ 
On the other hand, by \eqref{zeppe},  
$$
\modu(\beta,\beta'; D_{h}) \leq \ell^{-2}_h \hausk(A_{t-h,t+h})= \ell^{-2}_h \varphi((t-h,t+h)), 
$$
where $\ell_h$ is the length of a shortest path $\gamma_h$ joining $\beta$ and $\beta'$ in 
$\overline{D}_h$. 
Notice that $\gamma_h$ is simple, since otherwise we could find a shorter path inside 
$|\gamma_h|$ with the same property. 

Combining the estimates, we have 
\begin{equation}
\label{saalb}
\ell_h^2 \leq 4\kappa h^{-1}\varphi((t-h,t+h))M_1. 
\end{equation}
We take a sequence $h_j \to 0$. Then, by \eqref{saalb} and our choice of $t$, 
$$
\liminf_{j \to \infty} \ell_{h_j}^2 \leq 8\kappa \varphi'(t) M_1< \infty.  
$$
Hence, by Lemma \ref{lowersemi}, there is a subsequence of the simple paths $(\gamma_{h_j})$ converging uniformly to a rectifiable path $\tilde{\gamma}_{t,-}$. Moreover, by Lemma \ref{findcurves}, $|\tilde{\gamma}_{t,-}|$ contains a simple rectifiable path $\gamma_t$ joining $\zeta_2$ and $\zeta_4$ in $u^{-1}(t)$. This proves the second claim in the lemma. 

We found the path $\tilde{\gamma}_{t,-}$ as a limit of paths converging ``from left". With 
the same argument, replacing $t-h/q$ by $t+h/q$ everywhere, we get a sequence of simple rectifiable paths converging uniformly to a rectifiable path 
$\tilde{\gamma}_{t,+}$. Thus $\tilde{\gamma}_{t,+}$ is a limit of paths converging ``from right". 
Notice that both $|\tilde{\gamma}_{t,-}|$ and $|\tilde{\gamma}_{t,+}|$ 
are subsets of $u^{-1}(t)$. The first claim in the lemma follows if we can show that 
\begin{equation}
\label{zandig}
u^{-1}(t)=|\tilde{\gamma}_{t,-}| \cup |\tilde{\gamma}_{t,+}|. 
\end{equation}

Let $(\gamma_{k}^-)$ and $(\gamma_{k}^+)$ be the sequences of simple paths constructed above, such that 
$\gamma_{k}^- \to \tilde{\gamma}_{t,-}$ and $\gamma_{k}^+ \to \tilde{\gamma}_{t,+}$ uniformly 
as $k \to \infty$, and let $\Omega_k$ be the domain bounded by $|\gamma_k^-|$, $|\gamma_k^+|$, $\zeta_2$ and $\zeta_4$. Then $u^{-1}(t) = \cap_k \overline{\Omega}_k$. Since $u^{-1}(t)$ does not have interior points, it follows that for every $x \in u^{-1}(t)$ there is a sequence $(x_k^-)$ such that 
$x_k^- \in |\gamma_k^-|$ for every $k$ and $x_k^- \to x$, or a sequence $(x_k^+)$ such that 
$x_k^+ \in |\gamma_k^+|$ for every $k$ and $x_k^+ \to x$, or both. Thus, by the uniform convergence 
of the paths $\gamma_{k}^-$ and $\gamma_{k}^+$, $x \in |\tilde{\gamma}_{t,-}|$ or 
$x \in |\tilde{\gamma}_{t,+}|$. We conclude that \eqref{zandig} holds. The proof is complete. 
\end{proof}

\begin{proof}[Proof of Proposition \ref{levelsets}]
Again, we apply Lemma \ref{volder} with $\psi=u$, $\phi=1$, and choose $0<t<1$ such that $\varphi'(t)$ exists and the claims of Lemma \ref{mauko} hold. So $u^{-1}(t)$ contains a simple rectifiable path $\gamma_t$. We need to show that $u^{-1}(t)$ does not contain points outside $|\gamma_t|$. 

To prove this, it is convenient to use Euclidean coordinates. In other words, we now think of $d$ as a metric in $\R^2$. Then we may assume that $Q=[-1,1]^2$ and moreover that 
$|\gamma_t|=\{0\}\times [-1,1]$. Suppose there is a point $a \in u^{-1}(t) \setminus |\gamma_t|$. Then we may assume that 
$a \in \operatorname{int}Q$ removing, if necessary, at most countably many values of $t$ for which $u^{-1}(t)$ contains a non-trivial subcontinuum of $\partial Q$. 

Now we may assume that $a=(-1/2,0)$. Since $\haus^1(u^{-1}(t))<\infty$ by Lemma \ref{mauko},  Proposition \ref{findcurves} allows us to find a point $b \in |\gamma_t|$ and a simple path $\eta$, 
$|\eta| \subset u^{-1}(t)$, joining $a$ and $b$ in $Q$. Without loss of generality, $|\eta|=[-1/2,0] \times \{0\}$. 

Next let $I$ be the line segment $\{-1/4\} \times [-1,1]$. Then, when $h$ is small enough and $u^{-1}(t-h)$ contains a simple path $\gamma_{t-h}$, this path together with $I$ bounds a simply connected domain $U$, $a \in \partial U$, whose boundary 
is the union of a subcurve $J_1$ of $|\gamma_{t-h}|$, the subsegment 
$J'_0=[-1/2,-1/4]\times \{0\}$ of $|\eta|$, and two subsegments $J_2$ and $J_3$ of $I$. 

Now we slightly modify $U$ in order to have a Jordan domain $V \subset U$ to which condition \eqref{alaraja} can be applied. Since $u$ is continuous and $u=t$ on $J_0'$, we can choose a simple path $\eta'$ in $U$, depending on $h$,  close to $J'_0$ as follows: 
$u \geq t-h/2$ on $J_0=|\eta'|$ and $J_0$, $J_1$, $J_2$ and $J_3$ bound a Jordan domain $V$ such that $a \in \partial V$. What is important to 
us is that $J_0$ can be chosen so that there exists a constant $c>0$ not depending on $h$ such that whenever $\gamma$ is a path 
connecting $J_2$ and $J_3$ in $\overline{V}$, then $\ell(\gamma) \geq c$. Also, since $V \subset A_{t-h,t}$ and 
$$
\hausk(A_{t-h,t}) \leq 4 h\varphi'(t) 
$$ 
for $h$ small enough, we have 
\begin{equation}
\label{tura}
\modu(J_2,J_3;V) \leq 4c^{-2} h \varphi'(t)=4Ah, 
\end{equation}
where $A$ does not depend on $h$. Applying \eqref{alaraja} and \eqref{tura} shows that 
\begin{equation}
\label{broots}
\modu(J_0,J_1;V) \geq \frac{1}{4\kappa Ah}. 
\end{equation}
On the other hand, since $u=t-h$ on $J_1$ and $u \geq t-h/2$ on $J_0$, the function $2h^{-1}\rho$ is weakly admissible for 
$\Delta(J_0,J_1;V)$. Thus, by \eqref{broots} we have 
\begin{equation}
\label{real}
\frac{h}{16A \kappa} \leq \int_{V} \rho^2 \, d\hausk. 
\end{equation}

Notice that there is $\epsilon>0$ not depending on $h$ such that $\operatorname{dist}(I,|\gamma_t|) \geq \epsilon$. 
We claim that the function 
$$
\rho'=\rho \chi_{Q \setminus V} + \epsilon^{-1} h \chi_{A_{t-h,t}}
$$
is weakly admissible for $\Delta(\zeta_1,\zeta_3;Q)$. To see this, let $\gamma:[0,1] \to Q$ be a rectifiable path, 
$\gamma \notin \Gamma_0$, such that 
$\gamma(0) \in \zeta_1$ and $\gamma(1) \in \zeta_3$. Denote by $0<T<1$ the largest number such that 
$u(\gamma(T))=t-h$. 

Then, if $\gamma(S) \notin V$ for every $S >T$, we have   
$$
\int_{\gamma} \rho \chi_{Q \setminus V} ds \geq 1.  
$$
On the other hand, if $\gamma(S) \in V$ for some $S>T$, then  
$$
\int_{\gamma} \rho \chi_{Q \setminus V} ds \geq 1-h, 
$$
but also 
$$
\int_{\gamma}  \epsilon^{-1} h \chi_{A_{t-h,t}} ds \geq h 
$$
since a subpath of $\gamma$ joins $I$ and $\gamma_t$ in $A_{t-h,t}$. We conclude that $\rho'$ is indeed weakly admissible. 

Now, by \eqref{real} and H\"older's inequality, 
\begin{eqnarray*}
\int_Q (\rho')^2 \, d\hausk &=& 
\int_{Q\setminus  V} \rho^2 \, d\hausk + \epsilon^{-2} h^2 \hausk(A_{t-h,t}) +  
2\epsilon^{-1} h \int_Q \chi_{A_{t-h,t} \setminus V} \rho \, d\hausk \\
&\leq& -\frac{h}{16A\kappa} + \int_Q \rho^2 \, d\hausk + 4\epsilon^{-2} h^3 \varphi'(t) + 2\epsilon^{-1}M_1^{1/2}\varphi'(t)^{1/2}h^{3/2}.  
\end{eqnarray*}
We conclude that when $h$ is small enough, 
$$
\int_Q (\rho')^2 \, d\hausk < \int_Q \rho^2 \, d\hausk. 
$$
This contradicts the minimizing property of $\rho$. The proof is complete. 
\end{proof}

\section{Conjugate function $v$} \label{conjug}
In this section we construct a ``conjugate function" $v$ for our minimizing function $u$ and prove continuity. Then $f=(u,v)$ is the desired QC map in $Q$ if $X$ is reciprocal; this will be shown in the next sections. We note that the conjugate function is easier to find if $X$ is $1$-reciprocal. Indeed, if we construct $v$ as $u$ but replacing $\zeta_1$ and $\zeta_3$ with $\zeta_2$ and $\zeta_4$, respectively, then $f=(u,v):\operatorname{int} Q \to (0,1)^2$ is a conformal homeomorphism. In the general case of $\kappa$-reciprocal $X$ we have to work more to find $v$. The idea behind the construction is that $v$ should be defined integrating the minimizer $\rho$ over the level sets of $u$ in a suitable way.   



Recall the notation 
$$
A_{s,t}= \{ x \in Q: \, s <u(x)  < t\}. 
$$

\begin{lemma}
\label{techlevel}
Suppose $0 < t < 1$, and that the minimizer $u$ satisfies the conclusions of Theorem \ref{continuity}. Then for every $\epsilon>0$ there exists $h>0$ such that $A_{t-h,t}$ is contained in the $\epsilon$-neighborhood $N_{\epsilon}(u^{-1}(t))$ of $u^{-1}(t)$. 
\end{lemma}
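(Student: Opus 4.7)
The plan is to argue by contradiction using a standard compactness argument, relying only on the continuity of $u$ (Theorem \ref{continuity}) and the fact that $Q$, being homeomorphic to a closed square, is a compact subset of $X$.

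First I would observe that $u^{-1}(t)$ is a nonempty compact set. Compactness is immediate from continuity of $u$ and compactness of $Q$; nonemptiness follows because $Q$ is connected, $u$ is continuous on $Q$, and $u$ attains the values $0$ on $\zeta_1$ and $1$ on $\zeta_3$, so $u(Q)$ is a connected subset of $[0,1]$ containing both endpoints and hence all of $[0,1]$. In particular, the distance function $\delta(x) = \operatorname{dist}(x, u^{-1}(t))$ is continuous on $Q$ and vanishes precisely on $u^{-1}(t)$.

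Next, suppose the conclusion fails. Then there exist $\epsilon > 0$, a sequence $h_j \searrow 0$, and points $x_j \in A_{t-h_j,t}$ with $\delta(x_j) \geq \epsilon$. Since $Q$ is compact, we may pass to a subsequence and assume $x_j \to x_\infty \in Q$. Because $u(x_j) \in (t-h_j, t)$ and $h_j \to 0$, continuity of $u$ gives $u(x_\infty) = t$, so $x_\infty \in u^{-1}(t)$ and therefore $\delta(x_\infty) = 0$. On the other hand, continuity of $\delta$ yields $\delta(x_\infty) = \lim_j \delta(x_j) \geq \epsilon > 0$, a contradiction.

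I do not anticipate any real obstacle here: the statement is a soft continuity/compactness fact about the continuous function $u$ on the compact space $Q$, and none of the more subtle ingredients (upper gradients, reciprocality, modulus estimates) are needed. The only point to verify carefully is that the ambient topological assumption $Q \cong [0,1]^2$ indeed delivers compactness of $Q$ as a subset of $X$, which is automatic since the homeomorphism carries the compact Euclidean square to $Q$ with the subspace topology inherited from $X$.
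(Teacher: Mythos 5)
Your proof is correct and takes essentially the same approach as the paper: both argue by contradiction using compactness of $Q$ and continuity of $u$, the paper via the nested‑compact‑sets (finite intersection) formulation and you via the equivalent sequential‑compactness formulation. Your extra care about nonemptiness of $u^{-1}(t)$ is fine but not essential to the contradiction.
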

\begin{proof}
If the claim is not true, then for some $0<t<1$ and $\epsilon >0$, $F_h=\overline{A}_{t-h,t} \setminus N_{\epsilon}(u^{-1}(t))$ is non-empty for all $h$. But the sets $F_h$ are nested and compact, and 
$$
\bigcap_{h>0} F_h= u^{-1}(t)  \setminus N_{\epsilon}(u^{-1}(t)) = \emptyset
$$ 
by the continuity of $u$. This is a contradiction since the intersection of the sets $F_h$ cannot be empty. 
\end{proof}


We denote 
$$
\mathbb{F}=\{0<t<1: \,  u^{-1}(t) \text{ is a simple curve } |\gamma_t| \}, 
$$
where $\gamma_t:[0,1] \to Q$, $\gamma_t(0) \in \zeta_2$, $\gamma_t(1) \in \zeta_4$, and 
$$
U=\{x \in Q: \, u(x)=t \text{ for some } t \in \mathbb{F}\}. 
$$
Recall that, by Proposition \ref{levelsets}, the set $\mathbb{F}$ has full $1$-measure in $(0,1)$. Now let $x =\gamma_t (T) \in U$ and 
denote 
$$
N_{\epsilon,T} (\gamma_t)= Q \cap \Big( \cup_{0 \leq s \leq T} B(\gamma_t(s),\epsilon)\Big). 
$$ 
We define $v: \overline{U} \to [0,M_1]$ such that 
\begin{equation}
\label{mahdo}
v(x)=\lim_{\epsilon \to 0} \liminf_{h \to 0} \int_{N_{\epsilon,T}(\gamma_t)\cap A_{t-h,t}} \frac{\rho^2}{h} \, \dhaus^2  
\end{equation}
when $x \in U$ and 
$$
v(x)=\liminf_{y \in U, y \to x} v(y) 
$$
when $x \in \overline{U} \setminus U$. That $0 \leq v \leq M_1$ follows from Lemma \ref{dividingmod} and Lemma \ref{techlevel}. 
Also, notice that $\zeta_1 \cup \zeta_3 \subset \overline{U}$. 


The following proposition allows us to extend $v$ to all of $Q$. Recall the notation $\partial_*$ from Lemma \ref{epatoivo}. 

\begin{proposition}
\label{constbound}
Suppose $X$ satisfies \eqref{alaraja} and \eqref{nollamoduli}. Let $V$ be a connected component of $Q \setminus \overline{U}$. Then $v$ is constant on $\partial_* V$. 
\end{proposition}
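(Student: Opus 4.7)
Since $V\cap U=\emptyset$ and $u$ is continuous, the connected set $u(V)\subset(0,1)\setminus\mathbb{F}$ is a subset of a Lebesgue-null set by Proposition~\ref{levelsets}, so $u(V)=\{t_0\}$ for a single $t_0\in(0,1)\setminus\mathbb{F}$ and $u\equiv t_0$ on $\overline V$. As $t_0\in(0,1)$, $\overline V\cap(\zeta_1\cup\zeta_3)=\emptyset$, hence $\partial_* V=\partial V\cap Q$; every such point has $u$-value $t_0\notin\mathbb{F}$, so it lies in $\overline U\setminus U$ and its $v$-value is determined by the $\liminf$ rule in the definition of $v$. I also show $\rho=0$ a.e.\ on $V$: the competitor $\rho\chi_{Q\setminus V}$ is weakly admissible for $\Delta(\zeta_1,\zeta_3;Q)$, because for any $\gamma\notin\Gamma_0$ joining $\zeta_1$ and $\zeta_3$ the in-$V$ subpaths contribute zero net change to $u$ (as $u\equiv t_0$ on $V$), so the signed $u$-changes along the out-of-$V$ subpaths still sum to $1$, and the upper-gradient inequality on each such subpath (valid since $\Gamma_0$ is closed under subpaths) gives $\int_{\gamma\setminus V}\rho\ge 1$. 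The minimality of $\rho$ then forces $\int_V\rho^2\,d\hausk=0$.

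\textbf{Main argument via averaging.} To prove $v(x_1)=v(x_2)$ for $x_1,x_2\in\partial V\cap Q$, I will produce, for each small $\eta,\delta>0$, a level $t\in(t_0-\delta,t_0+\delta)\cap\mathbb{F}$ and parameters $T_1(t)<T_2(t)$ with $\gamma_t(T_i(t))$ close to $x_i$, such that $v(\gamma_t(T_2(t)))-v(\gamma_t(T_1(t)))\le C\eta/\delta$. The key input is the vanishing of $\rho$ on $V$: by outer regularity of $\rho^2\,d\hausk$, for any $\eta>0$ there is an open $W_\eta\supset V$ with $\int_{W_\eta}\rho^2\,d\hausk<\eta$. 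A Hausdorff-compactness argument in the spirit of Lemma~\ref{lowersemi} shows that the relevant arcs $\gamma_t([T_1(t),T_2(t)])$ accumulate on $\partial V$ as $t\to t_0$, so they lie in $W_\eta$ once $t$ is sufficiently close to $t_0$. A Fubini argument (using $|\{t\in(t_0-\delta,t_0+\delta):u(x)<t<u(x)+h\}|\le h$) then gives
\[
\int_{t_0-\delta}^{t_0+\delta}\frac{1}{h}\int_{(N_{\epsilon,T_2(t)}(\gamma_t)\setminus N_{\epsilon,T_1(t)}(\gamma_t))\cap A_{t-h,t}}\rho^2\,d\hausk\,dt\le \int_{W_\eta}\rho^2\,d\hausk<\eta,
\]
and Chebyshev produces a set of $t$'s of positive Lebesgue measure in $(t_0-\delta,t_0+\delta)\cap\mathbb{F}$ on which the inner expression is at most $\eta/\delta$. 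Applying Lemma~\ref{volder} at almost every such level to secure the existence of the relevant $\liminf_h$ and then passing to $\epsilon\to 0$ identifies these bounds with the $v$-difference above. Sending $\eta\to 0$ faster than $\delta$ yields sequences $y_i^n=\gamma_{t_n}(T_i(t_n))\to x_i$ with $v(y_2^n)-v(y_1^n)\to 0$, whence $v(x_1)=v(x_2)$. If $x_1,x_2$ are approachable only from opposite sides of $\partial V$, I chain through an intermediate boundary point accessible from both sides, or run the symmetric argument with $t_n\to t_0^+$.

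\textbf{Main obstacle.} The principal difficulty is the coordination of the iterated limits $h\to 0$, $\epsilon\to 0$, $t_n\to t_0$, $\eta\to 0$ implicit in the definition of $v$ together with the Fubini--Chebyshev selection; this requires restricting attention to levels $t\in\mathbb{F}$ where the derivatives of Lemma~\ref{volder} exist and where the $\liminf_h$'s defining $v$ at the two endpoints are actually attained, and it requires verifying that the chosen sequences $y_i^n$ indeed realize the $\liminf$s defining $v(x_1)$ and $v(x_2)$ rather than merely bounding them from above. A secondary delicate point is ensuring that one common level curve $\gamma_{t_n}$ passes close to both $x_1$ and $x_2$, which is straightforward when both are accessible from the same side but needs the chain argument in the mixed case; establishing the required Hausdorff-convergence of $\gamma_{t_n}$ to $\partial V$ uses the uniform bound on $\rho$-length of $\gamma_t$ together with Lemma~\ref{lowersemi}.
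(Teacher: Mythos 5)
Your preliminary reductions are fine and agree with the paper's starting point ($V\subset u^{-1}(t_0)$ for a single $t_0$, hence $\rho=0$ a.e.\ on $V$), but the main argument has a gap I do not see how to close within your framework. The paper proves the proposition by contradiction with a variational perturbation: assuming $v(a)\leq v(b)-8\mu$ for $a,b\in\partial_*V$, it builds a competitor $g=h^{-1}\rho\chi_\Omega+g_1+g_2+\delta^{-1}\chi_V$ that is weakly admissible for $\Delta(\zeta_1,\zeta_3;Q)$ --- the term $\delta^{-1}\chi_V$ bridges the gap between the neighborhoods of $a$ and $b$ along a path inside $V$ and costs nothing in the pairing $\int\rho g\,d\hausk$ because $\rho$ vanishes on $V$, while condition \eqref{nollamoduli} supplies the low-energy pieces $g_1,g_2$ near $a$ and $b$ --- and the first-variation inequality $\int\rho^2\leq\int\rho g$ then yields a contradiction with the minimality of $\rho$. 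Your averaging argument, even granting every technical patch, only produces \emph{one} pair of sequences $y_i^n\to x_i$ in $U$ with $v(y_2^n)-v(y_1^n)\to 0$. Since $v(x_i)$ is defined as $\liminf_{y\to x_i,\,y\in U}v(y)$, this shows at best that $v(x_1)$ and $v(x_2)$ are both bounded \emph{above} by the common limit of $v(y_i^n)$; it does not show that either equals that limit. Upgrading this requires controlling the oscillation of $v$ over \emph{all} of $U$ near each $x_i$, which is precisely the content of Proposition \ref{vcont} --- proved afterwards, and by the same variational mechanism as, the present proposition. Your proof nowhere invokes \eqref{nollamoduli}, which is the hypothesis that drives that mechanism; this is a symptom of the missing step rather than evidence that the hypothesis is superfluous.

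Two further, repairable, points. First, $v(\gamma_t(T_2))-v(\gamma_t(T_1))$ is a difference of iterated $\liminf$'s and is only formally bounded above by $\lim_{\epsilon}\limsup_{h}$ (not $\liminf_h$) of the integral over the set $(N_{\epsilon,T_2}(\gamma_t)\setminus N_{\epsilon,T_1}(\gamma_t))\cap A_{t-h,t}$; and since that set depends on $t$, Lemma \ref{volder} does not apply to it directly --- one must first majorize by a fixed open set $W_\eta$ of small $\rho^2$-mass containing all of $u^{-1}(t_0)$, not merely $V$, because the arcs $\gamma_t([T_1,T_2])$ are only guaranteed to accumulate on $u^{-1}(t_0)$, not on $\partial V$. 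These can be fixed; the gap in the previous paragraph is the essential one.
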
 

\begin{proof}
Let $V$ be a connected component of $Q \setminus \overline{U}$. We will argue by contradiction, assuming that $v$ is not constant on $\partial_* V$. First, notice that there exists $0<t_0<1$ such that $V \subset u^{-1}(t_0)$, by Proposition \ref{levelsets}. Therefore, Lemma \ref{dividingmod} implies that $\rho(x)=0$ for almost every $x \in V$. 

Now let $a,b \in \partial_*V$, such that $v(a) \leq v(b) -8\mu$, $\mu >0$. Then, by the definition of $v$ we can find a radius $r>0$ and 
points $x \in B(a,r) \cap U$, $y \in B(b,r) \cap U$ such that $v(x)\leq v(y)-7\mu$. Let $x=\gamma_t(T)$ and $y=\gamma_s(S)$. Then we furthermore find $0<\epsilon<r$ and $h>0$ such that 
\begin{eqnarray}
\label{kaa1} v(x) &\geq&  \int_{N_{\epsilon,T}(\gamma_t)\cap A_{t-h,t}} \frac{\rho^2}{h} \, \dhaus^2  -\mu, \\
\label{kaa2} v(y) &\leq&  \int_{N_{\epsilon,S}(\gamma_s)\cap A_{s-h,s}} \frac{\rho^2}{h} \, \dhaus^2 +\mu. 
\end{eqnarray}
Moreover, by Lemma \ref{techlevel} we may choose $h$ so that $A_{t-h,t} \subset N_{\epsilon}(\gamma_t)$ and 
$A_{s-h,s} \subset N_{\epsilon}(\gamma_s)$. In particular, \eqref{kaa2} and Lemma \ref{dividingmod} give 
\begin{equation}
\label{kaa3}  \int_{A_{s-h,s} \setminus N_{\epsilon,S}(\gamma_s)} \frac{\rho^2}{h} \, \dhaus^2 \leq M_1 -v(y) +\mu \leq M_1-v(x)- 6\mu.
\end{equation}
We denote 
$$
\Omega=(N_{\epsilon,T}(\gamma_t)\cap A_{t-h,t}) \cup (A_{s-h,s} \setminus N_{\epsilon,S}(\gamma_s)). 
$$
Then combining \eqref{kaa1} and \eqref{kaa3} gives 
\begin{equation}
\label{karaka}
\int_\Omega \frac{\rho^2}{h} \, d\hausk \leq M_1-5 \mu. 
\end{equation}
Now choose points $p \in B(a,r) \cap V$, $q \in B(b,r) \cap V$, and a simple path $\eta$ joining $p$ and $q$ in $V$. Moreover, choose $\delta>0$ small enough such that $N_{\delta}(|\eta|) \subset V$. By condition \eqref{nollamoduli} we can choose $r>0$ small enough to begin with so that there are 
Borel functions $g_1$ and $g_2$ such that 
$g_1$ is admissible for $\Delta(\zeta_1,B(a,2r);Q)$, $g_2$ is admissible for $\Delta(\zeta_1,B(b,2r);Q)$, and 
\begin{equation}
\label{riisi}
\int_Q  (g_1 +g_2)^2 \, d\hausk \leq \frac{\mu^2}{M_1}. 
\end{equation}
We now define a function $g$ by setting 
$$
g= h^{-1} \rho \chi_{\Omega}+g_1+g_2+\delta^{-1}\chi_V. 
$$
This definition of $g$ is motivated by the fact that $g$ is weakly admissible for $\Delta(\zeta_1,\zeta_3;Q)$. Indeed, let 
$\gamma \in \Delta(\zeta_1,\zeta_3;Q) \setminus \Gamma_0$. If $|\gamma|$ intersects $B(a,2r)$ then 
$\int_{\gamma} g_1 \, ds \geq 1$. Similarly, if  $|\gamma|$ intersects $B(b,2r)$ then 
$\int_{\gamma} g_2 \, ds \geq 1$. If $|\gamma|$ intersects $|\eta|$, then $\int_\gamma \delta^{-1}\chi_V \, ds \geq 1$. Otherwise 
$\gamma$ either passes through $A_{t-h,t}$ or $A_{s-h,s}$, in which case $\int_{\gamma} \rho \chi_\Omega/h \, ds \geq 1$. 

Now, for $0<m<1$, also the function 
$$
w_m = (1-m)\rho + mg 
$$
is weakly admissible for $\Delta(\zeta_1,\zeta_3;Q)$. By the minimizing property of $\rho$, we have 
\begin{equation}
\label{oulu}
\int_{Q} \rho^2 \, d\hausk \leq  \int_Q w_m^2 \, d\hausk. 
\end{equation}
Differentiating in \eqref{oulu} with respect to $m$ and setting $m =0$ then gives 
\begin{equation}
\label{juusi}
\int_Q \rho^2 \, d\hausk \leq \int_Q \rho g \, d\hausk. 
\end{equation}
To conclude the proof, we show that this is a contradiction. Recall that $\rho(x)=0$ for almost every $x \in V$. In particular, 
\begin{equation}
\label{kaka}
\int_Q \rho \delta^{-1}\chi_V \, d\hausk=0.
\end{equation} 
Also, H\"older's inequality, \eqref{riisi} and the minimizing property of $\rho$ give 
\begin{equation}
\label{loka}
\int_Q \rho (g_1+g_2) \, d\hausk \leq \mu. 
\end{equation}
Therefore, combining the definition of $g$ with \eqref{karaka}, \eqref{kaka} and \eqref{loka} gives 
$$
\int_Q \rho g \, d\hausk \leq M_1-4 \mu = \int_Q \rho^2 \, d\hausk - 4 \mu. 
$$
This contradicts \eqref{juusi}. The proof is complete. 
\end{proof}

By Proposition \ref{constbound}, we can extend $v$ to all of $Q$: if $V$ is a connected component of $Q \setminus \overline{U}$ 
and $x \in V$, then $v(x)=v(y)$, where $y$ is any point on $\partial_* V$. 


\begin{proposition}
\label{vcont}
Suppose $X$ satisfies \eqref{alaraja} and \eqref{nollamoduli}. Then $v$ is continuous in $Q$. Moreover, $v(x)=0$ for every $x \in \zeta_2$, and $v(x)=M_1$ for every $x \in \zeta_4$.  
\end{proposition}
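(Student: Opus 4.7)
The plan is to establish Proposition \ref{vcont} in three stages, closely modeled on the contradiction argument of Proposition \ref{constbound}.

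\textbf{Stage 1: Continuity on $U$.} We argue by contradiction, assuming $v$ fails to be continuous at some $x = \gamma_t(T) \in U$. Then there exist $\mu > 0$ and a sequence $y_n = \gamma_{t_n}(T_n) \to x$ in $U$ with, say, $v(y_n) \geq v(x) + 8\mu$. Using the definition \eqref{mahdo} of $v$, for large $n$ and suitable $\epsilon, h > 0$ we obtain analogues of \eqref{kaa1} and \eqref{kaa2}, and with the choice
$$
\Omega = (N_{\epsilon,T}(\gamma_t) \cap A_{t-h,t}) \cup (A_{t_n-h,t_n} \setminus N_{\epsilon,T_n}(\gamma_{t_n}))
$$
we obtain the analogue of \eqref{karaka}. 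Then, exactly as in Proposition \ref{constbound}, we combine $h^{-1}\rho \chi_\Omega$ with correction functions $g_1, g_2$ supplied by \eqref{nollamoduli} for small balls around $x$ and $y_n$ to form a function $g$ that is weakly admissible for $\Delta(\zeta_1, \zeta_3; Q)$, and deduce $\int \rho^2 \, d\hausk \leq \int \rho g \, d\hausk < \int \rho^2 \, d\hausk$ from the minimizing property of $\rho$, a contradiction.

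\textbf{Stage 2: Boundary values on $\zeta_2$ and $\zeta_4$.} For $x \in \zeta_2$, we show $v(x) = 0$. Removing at most countably many exceptional level values on which $u^{-1}(t)$ meets $\partial Q$ in a nontrivial subcontinuum, for each $t_n$ close to $u(x)$ the level curve $\gamma_{t_n}$ satisfies $\gamma_{t_n}(0) \in \zeta_2$ near $x$. Choosing the parametrization of $\gamma_{t_n}$ so that $T_n$ is the first time it reaches $y_n$, the continuity of $u$ combined with Proposition \ref{levelsets} ensures that the initial segments $\gamma_{t_n}([0, T_n])$ lie in arbitrarily small neighborhoods of $x$, and hence so do the tubes $N_{\epsilon, T_n}(\gamma_{t_n}) \cap A_{t_n-h, t_n}$ for small $\epsilon, h$. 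The $\rho^2/h$-integrals over these shrinking tubes tend to zero by absolute continuity of the integral $\int_Q \rho^2 \, d\hausk = M_1 < \infty$ combined with \eqref{nollamoduli}. Should $v(x) \geq \mu > 0$ persist in spite of this, the contradiction engine of Stage 1, applied with $y_n \in U$ accumulating at $x \in \zeta_2$, again defeats minimality of $\rho$. The case $x \in \zeta_4$ is symmetric: the complementary tube integral is then $M_1 h - o(h)$, forcing $v(x) = M_1$.

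\textbf{Stage 3: Extension to $Q$.} Proposition \ref{constbound} asserts $v$ is constant on $\partial_* V$ for every component $V$ of $Q \setminus \overline{U}$, and $v$ was defined equal to this constant throughout $V$. Combined with continuity on $\overline{U}$ from Stages 1--2 and with the liminf definition of $v$ on $\overline{U} \setminus U$, a routine limiting argument yields continuity at every point of $Q$.

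The main obstacle is Stage 1: the tubes around $x$ and $y_n$ must be arranged so that $\Omega$ cleanly captures the discrepancy $v(y_n) - v(x)$, and so that the test function $g$ is weakly admissible for rectifiable paths $\gamma \notin \Gamma_0$ that cross both level bands $A_{t-h,t}$ and $A_{t_n-h,t_n}$. The structure of level sets established in Section \ref{setsofu}---in particular that these bands are nested or nearly disjoint depending on the ordering of $t$ and $t_n$---is essential for verifying weak admissibility in the path-by-path case analysis.
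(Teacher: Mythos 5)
Your overall strategy is the same as the paper's: reduce everything to the behaviour of $v$ on $\overline{U}$ via Proposition \ref{constbound}, and rerun the contradiction engine of that proposition (the set $\Omega$ built from two tubes, correction functions supplied by \eqref{nollamoduli}, and the perturbation $(1-m)\rho+mg$ against the minimality of $\rho$). There is, however, a genuine gap between your Stage 1 and Stage 3. In Stage 1 you fix $x\in U$ and a sequence $y_n\to x$ in $U$; this only yields continuity of $v|_U$ \emph{at points of $U$}. At a point $a\in\overline{U}\setminus U$ the function is defined as $\liminf_{y\in U,\,y\to a}v(y)$, and pointwise continuity of $v|_U$ on the dense set $U$ does not prevent $v|_U$ from oscillating near $a$; so continuity at $a$ is not a ``routine limiting argument.'' What is actually needed --- and what the paper proves --- is the two-sided oscillation bound $\sup_{p,q\in B(a,r)\cap U}|v(p)-v(q)|\to 0$ as $r\to 0$ for an \emph{arbitrary} $a\in\overline{U}$. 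The fix is small: run your contradiction with $x,y$ any two points of $B(a,r)\cap U$ satisfying $v(x)\le v(y)-7\mu$, and note that since both lie in the same small ball a \emph{single} correction function $g_1$ admissible for $\Delta(\zeta_1,B(a,r);Q)$ suffices (this is the one structural simplification the paper makes relative to Proposition \ref{constbound}, where two separated balls require $g_1$ and $g_2$). With that oscillation bound in hand, continuity on $\overline{U}$, and then on all of $Q$ via the constancy of $v$ on components of $Q\setminus\overline{U}$, does follow.

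A secondary point: in Stage 2 your justification that the tube integrals $\int_{N_{\epsilon,T_n}(\gamma_{t_n})\cap A_{t_n-h,t_n}}\rho^2/h$ tend to zero ``by absolute continuity of the integral'' is not valid as stated, because the integrand carries the factor $1/h$ and $h\to 0$ is taken before $\epsilon\to 0$; absolute continuity of $\rho^2\,d\hausk$ gives no control on $h^{-1}\int_{E_h}\rho^2$ for sets with $\hausk(E_h)\to 0$. The boundary values should instead be extracted from the definition of $v$ together with the exact identity of Lemma \ref{dividingmod} (which immediately gives $v=M_1$ on $\zeta_4\cap U$, since $A_{t-h,t}\subset N_{\epsilon}(\gamma_t)=N_{\epsilon,1}(\gamma_t)$ for small $h$ by Lemma \ref{techlevel}), the already-established continuity, and Proposition \ref{constbound}; this is the route the paper indicates.
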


\begin{proof}
The proof is similar to the proof of Proposition \ref{constbound}. In view of the definition of $v$ and Proposition \ref{constbound}, 
it suffices to show that $v|\overline{U}$ is continuous. Fix $a \in \overline{U}$. Moreover, let $\mu, r >0$, $x,y \in B(a,r) \cap U$, and suppose 
$v(x) \leq v(y)-7 \mu$. Using the notation of Proposition \ref{constbound} for $v(x)$ and $v(y)$, we then conclude that \eqref{karaka} holds. This time we choose $r>0$ small enough and a Borel function $g_1$ which is admissible for $\Delta(\zeta_1,B(a,r);Q)$, such that \eqref{riisi} holds with $g_2$ removed from the estimate. Then we define 
$$
g= h^{-1} \rho \chi_{\Omega}+g_1,  
$$
and conclude as above that $g$ is weakly admissible for $\Delta(\zeta_1,\zeta_3;Q)$. This then leads to a contradiction 
precisely as in the proof of Proposition \ref{constbound}. So we conclude that, when $r>0$ is small enough, 
$v(y) \leq v(x) + 7 \mu$. Interchanging the roles of $x$ and $y$, and recalling the definition of $v$, we then have 
$$
\sup_{p,q \in B(a,r) \cap \overline{U}} |v(p)-v(q)| \to 0 \quad \text{as }  r \to 0. 
$$
We conclude that $v|\overline{U}$ is continuous at $a$, and furthermore that $v$ is continuous at every $b \in Q$ by the discussion above. The claims of the proposition now follow directly from the definition and continuity of $v$, Lemma \ref{dividingmod}, and Proposition \ref{constbound}. 
\end{proof}


We orient $X$ so that winding around $\partial Q$ starting from $\zeta_1$ and ending at $\zeta_4$ 
defines positive orientation. 

\begin{corollary}
\label{royh}
Suppose $X$ satisfies \eqref{alaraja} and \eqref{nollamoduli}. Then  
$$
f:=(u,v):Q \to [0,1]\times [0,M_1]
$$
is continuous and surjective. Moreover, for every $z \in (0,1)\times (0,M_1)$ we have 
$$
\operatorname{deg}(z,f,Q)=1, 
$$
where $\operatorname{deg}(z,f,Q)$ is the topological degree of $z$ with respect to $f$ and the domain $Q$. 
\end{corollary}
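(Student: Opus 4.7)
The plan is to reduce the corollary to three routine verifications—continuity, image containment, and boundary behavior—and then to a winding number computation on the boundary.

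First, $f = (u,v)$ is continuous because $u$ is continuous by Theorem \ref{continuity} and $v$ is continuous by Proposition \ref{vcont}. The image lies in $[0,1]\times[0,M_1]$ since $0 \le u \le 1$ by Lemma \ref{nollayksi} and $0 \le v \le M_1$ directly from the definition of $v$ (using Lemma \ref{dividingmod} and Lemma \ref{techlevel} on $\overline{U}$, and the extension step via Proposition \ref{constbound} elsewhere).

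Next I would record the boundary behavior. By Theorem \ref{continuity}, $u=0$ on $\zeta_1$ and $u=1$ on $\zeta_3$; by Proposition \ref{vcont}, $v=0$ on $\zeta_2$ and $v=M_1$ on $\zeta_4$. Hence $f$ maps each side $\zeta_i$ into the corresponding side of the target rectangle $R := [0,1]\times[0,M_1]$ (so $\zeta_1$ into the left edge, $\zeta_2$ into the bottom edge, $\zeta_3$ into the right edge, $\zeta_4$ into the top edge), and it sends the four corners $\zeta_i \cap \zeta_{i+1}$ to the corresponding corners of $R$; in particular $f(\partial Q) \subset \partial R$.

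For the degree statement, fix $z \in (0,1)\times(0,M_1)$. Since $z \notin f(\partial Q)$, the Brouwer degree $\operatorname{deg}(z,f,Q)$ is defined and equals the winding number of $f|_{\partial Q}$ around $z$. Orienting $\partial R$ counterclockwise, the cyclic order of its sides matches the cyclic order $\zeta_1,\zeta_2,\zeta_3,\zeta_4$ under the correspondence above; this is exactly what the positive orientation convention on $X$ was chosen to ensure. Since each restriction $f|_{\zeta_i}$ is a continuous path lying in a single side of $R$ with its endpoints prescribed to be the corresponding corners of $R$, and a side of $R$ is contractible rel endpoints, each $f|_{\zeta_i}$ is homotopic rel endpoints to the affine parametrization of that side. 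Concatenating these four homotopies produces a homotopy of $f|_{\partial Q}$ to a homeomorphism $\partial Q \to \partial R$ respecting orientations, which has winding number $1$ around any interior point of $R$. Therefore $\operatorname{deg}(z,f,Q)=1$.

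Surjectivity then follows: the degree being nonzero forces $(0,1)\times(0,M_1) \subset f(Q)$, and since $f(Q)$ is compact (hence closed in $R$) it contains the closure of this open rectangle, i.e.\ all of $R$. The only point requiring mild care is the orientation bookkeeping in the winding number computation, but this is handled by the choice of positive orientation on $X$ fixed immediately before the corollary.
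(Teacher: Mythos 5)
Your proposal is correct and follows the same route as the paper's (much terser) proof: continuity from Theorem \ref{continuity} and Proposition \ref{vcont}, the observation that $f$ sends $\partial Q$ to $\partial([0,1]\times[0,M_1])$ winding once with positive orientation so that $\operatorname{deg}(z,f,Q)=1$, and surjectivity as a consequence of the nonvanishing degree. The extra detail you supply (side-by-side boundary correspondence and the homotopy rel endpoints to an affine parametrization) is a correct fleshing-out of the winding-number claim the paper simply asserts.
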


\begin{proof}
First, $f$ is continuous by Theorem \ref{continuity} and Proposition \ref{vcont}. Also, $f$ maps $\partial Q$ onto 
$\partial ([0,1]\times [0,M_1])$ winding around once with positive orientation, so the topological degree $\operatorname{deg}(z,f,Q)=1$ for all $z \in (0,1)\times (0,M_1)$. In particular, $f$ is surjective 
(see \cite[Chapter II]{RR}). 
 
\end{proof}


\section{Change of variables with $f=(u,v)$} 
\label{condo}
In order to prove quasiconformality, we need to establish analytic properties for $f$. In this section we prove a change of 
variables formula by employing decompositions of the rectangle $[0,1]\times [0,M_1]$ and the corresponding preimages. 

We decompose the interval $[0,1]\times [0,M_1]$ as follows. We first choose $k_0 \in \mathbb{Z}$ and $2^{-1} <m_1\leq 1$ 
such that 
\begin{equation}
\label{kankara}
m_1 2^{k_0}=M_1. 
\end{equation}

Let $k \in \mathbb{N}$, $k \geq -k_0+2$, and consider the 
rectangles 
\begin{equation}
\label{changes}
R(i,j,k)=[i2^{-k},(i+1)2^{-k}]\times[j2^{-k}m_1,(j+1)2^{-k}m_1],  
\end{equation}
where $0\leq i \leq 2^{k}-1$, $0\leq j \leq 2^{k+k_0}-1$. Then two rectangles either coincide or have disjoint interiors, and the union of all the rectangles covers all of $[0,1]\times[0,M_1]$. We denote 
$$
Q(i,j,k)=f^{-1}(R(i,j,k)), 
$$
and by $\tilde{Q}(i,j,k)$ the interior of $Q(i,j,k)$. Also, when $(i,j,k)$ is fixed, and $0\leq s <t \leq 1$, we use the notation  
$$
A_{s,t}(i,j,k)=A_{s,t} \cap \tilde{Q}(i,j,k). 
$$




\begin{lemma} 
\label{tasan}
Suppose $X$ satisfies \eqref{alaraja} and \eqref{nollamoduli}. Then we have 
\begin{equation}
\label{tasa}
\int_{Q(i,j,k)} \rho^2 \, d\haus^2 = 2^{-2k}m_1  \quad \text{and} \quad \int_{\partial Q(i,j,k)} \rho^2 \, d\haus^2=0
\end{equation}
for every $(i,j,k)$ as above.  
\end{lemma}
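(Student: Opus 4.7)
The first identity says that $f = (u, v)$ pushes $\rho^2 \, d\haus^2$ forward to Lebesgue measure on the dyadic rectangles. My plan is to establish first the bilinear formula
\[
\int_{A_{s_1, t_1} \cap B_{s_2, t_2}} \rho^2 \, d\haus^2 = (t_1 - s_1)(t_2 - s_2)
\]
for all $0 \leq s_1 < t_1 \leq 1$ and $0 \leq s_2 < t_2 \leq M_1$, where $B_{s_2, t_2} := \{x \in Q : s_2 < v(x) < t_2\}$. Summing this bilinear formula in $v$ across $[0, M_1]$ recovers Lemma \ref{dividingmod}, so it is consistent; specialized to the dyadic parameters defining $Q(i,j,k)$ it gives $2^{-k} \cdot 2^{-k} m_1 = 2^{-2k} m_1$, as required for the first identity of the lemma.

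The bilinear formula is the main step, and my approach would exploit the defining relation \eqref{mahdo} for $v$. For $t$ in the full-measure set $\mathbb{F}$ (Proposition \ref{levelsets}) and $x = \gamma_t(T)$, the value $v(x)$ measures the asymptotic $\rho^2$-density per unit vertical width carried by the ``left'' sub-strip $N_{\epsilon, T}(\gamma_t) \cap A_{t-h, t}$ as $h \to 0$ and $\epsilon \to 0$. Comparing two points $x_i = \gamma_t(T_i)$ with $T_1 < T_2$ on the same level curve identifies the difference $v(x_2) - v(x_1)$ with the corresponding asymptotic mass in the intermediate slab; integrating in $t$ over $[s_1, t_1]$ and using Lemma \ref{dividingmod} delivers the bilinear formula. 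The main technical obstacle is upgrading this asymptotic statement to an exact equality: if the formula were violated for some slice, I would construct a perturbation of $\rho$ on the offending piece (balanced on the complementary pieces) that remains weakly admissible for $\Delta(\zeta_1, \zeta_3; Q)$ but has strictly smaller $L^2$-energy, contradicting the minimizing property of $\rho$. This is in the spirit of the contradiction argument used in Proposition \ref{constbound}.

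Given the bilinear formula, both identities in the lemma follow quickly. For the first, $Q(i,j,k)$ and the open set $A_{i2^{-k}, (i+1)2^{-k}} \cap B_{j2^{-k}m_1, (j+1)2^{-k}m_1}$ differ only along $\partial Q(i,j,k)$, so it suffices to know that the boundary is $\rho^2$-null; this is exactly the second identity. The boundary lies in the union of the $u$-level sets $u^{-1}(c)$ for $c \in \{i2^{-k}, (i+1)2^{-k}\}$ and the $v$-level sets $v^{-1}(c)$ for $c \in \{j2^{-k}m_1, (j+1)2^{-k}m_1\}$. For any $0 < c < 1$, the disjoint decomposition $A_{c - \epsilon, c + \epsilon} = A_{c - \epsilon, c} \sqcup u^{-1}(c) \sqcup A_{c, c + \epsilon}$ together with Lemma \ref{dividingmod} applied to each of the three open strips forces $\int_{u^{-1}(c)} \rho^2 \, d\haus^2 = 0$, and the analogous decomposition in $v$, using the bilinear formula in place of Lemma \ref{dividingmod}, gives $\int_{v^{-1}(c)} \rho^2 \, d\haus^2 = 0$.
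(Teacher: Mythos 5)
Your proposal is correct and follows essentially the same route as the paper: both arguments hinge on using the definition \eqref{mahdo} of $v$ together with a Proposition \ref{constbound}-style perturbation of $\rho$ to bound the $\rho^2$-mass of each piece from below by the corresponding increment of $v$ along a level curve, and then on a complementarity argument with Lemma \ref{dividingmod} (the total mass being exactly $M_1$) to upgrade the one-sided bounds to equalities. The paper organizes this slightly differently --- it proves only the lower bound $\int_{\tilde{Q}(i,j,k)} \rho^2 \, d\hausk \geq 2^{-2k}m_1$ for each cell and then sums over the entire dyadic grid, so that the exact equality and the nullity of the boundaries drop out simultaneously from $\sum 2^{-2k}m_1 = M_1 = \int_Q \rho^2 \, d\hausk$ --- but this is the same mechanism as your ``balancing on the complementary pieces.''
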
 

\begin{proof}
Fix $k \geq -k_0+2$. We claim that 
\begin{equation}
\label{ooho}
\int_{\tilde{Q}(i,j,k)} \rho^2 \, d\haus^2 \geq  2^{-2k}m_1
\end{equation}
for every $(i,j)$. Suppose to the contrary that 
\eqref{ooho} does not apply for some $(i,j)$. Setting 
$$
\varphi(E)=\int_{u^{-1}(E) \cap \tilde{Q}(i,j,k)}  \rho^2 \, d\hausk, 
$$
we get a set function for which Lemma \ref{volder} can be applied. Since \eqref{ooho} does not hold, Lemma \ref{volder} shows that there exists a set $G \subset (i2^{-k},(i+1)2^{-k})$ of positive $1$-measure such that for every $t \in G$ 
\begin{equation}
\label{laski}
\lim_{h \to 0} \int_{A_{t-h,t}(i,j,k)} \frac{ \rho^2}{h} \, d\hausk < 2^{-k}m_1. 
\end{equation}
In particular, for some $t \in G$ the level set $u^{-1}(t)$ is a simple curve $\gamma_t$. 
By the definition of $Q(i,j,k)$, we find $a=\gamma_t(T)$ and $b=\gamma_t(S)$ such that 
$v(a)=j2^{-k}m_1$, $v(b)=(j+1)2^{-k}m_1$, and $\gamma_t(q) \in Q(i,j,k)$ for every $T \leq q \leq S$.  
The definition of $v$ now implies 
$$
2^{-k}m_1=v(b)-v(a) \leq \lim_{h \to 0} \int_{A_{t-h,t}(i,j,k)} \frac{ \rho^2}{h} \, d\hausk, 
$$
contradicting \eqref{laski} (the detailed proof of the last inequality involves the argument used in the proof of Proposition \ref{constbound} and is left to the reader). We conclude that \eqref{ooho} holds. 

Now we can apply \eqref{kankara} and \eqref{ooho} to all $(i,j)$ to estimate
\begin{eqnarray*}
M_1 &=& \sum_{i=0}^{2^k-1} \sum_{j=0}^{2^{k+k_0}-1} 2^{-2k}m_1  \\ 
&\leq & 
\sum_{i=0}^{2^k-1} \sum_{j=0}^{2^{k+k_0}-1} \int _{\tilde{Q}(i,j,k)} \rho^2 \, d\haus^2 
\leq \int_Q \rho^2 \, d\hausk=M_1.
\end{eqnarray*}
This gives \eqref{tasa}. The proof is complete. 

\end{proof}

Applying Lemma \ref{tasan} gives the desired change of variables formula. 

\begin{proposition}
\label{huuto}
Suppose $X$ satisfies \eqref{alaraja} and \eqref{nollamoduli}. If 
$g: [0,1]\times[0,M_1] \to [0,\infty]$ is Borel measurable, then 
$$
\int_{[0,1]\times[0,M_1]} g(y) \, dy =\int_{Q} g(f(x)) \rho^2(x) \, d\hausk(x). 
$$
\end{proposition}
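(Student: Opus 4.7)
The plan is to interpret both sides of the identity as integrals of $g$ against Borel measures on $[0,1]\times[0,M_1]$ and then show those two measures coincide. Define
$$\nu(E) := \int_{f^{-1}(E)} \rho^2 \, d\hausk, \qquad E \subset [0,1]\times[0,M_1] \text{ Borel}.$$
Since $f$ is continuous by Corollary \ref{royh}, the preimage $f^{-1}(E)$ is Borel in $Q$, so $\nu$ is well-defined; monotone convergence gives $\sigma$-additivity, and $\nu$ is a finite Borel measure with total mass $\int_Q \rho^2 \, d\hausk = M_1 = |[0,1]\times[0,M_1]|$. The asserted formula is precisely $\int g \, dy = \int g \, d\nu$, so it suffices to show that $\nu$ equals Lebesgue measure $|\cdot|$ on the Borel $\sigma$-algebra.

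By the first half of Lemma \ref{tasan}, for every dyadic rectangle $R(i,j,k)$ as in \eqref{changes} we have $\nu(R(i,j,k)) = \int_{Q(i,j,k)} \rho^2 \, d\hausk = 2^{-2k}m_1 = |R(i,j,k)|$. The main technical point is to upgrade this to $\nu(\partial R(i,j,k)) = 0$. Each edge of $\partial R(i,j,k)$ has the form $\{i 2^{-k}\}\times[\,\cdot\,]$ or $[\,\cdot\,]\times\{j 2^{-k}m_1\}$, hence its $f$-preimage lies inside a level set $u^{-1}(\text{dyadic})$ or $v^{-1}(\text{dyadic})$. Any point of such a level set that is not already on $\partial Q$ lies in the common topological boundary $\partial Q(i',j',k) \cap \partial Q(i'',j'',k)$ of the two adjacent dyadic cells (since $\tilde Q(i',j',k)$ is mapped into the open rectangle $\operatorname{int}R(i',j',k)$). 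The second half of Lemma \ref{tasan} says $\int_{\partial Q(i',j',k)} \rho^2 \, d\hausk = 0$ for each of the finitely many neighbors, so $\nu(\partial R(i,j,k)) = 0$.

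Combining the two previous facts, $\nu$ agrees with Lebesgue measure on every half-open dyadic rectangle $[i 2^{-k},(i+1)2^{-k})\times [j 2^{-k}m_1,(j+1)2^{-k}m_1)$. These sets form a $\pi$-system generating the Borel $\sigma$-algebra of $[0,1]\times[0,M_1]$, and both measures are finite, so by the $\pi$-$\lambda$ theorem (Dynkin's uniqueness) we conclude $\nu = |\cdot|$ on all Borel subsets. The change of variables formula
$$\int_{[0,1]\times[0,M_1]} g(y)\,dy = \int_Q g(f(x))\rho^2(x)\,d\hausk(x)$$
then follows by the standard machine: it holds for $g=\chi_E$ by the very definition of $\nu$, extends to non-negative simple functions by linearity, and to arbitrary Borel measurable $g\ge 0$ by monotone convergence.

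The only nontrivial step is the boundary estimate $\nu(\partial R(i,j,k)) = 0$; the second clause of Lemma \ref{tasan} is tailor-made for precisely this, and the rest is routine measure theory.
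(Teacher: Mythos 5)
Your overall strategy --- identify the set function $\nu(E)=\int_{f^{-1}(E)}\rho^2\,d\hausk$ with Lebesgue measure via the dyadic $\pi$-system and Dynkin's theorem, then run the standard machine --- is a legitimate alternative to the paper's route, and it does hinge on the same input, Lemma \ref{tasan}. However, there is a genuine gap in the one step you yourself single out as nontrivial. Your proof that $\nu(\partial R(i,j,k))=0$ rests on the parenthetical claim that $f$ maps $\tilde Q(i',j',k)$ into the \emph{open} rectangle $\operatorname{int}R(i',j',k)$, so that the preimage of a shared edge is trapped in $\partial Q(i',j',k)\cap\partial Q(i'',j'',k)$. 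That claim is a form of openness of $f$ (interior points go to interior points), and at this stage of the paper it is not available: all we know about $f$ is that it is continuous, surjective, and of degree $1$ (Corollary \ref{royh}). A continuous non-injective map can perfectly well send an open set into a line, and injectivity of $f$ is only established in Proposition \ref{homeo}, whose proof (via Lemma \ref{jencri}) uses Proposition \ref{huuto}. So as written the argument is circular at its key point.

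The good news is that the gap is easily repaired, and in a way that needs only the first identity in \eqref{tasa}. Every horizontal line $[0,1]\times\{s\}$ is contained, for each level $k'$, in the union of at most two rows $\bigcup_i R(i,j,k')$ with $s\in[j2^{-k'}m_1,(j+1)2^{-k'}m_1]$, whence $\nu([0,1]\times\{s\})\leq 2\cdot 2^{k'}\cdot 2^{-2k'}m_1\to 0$; similarly $\nu(\{t\}\times[0,M_1])\leq 2\cdot 2^{k'+k_0}\cdot 2^{-2k'}m_1\to 0$. Since $\partial R(i,j,k)$ lies in four such lines, $\nu(\partial R(i,j,k))=0$, and the rest of your argument goes through (modulo the small point that the half-open dyadic rectangles only generate the Borel sets of $[0,1)\times[0,M_1)$; the two missing edges are handled by the same line argument, or by comparing total masses). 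For comparison, the paper proves the identity directly for dyadic step functions and then approximates a general bounded $g$ in $L^1$ by such step functions converging a.e.; its key technical point is that Lebesgue-null sets have $\nu$-null preimages, which it proves by covering $f(Q\setminus E)$ with dyadic rectangles of small total area and invoking \eqref{tasa}. That covering argument is exactly the kind of estimate your proof is missing, so the two approaches are close cousins; yours is tidier once the boundary estimate is in place, but that estimate must be supplied honestly rather than via the unproved openness of $f$.
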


\begin{proof}
By Monotone Convergence, we may assume that $g$ is bounded. Let first $g_k$ be of the form 
\begin{equation}
\label{icicles}
g_k=\sum_j a_j \chi_{R_j}, 
\end{equation}
where $a_j \geq 0$ and $R_j$ is a rectangle of the form \eqref{changes} for every $j$, such that the rectangles have disjoint interiors. Then \eqref{tasa} gives 
\begin{eqnarray*}
\int_{[0,1]\times [0,M_1]} g_k(y) \, dy &=& \sum_j a_j |R_j|=\sum_j \int_{Q_j} a_j \rho^2(x)\, d\hausk(x) \\
&=& \int_Q g_k(f(x))\rho^2(x)\, d\hausk(x). 
\end{eqnarray*}

In general, the bounded Borel function $g$ can be approximated in $L^1([0,1]\times [0,M_1])$ by continuous functions and furthermore by uniformly bounded functions $g_k$ of the form \eqref{icicles} such that 
$$
g_k(y)\to g(y) \quad \text{for almost every } y \in [0,1]\times [0,M_1].  
$$
Now it suffices to show that
$$
\lim_{k \to \infty} \int_Q g_k(f(x))\rho^2(x)\, d\hausk(x) = \int_Q g(f(x))\rho^2(x)\, d\hausk(x). 
$$
We claim that the set $E= \{x \in Q: \, g_k(f(x)) \to g(f(x)) \}$ satisfies 
\begin{equation}
\label{aggae}
\int_{Q \setminus E} \rho^2(x) \, d\hausk(x)=0. 
\end{equation}
The proposition follows from \eqref{aggae}, the definition of $E$, and Dominated Convergence. 

To prove \eqref{aggae}, notice that $|f(Q\setminus E)|=0$. Hence, applying 
\eqref{tasa} again shows that, given $\epsilon>0$, the set $f(Q \setminus E)$ can be covered by rectangles $R_\ell$ of  the form \eqref{icicles} such that 
$$\int_{Q \setminus E} \rho^2(x) \, d\hausk(x)  \leq \sum_{\ell} \int_{Q_\ell} \rho^2(x) \, d\hausk(x) \\ 
= \sum_{\ell} |R_\ell|   < \epsilon.  
$$
So \eqref{aggae} follows. The proof is complete. 
\end{proof}

\begin{remark}
In the next section we show that $f$ is one-to-one. Combining this with Proposition \ref{huuto} 
shows that $f$ satisfies Condition $(N)$: If $E \subset Q$ and $\hausk(E)=0$, then $|f(E)|=0$. 
To see this, apply the change of variables formula to the function $g=\chi_{f(E)}$. 
\end{remark}



\section{Invertibility of $f$} \label{homeosection}

In this section we show that the map $f:Q \to [0,1] \times [0,M_1]$ is a homeomorphism. In particular, $v$ is then defined by \eqref{mahdo} in every $x \in Q$. 

\begin{proposition}
\label{homeo}
Suppose $X$ satisfies \eqref{alaraja} and \eqref{nollamoduli}. Then $f:Q \to [0,1]\times[0,M_1]$ is a homeomorphism. 
\end{proposition}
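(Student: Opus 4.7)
The plan is to prove injectivity; since $Q$ is compact and $f$ is continuous and surjective by Corollary \ref{royh}, continuity of $f^{-1}$ will then be automatic. I would argue by contradiction, supposing there exist distinct $x, y \in Q$ with $f(x) = f(y) = (t_0, s_0)$.

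First I would dispose of the boundary. By Theorem \ref{continuity} we have $u = 0$ on $\zeta_1$ and $u = 1$ on $\zeta_3$, and by Proposition \ref{vcont} we have $v = 0$ on $\zeta_2$ and $v = M_1$ on $\zeta_4$. Hence $f$ carries each edge $\zeta_i$ into the corresponding edge of the rectangle $R = [0,1]\times[0,M_1]$. The degree-one condition of Corollary \ref{royh} means $f|_{\partial Q}$ winds once around $\partial R$; combined with continuity this forces the map on each edge to be monotone (non-decreasing after suitable parametrization) and surjective, hence a homeomorphism. Therefore I may assume $x,y\in\operatorname{int}Q$ and $(t_0,s_0)\in\operatorname{int}R$.

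The key step is injectivity on the level curves. For every $t\in\mathbb{F}$ consider the simple rectifiable curve $\gamma_t\colon[0,1]\to Q$ from Proposition \ref{levelsets}, with $\gamma_t(0)\in\zeta_2$ and $\gamma_t(1)\in\zeta_4$. By the definition \eqref{mahdo} of $v$ and the nesting $N_{\epsilon,T_1}(\gamma_t)\subset N_{\epsilon,T_2}(\gamma_t)$ for $T_1\le T_2$, the map $T\mapsto v(\gamma_t(T))$ is non-decreasing, continuous by Proposition \ref{vcont}, and sends $0\mapsto 0$, $1\mapsto M_1$. Strict monotonicity is the essential point. Suppose it failed on some interval $[T_1,T_2]$, so that $v\circ\gamma_t\equiv s'$ there; then the sub-arc $\Gamma=\gamma_t([T_1,T_2])$ would satisfy $\Gamma\subset f^{-1}(\{(t,s')\})$. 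Choose a dyadic rectangle $R(i,j,k)$ from \eqref{changes} with $(t,s')$ in its interior, so that $\Gamma\subset\tilde{Q}(i,j,k)$. The difference $v(\gamma_t(T_2))-v(\gamma_t(T_1))$ is given by the limit of $h^{-1}$ times the $\rho^2$-mass of the crescent $(N_{\epsilon,T_2}(\gamma_t)\setminus N_{\epsilon,T_1}(\gamma_t))\cap A_{t-h,t}$, and by assumption this vanishes. On the other hand, applying Proposition \ref{huuto} to the characteristic function of the thin strip $[t-h,t]\times[s'-\delta,s'+\delta]\subset R(i,j,k)$ and combining with Lemma \ref{tasan}, the $\rho^2$-mass of its preimage equals $2h\delta$. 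Pushing the preimage into a metric neighborhood of $\Gamma$ using the definition of $v$ at nearby points of $\gamma_t$ should then produce the bound $v(\gamma_t(T_2))-v(\gamma_t(T_1))\ge c>0$, a contradiction. Hence $v\circ\gamma_t$ is strictly increasing, so $f$ is injective on $|\gamma_t|$.

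To finish, if $t_0\in\mathbb{F}$ then both $x,y\in u^{-1}(t_0)=|\gamma_{t_0}|$ and the previous step gives $x=y$. For exceptional $t_0\notin\mathbb{F}$ I would approximate: pick $t_n\in\mathbb{F}$ with $t_n\to t_0$. For each $n$ the preimage $f^{-1}(t_n,s_0)$ is a single point $z_n$ by the preceding case together with $\deg(\cdot,f,Q)=1$; by compactness, after passing to a subsequence $z_n\to z_\infty\in f^{-1}(t_0,s_0)$. Choose disjoint neighborhoods $U_x\ni x$ and $U_y\ni y$. By a local degree computation around $x$ and $y$ and continuity of $f$, each of $U_x$ and $U_y$ must contain some preimage of $(t_n,s_0)$ for all large $n$, contradicting uniqueness of $z_n$. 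The main obstacle is the strict-monotonicity step: reconciling the horizontal flux definition \eqref{mahdo} of $v$ with the vertical dyadic decomposition of Section \ref{condo} requires a careful exchange of limits and the full strength of the mass identity in Lemma \ref{tasan}.
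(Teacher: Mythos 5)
Your reduction to strict monotonicity of $v\circ\gamma_t$ is the right way to organize the problem (and the paper also uses the monotonicity of $v$ on level curves at the corresponding point), but the argument you offer for strict monotonicity does not work, and this is precisely the hard core of the proposition. Suppose $v\circ\gamma_t\equiv s'$ on $[T_1,T_2]$. The preimage $f^{-1}\bigl((t-h,t)\times(s'-\delta,s'+\delta)\bigr)$ does, as $h\to 0$, concentrate near the arc of $\gamma_t$ on which $v\in[s'-\delta,s'+\delta]$ --- but that arc is $\gamma_t([T_1(\delta),T_2(\delta)])$ with $T_1(\delta)\le T_1$ and $T_2(\delta)\ge T_2$, strictly larger than $\Gamma$ for $\delta>0$. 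The flux through the crescent around this larger arc is $v(\gamma_t(T_2(\delta)))-v(\gamma_t(T_1(\delta)))\approx 2\delta$, which exactly matches the mass $2h\delta$ of the strip supplied by Proposition \ref{huuto}. So the bookkeeping closes up with no contradiction: the constant $c>0$ you hope to extract degenerates to $2\delta$, and $\delta$ must be sent to $0$ before you can localize onto $\Gamma$ itself. The change-of-variables identity and Lemma \ref{tasan} are consistent with $f$ collapsing a nontrivial arc to a point (the fiber has measure zero either way), so no amount of ``careful exchange of limits'' in this mass count can rule it out. The same gap silently infects your boundary step: degree one on $\partial Q$ gives only weak monotonicity of $u$ on $\zeta_2$, not injectivity, so ``hence a homeomorphism'' on the edges is unjustified for the same reason.

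What is actually needed, and what the paper supplies in Lemmas \ref{floyd} and \ref{jencri}, is a length--area (logarithmic modulus) argument showing that every fiber $f^{-1}(z)$ is totally disconnected: paths $\beta_s$ sweeping from a putative fiber continuum $F$ to a disjoint continuum $E$ must cross each dyadic annulus $f^{-1}(\overline{B}(z,2^{j+1}r)\setminus B(z,2^jr))$, and on each crossing either $u$ or $v$ changes by $2^{j-1}r$; summing over $j$ yields a lower bound of order $\log_2(R/r)$ for an integral of $\rho/|f-z|$ (split into a path part and a flux part), while the coarea inequality, Cauchy--Schwarz and Proposition \ref{huuto} bound the same quantity by a constant times $(\log_2(1/r))^{1/2}$; letting $r\to 0$ gives the contradiction. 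Once total disconnectedness is known, your observation that $v$ is monotone on $|\gamma_t|$ immediately forces each fiber over $(t,s)$ with $t\in\mathbb{F}$ to be a point, and the exceptional $t\notin\mathbb{F}$ are handled in the paper by a separation argument with $C=\overline{\{u<t\}}\cap\overline{\{u>t\}}$ and density of $U$ rather than by your local-degree approximation (which would additionally require justifying that the local index at $y$ is nonzero). In short: the missing ingredient is the modulus estimate of Lemma \ref{floyd}, and without it the proof does not go through.
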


The rest of this section is devoted to the proof of Proposition \ref{homeo}. We first show that $f^{-1}(z)$ does not 
contain non-trivial continua for any $z \in [0,1]\times [0,M_1]$. 





Suppose $F \subset f^{-1}(z)$ is a non-trivial continuum. There exists a non-trivial continuum $E \subset \operatorname{int} Q$ such that $E \cap f^{-1}(z)= \emptyset$. 

We first give an estimate resembling a lower modulus bound. Let $\tau:[0,1]\to Q$ be a simple path such that 
$\tau(T) \in E$ if and only if $T=0$ and $\tau(T)\in F$ if and only if $T=1$. If there exists $a \in F \cap \operatorname{int} Q$, 
then we choose $\tau$ so that $\tau(1)=a$. Otherwise $F$ is a simple curve in $\partial Q$, and we choose $\tau$ so that 
$\tau(1)$ is not a boundary point of $F$ in $\partial Q$.  

Consider the distance function 
$$
\psi(x)=\operatorname{dist}(x,|\tau|). 
$$
Now $\psi$ is $1$-Lipschitz, and there exists $\delta >0$ such that $\psi^{-1}(\delta)$ intersects both $E$ and $F$. 
Moreover, by Lemma \ref{separates} we can choose a connected component $G$ of 
$$
X \setminus (E \cup F \cup |\tau| \cup \psi^{-1}(\delta))
$$ 
such that $|\tau| \subset \partial G$. Notice that $G \subset \psi^{-1}((0,\delta)) \cap Q$. 

By Proposition \ref{findcurves} and Proposition \ref{coarea}, for almost every $0<s<\delta$ the preimage $\psi^{-1}(s)$ 
contains a simple rectifiable path 
$$
\beta_s:[0,1] \to \psi^{-1}(s), \quad \beta_s \notin \Gamma_0, 
$$
in $\overline{G}$ such that $\beta_s(T) \in E$ if and only if $T=0$ and $\beta_s(T)\in F$ if and only if $T=1$ (recall that $\Gamma_0$ is an exceptional path family of modulus zero). Then, if $0<s-h<s$, there exists a unique component $V_{s-h,s} \subset G$ of 
$$
X \setminus (E \cup F \cup |\beta_s| \cup \psi^{-1}(s-h)) 
$$
such that $|\beta_s| \subset \partial V_{s-h,s}$. 

For the rest of this section, we use the notation $B'(r)=f^{-1}(B(z,r))$. 

\begin{lemma}
\label{floyd}
Suppose $X$ satisfies \eqref{alaraja} and \eqref{nollamoduli}. Fix $0<s<\delta$ as above. 
Moreover, let $R>r>0$ such that $B'(2R) \cap E = \emptyset$. Then 
\begin{equation}
\label{ovela}
\frac{1}{4} \log_2 \frac{R}{r} \leq \int_{|\beta_s|} \frac{\rho\chi_{Q\setminus B'(r)}}{|f-z|} \, d\haus^1 +\liminf_{h \to 0} \int_{V_{s-h,s}} \frac{\rho \chi_{Q \setminus B'(r)}}{h{|f-z|}}\, d\hausk. 
\end{equation}
\end{lemma}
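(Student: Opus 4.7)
The strategy is a dyadic decomposition by the distance $|f-z|$, with a per-scale contribution of at least $\tfrac14$ coming from either a line integral along $\beta_s$ or an area integral over $V_{s-h,s}$. Set $N=\lfloor \log_2(R/r)\rfloor$ and $r_k=2^{-k}R$ for $0\le k\le N$. Since $\beta_s(0)\in E\subset Q\setminus B'(2R)$ and $\beta_s(1)\in F\subset f^{-1}(z)$, the function $t\mapsto |f(\beta_s(t))-z|$ is continuous, positive at $t=0$ and zero at $t=1$. Hence there exist $0<T_0<T_1<\cdots<T_N<1$ with $|f(\beta_s(T_k))-z|=r_k$, and the subarc $\beta_s^k:=\beta_s|_{[T_k,T_{k+1}]}$ lies entirely in $Q\setminus B'(r_{k+1})\subset Q\setminus B'(r)$ with $|f-z|\le r_k$ throughout.

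At each scale, writing $f=(u,v)$ we have
\[
|\Delta u_k|+|\Delta v_k|\ \ge\ |f(\beta_s(T_{k+1}))-f(\beta_s(T_k))|\ \ge\ r_k-r_{k+1}\ =\ r_k/2,
\]
so at least one of $|\Delta u_k|,|\Delta v_k|$ is $\ge r_k/4$. In the \emph{u-case}, since $\beta_s\notin\Gamma_0$ and hence $\beta_s^k\notin\Gamma_0$, Lemma~\ref{kipu} gives $\int_{\beta_s^k}\rho\,d\haus^1\ge r_k/4$; combined with $|f-z|\le r_k$ on $\beta_s^k$ this yields
\[
\int_{\beta_s^k}\frac{\rho\,\chi_{Q\setminus B'(r)}}{|f-z|}\,d\haus^1\ \ge\ \frac{1}{r_k}\int_{\beta_s^k}\rho\,d\haus^1\ \ge\ \frac14,
\]
a contribution already present on the right-hand side since the $\beta_s^k$ are disjoint.

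In the \emph{v-case} I would argue that
\[
\liminf_{h\to 0}\int_{V_{s-h,s}\cap f^{-1}(B(z,r_k)\setminus B(z,r_{k+1}))}\frac{\rho\,\chi_{Q\setminus B'(r)}}{h|f-z|}\,d\hausk\ \ge\ \frac14.
\]
The key observation is that $V_{s-h,s}$ is the $\psi$-strip of thickness $h$ adjacent to $\beta_s$. Applying Proposition~\ref{coarea} with $m=\psi$ identifies the area integral divided by $h$ with an average of line integrals of $\rho/|f-z|$ over crosscuts $\psi^{-1}(t)\cap V_{s-h,s}$, $t\in(s-h,s)$. For a.e.\ such $t$ Lemma~\ref{findcurves} and Fuglede's lemma produce a rectifiable crosscut outside $\Gamma_0$ which, by continuity of $f$ (Proposition~\ref{vcont}), traverses $f^{-1}(B(z,r_k)\setminus B(z,r_{k+1}))$ between points whose $v$-coordinates differ by $|\Delta v_k|-o(1)\ge r_k/4-o(1)$. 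A weak upper-gradient property of $\rho$ for $v$ along such crosscuts then converts this $v$-oscillation into a line integral of $\rho$, which is bounded below by $r_k/4-o(1)$; dividing by $r_k$ and taking $\liminf_{h\to 0}$ gives the claim (with constant $\tfrac14$ after absorbing any $\pi/4$ coarea factor by adjusting the per-scale counting).

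Summing the $\tfrac14$ contributions over the $N$ dyadic scales, noting that the subarcs $\beta_s^k$ are disjoint and the preimages $f^{-1}(B(z,r_k)\setminus B(z,r_{k+1}))$ are disjoint, yields $N/4\ge \tfrac14 \log_2(R/r)$ on the left-hand side of the claimed inequality. The principal difficulty is the \emph{v-case}: unlike $u$, the conjugate function $v$ was built from $\rho^2$ area integrals (cf.\ \eqref{mahdo}) rather than $\rho$ line integrals, so the upper-gradient inequality $|\Delta v|\le \int \rho\,ds$ along generic crosscuts is not automatic and must be extracted either from a Fubini-type decomposition of the minimizing energy of $\rho$ along $\psi$-slices combined with the minimality of $\rho$, or from a reciprocality argument applied to the topological square inside $V_{s-h,s}\cap f^{-1}(B(z,r_k)\setminus B(z,r_{k+1}))$ whose sides are $\beta_s^k$, the matching piece of $\beta_{s-h}$, and arcs of $\partial B'(r_k)$, $\partial B'(r_{k+1})$, invoking \eqref{alaraja}.
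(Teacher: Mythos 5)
Your skeleton coincides with the paper's: dyadic annuli in the image, the dichotomy that at each scale either $u$ or $v$ must move by a definite fraction of the annulus width along a subarc of $\beta_s$, the upper gradient inequality of Lemma \ref{kipu} for the $u$-case, and an area integral over $V_{s-h,s}$ for the $v$-case. The bookkeeping issues (the times $T_k$ must be chosen as last-exit/first-entry times so that each subarc really stays in one dyadic annulus; mere existence of times with $|f\circ\beta_s-z|=r_k$ does not give this) are minor and fixable.

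The genuine gap is exactly the step you flag: the $v$-case. Your primary mechanism --- slicing $V_{s-h,s}$ by $\psi$ via Proposition \ref{coarea} into crosscuts and then invoking ``a weak upper-gradient property of $\rho$ for $v$'' --- is not available here. At this stage $v$ is only known to be continuous and is defined by the area averages \eqref{mahdo}; no inequality of the form $|v(a)-v(b)|\le\int_\gamma\rho\,ds$ has been established. Such an inequality is essentially the content of Proposition \ref{ugv}, which comes later and requires condition \eqref{ylaraja}, whereas Lemma \ref{floyd} assumes only \eqref{alaraja} and \eqref{nollamoduli}; for the same reason your fallback ``reciprocality argument'' cannot lean on \eqref{ylaraja}. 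The paper instead proves \eqref{veepee} by the variational argument of Proposition \ref{constbound}: one builds a competitor $g$ for $\Delta(\zeta_1,\zeta_3;Q)$ out of (i) the densities $\rho/h$ on the neighborhoods of level curves of $u$ that enter the definition of $v$ at points near the two endpoints of $\gamma_j$, (ii) the density $h^{-1}$ on the strip $V_{s-h,s}$ near $\gamma_j$, and (iii) small-energy correctors supplied by \eqref{nollamoduli} near those endpoints. Every path from $\zeta_1$ to $\zeta_3$ picks up mass $1$ from $g$, so the first variation of minimality, $\int_Q\rho^2\,d\hausk\le\int_Q\rho\,g\,d\hausk$, forces the cross term $\liminf_{h\to0}\int_{V_{s-h,s}\cap A(j)}\rho/h\,d\hausk$ to be at least the $v$-increment, which is \eqref{veepee}. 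Your second fallback (``Fubini-type decomposition \dots combined with the minimality of $\rho$'') points in this direction, but the step is not carried out, and without it the lemma is not proved.
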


\begin{proof}
We may assume that there exists $L \in \mathbb{N}$ such that $R=2^Lr$. Let $j \in \mathbb{N}$, $j \leq L$. Now denote 
\begin{eqnarray*}
m_j &=& \max\{T \in [0,1]: \, \beta_s(T) \in A(j+1)\},  \\
M_j &=& \min\{S \in [0,1]: \, \beta_s(S) \in A(j-1)\}, 
\end{eqnarray*}
and $\gamma_j=\beta_s|[m_j,M_j]$. Here 
$$
A(j) = f^{-1}(\overline{B}(z,2^{j+1}r)\setminus B(z,2^jr)). 
$$
Then $m_1 \leq M_1 \leq m_2 \leq M_2....$, and 
$$
|f(\beta_s(m_j))-f(\beta_s(M_j))| = 2^{j}r. 
$$
Therefore, either 
\begin{equation}
\label{unyt}
|u(\beta_s(m_j))-u(\beta_s(M_j))| \geq 2^{j-1}r, 
\end{equation}
or 
\begin{equation}
\label{vnyt}
|v(\beta_s(m_j))-v(\beta_s(M_j))| \geq 2^{j-1}r. 
\end{equation}
If \eqref{unyt} occurs, then, since $\beta_s \notin \Gamma_0$, \eqref{upperg} is satisfied with $\gamma_j$. In other words, 
\begin{equation}
\label{uupee}
2^{j-1}r \leq \int_{\gamma_j} \rho \, ds= \int_{|\gamma_j|} \rho \, d\haus^1.  
\end{equation}
We claim that if \eqref{vnyt} occurs, then 
\begin{equation}
\label{veepee}
2^{j-1}r \leq \liminf_{h \to 0} \int_{V_{s-h,s}\cap A(j)} \frac{\rho \chi_{Q \setminus B'(r)}}{h}\, d\hausk.
\end{equation}
Suppose for the moment that \eqref{veepee} holds. Then, applying \eqref{uupee} and \eqref{veepee}, we have 
\begin{eqnarray*}
1 & \leq &  \int_{|\gamma_j|} \frac{\rho}{2^{j-1}r} \, d\haus^1 +
\liminf_{h \to 0} \int_{V_{s-h,s}\cap A(j)} \frac{\rho \chi_{Q \setminus B'(r)}}{h2^{j-1}r}\, d\hausk \\
& \leq & 4  \Big (\int_{|\gamma_j|} \frac{\rho}{|f-z|} \, d\haus^1 +
\liminf_{h \to 0} \int_{V_{s-h,s}\cap A(j)} \frac{\rho \chi_{Q \setminus B'(r)}}{h|f-z|}\, d\hausk \Big). 
\end{eqnarray*}
Summing over $j$ and recalling $L=\log_2 R/r$ yields \eqref{ovela}. 

It remains to prove \eqref{veepee}, assuming that \eqref{vnyt} holds. The argument is almost identical to what we have already seen in 
the proof of Proposition \ref{constbound}. Fix $\epsilon >0$. Recall that $v$ is constant on every component of $Q \setminus \overline{U}$ by the remark after Proposition \ref{constbound}, where $U$ is as defined in Section \ref{conjug}. Therefore, taking a subpath of $\gamma_j$ if necessary, we may assume that 
$$
\gamma_j(m_j), \gamma_j(M_j) \in \overline{U}. 
$$
Now we find $0 < a,b <1$ and points $x_a,x_b \in U$ such that 
$u^{-1}(a)$ and $u^{-1}(b)$ are simple paths $\gamma_a$ and $\gamma_b$ joining $\zeta_2$ and $\zeta_4$, and
$$
x_a =\gamma_a(T) \in B(\beta_s(m_j),\epsilon), \quad x_b =\gamma_b(S) \in B(\beta_s(M_j),\epsilon). 
$$
From now on the argument proceeds exactly as the proof of Proposition \ref{constbound}, so we only recall the main points. 
We choose a small $h>0$, and define a weakly admissible function for $\Delta(\zeta_1,\zeta_3;Q)$ as follows. First, near $\gamma_a$ and $\gamma_b$, we apply the function $\rho$ the same way as in the definition of $v(x_a)$ and $v(x_b)$. Then, near 
$\gamma_j$ we use the function $\psi/h$. Finally, in $Q \setminus (B(x_a,\epsilon) \cup B(x_b,\epsilon))$ we apply 
\eqref{nollamoduli} to construct an admissible function $p_\epsilon$ for $\Delta (\partial Q, B(x_a,\epsilon) \cup B(x_b,\epsilon);Q)$ such 
that the integral of $p_{\epsilon}^2$ is small. We take the sum $g$ of these functions, and test the minimizing property of $\rho$ for 
$M_1=\modu(\zeta_1,\zeta_3;Q)$ with the function $(1-\alpha) \rho + \alpha g$.  Taking $\epsilon \to 0$ and $\alpha \to 0$, we 
arrive at \eqref{veepee}. 
\end{proof}


\begin{lemma}
\label{jencri}
For every $z \in [0,1] \times [0,M_1]$ the set $f^{-1}(z)$ is totally disconnected. 
\end{lemma}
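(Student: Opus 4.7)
My plan is to argue by contradiction. Suppose some $z \in [0,1]\times[0,M_1]$ has $f^{-1}(z)$ containing a non-trivial continuum $F$, and derive a contradiction by combining Lemma \ref{floyd} with the change of variables formula from Proposition \ref{huuto}. Since $f$ is surjective (Corollary \ref{royh}), we may fix a non-trivial continuum $E \subset \operatorname{int} Q$ disjoint from $f^{-1}(z)$; this is the setup preceding Lemma \ref{floyd}, so estimate \eqref{ovela} applies for a.e.\ $s \in (0,\delta)$ and every $R > r > 0$ satisfying $B'(2R) \cap E = \emptyset$.

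The first step is to integrate \eqref{ovela} in $s$ over $(0,\delta)$. The left side produces $\frac{\delta}{4}\log_2(R/r)$. For the first term on the right, the paths $\beta_s$ lie in the level set $\psi^{-1}(s)$ of the $1$-Lipschitz function $\psi$, so Proposition \ref{coarea} bounds its $s$-integral by a constant multiple of $\int_{Q\setminus B'(r)} \rho/|f-z|\, d\hausk$. For the second term, the observation that $V_{s-h,s} \subset \psi^{-1}((s-h,s))$ (from the construction), combined with Fatou's lemma and Lemma \ref{volder} applied to the finite set function $\varphi(B) = \int_{\psi^{-1}(B)} \rho\chi_{Q\setminus B'(r)}/|f-z|\, d\hausk$ (its integrand is dominated by $\rho/r \in L^1(Q)$), produces the same area integral on the right. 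The net output is an inequality of the form
\begin{equation*}
\frac{\delta}{4}\log_2(R/r) \leq C \int_{Q \setminus B'(r)} \frac{\rho}{|f-z|}\, d\hausk.
\end{equation*}

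The crucial second step is Cauchy--Schwarz followed by the change of variables formula Proposition \ref{huuto} applied to $g(y) = |y-z|^{-2}\chi_{R_0 \setminus B(z,r)}(y)$, with $R_0 = [0,1]\times[0,M_1]$, which gives
\begin{equation*}
\int_{Q \setminus B'(r)} \frac{\rho}{|f-z|}\, d\hausk \leq \hausk(Q)^{1/2} \left( \int_{R_0 \setminus B(z,r)} \frac{dy}{|y-z|^2}\right)^{1/2}.
\end{equation*}
The planar integral on the right is of order $\log(1/r)$, so the upper bound is $O(\sqrt{\log(1/r)})$, which is incompatible with the lower bound $\sim \log(1/r)$ for fixed $R$ as $r \to 0$. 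Hence $f^{-1}(z)$ contains no non-trivial continuum; since $f^{-1}(z)$ is a compact subset of $Q$, its components are singletons, i.e., $f^{-1}(z)$ is totally disconnected. The main obstacle I foresee is the second-term estimate: verifying rigorously that $V_{s-h,s} \subset \psi^{-1}((s-h,s))$ via the topological construction, and that averaging the factor $1/h$ in $s$ with Fatou together with Lemma \ref{volder} yields the clean area integral.
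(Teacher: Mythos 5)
Your proposal is correct and follows essentially the same route as the paper: integrate \eqref{ovela} in $s$, bound the curve-integral term via Proposition \ref{coarea} and the area term via Lemma \ref{volder} applied to a set function built from $\phi=\rho\chi_{Q\setminus B'(r)}/|f-z|$, then apply Cauchy--Schwarz and Proposition \ref{huuto} to get an $O(\sqrt{\log(1/r)})$ upper bound against the $\log(1/r)$ lower bound. The one point you flag as an obstacle (the containment of $V_{s-h,s}$ in the level-set preimage so that the $1/h$-averages are controlled by the differential of the set function) is handled with the same brevity in the paper itself, so no gap relative to the original argument.
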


\begin{proof}
We prove that the continuum $F$ above is trivial. Fix $R>0$ as in Lemma \ref{floyd}.  For the moment, we denote $\phi=\frac{\rho\chi_{Q\setminus B'(r)}}{|f-z|}$, and 
$$
\varphi((s-h,s)):= \int_{V_{s-h,s}} \phi\, d\hausk. 
$$
Then applying the coarea inequality, Proposition \ref{coarea} to the distance function $\psi$ (recall that $|\beta_s| \subset \psi^{-1}(s)$), we have 
\begin{equation}
\label{hhe}
\int_{0}^{\delta} \int_{|\beta_s|}\phi \, d\haus^1 \, ds \leq \frac{4}{\pi} \int_{Q \setminus B'(r)} \frac{\rho}{|f-z|}\, d\hausk. 
\end{equation}
On the other hand, $\varphi$ extends to a set function with differential $\varphi'(s)$ at almost every $s$, and 
$$
\int_0^\delta \varphi'(s) \, ds \leq \varphi((0,\delta))
$$
by Lemma \ref{volder}. Therefore, 
\begin{equation}
\label{orma}
\int_0^{\delta} \liminf_{h \to 0}  \int_{V_{s-h,s}} \frac{\phi}{h}\, d\hausk \, ds \leq  \int_{Q \setminus B'(r)} \frac{\rho}{|f-z|}\, d\hausk. 
\end{equation}
Combining \eqref{ovela}, \eqref{hhe} and \eqref{orma}, we have 
$$
\frac{\delta \log_2 \frac{R}{r}}{4}\leq 3  \int_{Q \setminus B'(r)} \frac{\rho}{|f-z|}\, d\hausk.
$$ 
Furthermore, applying H\"older's inequality and Proposition \ref{huuto}, we have 
\begin{eqnarray*}
\frac{\delta \log_2 \frac{R}{r}}{12} & \leq &  \hausk(Q)^{1/2} \Big( \int_{Q\setminus B'(r)} \frac{\rho^2}{|f-z|^2} \, d\hausk\Big)^{1/2}\\
 & \leq & \hausk(Q)^{1/2} \Big( \int_{([0,1]\times [0,M_1])\setminus B(z,r)} \frac{1}{|y-z|^2} \, dy \Big)^{1/2} \\
 & \leq &2 \pi \hausk(Q)^{1/2} \Big(\log_2 \frac{\max\{2,2M_1\}}{r} \Big)^{1/2}. 
\end{eqnarray*}
But this is a contradiction when $r \to 0$. The lemma follows. 
\end{proof}

\begin{proof}[Proof of Proposition \ref{homeo}]
We know that $f$ is continuous and surjective by Corollary \ref{royh}. We use the notation of Section \ref{conjug}; 
$$
\mathbb{F}=\{t \in [0,1]: \, u^{-1}(t)  \text{ is a simple curve }  |\gamma_t| \}, 
$$
and 
$$
U=\{x \in Q: u(x) \in \mathbb{F} \}. 
$$
Then for every $t \in \mathbb{F}$ the conjugate function $v$ is increasing on $|\gamma_t|$ by construction, so 
$f^{-1}(t,s)$ is a continuum for all $s$. But then $f^{-1}(t,s)$ has to be a point by Lemma \ref{jencri}. 
We conclude that if $z=(t,s)$, $t \in \mathbb{F}$, then $f^{-1}(t)$ is a point. It remains to prove that 
the same property holds when $t \notin \mathbb{F}$. Fix such $t$, and $z=(t,s)$. 

We now claim that $f^{-1}(z)$ contains a point $x_z$ with the following property: for every $\epsilon >0$ the 
$x_z$-component $V$ of $f^{-1}B(z,\epsilon)$ contains points $a,b \in U$ such that $u(a)<t$ and $u(b)>t$. 
Indeed, we know that the set $u^{-1}(t)$ separates $A=\{u <t\}$ and $B= \{ u >t\}$ in $Q$. Consider the set 
$$
C= \overline{A} \cap \overline{B}. 
$$
Recall that $u^{-1}(t)$ does not contain interior points by Proposition \ref{constbound} and Lemma \ref{jencri}. 
Therefore $C$ is non-empty and also separates $A$ and $B$ in $Q$, since $\overline{A} \setminus C$ and $\overline{B} \setminus C$ 
are open and disjoint in $Q$. We conclude that $C$ contains a continuum joining $\zeta_2$ and $\zeta_4$, 
so in particular $v$ takes all values between $0$ and $M_1$ in $C$. The claim follows. 

Notice that by Proposition \ref{constbound} and Lemma \ref{jencri}, $U$ is dense in $Q$. Suppose there exists a 
point $x_0 \in f^{-1}(z)$, $x_0 \neq x_z$, and let $\epsilon >0$. Then by the density of $U$, the $x_0$-component $W$ 
of $f^{-1}B(z,\epsilon)$ contains a point  $a_0 \in U$ such that $u(a_0)<t$ or $u(a_0)>t$. Without loss of generality, assume that 
$u(a_0)<t$. Connecting $a_0$ to $x_0$ in $W$ and $a$ to $x_z$ in $V$, we may assume that $u(a_0)=u(a)=t_0<t$. 

We now have components $V$ and $W$ of $f^{-1}B(z,\epsilon)$, and points $a \in V$ and $a_0 \in W$, such that 
$$
f(a)=(t_0,r) \in B(z,\epsilon), \quad f(a_0)=(t_0,s) \in B(z,\epsilon). 
$$
Recall that the restriction of $f$ to $|\gamma_{t_0}|$ is injective. In particular, $a$ and $a_0$ can be connected 
by a subcurve $\eta$ of $|\gamma_{t_0}|$ such that $f(\eta) \subset B(z,\epsilon)$. So we conclude that in fact $V=W$. 
But this is a contradiction when $\epsilon$ is small enough, since $f^{-1}(z)$ is not connected by Lemma \ref{jencri}. 
The proof is complete. 


\end{proof}

\section{Variational modulus} \label{vari}
In the next section we prove perhaps the most intricate property of our map $f$, showing that under the reciprocality assumption the function $C \rho$ is a weak upper gradient of $f$ when $C$ is large enough. To accomplish this, we now introduce a modified version of 
the conformal modulus called variational modulus, and prove a reciprocality result connecting the variational modulus to conformal modulus. The variational modulus appears, though implicitly, in the work of Gehring \cite{Ge3} in Euclidean space, where it coincides with conformal modulus. 

Let $Q^0 \subset X$ be homeomorphic to a closed square, with boundary segments $\zeta_1^0,\zeta_2^0,\zeta_3^0,\zeta_4^0$. Denote by $\Lambda$ the family of simple paths joining $\zeta_2^0$ and $\zeta_4^0$ in $Q^0 \setminus \zeta^0_1$. Fix $\gamma \in \Lambda$, and let $N_\epsilon(\gamma)$ be the closed $\epsilon$-neighborhood of $|\gamma|$. Then, when $\epsilon>0$ is small 
enough, $N_{\epsilon}(\gamma) \cap \zeta^0_1=\emptyset$. We denote by $F(\gamma)$ the connected component 
of $Q^0 \setminus |\gamma|$ containing $\zeta_1^0$. Moreover, we denote 
\begin{eqnarray*}
\Gamma_\epsilon(\gamma)&:=& \Delta(|\gamma|,F(\gamma) \setminus \operatorname{int} N_{\epsilon}(\gamma);F(\gamma) \cap N_\epsilon(\gamma)),  \\
\mathcal{F}_\epsilon(\gamma)&:=& \{g: g \text{ is weakly admissible for }\Gamma_\epsilon(\gamma) \}. 
\end{eqnarray*}

We say that a Borel function $H \geq 0$ is \emph{$V$-admissible} for $\Lambda$, if 
\begin{equation}
\label{erjantai}
\inf_{\gamma \in \Lambda} \liminf_{\epsilon \to 0} \inf_{g \in \mathcal{F}_\epsilon(\gamma)} \int_{Q^0} g H\, d\haus^2 \geq 1, 
\end{equation}
and define the \emph{variational modulus} $\overline{\modu}(\Lambda)$ by 
$$
\overline{\modu}(\Lambda) = \inf_{H} \int_{Q^0} H^2 \, d\hausk, 
$$
where the infimum is taken over all $V$-admissible functions $H$. 

\begin{lemma}
\label{aap}
Suppose $X$ satisfies \eqref{alaraja} and \eqref{nollamoduli}. Then 
$$
\modu(\zeta^0_1,\zeta^0_3;Q^0) \cdot \overline{\modu}(\Lambda)=1. 
$$
\end{lemma}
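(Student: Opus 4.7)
The plan is to prove both inequalities $M \cdot \bar{M} \leq 1$ and $M \cdot \bar{M} \geq 1$, where $M := \modu(\zeta_1^0,\zeta_3^0;Q^0)$ and $\bar{M} := \overline{\modu}(\Lambda)$.

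For the upper bound, I would take the weakly admissible minimizer $\rho$ for $M$ (constructed as in Section \ref{minimizer} via Mazur and Fuglede) and set $H := \rho/M$. The verification that $H$ is $V$-admissible uses the following separation observation: every $\gamma \in \Lambda$ separates $\zeta_1^0$ from $\zeta_3^0$, so every $\eta \in \Delta(\zeta_1^0,\zeta_3^0;Q^0)$ has a subpath lying in $F(\gamma) \cap N_\epsilon(\gamma)$ that starts on $F(\gamma) \setminus \operatorname{int} N_\epsilon(\gamma)$ and ends on $|\gamma|$, i.e., a subpath in $\Gamma_\epsilon(\gamma)$. Hence if $g \in \mathcal{F}_\epsilon(\gamma)$ is extended by zero to $Q^0$, it is weakly admissible for $\Delta(\zeta_1^0,\zeta_3^0;Q^0)$ (a standard subpath-modulus argument handles the exceptional family). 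Now the first-order minimality of $\rho$ applied to the convex perturbation $(1-s)\rho + sg$, which is weakly admissible for $\Delta(\zeta_1^0,\zeta_3^0;Q^0)$, yields $\int \rho g \, d\hausk \geq M$ after differentiating at $s = 0^+$. Therefore $\int g H \, d\hausk \geq 1$ for every $g \in \mathcal{F}_\epsilon(\gamma)$, so $H$ is $V$-admissible, giving $\bar{M} \leq \int H^2 \, d\hausk = 1/M$.

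For the lower bound, I would let $H$ be any $V$-admissible function and use the conformal function $u$ from Section \ref{minimizer} together with its level sets. By Proposition \ref{levelsets}, for $t \in \mathbb{F}$ the level set $\gamma_t := u^{-1}(t)$ lies in $\Lambda$ (since $u=0$ on $\zeta_1^0$). Given $\epsilon > 0$, Lemma \ref{techlevel} provides $h > 0$ with $A_{t-h,t} \subset N_\epsilon(\gamma_t)$, and since $F(\gamma_t) = \{u < t\}$, we get $F(\gamma_t) \setminus \operatorname{int} N_\epsilon(\gamma_t) \subset \{u \leq t-h\}$. The upper gradient inequality (Lemma \ref{kipu}) then shows that $g_h := \rho \chi_{A_{t-h,t}}/h$ is weakly admissible for $\Gamma_\epsilon(\gamma_t)$. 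Applying $V$-admissibility of $H$ along a sequence $\epsilon_n \to 0$ with matching $h_n \to 0$,
$$\liminf_{h \to 0} \int_{A_{t-h,t}} \frac{\rho H}{h} \, d\hausk \geq 1.$$

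Now I would apply Cauchy--Schwarz with the weight $h^{-1}\chi_{A_{t-h,t}}$ together with the identity $\int_{A_{t-h,t}} \rho^2 \, d\hausk = h M$ from Lemma \ref{dividingmod} to obtain
$$\liminf_{h \to 0} \int_{A_{t-h,t}} \frac{H^2}{h} \, d\hausk \geq \frac{1}{M} \quad \text{for a.e. } t \in \mathbb{F}.$$
Finally, applying Lemma \ref{volder} to $\varphi(E) := \int_{u^{-1}(E)} H^2 \, d\hausk$ gives $\varphi'(t) \geq 1/M$ for a.e. $t \in (0,1)$, so $\int_{Q^0} H^2 \, d\hausk \geq \int_0^1 \varphi'(t) \, dt \geq 1/M$. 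Taking the infimum over $H$ yields $\bar{M} \geq 1/M$.

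The main obstacle is the lower bound: specifically, constructing a valid $g_h \in \mathcal{F}_\epsilon(\gamma_t)$ out of the minimizer $\rho$. This succeeds precisely because Lemma \ref{techlevel} gives the one-sided ``strip'' $A_{t-h,t}$ inside $N_\epsilon(\gamma_t)$ on the $F(\gamma_t) = \{u<t\}$ side of $\gamma_t$, so that the upper gradient inequality forces at least $h$ worth of $\rho$-length across the strip. Coordinating the $\epsilon \to 0$ limit in the definition of $V$-admissibility with the $h \to 0$ limit used for differentiation requires selecting $\epsilon = \epsilon(h)$ via Lemma \ref{techlevel}; once this is done, the remainder is Cauchy--Schwarz, the energy identity of Lemma \ref{dividingmod}, and elementary differentiation.
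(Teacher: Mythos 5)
Your proof is correct and follows essentially the same route as the paper: for the upper bound you extend $g\in\mathcal{F}_\epsilon(\gamma)$ by zero to a weakly admissible function for $\Delta(\zeta_1^0,\zeta_3^0;Q^0)$ and invoke the first-order minimality of $\rho$, and for the lower bound you test $V$-admissibility with the strip functions $\rho\chi_{A_{t-h,t}}/h$ along the level curves $\gamma_t$ and then differentiate via Lemma \ref{volder}. The only (harmless) deviation is that you apply Cauchy--Schwarz strip-by-strip using the energy identity of Lemma \ref{dividingmod}, whereas the paper sets $\varphi((s,t))=\int_{A_{s,t}}H\rho\,d\hausk$, deduces $\int_{Q^0}H\rho\,d\hausk\ge 1$ directly, and applies Cauchy--Schwarz once globally, thereby avoiding any appeal to the second equality in \eqref{jeehee}.
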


\begin{proof}
We first prove 
\begin{equation}
\label{lut}
\modu(\zeta^0_1,\zeta^0_3;Q^0) \cdot \overline{\modu}(\Lambda) \geq 1.  
\end{equation}
Let $u_0$ be the minimizing function in $Q^0$ constructed exactly as $u$ in Section \ref{minimizer}, with minimizing weak upper gradient $\rho_0$. Moreover, let $H$ be $V$-admissible for $\Lambda$, and $\gamma_t=u_0^{-1}(t)$, $0<t<1$. Denote 
$$
\varphi((s,t))=\int_{A_{s,t}}H\rho_0 \, d\hausk, 
$$
where 
$$
A_{s,t}=\{x \in Q^0: s<u_0(x)<t\} 
$$
as before. By Lemma \ref{volder} and the $V$-admissibility of $H$, 
$$
1 \leq \varphi'(t)= \lim_{h \to 0} \frac{\varphi((t-h,t))}{h} 
$$
exists for almost every $0<t<1$, and moreover 
\begin{equation}
\label{aaraa}
1 \leq \int_0^1\varphi'(t) \, dt \leq \varphi((0,1))=\int_{Q^0} H\rho_0 \, d\hausk. 
\end{equation}
Recall also that 
$$
\int_{Q^0} \rho_0^2 \, d\hausk =\modu(\zeta_1^0,\zeta_3^0;Q^0). 
$$
Therefore, \eqref{lut} follows from \eqref{aaraa} by applying H\"older's inequality and minimizing over $H$.

To conclude the proof, we show that  
\begin{equation}
\label{happan}
\modu(\zeta^0_1,\zeta^0_3;Q^0) \cdot \overline{\modu}(\Lambda) \leq 1.  
\end{equation}

We claim that the function $\rho_0 \cdot \modu(\zeta^0_1,\zeta^0_3;Q^0)^{-1}$ is $V$-admissible for $\Lambda$. This immediately implies 
\eqref{happan}. Let $\gamma$, 
$\epsilon$ and $g \in \mathcal{F}_{\epsilon}(\gamma)$ be as in \eqref{erjantai}. Then, $g$ is in particular weakly admissible for 
$\Delta(\zeta^0_1,\zeta^0_3;Q^0)$. Applying the minimizing property of $\rho_0$, we have 
$$
\modu(\zeta^0_1,\zeta^0_3;Q^0)= \int_{Q^0} \rho_0^2 \, d\hausk \leq \int_{Q^0} ((1-s)\rho_0+sg)^2 \, d\hausk, \quad 0<s<1. 
$$
Subtracting the left integral from both sides of the inequality and letting $s \to 0$, we have 
$$
\modu(\zeta^0_1,\zeta^0_3;Q^0) \leq \int_{Q^0} g \rho_0 \, d\hausk, 
$$
verifying our claim. 
\end{proof}

We can apply the variational modulus to estimate conformal modulus in the rectangles $Q(i,j,k)$ defined in Section \ref{condo}. 
Recall that assuming reciprocality means that we assume the conditions \eqref{ylaraja}, \eqref{alaraja} and \eqref{nollamoduli}. 
Lemma \ref{applvari} is the only step in the proof of Theorem \ref{main} where condition \eqref{ylaraja} is needed. 

\begin{lemma}
\label{applvari}
Suppose $X$ is reciprocal, and let $f:Q \to [0,1]\times[0,M_1]$ be the mapping constructed in the previous sections. Let $0 \leq a_1<b_1 \leq 1$, 
$0 \leq a_2 < b_2 \leq M_1$, and 
\begin{eqnarray*}
Q^0 & = & \{x \in Q: f(x) \in [a_1,b_1] \times [a_2,b_2]\}, \\ 
\zeta_1^0 & = & \{x \in Q: f(x) \in \{a_1\} \times [a_2,b_2] \}, \\
\zeta_2^0 & = & \{x \in Q: f(x) \in  [a_1,b_1] \times \{a_2\} \}, \\
\zeta_3^0 & = & 	\{x \in Q: f(x) \in \{b_1\} \times [a_2,b_2] \}, \\ 
\zeta_4^0 & = & \{x \in Q: f(x) \in [a_1,b_1]\times \{b_2\} \}. 
\end{eqnarray*}
Then 
\begin{eqnarray}
\label{kjk1} \modu(\zeta^0_1,\zeta^0_3;Q^0) & = & \frac{b_2-a_2}{b_1-a_1}, \quad \text{and } \\ 
\label{kjk2}  \frac{b_1-a_1}{\kappa(b_2-a_2)} & \leq & \modu(\zeta^0_2,\zeta^0_4;Q^0)  \leq  \frac{\kappa(b_1-a_1)}{b_2-a_2}. 
\end{eqnarray}
\end{lemma}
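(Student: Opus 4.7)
The plan splits naturally into two parts: proving the equality \eqref{kjk1}, which is the main work, and then deriving \eqref{kjk2} as a quick corollary.

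For the upper bound in \eqref{kjk1}, I would take the explicit admissible function $\rho\chi_{Q^0}/(b_1-a_1)$: this is weakly admissible for $\Delta(\zeta_1^0,\zeta_3^0;Q^0)$ because $\rho$ is a weak upper gradient of $u$ and $u\equiv a_1$ on $\zeta_1^0$, $u\equiv b_1$ on $\zeta_3^0$. Its $L^2$-energy is $(b_1-a_1)^{-2}\int_{Q^0}\rho^2\,d\mathcal{H}^2$, and by applying the change-of-variables formula (Proposition~\ref{huuto}, which extends the dyadic mass identity of Lemma~\ref{tasan}) to $g=\chi_R$ we get $\int_{Q^0}\rho^2 = |R| = (b_1-a_1)(b_2-a_2)$, yielding energy $(b_2-a_2)/(b_1-a_1)$.

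For the lower bound I would invoke Lemma~\ref{aap} applied to $Q^0$, which reduces the problem to showing $\overline{\modu}(\Lambda^0)\leq (b_1-a_1)/(b_2-a_2)$. The natural candidate is $H:=\rho\chi_{Q^0}/(b_2-a_2)$, whose energy equals $(b_1-a_1)/(b_2-a_2)$ by the same change-of-variables. V-admissibility of $H$ amounts to showing, for every $\gamma\in\Lambda^0$ and every $g\in\mathcal{F}_\epsilon(\gamma)$, that $\int_{Q^0}g\rho\,d\mathcal{H}^2\geq b_2-a_2$ in the limit $\epsilon\to 0$. The relevant input is that (as in the argument behind Lemma~\ref{aap}) such a $g$ is weakly admissible for $\Delta(\zeta_1^0,\zeta_3^0;Q^0)$, so the task becomes: for any $g$ weakly admissible for $\Delta(\zeta_1^0,\zeta_3^0;Q^0)$, show $\int_{Q^0}g\rho\geq b_2-a_2$. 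I would establish this by testing the minimality of $\rho$ in $Q$ against a perturbation $\rho_\lambda$ built from $g$ — morally $\rho$ with $g$ swapped into $Q^0$ — and extracting the conclusion from $\partial_\lambda\big|_{\lambda=0}\int_Q\rho_\lambda^2\geq 0$.

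For \eqref{kjk2} I would simply apply the reciprocality inequalities \eqref{ylaraja} and \eqref{alaraja} to the topological square $Q^0$, obtaining $\kappa^{-1}\leq \modu(\zeta_1^0,\zeta_3^0;Q^0)\cdot\modu(\zeta_2^0,\zeta_4^0;Q^0)\leq\kappa$. Substituting the value $(b_2-a_2)/(b_1-a_1)$ from \eqref{kjk1} gives both bounds on $\modu(\zeta_2^0,\zeta_4^0;Q^0)$ immediately.

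The main obstacle is the V-admissibility step in the lower bound of \eqref{kjk1}. The issue is that the naive extension of $g$ from $Q^0$ to $Q$ (equal to $\rho$ off $Q^0$ and $(b_1-a_1)g$ on $Q^0$) can fail weak admissibility for paths in $\Delta(\zeta_1,\zeta_3;Q)$ that enter $Q^0$ across the transverse sides $\zeta_2^0$ or $\zeta_4^0$ without crossing from $\zeta_1^0$ to $\zeta_3^0$, since $g$'s admissibility controls only the latter. Resolving this requires either a careful compensating perturbation that remains weakly admissible (exploiting that $u$-variation outside $Q^0$ makes up for any shortfall of $g$) or an approximation by the conjugate level-set decomposition, and this is the step where the combinatorics and the use of $\rho$ as a weak upper gradient of $u$ must be handled most carefully.
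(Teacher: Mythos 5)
Your architecture matches the paper's exactly: the upper bound in \eqref{kjk1} via the weak admissibility of $\rho/(b_1-a_1)$ and Proposition \ref{huuto}, the lower bound via Lemma \ref{aap} and the $V$-admissibility of $\rho/(b_2-a_2)$, and \eqref{kjk2} by feeding \eqref{kjk1} into \eqref{ylaraja} and \eqref{alaraja}. But the step you defer --- showing $\int_{Q^0} g\rho \, d\hausk \geq b_2-a_2-\mu$ for $g \in \mathcal{F}_\epsilon(\gamma)$ --- is the entire content of the lemma, and your proposed reduction does not close it. Reducing to ``any $g$ weakly admissible for $\Delta(\zeta_1^0,\zeta_3^0;Q^0)$ satisfies $\int_{Q^0} g\rho \geq b_2-a_2$'' is essentially equivalent to asserting that $\rho/(b_1-a_1)$ is the extremal function for $\modu(\zeta_1^0,\zeta_3^0;Q^0)$, which is \eqref{kjk1} itself; it cannot be extracted from the minimality of $\rho$ on $Q$ by the ``naive swap'' $\rho_\lambda$, for precisely the reason you flag: a path of $\Delta(\zeta_1,\zeta_3;Q)$ may traverse the strip $u^{-1}([a_1,b_1])$ entirely outside $Q^0$, or dip in and out through $\zeta_2^0$ or $\zeta_4^0$, so the swapped function is not weakly admissible and the first variation gives nothing.

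The paper fills this gap with a specific competitor that you would need to reproduce. Writing $f(\gamma(0))=(t,a_2)$, $f(\gamma(1))=(s,b_2)$, one takes $\overline{g}=h^{-1}\rho\chi_\Omega + p + g\chi_{Q^0}$, where $\Omega=(N_{\epsilon,T}(\gamma_t)\cap A_{t-h,t})\cup(A_{s-h,s}\setminus N_{\epsilon,S}(\gamma_s))$ is built from the very definition \eqref{mahdo} of the conjugate function $v$ at the two endpoints of $\gamma$, and $p$ is an admissible function of small energy for paths meeting $B(\gamma(0),2\epsilon)\cup B(\gamma(1),2\epsilon)$, supplied by \eqref{nollamoduli}. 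Admissibility of $\overline{g}$ for $\Delta(\zeta_1,\zeta_3;Q)$ holds because $|\gamma|$ together with the initial arc of $\gamma_t$ and the terminal arc of $\gamma_s$ separates $\zeta_1$ from $\zeta_3$: a path avoiding the endpoint balls either crosses $|\gamma|$ from the $F(\gamma)$ side (handled by $g$) or crosses one of the two level strips inside $\Omega$ (handled by $h^{-1}\rho$, via the upper gradient inequality). The first variation then gives $M_1\leq\int_\Omega h^{-1}\rho^2+\int_Q p\rho+\int_{Q^0}g\rho$, and the definition of $v$ bounds $\int_\Omega h^{-1}\rho^2$ by $M_1+a_2-b_2+\mu/2$ (this is where $v(\gamma(0))=a_2$ and $v(\gamma(1))=b_2$ enter), yielding the desired lower bound on $\int_{Q^0}g\rho$. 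Without this construction --- in particular without the role of $v$ in controlling the mass of $\rho^2/h$ on $\Omega$ --- the lower bound in \eqref{kjk1}, and hence the crucial second inequality of \eqref{kjk2}, is not established.
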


\begin{remark}
Notice that in the case $[a_1,a_2]=[0,1]$, $[b_1,b_2]=[0,M_1]$, Lemma \ref{applvari} follows from Proposition \ref{homeo} and conditions \eqref{ylaraja} and \eqref{alaraja}. The main content of the lemma is the second inequality in \eqref{kjk2}, as we will see in Section \ref{regoff}. 
\end{remark}
\begin{proof}

We claim that 
\begin{equation}
\label{capsule}
\overline{\modu}(\Lambda) \leq \frac{b_1-a_1}{b_2-a_2}. 
\end{equation}
The lemma follows from \eqref{capsule}. Indeed, $\geq$ in \eqref{kjk1} follows from \eqref{capsule} and Lemma \ref{aap}, and 
$\leq$ from Proposition \ref{huuto} and the fact that $\rho/(b_1-a_1)$ is weakly admissible for $\Delta(\zeta_1^0,\zeta_3^0;Q^0)$. 
The estimates in \eqref{kjk2} then follow directly from \eqref{kjk1} and conditions \eqref{ylaraja} and \eqref{alaraja}. 

To prove \eqref{capsule}, we again apply the same argument as in the proof of Proposition \ref{constbound}. Namely, 
let $\gamma:[0,1]\to Q^0 \in \Lambda$, $\epsilon>0$, and $g \in \mathcal{F}_{\epsilon}(\gamma)$ as in the definition of $\overline{\modu}(\Lambda)$. Then 
$$
f(\gamma(0))=(t,a_2), \quad f(\gamma(1))=(s,b_2), \quad a_1 \leq t,s \leq b_1.
$$  
Also, $\gamma(0)=\gamma_{t}(T)$ and $\gamma(1)=\gamma_{s}(S)$ for some $T$ and $S$. 
Here $|\gamma_t|=u^{-1}(t)$ as before. We claim that 
\begin{equation}
\label{kivela}
\int_{Q^0} g \rho \, d\hausk \geq b_2-a_2 - \mu, \quad \mu \to 0 \text{ as } \epsilon \to 0, 
\end{equation}
i.e., that $\rho/(b_2-a_2)$ is $V$-admissible. This implies \eqref{capsule} by Proposition \ref{huuto}. 

Fix $\mu>0$. Then, by the definition of $v$ and Lemma \ref{techlevel}, we can choose $\epsilon>0$ and $h>0$ small enough such that
$A_{t-h,t}\subset N_{\epsilon}(\gamma_{t})$, and 
\begin{equation}
\label{euro}
a_2 \geq \int_{N_{\epsilon,T}(\gamma_t)\cap A_{t-h,t}} \frac{\rho^2}{h} \, d\hausk - \frac{\mu}{4}. 
\end{equation}
Similarly, we can assume $A_{s-h,s}\subset N_{\epsilon}(\gamma_{s})$, and 
\begin{equation}
\label{jeuro}
M_1-b_2 \geq  \int_{A_{s-h,s} \setminus N_{\epsilon,S}(\gamma_s)} \frac{\rho^2}{h} \, d\hausk - \frac{\mu}{4}. 
\end{equation}
We denote 
$$
\Omega = (N_{\epsilon,T}(\gamma_t)\cap A_{t-h,t}) \cup (A_{s-h,s} \setminus N_{\epsilon,S}(\gamma_s)). 
$$
Combining \eqref{euro} and \eqref{jeuro} then gives 
\begin{equation}
\label{ironma}
\int_{\Omega} \frac{\rho^2}{h}\, d\hausk \leq M_1+a_2-b_2+\frac{\mu}{2}. 
\end{equation}

By condition \eqref{nollamoduli}, when $\epsilon$ is small enough, we can choose an admissible function $p$ for 
$\Delta(\zeta_1,B(\gamma(0),2\epsilon)\cup B(\gamma(1),2\epsilon);Q)$ such that 
\begin{equation}
\label{oppu}
\int_{Q} p\rho \, d\hausk \leq \Big(\int_Q p^2 \, d\hausk\Big)^{1/2}M_1^{1/2} \leq \frac{\mu}{2}. 
\end{equation} 
Recall that $M_1=\modu(\zeta_1,\zeta_3;Q)$. Now 
$$
\overline{g}= h^{-1}\rho \chi_{\Omega} + p+ g\chi_{Q^0} 
$$
is admissible for $\Delta(\zeta_1,\zeta_3;Q)$, so testing the minimizing property of $\rho$ with 
$(1-m)\rho + m \overline{g}$, $m \to 0$, it follows that 
\begin{equation}
\label{oaar}
M_1=\int_Q \rho^2 \, d\hausk \leq \int_Q \overline{g} \rho \, d\hausk=
 \int_{\Omega}  \frac{\rho^2}{h} \, d\hausk +  \int_{Q} p\rho \, d\hausk 
 + \int_{Q^0} g\rho \, d\hausk. 
\end{equation}
Combining \eqref{oaar} with \eqref{ironma} and \eqref{oppu} gives \eqref{kivela}. The proof is complete. 
\end{proof}

\section{Regularity of $f$} \label{regoff}
By Proposition \ref{homeo}, our map $f:Q \to [0,1]\times [0,M_1]$ is a homeomorphism, assuming \eqref{alaraja} and \eqref{nollamoduli}. In this section we show that if we also assume condition \eqref{ylaraja}, then we have one of the modulus inequalities required for quasiconformality. 

Upper gradients for maps are defined similarly to upper gradients of functions. We say that a Borel function $g \geq 0$ is an \emph{upper gradient} of a map $F:(Y,\operatorname{d}_Y) \to (Z,\operatorname{d}_Z)$ between metric spaces, if 
\begin{equation}
\label{geneug}
\operatorname{d}_Z(F(a),F(b)) \leq \int_{\gamma} g \, ds 
\end{equation}
for every $a, b \in Y$ and every locally rectifiable path $\gamma$ joining $a$ and $b$ in $Y$. If moreover $Y$ is equipped with locally 
finite $\hausk$-measure, then $g$ is a weak upper gradient of $F$ if there exists an exceptional set $\Gamma'$ of modulus zero such that \eqref{geneug} holds for every $\gamma \notin \Gamma'$. 

Furthermore, if $g \in L^2(Y)$ is a weak upper gradient of $F$, then there exists an exceptional set $\Gamma''$ of modulus zero such that 
if $h\geq 0$ is a Borel function in $Z$, then 
\begin{equation}
\label{iisti}
\int_{F \circ \gamma} h \, ds \leq \int_{\gamma} (h \circ F) g \, ds 
\end{equation}
for every $\gamma \notin \Gamma''$. See \cite{HKSTbook} for the proof of this property and more information on upper gradients and absolute continuity. 

\begin{proposition}
\label{ugv}
Suppose $X$ is $\kappa$-reciprocal. Then $2000 \kappa^{1/2} \rho$ is a weak upper gradient of $f$. 
\end{proposition}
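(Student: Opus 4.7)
My plan is to reduce the statement to the estimate $|v(a)-v(b)|\leq C\kappa^{1/2}\int_\gamma \rho\,ds$ for $a,b$ the endpoints of a rectifiable path $\gamma$ outside an exceptional family of modulus zero, since the $u$-component estimate $|u(a)-u(b)|\leq \int_\gamma \rho\,ds$ is exactly Lemma \ref{kipu}. The key tool is the variational-modulus technology developed in Section \ref{vari}: by the proof of Lemma \ref{applvari}, in each dyadic rectangle $Q(i,j,k)$ the rescaled function $\rho/(2^{-k}m_1)$ is $V$-admissible for the family $\Lambda^{(i,j,k)}$ of simple paths joining the preimages of the horizontal edges of $R(i,j,k)$, and by Lemma \ref{aap} the corresponding variational modulus is at most $1/m_1\leq 2$.

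The first core step is to translate $V$-admissibility of $\rho/(2^{-k}m_1)$ into a lower bound $\int_\tau \rho\,d\haus^1\geq c\cdot 2^{-k}m_1$ for almost every simple path $\tau \in \Lambda^{(i,j,k)}$. I would do this by constructing weakly admissible functions $g_\epsilon\in \mathcal{F}_\epsilon(\tau)$ supported in one-sided tubular neighborhoods $N_\epsilon(\tau)\cap F(\tau)$ of width $\epsilon$, for which $(1/\epsilon)\int_{N_\epsilon(\tau)\cap F(\tau)} \rho\,d\hausk$ converges as $\epsilon\to 0$ to $\int_\tau \rho\,d\haus^1$ up to a tube constant controlled by reciprocality. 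The $V$-admissibility condition applied to $g_\epsilon$ then forces the stated integral bound on $\tau$.

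Next, assume without loss of generality $v(a)<v(b)$ and fix a large dyadic $k$. By Proposition \ref{homeo}, $\mu_{j,k}:=f^{-1}(\{v=j\,2^{-k}m_1\})$ is a simple curve separating $Q$ for each $j$. The path $\gamma$ must cross $\mu_{j,k}$ for each $j$ with $v(a)<j\,2^{-k}m_1<v(b)$, and there are $N_k\geq \lfloor (v(b)-v(a))2^k/m_1\rfloor$ such $j$. Select pairwise disjoint subpaths $\tau_\ell\subset \gamma$ joining consecutive crossings; each $\tau_\ell$ lies in a horizontal strip $f^{-1}([0,1]\times[(j{-}1)2^{-k}m_1,\,j\,2^{-k}m_1])$ and joins its top and bottom preimages. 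Applying the lower bound of the previous paragraph (to the strip, via Lemma \ref{applvari} with $(b_1-a_1,b_2-a_2)=(1,2^{-k}m_1)$) and summing:
\[
\int_\gamma \rho\,ds \geq \sum_{\ell=1}^{N_k} \int_{\tau_\ell}\rho\,d\haus^1 \geq c\,N_k\,2^{-k}m_1 \geq c(v(b)-v(a)) - c\,2^{-k}m_1.
\]
Letting $k\to\infty$ gives the bound $|v(a)-v(b)|\leq c^{-1}\int_\gamma \rho\,ds$.

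The main obstacle is the $V$-admissibility-to-integral conversion. The definition involves an infimum over weakly admissible $g$ on shrinking tubes and does not immediately yield $\int_\tau H\geq 1$: one must control from above the quotient $(1/\epsilon)\haus^2(N_\epsilon(\tau)\cap F(\tau))$ relative to $\haus^1(\tau)$ as $\epsilon\to 0$, and this tube-volume estimate fails in general metric spaces. The reciprocality assumption \eqref{ylaraja}, through the upper bound it provides on the conformal modulus of transverse path families (Lemma \ref{applvari}), is precisely what supplies the needed control. The factor $\kappa^{1/2}$ in the final constant originates here: the upper modulus bound produces a factor $\kappa$ in the $L^2$ tube estimate, which becomes $\kappa^{1/2}$ when one extracts a linear path-integral bound from the $L^2$ bound. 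The exceptional family for $\gamma$ is taken to be the union over all dyadic scales of the $V$-admissibility exceptional sets, which remains of modulus zero by countable subadditivity; Fuglede-type selection ensures the decomposition and estimates apply simultaneously to $\gamma$.
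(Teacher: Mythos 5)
Your reduction to the estimate $|v(a)-v(b)|\le C\kappa^{1/2}\int_\gamma\rho\,ds$ and the chaining over dyadic horizontal strips are fine in outline, but the step you yourself flag as the ``main obstacle'' is a genuine gap, and reciprocality does not fill it in the way you assert. $V$-admissibility of $\rho/(2^{-k}m_1)$ for the crossing family $\Lambda$ of a strip (which is indeed what the proof of Lemma \ref{applvari} gives, via \eqref{kivela}) is a statement about $\liminf_{\epsilon\to 0}\inf_{g\in\mathcal F_\epsilon(\tau)}\int g\rho$. Testing with $g_\epsilon=\epsilon^{-1}\chi_{N_\epsilon(\tau)\cap F(\tau)}$ yields a lower bound on the one-sided Minkowski-type average $\epsilon^{-1}\int_{N_\epsilon(\tau)\cap F(\tau)}\rho\,d\hausk$, not on $\int_{|\tau|}\rho\,d\haus^1$. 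To pass from the former to the latter you need the upper tube estimate $\int_{N_\epsilon(\tau)}\rho\,d\hausk\le C\epsilon\int_{|\tau|}\rho\,d\haus^1+o(\epsilon)$, which is a mass-upper-bound/maximal-function statement (this is exactly the mechanism of Proposition \ref{upperboundforus} under \eqref{upperbound}) and is not a consequence of \eqref{ylaraja}--\eqref{nollamoduli}. Condition \eqref{ylaraja} gives an upper bound on the \emph{modulus} of the transverse family (Lemma \ref{applvari}, \eqref{kjk2}); an upper modulus bound never certifies that a \emph{specific} function such as $c^{-1}\rho/(2^{-k}m_1)$ is admissible for that family, and your sketch of how ``the $L^2$ tube estimate'' becomes a linear path-integral bound via Cauchy--Schwarz does not produce $\int_\tau\rho\,d\haus^1$ at all: Cauchy--Schwarz applied inside the shrinking tube only shows $\modu(\Gamma_\epsilon(\tau))\to\infty$, since $\int_{N_\epsilon(\tau)}\rho^2\to 0$.

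The paper's proof is built precisely to avoid claiming that a multiple of $\rho$ is admissible for the transverse families. It takes, for each frame $P_\ell(i,j,k)$ around a dyadic piece, a near-extremal weakly admissible function $\nu_\ell(i,j,k)$ for the transverse family $\Gamma_\ell(i,j,k)$ --- such functions exist tautologically, and \eqref{kjk2} bounds their $L^2$ mass by $6\kappa$ --- assembles them into $\sigma_k=2^{-k}\sum\nu_\ell\chi_{P_\ell}$, passes to a weak limit $\sigma$, and proves $\sigma\le\sqrt{768\kappa}\,\rho$ almost everywhere by comparing $\int\sigma^2$ with $\int\rho^2$ over Whitney decompositions of arbitrary open sets (using Lemma \ref{tasan}). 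The upper-gradient property is then proved for $64\sigma$ by a chaining argument over the dyadic pieces met by $\gamma$. In other words, the domination by $C\kappa^{1/2}\rho$ is obtained as an $L^2$ comparison on dyadic pieces, localized by Lebesgue differentiation --- never as an admissibility statement for $\rho$ itself on individual crossing paths. If you want to salvage your route you would need to supply the weighted tube-volume estimate from reciprocality alone, and I do not see how to do that; otherwise you should switch to the paper's indirect construction.
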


\begin{remark}
Proposition \ref{ugv} and Lemma \ref{kipu} imply in particular that $f$ belongs to the Newtonian Sobolev space $N^{1,2}(Q,\R^2)$. See \cite{HKSTbook} for the theory of Sobolev spaces in metric measure spaces. 
\end{remark}

Before proving Proposition \ref{ugv}, we apply it to prove the modulus inequality discussed above. 
\begin{corollary}
\label{modineq1}
Suppose $X$ is $\kappa$-reciprocal. Then 
$$
\modu(\Gamma) \leq  4\cdot 10^6 \kappa \modu(f \Gamma)
$$ 
for every path family $\Gamma$ in $Q$. 
\end{corollary}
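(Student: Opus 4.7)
The plan is to pull back an admissible function for $f\Gamma$ to a weakly admissible function for $\Gamma$ via the weak upper gradient of $f$, and then compare $L^2$-norms by the change of variables formula (Proposition \ref{huuto}).

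More precisely, let $g = 2000\kappa^{1/2}\rho$, which by Proposition \ref{ugv} is a weak upper gradient of $f : Q \to [0,1]\times [0,M_1]$. By the standard fact recorded in \eqref{iisti}, there is an exceptional family $\Gamma''$ with $\modu(\Gamma'')=0$ such that for every Borel $h \geq 0$ on $[0,1]\times[0,M_1]$ and every $\gamma \in \Gamma \setminus \Gamma''$,
\begin{equation*}
\int_{f\circ\gamma} h\,ds \;\leq\; \int_\gamma (h\circ f)\,g\,ds.
\end{equation*}
Given any admissible $\tilde\rho$ for $f\Gamma$, I would therefore set $\sigma := (\tilde\rho \circ f)\,g$; for every $\gamma \in \Gamma \setminus \Gamma''$, the displayed inequality with $h=\tilde\rho$ gives $\int_\gamma \sigma\,ds \geq \int_{f\circ\gamma}\tilde\rho\,ds \geq 1$, so $\sigma$ is weakly admissible for $\Gamma$.

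Next, I would estimate the $L^2$-norm of $\sigma$ using Proposition \ref{huuto} applied to the Borel function $\tilde\rho^2$:
\begin{equation*}
\int_Q \sigma^2\,d\hausk \;=\; 4\cdot 10^6 \kappa \int_Q (\tilde\rho\circ f)^2\,\rho^2\,d\hausk \;=\; 4\cdot 10^6 \kappa \int_{[0,1]\times[0,M_1]} \tilde\rho^2\,dy.
\end{equation*}
Since weakly admissible functions compute conformal modulus (one can, on the modulus-zero exceptional family, add an admissible function of arbitrarily small $L^2$-norm, a standard argument), taking the infimum over admissible $\tilde\rho$ for $f\Gamma$ yields $\modu(\Gamma)\leq 4\cdot 10^6\kappa\,\modu(f\Gamma)$.

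The main obstacle here is not in the plan itself, which is a routine pullback argument once the weak upper gradient is in hand; the real difficulty was absorbed into Proposition \ref{ugv}. A minor technical point I would want to verify carefully is the standard reduction from weakly admissible to admissible test functions for the modulus on the target side (so that the infimum matches $\modu(f\Gamma)$ as defined in Definition \ref{moddef}), but this is a well-known consequence of the fact that exceptional families have modulus zero and can be covered by admissible functions of arbitrarily small energy.
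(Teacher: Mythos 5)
Your proposal is correct and follows essentially the same route as the paper: pull back an admissible function for $f\Gamma$ via \eqref{iisti} using the weak upper gradient $2000\kappa^{1/2}\rho$ from Proposition \ref{ugv}, then compare $L^2$-norms with Proposition \ref{huuto}. The only (harmless) addition is your explicit remark that weakly admissible functions suffice to compute modulus, which the paper uses implicitly.
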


\begin{proof}
Let $g$ be an admissible function for $f\Gamma$. By Proposition \ref{ugv}, the function $\rho'=2000\kappa^{1/2} \rho$ is a weak upper gradient of $f$. Therefore, for almost every rectifiable path $\gamma \in \Gamma$,  
$$
1 \leq \int_{f \circ \gamma}  g \, ds \leq \int_{\gamma} (g \circ f) \rho' \, ds 
$$
by \eqref{iisti}. Thus $(g\circ f) \rho' $ is weakly admissible for $\Gamma$. By Proposition \ref{huuto}, we have 
\begin{eqnarray*}
\int_{[0,1]\times[0,M_1]} g^2 \, dx &=& \int_Q (g \circ f)^2 \rho^2 \, d\hausk \\
&\geq& \frac{1}{4\cdot 10^6 \kappa} \int_Q (g \circ f)^2 (\rho')^2 \, d\hausk \geq 
\frac{1}{4\cdot 10^6 \kappa} \modu(\Gamma). 
\end{eqnarray*}
Taking infimum over admissible functions $g$ gives the claim. 
\end{proof}


\begin{proof}[Proof of Proposition \ref{ugv}]
We use the notation $R(i,j,k)$ and $Q(i,j,k)$ introduced in Section \ref{condo}. We fix $k$ and denote 
$$
\hat{Q}(i,j,k)= \bigcup_{|i'-i| \leq 1, |j'-j|\leq 1} Q(i',j',k). 
$$
In other words, $\hat{Q}(i,j,k)$ is the preimage under $f$ of a rectangle $\hat{R}(i,j,k)$ with the same center as $R(i,j,k)$, so that 
$\hat{R}(i,j,k)$ is a scaled copy of $R(i,j,k)$ with scaling factor $3$, except when $Q(i,j,k)$ intersects $\partial Q$. We will consider four subsets of $\hat{Q}(i,j,k)$ together with their boundaries. We denote 
\begin{eqnarray*}
P_1(i,j,k) & = & \bigcup_{|j'-j|\leq 1}Q(i-1,j',k), \quad P_2(i,j,k)  =  \bigcup_{|i'-i|\leq 1}Q(i',j-1,k) \\ 
P_3(i,j,k) & = & \bigcup_{|j'-j|\leq 1}Q(i+1,j',k), \quad P_4(i,j,k)  =  \bigcup_{|i'-i|\leq 1}Q(i',j+1,k).  
\end{eqnarray*}
Then the union of the sets $P_{\ell}$ forms a topological annulus around $Q(i,j,k)$, except when $Q(i,j,k)$ intersects $\partial Q$. 

The rectangles $f(P_{\ell}(i,j,k))$ have two opposite sides three times as long as the two other sides; we say that the long boundary curves of 
$P_{\ell}$ are the preimages of the longer sides. We denote by 
$$
\Gamma_{\ell}=\Gamma_{\ell}(i,j,k) 
$$
the family of rectifiable paths joining the long boundary curves of $P_{\ell}(i,j,k)$ in $P_{\ell}(i,j,k)$. Then Lemma \ref{applvari} gives 
$$
\modu(\Gamma_{\ell}) \leq 3 \kappa 
$$ 
for all $\ell$. Therefore, we can choose a weakly admissible function $\nu_{\ell}(i,j,k):Q \to [0,\infty]$ for $\Gamma_{\ell}$ such that 
\begin{equation}
\label{ammma}
\int_{P_{\ell}(i,j,k)} \nu_{\ell}(i,j,k)^2 \, d\hausk \leq 6 \kappa. 
\end{equation}
We now define $\sigma_k:Q \to [0,\infty]$, 
$$
\sigma_k = 2^{-k} \sum_{i=0}^{2^{k}-1} \sum_{j=0}^{2^{k+k_0}-1} \sum_{\ell = 1}^4 \nu_{\ell}(i,j,k)\chi_{P_{\ell}(i,j,k)}. 
$$
Notice that if $x \in \operatorname{int} Q(i,j,k)$ for some $(i,j)$, then there are at most $8$ triples $(i',j',\ell)$ such that 
$x \in P_{\ell}(i',j',k)$. Therefore, applying \eqref{ammma} and Lemma \ref{tasan}, we see that 
\begin{equation}
\label{saanparky}
\int_{Q(i,j,k)} \sigma_k^2 \, d\hausk \leq 384 \kappa \cdot 2^{-2k} \leq 768 \kappa \int_{Q(i,j,k)} \rho^2 \, d\hausk. 
\end{equation}
In particular, the sequence $(\sigma_k)$ is bounded in $L^2(Q)$, so there exists a subsequence $(\sigma_{k_n})$ converging 
weakly to $\sigma \in L^2$. Furthermore, by Mazur's lemma, there exists a sequence $(\hat{\sigma}_n)$ of convex combinations of the 
functions $\sigma_{k_n}$ converging to $\sigma$ strongly in $L^2$. Notice that \eqref{saanparky} then holds with $\sigma_k$ replaced by $\sigma$ and for every $(i,j,k)$. 

Now, if $\Omega \subset Q$ is open in $Q$, then we can take a ``Whitney decomposition" 
of $f(\Omega)$ and cover it with the sets $R(i,j,k)\subset f(\Omega)$ with disjoint interiors (and varying $k$). Then $\Omega$ is covered by the corresponding sets $Q(i,j,k) \subset \Omega$, and applying Lemma \ref{tasan} and \eqref{saanparky} with $\sigma$ 
gives 
$$
\int_{\Omega} \sigma^2 \, \dhausk \leq 768 \kappa \int_{\Omega} \rho^2 \, d\hausk. 
$$
Since this holds for every open $\Omega \subset Q$, we conclude that 
$$
\sigma(x) \leq \sqrt{768 \kappa} \rho(x) 
$$
for $\hausk$-almost every $x \in Q$. So the proposition follows if we can show that $64 \sigma$ is a weak upper gradient of $f$. 

First, notice that since the functions $\nu_{\ell}(i,j,k)$ are weakly admissible for the path families $\Gamma_{\ell}(i,j,k)$, we can choose an exceptional set $\hat{\Gamma}$ of zero modulus such that the following holds: whenever 
$\gamma$ contains a subpath $\tilde{\gamma}$ in $\Gamma_{\ell}(i,j,k) \setminus \hat{\Gamma}$, then 
$$
\int_{\gamma} \nu_{\ell}(i,j,k)\chi_{P_{\ell}(i,j,k)}\, ds  \geq 1. 
$$

Now fix $k \geq 1$ and a non-constant $\gamma:[0,1] \to Q$, $\gamma \notin \hat{\Gamma}$. Then, if 
\begin{equation}
\label{jolly}
|\gamma| \cap Q(i,j,k) \neq \emptyset, \quad \gamma(0), \gamma(1) \notin \hat{Q}(i,j,k), 
\end{equation}
we have 
\begin{equation}
\label{sdf}
\int_{\gamma} \sigma_k \chi_{\hat{Q}(i,j,k)} \, ds \geq 2^{-k}m_1. 
\end{equation}
Indeed, if \eqref{jolly} holds, then there exists $\ell \in \{1,2,3,4\}$ and a simple $\gamma_{\ell} \in \Gamma_{\ell}(i,j,k)$ such that 
$|\gamma_{\ell}| \subset |\gamma|$, so \eqref{sdf} holds by the weak admissibility of $\nu_{\ell}(i,j,k)$ and the definition of $\sigma_k$. 

On the other hand, the triangle inequality gives 
\begin{eqnarray*}
|f(\gamma(1))-f(\gamma(0))| &  \leq & \sum_{|\gamma| \cap Q(i,j,k) \neq \emptyset} \max_{x,y \in Q(i,j,k)}|f(y)-f(x)| \\
& \leq & 2^{2-k} \operatorname{card}\{(i,j): \, |\gamma| \cap Q(i,j,k) \neq \emptyset \}. 
\end{eqnarray*}
Therefore, applying \eqref{sdf} and recalling the bounded overlap of the sets $\hat{Q}(i,j,k)$, we have 
\begin{eqnarray*}
|f(\gamma(1))-f(\gamma(0))| & \leq & 8 \lim_{k \to \infty}  \sum_{|\gamma| \cap Q(i,j,k) \neq \emptyset} 
\int_{\gamma} \sigma_k \chi_{\hat{Q}(i,j,k)} \, ds  \\
& \leq & 64 \lim_{k \to \infty} \int_{\gamma} \sigma_k \, ds. 
\end{eqnarray*}
Finally recall that, by Fuglede's lemma, 
$$
\int_{\gamma} \sigma \, ds  =\lim_{k \to \infty} \int_{\gamma} \sigma_k \, ds 
$$
for almost every $\gamma$. We conclude that 
$$
64 \sigma \leq 64 \cdot \sqrt{768\kappa} \rho \leq 2000 \kappa^{1/2} \rho
$$
is a weak upper gradient of $f$. The proof is complete. 
\end{proof}


\section{Regularity of $f^{-1}$ and quasiconformality of $f$} \label{regoffmiinus1}
In this section we conclude the proof of the quasiconformality of $f$. 

\begin{theorem}
\label{joo1}
Suppose $X$ is $\kappa$-reciprocal. Then $f:\operatorname{int} Q \to (0,1) \times (0,M_1)$ is 
a $ 8 \cdot 10^6 \kappa^2$-QC homeomorphism. 
\end{theorem}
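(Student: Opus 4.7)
Plan. Proposition \ref{homeo} shows that $f$ restricts to a homeomorphism from $\operatorname{int} Q$ onto $(0,1)\times(0,M_1)$, and Corollary \ref{modineq1} provides one of the two required modulus inequalities, namely
\[
\modu(\Gamma) \le 4\cdot 10^{6}\,\kappa \, \modu(f\Gamma)
\]
for every path family $\Gamma$ in $\operatorname{int} Q$. It therefore remains to establish the reverse bound
\[
\modu(f\Gamma) \le 8\cdot 10^{6}\,\kappa^{2}\, \modu(\Gamma);
\]
combining the two gives the stated dilatation $K = 8\cdot 10^{6}\,\kappa^{2}$ (which dominates $4\cdot 10^{6}\,\kappa$ as soon as $\kappa \ge 1/2$, which always holds by \eqref{ylaraja}--\eqref{alaraja} since $\kappa$ appears symmetrically in both inequalities).

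The plan for the reverse inequality is to mirror the proof of Corollary \ref{modineq1}, but now producing a weak upper gradient of the \emph{inverse} map $f^{-1}: R \to Q$, where $R=[0,1]\times[0,M_1]$. Concretely, I will construct a Borel function $g$ on $\operatorname{int} R$ satisfying
\begin{equation}
\label{plan_ug}
d_X\bigl(f^{-1}(\eta(1)),f^{-1}(\eta(0))\bigr)\le \int_{\eta} g\,ds
\end{equation}
for every rectifiable $\eta$ outside an exceptional family of modulus zero, together with the pointwise bound $g(f(x))\,\rho(x)\le C\kappa$ for $\hausk$-a.e.\ $x\in Q$, where $\rho$ is the energy minimizer of Section \ref{minimizer}. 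Once $g$ is in hand, the argument is formal: given $\rho_{0}$ admissible for $\Gamma$, the product $(\rho_{0}\circ f^{-1})\,g$ is weakly admissible for $f\Gamma$ by the analog of \eqref{iisti} applied to $f^{-1}$, and Proposition \ref{huuto} gives
\[
\int_{R} (\rho_{0}\circ f^{-1})^{2}\,g^{2}\,dy \;=\; \int_{Q} \rho_{0}^{2}\, g(f)^{2}\,\rho^{2}\,d\hausk \;\le\; C^{2}\kappa^{2}\int_{Q}\rho_{0}^{2}\,d\hausk,
\]
which yields the reverse modulus inequality after taking the infimum over admissible $\rho_{0}$.

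The construction of $g$ is guided by Proposition \ref{ugv}. For each dyadic rectangle $R(i,j,k)\subset R$ and its three-rectangle strip neighborhoods, Lemma \ref{applvari} bounds the moduli of the opposite-side families in the preimage strips $P_{\ell}(i,j,k)\subset Q$ by a constant multiple of $\kappa$, so weakly admissible functions $\nu_{\ell}(i,j,k)$ with $\int \nu_{\ell}^{2}\le C\kappa$ exist. Suitably rescaled and summed against $\chi_{R(i,j,k)}$ in the Euclidean target, these produce functions $g_{k}$ on $R$; a weak limit followed by Mazur's lemma yields $g\in L^{2}(R)$, and the pointwise control $g(f(x))\,\rho(x)\le C\kappa$ is extracted from Lemma \ref{tasan} exactly as the bound $\sigma\le \sqrt{768\kappa}\,\rho$ is extracted in Proposition \ref{ugv}.

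The main obstacle is verifying the upper gradient property \eqref{plan_ug}. In Proposition \ref{ugv} the weak admissibility of each $\nu_{\ell}$ in the source translated directly into displacement estimates of $f$ across dyadic rectangles in the target via a simple triangle-inequality argument in $\R^{2}$; here the roles of source and target are swapped, and one has to show that integrating $g$ along a Euclidean path $\eta$ in $R$ controls the \emph{metric} displacement of the lifted path $f^{-1}\circ\eta$ in the wild space $X$. This passage from weak admissibility in $Q$ to metric displacement in $X$ is delicate because the metric structure of $X$ is only implicitly captured through $\rho$, and it is precisely this asymmetry that accounts for the extra factor of $\kappa$ in the final dilatation.
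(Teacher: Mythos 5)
Your overall architecture is right: Proposition \ref{homeo} for the homeomorphism, Corollary \ref{modineq1} for $\modu(\Gamma)\le 4\cdot 10^6\kappa\,\modu(f\Gamma)$, and then the reverse inequality with constant $8\cdot 10^6\kappa^2$. But your route to the reverse inequality has a genuine gap at exactly the step you flag as "the main obstacle," and the dyadic-strip construction you sketch cannot close it. Weak admissibility of the functions $\nu_\ell(i,j,k)$ for the path families $\Gamma_\ell(i,j,k)$ in $Q$ gives \emph{lower} bounds on $\int_\gamma \nu_\ell\,ds$ for paths $\gamma$ in $Q$; it carries no information about the metric diameter of the cells $Q(i,j,k)$ in $X$. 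To run the triangle-inequality argument of Proposition \ref{ugv} in the reverse direction you would need, for a path $\eta$ in the rectangle, an estimate of the form $d_X(f^{-1}(\eta(1)),f^{-1}(\eta(0)))\le\sum \operatorname{diam}Q(i,j,k)$ over visited cells together with $\operatorname{diam}Q(i,j,k)\lesssim 2^{k}\int_{\eta}g\chi_{\hat R(i,j,k)}\,ds$, and for $g$ to lie in $L^2$ with the right norm this forces something like $\operatorname{diam}(Q(i,j,k))^2\lesssim \kappa\,\hausk(Q(i,j,k))$. That is a lower-mass-bound/Loewner-type estimate which reciprocality does not provide (and which fails in general for the spaces considered here), so the construction of $g$ as a limit of such $g_k$ does not yield the upper gradient property \eqref{plan_ug}.

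The paper closes this step by a different mechanism: Proposition \ref{sobofmiinus} and Corollary \ref{modineq2}. There one does not try to control cell diameters at all. Instead, for each $a\in X$ one proves that $H_a=\operatorname{dist}(f^{-1}(\cdot),a)$ is ACL on the rectangle, by covering a set $E$ on a vertical segment with short intervals, forming thin rectangles $T_j(\nu)$, bounding $\modu(f^{-1}\Lambda_j(\nu))$ from above via the already-proved inequality of Corollary \ref{modineq1}, converting this into a lower bound for $\modu(f^{-1}\Gamma_j(\nu))$ via \eqref{alaraja}, and then using the admissibility of the constant $2/\ell(f^{-1}(I_t|[a_j,b_j]))$ (justified by the length lower semicontinuity of Lemma \ref{lowersemi}) to get $\ell(f^{-1}(I_t|[a_j,b_j]))^2\le 2\kappa K\delta\,\hausk(f^{-1}(T_j(\nu)))/\nu$. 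Summing, applying H\"older and the differentiation Lemma \ref{volder} gives the ACL property and the pointwise bound $|\nabla H_a|\le(2\kappa K J_{f^{-1}})^{1/2}$, hence the weak upper gradient $(2\kappa K J_{f^{-1}})^{1/2}$ of $f^{-1}$; since $J_{f^{-1}}=(\rho\circ f^{-1})^{-2}$ a.e.\ by Proposition \ref{huuto}, this is exactly the function with $g(f(x))\rho(x)=(2\kappa K)^{1/2}$ that you were aiming for, and Corollary \ref{modineq2} with $K=4\cdot 10^6\kappa$ gives $\modu(f\Gamma)\le 8\cdot 10^6\kappa^2\modu(\Gamma)$. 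You would need to replace your dyadic construction with an argument of this kind (or otherwise establish \eqref{plan_ug}) for the proof to be complete.
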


Theorem \ref{joo1} follows from Proposition \ref{homeo}, Corollary \ref{modineq1}, and Corollary \ref{modineq2} below. 
In this section we prove Sobolev regularity for the inverse map $f^{-1}$. This leads to the last modulus inequality in 
the definition of quasiconformality, finishing the proof of Theorem \ref{joo1}. 

We formulate the next results in slightly greater generality than what is needed to prove Theorem \ref{joo1}. Notice that the results 
can be applied to our map $f$, thanks to Proposition \ref{homeo} and Corollary \ref{modineq1}. 

Suppose $F:\Omega \to \Omega' \subset \R^2$ is a homeomorphism, $\Omega \subset X$ a domain. If $y=(y_1,y_2) \in \Omega'$, we define 
$$
J_{F^{-1}}(y)= \limsup_{r \to 0} \frac{\hausk(F^{-1}(R(y,r)))}{4r^2}, 
$$
where $R(y,r)=[y_1-r,y_1+r]\times [y_2-r,y_2+r]$. We will use the following facts from real analysis 
(cf. \cite[Theorem 2.12]{Ma}, \cite[Theorem 3.22]{Fol}): if $h \geq 0$ is a Borel function 
in $X$, then 
\begin{equation}
\label{arsa}
\int_{\Omega'} (h \circ F^{-1})  J_{F^{-1}} \, dy \leq \int_{\Omega} h \, d\hausk. 
\end{equation}
Also, if $F$ is our map $f$, then (see Proposition \ref{huuto})
$$
J_{f^{-1}}(y)=(\rho(f^{-1}(y)))^{-2} 
$$
for Lebesgue almost every $y \in (0,1) \times (0,M_1)$.


\begin{proposition}
\label{sobofmiinus}
Suppose $X$ satisfies \eqref{alaraja} with constant $\kappa$, and let $F:\Omega \to \Omega' \subset \R^2$ be a homeomorphism, 
$\Omega \subset X$ a domain. If there exists $K$ such that 
\begin{equation}
\label{taaskoo}
\modu(\Gamma) \leq K\modu(F\Gamma)
\end{equation} 
for every path family $\Gamma$ in $\Omega$, then $(2\kappa K J_{F^{-1}})^{1/2}$ is a weak upper gradient of $F^{-1}$. 
\end{proposition}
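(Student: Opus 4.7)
The plan is to construct a sequence of $L^2$-bounded approximate upper gradients $\sigma_k$ for $F^{-1}$ at dyadic scale $2^{-k}$, and then extract a weak upper gradient as the weak $L^2$ limit via Mazur's lemma and Fuglede's lemma, following the template of the proof of Proposition \ref{ugv}.

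For each dyadic square $R \subset \Omega'$ of side $2^{-k}$, let $Q = F^{-1}(R)$. Since in $\R^2$ the modulus of paths joining opposite sides of $R$ equals $1$, the hypothesis \eqref{taaskoo} gives $\modu(\zeta_i, \zeta_{i+2}; Q) \leq K$ for both pairs of opposite edges of $Q$, and then \eqref{alaraja} gives $\modu(\zeta_i, \zeta_{i+2}; Q) \geq 1/(\kappa K)$ for each pair. Testing the moduli with the admissible constant function $1/\operatorname{dist}(\zeta_i, \zeta_{i+2}) \cdot \chi_Q$ yields the edge-distance bound
\begin{equation*}
\operatorname{dist}(\zeta_i, \zeta_{i+2}; Q) \leq \sqrt{\kappa K \, \hausk(Q)}, \quad i = 1, 2.
\end{equation*}
The key technical step is to upgrade this to a diameter bound of the form $\operatorname{diam}(Q) \leq C \sqrt{\kappa K \, \hausk(Q)}$. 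I would obtain this by a chaining argument: given $x, y \in Q$, draw an $L$-shaped path in $R$ from $F(x)$ to $F(y)$, and bound the separation of each preimage arc using the edge-distance estimate applied to appropriate subrectangles of $R$. Any subrectangle is a topological square to which \eqref{alaraja} applies, with moduli in $\R^2$ controlled by the aspect ratio; the constant $\sqrt{2}$ stated in the proposition presumably comes from a Pythagoras-style optimization $d(x,y)^2 \leq \operatorname{dist}_1^2 + \operatorname{dist}_2^2$ achieved by a finer bisection. This diameter bound is the main obstacle.

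Given the diameter bound, set
\begin{equation*}
\sigma_k(y) = \sum_j \frac{C \sqrt{\kappa K \, \hausk(F^{-1}(R_k^j))}}{2^{-k}} \chi_{R_k^j}(y),
\end{equation*}
so that $\int_{\Omega'} \sigma_k^2 \, dy \leq C^2 \kappa K \, \hausk(\Omega)$ uniformly in $k$ thanks to the disjointness of the $R_k^j$. For a rectifiable path $\beta$ in $\Omega'$ from $a$ to $b$, the triangle inequality over the sequence of dyadic squares visited by $\beta$ yields $d(F^{-1}(a), F^{-1}(b)) \leq \sum_l \operatorname{diam}(F^{-1}(R_k^{j_l}))$, and for squares that $\beta$ crosses with $\haus^1$-length at least $2^{-k}$ this is bounded by $C' \int_\beta \sigma_k \, ds$; squares touched only tangentially are handled by a Whitney-style enlargement exactly as in the proof of Proposition \ref{ugv}. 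Passing to a weakly convergent subsequence $\sigma_k \rightharpoonup \sigma$ in $L^2(\Omega')$ and applying Mazur's lemma produces convex combinations converging strongly; Fuglede's lemma then transfers the integral inequalities to $\modu$-a.e.\ path, so $\sigma$ is a weak upper gradient of $F^{-1}$. Finally, since $\hausk(F^{-1}(R_k^j))/|R_k^j|$ has $\limsup$ equal to $J_{F^{-1}}$ at almost every $y$, the pointwise estimate $\sigma_k(y) \leq C \sqrt{\kappa K \cdot \hausk(F^{-1}(R_k(y)))/|R_k(y)|}$ passes to the limit via Lebesgue differentiation to give $\sigma(y) \leq \sqrt{2\kappa K \, J_{F^{-1}}(y)}$ for almost every $y$, completing the proof.
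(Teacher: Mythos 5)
Your dyadic-square scheme is a genuinely different route from the paper's, but it has a genuine gap at the step you yourself flag as the main obstacle, and that step cannot be repaired. The edge-distance estimate $\operatorname{dist}(\zeta_i,\zeta_{i+2})\leq(\kappa K\hausk(Q))^{1/2}$ is correct, but it only controls the \emph{infimal} distance between two continua. Upgrading it to $\operatorname{diam}(F^{-1}(R))\leq C(\kappa K\,\hausk(F^{-1}(R)))^{1/2}$ is a statement about distances between \emph{specific points}, and the modulus machinery gives nothing there: the family of paths joining two points has zero modulus (this is exactly condition \eqref{nollamoduli}), so no chaining over subrectangles can convert "the edges come close somewhere" into "these two preimage points are close." Worse, the diameter bound is false under the hypotheses: it is a local quasisymmetry/LLC-type condition which reciprocal spaces need not satisfy. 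Take for $X$ a smooth surface (hence $1$-reciprocal) with a long thin finger of length $1$ and area $\epsilon$; the uniformizing map sends the finger into a tiny neighborhood of a point, so the preimage of a small square around that point has diameter $\approx 1$ but measure $\approx\epsilon$. Since your $\sigma_k$ is built directly from this false diameter bound, the uniform $L^2$ bound and the admissibility argument both collapse.

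The paper avoids the issue by never asking for a pointwise diameter bound. It proves that $H=F^{-1}$ is ACL and bounds $|\nabla H_a|$ for the functions $H_a=\operatorname{dist}(H(\cdot),a)$, which suffices for the weak upper gradient property. The key estimate \eqref{myohtarv} is your edge-distance computation, but applied to thin rectangles $[t-\nu,t+\nu]\times[a_j,b_j]$ around a \emph{fixed vertical line}, with two crucial substitutes for your missing lemma: (i) the quantity controlled is $\ell(H(I_t|[a_j,b_j]))$, the length of the image of the segment, obtained by letting $\nu\to0$ and using lower semicontinuity of length (Lemma \ref{lowersemi}) to make the constant $2/\ell$ admissible for $\modu(H\Gamma_j(\nu))$; and (ii) the measure factor $\hausk(H(T_j(\nu)))/\nu$ is tamed only for \emph{almost every} $t$ via differentiation of the set function $\varphi$ (Lemma \ref{volder}). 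Both devices are exactly what rescue the argument from the counterexample above — the bad squares are confined to a null set of lines. If you want to keep a Fuglede/Mazur-style construction, you would have to build it on these a.e.-line estimates rather than on a uniform bound over all dyadic squares.
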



\begin{corollary}
\label{modineq2}
Suppose $X$ and $F$ are as in Proposition \ref{sobofmiinus}. Then  
\begin{equation}
\label{uusihata}
\modu(F\Gamma) \leq 2\kappa K \modu(\Gamma) 
\end{equation}
for every path family $\Gamma$ in $\Omega$. 
\end{corollary}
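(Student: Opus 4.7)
The plan is to derive Corollary \ref{modineq2} directly from Proposition \ref{sobofmiinus} by transporting admissible functions for $\Gamma$ to weakly admissible functions for $F\Gamma$, using the weak upper gradient of $F^{-1}$ as a Jacobian-type weight, and then changing variables via \eqref{arsa}.

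First, let $\tilde{\rho}$ be any admissible function for $\Gamma$, and set $g = (2\kappa K\,J_{F^{-1}})^{1/2}$, which by Proposition \ref{sobofmiinus} is a weak upper gradient of $F^{-1}\colon \Omega' \to \Omega$. By the transformation property \eqref{iisti} applied to $F^{-1}$, there is an exceptional family $\Gamma'' \subset \Omega'$ with $\modu(\Gamma'') = 0$ such that for every $\beta \notin \Gamma''$ and every non-negative Borel $h$ on $\Omega$,
$$
\int_{F^{-1}\circ \beta} h\, ds \;\le\; \int_{\beta} (h\circ F^{-1})\, g\, ds.
$$

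Next I would define $\rho'(y) = \tilde{\rho}(F^{-1}(y))\, g(y)$ on $\Omega'$ and verify that $\rho'$ is weakly admissible for $F\Gamma$. The subtle point, and the only real thing to check, is the exceptional set. Let $\Gamma_0 = \{\gamma \in \Gamma : F\circ \gamma \in \Gamma''\}$. Then $F\Gamma_0 \subset \Gamma''$, so $\modu(F\Gamma_0) = 0$, and by the hypothesis \eqref{taaskoo} we also have $\modu(\Gamma_0) \leq K \modu(F\Gamma_0) = 0$. For every $\gamma \in \Gamma \setminus \Gamma_0$, applying the displayed inequality with $\beta = F\circ \gamma$ and $h = \tilde{\rho}$ yields
$$
1 \;\le\; \int_{\gamma} \tilde{\rho}\, ds \;\le\; \int_{F\circ \gamma} (\tilde{\rho} \circ F^{-1})\, g\, ds \;=\; \int_{F\circ \gamma} \rho'\, ds,
$$
so $\rho'$ is indeed weakly admissible for $F\Gamma$.

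Finally I would bound the energy of $\rho'$. By the definition of $g$ and the change of variables inequality \eqref{arsa} with $h = \tilde{\rho}^2$,
$$
\int_{\Omega'} (\rho')^2\, dy \;=\; 2\kappa K \int_{\Omega'} (\tilde{\rho}\circ F^{-1})^2\, J_{F^{-1}}\, dy \;\le\; 2\kappa K \int_{\Omega} \tilde{\rho}^2\, d\hausk.
$$
Taking the infimum over admissible $\tilde{\rho}$ on the right (and noting that weak admissibility of $\rho'$ suffices to bound $\modu(F\Gamma)$, since weakly admissible functions compute the same modulus) gives $\modu(F\Gamma) \leq 2\kappa K\, \modu(\Gamma)$, which is \eqref{uusihata}. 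The only real obstacle is the bookkeeping around weak admissibility and the exceptional family $\Gamma''$, but the assumption \eqref{taaskoo} is precisely what is needed to push this null family back through $F$.
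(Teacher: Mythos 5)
Your proof is correct and follows essentially the same route as the paper: push an admissible function for $\Gamma$ forward to the weakly admissible function $(2\kappa K)^{1/2}(\tilde\rho\circ F^{-1})J_{F^{-1}}^{1/2}$ for $F\Gamma$ via Proposition \ref{sobofmiinus} and \eqref{iisti}, then bound its energy by \eqref{arsa}. Your extra step invoking \eqref{taaskoo} to kill $\modu(\Gamma_0)$ is harmless but not needed, since the exceptional family for weak admissibility of $\rho'$ lives in $\Omega'$ and is already contained in $\Gamma''$, which has modulus zero.
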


\begin{proof}
Let $\Gamma$ be a path family in $\Omega$, and let $h$ be admissible for $\Gamma$. Then, by Proposition \ref{sobofmiinus} and \eqref{iisti}, 
$$
(2\kappa K)^{1/2} (h\circ F^{-1}) J_{F^{-1}}^{1/2}
$$
is weakly admissible for $F\Gamma$, and moreover by \eqref{arsa} 
$$
\modu(F \Gamma) \leq 2 \kappa K  \int_{\Omega'} (h\circ F^{-1})^2 J_{F^{-1}}\, dy \leq 2\kappa K \int_{\Omega} h^2 \, d\hausk. 
$$
Minimizing over $h$ gives \eqref{uusihata}. The proof is complete. 
\end{proof}


The rest of this section is devoted to the proof of Proposition \ref{sobofmiinus}. The basic idea for the proof is classical in 
QC mapping theory, see \cite[Theorem 31.2]{Vabook}. However, here we replace the classical geometric conditions by the reciprocality condition \eqref{alaraja}. 


We say that a continuous function $w: \Omega' \to \mathbb{R}$ is \emph{ACL}, if $w$ is absolutely continuous on $\haus^{1}$-almost every line segment parallel to the coordinate axes. 
Notice that if $w$ is ACL, then it has partial derivatives at almost every point, defining the gradient $\nabla w$. 

For the rest of this section, we denote $H=F^{-1}$. Let $a \in X$, and denote 
$$
H_{a}(y)=\operatorname{dist}(H(y),a). 
$$ 
We will use the following fact, cf. \cite[Theorems 7.1.20 and 7.4.5]{HKSTbook} for the proof: If there exists a Borel function $g \in L^2(\Omega')$ such that, for every $a \in X$, $H_a$ is ACL and 
\begin{equation}
\label{usihata}
|\nabla H_a(y)| \leq g(y) \quad \text{for almost every }  y \in \Omega', 
\end{equation}
then $g$ is a weak upper gradient of $H$.


\begin{proof}[Proof of Proposition \ref{sobofmiinus}]
In view of the previous discussion it suffices to show that $H_a$ is ACL and that the function 
\begin{equation}
\label{aatu}
g=(2\kappa KJ_{F^{-1}})^{1/2}
\end{equation}
satisfies \eqref{usihata} for every $a \in X$. Notice that by assumption it suffices to consider the restriction of $H$ 
to an arbitrary cube $Q' \subset \Omega'$, and that by scaling and translating $\Omega'$ if necessary we may assume $Q'=[0,1]^2$.  
We denote 
$$
\varphi(G):= \hausk(H(G \times [0,1])), \quad G \subset [0,1]. 
$$
Recall from Lemma \ref{volder} that 
$$
\varphi'(t)=\lim_{h \to 0}\frac{\varphi([t-h,t+h])}{2h}  
$$
exists and is finite for almost every $0<t<1$. We fix such a $t$, and let 
$$
E \subset I_t=\{(t,b):\, 0 \leq b \leq 1\} 
$$
with $\haus^1(E) < \epsilon$. We will prove 
\begin{equation}
\label{mustasuk}
\haus^1(H(E)) \leq C \epsilon^{1/2}, 
\end{equation}
where $C$ may depend on $t$ but not on $E$. This suffices for the ACL-property since $H$ is a homeomorphism and since we can apply the same argument to the horizontal segments. 

We may assume that $E$ is a Borel set. Furthermore, since $H(E)$ is an increasing limit of compact subsets, we may assume 
that $E$ is compact. Now there exists $\delta>0$ and a covering 
$$
I_j=\{\{t\} \times [a_j,b_j]\}_{j=1}^L, \quad b_j-a_j=\delta, 
$$ 
of $E$ by segments with pairwise disjoint interiors such that $L\delta <\epsilon$. 
Indeed, since $E$ is compact, given a small open cover of $E$, there exists a subcover $\{I_j\}$, $j=1, \ldots, p$, 
consisting of open intervals. Finally, for a large enough integer $\ell$, we cover $\cup_j I_j$ with dyadic intervals 
of length $2^{-\ell}$, increasing the measure only slightly.

For $\nu>0$, denote 
\begin{eqnarray*}
\Gamma_j(\nu) &=&\Delta([t-\nu,t+\nu]  \times \{a_j\},[t-\nu,t+\nu] \times \{b_j\};T_j(\nu)) \quad \text{and}       \\
\Lambda_j(\nu)&=&\Delta(I_{t-\nu}|[a_j,b_j],I_{t+\nu}|[a_j,b_j];T_j(\nu)), 
\end{eqnarray*}
where $T_j(\nu)=[t-\nu,t+\nu] \times[a_j,b_j]$, and 
$$
I_{t \pm \nu}|[a_j,b_j]=\{(t \pm \nu,a) \in I_t: \, a_j \leq a \leq b_j\}. 
$$ 
By Lemma \ref{lowersemi}, for every $\alpha>0$ there exists $\nu < \delta$ such that 
\begin{equation}
\label{color}
\ell(\gamma) \geq (1-\alpha) \ell(H(I_t|[a_j,b_j])) \quad \text{for every } \gamma \in H \Gamma_j(\nu). 
\end{equation}
For now we choose $\alpha=1/2$. Also, choosing $\nu$ smaller if necessary we may assume that  
$$
\varphi([t-\nu,t+\nu])\leq 4 \nu \varphi'(t). 
$$
Now, the moduli of $\Gamma_j(\nu)$ and $\Lambda_j(\nu)$ are easy to calculate. Combining with assumption \eqref{taaskoo}, 
we then have 
$$
\modu(H\Lambda_j(\nu)) \leq K \modu(\Lambda_j(\nu))=\frac{K\delta}{2 \nu}. 
$$
By condition \eqref{alaraja}, 
$$
\modu(H\Lambda_j(\nu)) \geq \frac{1}{\kappa \modu(H\Gamma_j(\nu))}. 
$$
Moreover, by \eqref{color}, the constant function $2 /\ell(H(I_t|[a_j,b_j]))$ is admissible for $H\Gamma_j(\nu)$, so 
$$
\modu(H\Gamma_j(\nu)) \leq \frac{4\hausk(H(T_j(\nu))) }{\ell(H(I_t|[a_j,b_j]))^2}. 
$$
Combining the estimates, we get 
\begin{equation}
\label{myohtarv}
\ell(H(I_t|[a_j,b_j]))^2 \leq \frac{2\kappa K\delta \hausk(H(T_j(\nu)))}{\nu}. 
\end{equation}
Summing over $j$ and applying H\"older's inequality yields  
\begin{eqnarray*}
\nonumber \haus^1(H(E)) &\leq& \sum_{j=1}^L \ell(H(I_t|[a_j,b_j])) 
\leq \Big( \frac{2\kappa K\delta}{\nu}  \Big)^{1/2} \sum_{j=1}^L \Big(\hausk(H(T_j(\nu))) \Big)^{1/2} \\
 &\leq&  \Big(\frac{2L \kappa K \delta}{\nu} \Big)^{1/2}  \Big(\sum_{j=1}^L \hausk(H(T_j(\nu)))  \Big)^{1/2}. 
\end{eqnarray*}
Recalling the disjointness of the interiors of the segments $I_j$ and that $L\delta < \epsilon$, we see that the right hand term is bounded by 
$$
\Big(\frac{4 \epsilon \kappa K  \varphi([t-\nu,t+\nu])}{\nu} \Big)^{1/2}  \leq 4 (\epsilon \kappa K \varphi'(t))^{1/2}, 
$$
so \eqref{mustasuk} follows. We conclude that $H$ is ACL. 

To conclude the proof we have to show that $H=F^{-1}$ satisfies \eqref{usihata} with the function $g$ in \eqref{aatu}. Let $y=(y_1,y_2) \in \Omega'$, and 
$$
Q_0=[y_1-\delta,y_1+\delta]\times [y_2-\delta,y_2+\delta] \subset \Omega'.  
$$
We denote $E_t=\{t\} \times [y_2-\delta,y_2+\delta]$. Let $a \in X$. Then, since $H$ is ACL, so is 
$H_a=\operatorname{dist}(\cdot,a)$. Now  
\begin{eqnarray}
\nonumber \left| \int_{y_1-\delta}^{y_1+\delta} \int_{y_2-\delta}^{y_2+\delta} \partial_2 H_a (t,s)\, ds \, dt \right|  & = & 
\left| \int_{y_1-\delta}^{y_1+\delta}H_a(t,y_2+\delta)-H_a(t,y_2-\delta)\, dt  \right| \\
\label{akraari} & \leq & \int_{y_1-\delta}^{y_1+\delta} \ell(H(E_t))\, dt. 
\end{eqnarray}
Notice that choosing $\alpha$ arbitrarily small in \eqref{color} and showing \eqref{myohtarv} with this sharper bound yields 
\begin{eqnarray*}
\ell(H(E_t))& \leq &
\lim_{\nu \to 0} \Big(\frac{2\kappa K\delta \hausk(H([t-\nu,t+\nu]\times [y_2-\delta,y_2+\delta]))}{2\nu} \Big)^{1/2} \\
& \leq & (2\kappa \delta K \varphi'(t))^{1/2}
\end{eqnarray*}
whenever $\varphi'(t)$ exists, where now 
$$
\varphi(G)= \hausk(H(G \times [y_2-\delta,y_2+\delta])). 
$$ 
Combining with \eqref{akraari}, H\"older's inequality and Lemma \ref{volder}, we have 
\begin{eqnarray*}
 \left| \int_{y_1-\delta}^{y_1+\delta} \int_{y_2-\delta}^{y_2+\delta} \partial_2 H_a (t,s)\, ds \, dt \right|
 &\leq & 2 (\kappa K)^{1/2} \delta \varphi((y_1-\delta,y_1+\delta))^{1/2} \\ 
 &= & 4 (\kappa K)^{1/2} \delta^2 \Big(\frac{\hausk(H(Q_0))}{4\delta^2}\Big)^{1/2}. 
\end{eqnarray*}
Dividing both sides by $4\delta^2$, taking $\delta \to 0$ and applying the Lebesgue differentiation theorem then gives 
$$
|\partial_2H_a(y)| \leq (\kappa K J_H)^{1/2}(y) 
$$
for almost every $y \in \Omega'$. Repeating the argument gives the same estimate for $\partial_1H_a$. Combining the estimates, 
we conclude \eqref{aatu}. 
\end{proof}



\section{Existence of QC maps $f_0: X \to \R^2$ and $\to \mathbb{S}^2$} \label{globalsection}
Theorem \ref{joo1} shows the existence of QC maps on subsets of a reciprocal space $X$. In this section we finish the proof 
of Theorem \ref{main} by showing the existence of a QC map on the whole space $X$. This is done by exhausting $X$ with a sequence of subsets for which Theorem \ref{joo1} can be applied, and then using normal family arguments. 

Recall that if $(F_j)$ is a sequence of $K$-QC maps $F_j: U\to V_j$ between planar domains containing $0$ and $1$ such that 
$F_j(0)=0$ and $F_j(1)=1$ for every $j$, then $(F_j)$ is a normal family and there exists a subsequence $(F_{j_{\ell}})$ converging locally uniformly to a $K$-QC map $F$, cf. \cite[20.5,21.3,37.2]{Vabook}. Also, notice that if $F$ and $G$ are $K_1$- and $K_2$-QC homeomorphisms, 
respectively, and if 
the composition $F\circ G$ is a well-defined homeomorphism, then $F\circ G$ is $K_1K_2$-QC; this follows from the 
definition of quasiconformality. 

Theorem \ref{main} is a direct consequence of the following. 

\begin{theorem}
\label{weakmain}
Suppose $X$ is $\kappa$-reciprocal. Then there is a $512 \cdot 10^{18}\kappa^6$-QC homeomorphism from $X$ onto 
either $\R^2$ or $\mathbb{D}$. 
\end{theorem}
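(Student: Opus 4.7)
The plan is to build the desired QC homeomorphism as a normal-family limit of the local parametrizations supplied by Theorem \ref{joo1}, and then to conformally normalize the target via Riemann mapping.

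Since $X$ is homeomorphic to $\R^2$, I can select three distinct points $x_1, x_2, x_3 \in X$ and an increasing exhaustion $Q_1 \subset \operatorname{int} Q_2 \subset Q_2 \subset \cdots$ of $X$ by topological closed squares $Q_n \subset X$ with $\{x_1,x_2,x_3\} \subset \operatorname{int} Q_1$ and $\bigcup_n Q_n = X$. Theorem \ref{joo1} then supplies, for each $n$, a $K_0$-QC homeomorphism $f_n\colon\operatorname{int} Q_n \to R_n \subset \R^2$, where $K_0 = 8\cdot 10^6 \kappa^2$ and $R_n$ is a planar rectangle. Viewing the targets as subsets of the Riemann sphere $\mathbb{S}^2 = \R^2 \cup \{\infty\}$, I post-compose each $f_n$ with the unique Möbius transformation $\phi_n$ of $\mathbb{S}^2$ sending the triple $f_n(x_1), f_n(x_2), f_n(x_3)$ to a prescribed triple of distinct points $p_1, p_2, p_3 \in \mathbb{S}^2$ independent of $n$. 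Since Möbius transformations are $1$-QC, the normalized maps $\tilde f_n := \phi_n \circ f_n \colon \operatorname{int} Q_n \to \mathbb{S}^2$ remain $K_0$-QC.

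Next, the classical compactness theorem for planar $K$-QC embeddings with a common three-point normalization (cf.\ \cite[20.5, 21.3, 37.2]{Vabook}) implies that $\{\tilde f_n\}$ is a normal family and that any subsequential locally uniform limit is either constant or a $K_0$-QC homeomorphism onto its image. The three-point normalization precludes the constant case, so, passing to a subsequence and applying a diagonal argument along the exhaustion, I obtain a $K_0$-QC homeomorphism $\tilde f \colon X \to \Omega$ where $\Omega := \tilde f(X)$ is open in $\mathbb{S}^2$ by invariance of domain. Because $X$ is simply connected and not homeomorphic to $\mathbb{S}^2$, the set $\Omega$ is a simply connected proper open subset of $\mathbb{S}^2$. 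The classical Riemann mapping theorem (or, in the case $\Omega = \mathbb{S}^2 \setminus \{\text{pt}\}$, stereographic projection) then yields a conformal, hence $1$-QC, homeomorphism $h \colon \Omega \to \mathbb{D}$ or $h \colon \Omega \to \R^2$; the composition $h \circ \tilde f$ is the desired $K_0$-QC homeomorphism of $X$ onto $\mathbb{D}$ or $\R^2$. The reciprocality conditions \eqref{ylaraja}--\eqref{alaraja} force $\kappa \geq 1$, so $K_0 = 8 \cdot 10^6 \kappa^2 \leq 512 \cdot 10^{18} \kappa^6$, which establishes the stated bound.

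The principal obstacle is ensuring that the normal-family limit is genuinely a homeomorphism rather than a degenerate map collapsing $X$ to a lower-dimensional set; this is handled by the three-point normalization on $\mathbb{S}^2$ together with the classical Väisälä theorem that locally uniform limits of planar $K$-QC maps are either $K$-QC or constant. A secondary technical point is the diagonal extraction of the subsequence across the exhausting domains $\operatorname{int} Q_n$, which is routine given that each compact subset of $X$ lies in $\operatorname{int} Q_n$ for all large $n$.
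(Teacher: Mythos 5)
Your overall strategy (exhaust $X$ by topological squares, invoke Theorem \ref{joo1}, normalize, extract a locally uniform limit, and finish with the Riemann mapping theorem) matches the paper's, but there is a genuine gap at the central step. You apply the classical compactness and convergence theorems for planar $K$-QC embeddings (V\"ais\"al\"a 20.5, 21.3, 37.2) directly to the family $\tilde f_n$. Those theorems concern maps whose \emph{domains} are subsets of $\R^2$ or $\mathbb{S}^2$; the equicontinuity estimates underlying them come from modulus estimates for annuli in the Euclidean source domain. Your maps $\tilde f_n$ are defined on $\operatorname{int}Q_n\subset X$, a general metric surface, so the cited results simply do not apply. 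One could try to prove equicontinuity of $\{\tilde f_n\}$ from scratch using condition \eqref{nollamoduli} in $X$ together with a Loewner-type lower bound on the sphere, and then re-derive that the limit is a non-constant QC homeomorphism, but none of that is in your argument, and it is not a routine citation.

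The paper avoids exactly this difficulty by never taking limits of maps with domain in $X$: it fixes $k$, forms the compositions $g_j^k=f_j\circ f_k^{-1}:B_k\to B_j$, which are honest \emph{planar} $K_0^2$-QC maps with $K_0=8\cdot 10^6\kappa^2$, applies the classical normal family theory to those, and then writes the limit on $X_k$ as $g_k\circ f_k$. This is also why the constant in the theorem is $K_0^3=512\cdot 10^{18}\kappa^6$ rather than the $K_0$ your argument would produce: the detour through planar compositions costs a factor of $K_0^2$ on top of the $K_0$ from $f_k$. The fact that your route would yield a strictly better constant than the paper's is itself a signal that the limiting step needs more justification than a citation of the planar theory. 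To repair your proof, either adopt the paper's composition trick, or supply a self-contained equicontinuity and convergence argument for QC maps defined on reciprocal metric surfaces.
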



\begin{proof}
Let $\{X_j\}$, $\overline{X}_j \subset X_{j+1}$ for all $j$, be an exhaustion of $X$ by open topological squares such that the closures 
$\overline{X}_j$ are closed topological squares. Moreover, fix $a, b \in X_1$, $a \neq b$. 

By Theorem \ref{joo1} and the Riemann mapping theorem, there exists for every $j \in \mathbb{N}$ a $8\cdot 10^6\kappa^2$-QC homeomorphism $f_j:X_j \to B_j$, where $B_j =B(0,r_j)\subset \R^2$ is a disk, normalized such that $f_j(a)=0$ and $f_j(b)=1$. We denote the inverse map by $h_j=f_j^{-1}:B_j \to X_j$. 

Now fix $k \in \mathbb{N}$, and let 
$$
g_j^k  = f_j \circ h_k:B_k \to B_j, \quad j \geq k. 
$$
Then $g_j^k$ is $64\cdot 10^{12}\kappa^4$-QC for all $j \geq k$. Moreover, $g_j^k(0)=0$ and $g_j^k(1)=1$. Thus $(g_j^k)_{j=k}^{\infty}$ is a normal family, so there exists a subsequence $(g_{j_k}^k)$ converging locally uniformly to a 
$64\cdot 10^{12}\kappa^4$-QC map $g_k:B_k \to \R^2$. It follows that 
$$
f_{j_k}|X_k =g_{j_k}^k \circ f_k \to g_k \circ f_k :X_k \to \R^2. 
$$ 
Taking a diagonal subsequence $(f_{\ell})$, we see that $(f_{\ell}|X_k)$ converges for every $k \in \mathbb{N}$ to 
a $512\cdot 10^{18}\kappa^6$-QC map. We conclude that the pointwise limit map $f: X \to \R^2$ is 
$512\cdot 10^{18}\kappa^6$-QC. Applying the Riemann mapping theorem if necessary, we see that the image 
$f(X)$ can be chosen to be either $\R^2$ or $\mathbb{D}$. 
\end{proof}


We now consider the case where $Y$ is homeomorphic to the Riemann sphere $\mathbb{S}^2$. The reciprocality conditions now easily generalize; we assume $\haus^2(Y)< \infty$, that \eqref{ylaraja} and \eqref{alaraja} hold for all topological squares in $\mathbb{S}^2$, and \eqref{nollamoduli} for all points and topological annuli. 

\begin{theorem}
\label{qcsphere}
With the above assumptions, there exists a $512\cdot 10^{18}\kappa^6$-QC homeomorphism $f:Y \to \mathbb{S}^2$. 
\end{theorem}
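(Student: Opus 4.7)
The plan is to deduce Theorem~\ref{qcsphere} from its planar counterpart, Theorem~\ref{weakmain}, by puncturing $Y$ at a single point. Fix $p\in Y$ and set $Y'=Y\setminus\{p\}$; then $Y'$ is homeomorphic to $\R^2$ and, since $\hausk(Y)<\infty$, $\hausk$ is locally finite on $Y'$. I would first verify that $Y'$ is $\kappa$-reciprocal in the sense of Definition~\ref{rec}. Conditions~\eqref{ylaraja} and \eqref{alaraja} transfer verbatim because every topological square $Q\subset Y'$ is also a topological square in $Y$ and the relevant moduli are intrinsic to $Q$. For \eqref{nollamoduli} at $a\in Y'$, test functions of the form $\rho_n=\sum_{k=1}^n(nr_k)^{-1}\chi_{B(p,r_k)}$ with $r_k=2^{-k}$ show that paths through any single point form a modulus-zero family, so moduli in $Y'$ agree with moduli in $Y$; the hypothesis on $Y$, applied to an appropriate topological annulus around $a$, then yields the condition on $Y'$.

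Applying Theorem~\ref{weakmain} to $Y'$ gives a $K$-QC homeomorphism $f\colon Y'\to\Omega$ with $K=512\cdot 10^{18}\kappa^6$ and $\Omega\in\{\R^2,\mathbb{D}\}$. The main step, and the hardest, is to rule out $\Omega=\mathbb{D}$. Fix a closed topological disk $\overline{U}\subset Y'$ with nonempty interior, and choose $R>0$ so that $\overline{U}\subset Y\setminus B(p,R)$. For $0<r<R$ set $V_r=\overline{B}(p,r)\cap Y'$. Every path in $Y$ joining $\overline{B}(p,r)$ and $\overline{U}$ contains a subpath in $\overline{B}(p,R)$ joining $\overline{B}(p,r)$ and $Y\setminus B(p,R)$; by monotonicity and \eqref{nollamoduli} at $p$,
\begin{equation*}
\modu\bigl(V_r,\overline{U};Y'\bigr)\;\le\;\modu\bigl(\overline{B}(p,r),Y\setminus B(p,R);\overline{B}(p,R)\bigr)\longrightarrow 0
\end{equation*}
as $r\to 0$, and quasiconformality of $f$ forces $\modu(f(V_r),f(\overline{U});\Omega)\to 0$.

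Suppose for contradiction that $\Omega=\mathbb{D}$. Since $Y\setminus B(p,r)$ is compact in $Y'$, its image $f(Y\setminus B(p,r))\subset\mathbb{D}$ is compact, so $f(V_r)\supset\{c_r<|y|<1\}$ with $c_r\to 1^-$. The image $f(\overline{U})$ has nonempty interior in $\mathbb{D}$ and therefore contains a closed disk $\overline{B}(z_0,b')$; after post-composing $f$ with a M\"obius automorphism of $\mathbb{D}$ (conformal, hence $1$-QC, and extending continuously to $\partial\mathbb{D}$) we may assume $z_0=0$, so $f(\overline{U})\supset\{|y|\le b'\}$ while $f(V_r)$ still contains some $\{c'_r<|y|<1\}$ with $c'_r\to 1$. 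The classical explicit admissible weight $\rho(y)=(|y|\log(c'_r/b'))^{-1}\chi_{\{b'\le|y|\le c'_r\}}$ gives
\begin{equation*}
\modu\bigl(\{|y|\le b'\},\{c'_r<|y|<1\};\mathbb{D}\bigr)\;=\;\frac{2\pi}{\log(c'_r/b')}\;>\;\frac{2\pi}{\log(1/b')}>0,
\end{equation*}
and this path family is a subfamily of $\Delta(f(\overline{U}),f(V_r);\mathbb{D})$, so $\modu(f(V_r),f(\overline{U});\mathbb{D})>2\pi/\log(1/b')$ for all small $r$, contradicting the previous display. Hence $\Omega=\R^2$.

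Finally, extend $f$ to $\tilde f\colon Y\to\mathbb{S}^2=\R^2\cup\{\infty\}$ by $\tilde f(p)=\infty$. Since $f(V_r)$ contains the complement of a compact subset of $\R^2$, $|f(x)|\to\infty$ as $x\to p$, and $\tilde f$ is a homeomorphism. The zero-modulus property of paths through a single point (used already in the verification of reciprocality) implies that the modulus of any path family in $Y$ equals that of its subfamily avoiding $p$, and likewise for $\mathbb{S}^2$ avoiding $\infty$; hence the two QC modulus inequalities for $\tilde f$ are inherited directly from those for $f$, and $\tilde f$ is $K$-QC. The principal difficulty throughout is the exclusion of $\Omega=\mathbb{D}$ above; the remaining steps are essentially topological together with the removability of a single point.
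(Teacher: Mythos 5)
Your proposal is correct and follows essentially the same route as the paper: puncture $Y$ at a point, apply Theorem \ref{weakmain} to the punctured surface, rule out the disk as image by observing that the modulus of paths joining a fixed continuum to small neighborhoods of the puncture collapses to zero while in $\mathbb{D}$ it would stay bounded below (the paper invokes Proposition \ref{continuum} here where you compute the round-annulus modulus explicitly), and then extend by sending the puncture to $\infty$. One small caveat: the explicit test functions $\rho_n=\sum_k (nr_k)^{-1}\chi_{B(p,r_k)}$ only give $\int\rho_n^2\,d\hausk\to 0$ under a mass upper bound like \eqref{upperbound}, which is not assumed here; the fact that nonconstant paths through a single point form a modulus-zero family should instead be deduced directly from condition \eqref{nollamoduli} at $p$ (every such path joins $\overline{B}(p,r)$ to $Y\setminus B(p,\delta)$ for some $\delta>0$ and all small $r$).
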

\begin{proof}
For $y_0 \in Y$, denote $X:= Y \setminus \{y_0\}$. Then $X$ satisfies the assumptions of Theorem \ref{weakmain}, so 
there exists a $512 \cdot 10^{18}\kappa^6$-QC map $f: X \to \R^2$. For $\epsilon >0$ small, consider 
$$
\Gamma_\epsilon = \Delta(\overline{B}(y_0,\epsilon), Y \setminus B(y_0,\operatorname{diam}(Y)/10);\overline{B}(y_0,\operatorname{diam}(Y)/10)\setminus 
B(y_0,\epsilon)). 
$$
Now $\modu(\Gamma_\epsilon) \to 0$ as $\epsilon \to 0$ by condition \eqref{nollamoduli}. Then also $\modu(f \Gamma_\epsilon) \to 0$ by the quasiconformality of $f$. Applying Proposition \ref{continuum}, we conclude that $f(X)$ does not have boundary in $\R^2$, and $f$ extends continuously, 
mapping $y_0$ to $\infty$ on the Riemann sphere. Moreover, the extension is $512 \cdot 10^{18}\kappa^6$-QC. The proof is complete. 
\end{proof}


\section{Minimizing dilatation} \label{bensajumala}


In this section we prove Theorem \ref{mainmini}. Note that the corresponding result also holds when $Y$ is homeomorphic to $\mathbb{S}^2$; this follows from the proof given below. 

The constant $2$ in Theorem \ref{mainmini} is not best possible. In fact, in view of Example \ref{norm} and the results of Behrend \cite{Beh} (see also \cite{Ba} and \cite{Bar}) on the area ratios of symmetric convex bodies, it is natural to ask if the sharp constant is $\pi/2$, or even if there always exists a QC map $f_0$ satisfying 
\begin{equation}
\label{paras}
\frac{2}{\pi} \modu(\Gamma) \leq \modu(f_0 \Gamma) \leq \frac{4}{\pi} \modu(\Gamma). 
\end{equation}
Both inequalities would be best possible by Example \ref{norm}. The results mentioned above together with the arguments in this section guarantee that there exists a 
QC map satisfying the first inequality in \eqref{paras}, and also there exists a QC map satisfying the second inequality. However, we do not know if a single map satisfies both inequalities. 


In the proof of Theorem \ref{mainmini}, we apply certain differentiability properties of Sobolev maps with values in metric spaces, together with the measurable Riemann mapping theorem and John's theorem on convex bodies. Instead of relying on the measurable Riemann mapping theorem, one could reprove it with the methods used in this paper. 

We now begin the proof of Theorem \ref{mainmini}. We will not consider the case $X \subset \mathbb{R}^N$ separately since it follows from the general arguments below. We assume $f$ is QC, and denote 
$$
h:= f^{-1}: \Omega \to X. 
$$
We will use some Lipschitz analysis. The following lemma is a special case of a statement concerning metric-valued Sobolev functions. See \cite[Theorem 8.1.49]{HKSTbook} for the proof. 


\begin{lemma}
\label{Lipschitzappr}
There exist measurable, pairwise disjoint sets $G_j$, $j=0,1,2,\ldots$, covering $\Omega$, such that $|G_0|=0$ and 
$h|G_j$ is $j$-Lipschitz continuous for all $j=1,2,\ldots$. 
\end{lemma}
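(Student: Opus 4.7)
The plan is to exhibit $h=f^{-1}$ as a metric-valued Newtonian Sobolev map on the Euclidean domain $\Omega$ and then invoke the general Lusin-type Lipschitz decomposition theorem for such maps, which is the content of \cite[Theorem 8.1.49]{HKSTbook}.

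The first step is to verify that $h$ lies in the Newtonian Sobolev class $N^{1,2}_{\loc}(\Omega, X)$. Since $f:X\to \Omega$ is a QC homeomorphism, the modulus inequality $\modu(\Gamma)\le K\modu(f\Gamma)$ is satisfied for every path family $\Gamma$ in $X$, which is exactly hypothesis \eqref{taaskoo} of Proposition \ref{sobofmiinus} with $F$, ``$\Omega$'', ``$\Omega'$'' there taken to be $f$, $X$, and the present $\Omega$, respectively. That proposition then produces $(2\kappa K J_{h})^{1/2}$ as a weak upper gradient of $h$. The volume derivative $J_h$ is locally integrable on $\Omega$: applying \eqref{arsa} with the integrand $\chi_{h(A)}$, where $A\subset \Omega$ is Borel with compact closure, gives
$$
\int_A J_h(y)\, dy \leq \hausk(h(A)) < \infty,
$$
because $X$ carries a locally finite $\hausk$-measure and $h$ is a homeomorphism. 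Hence the upper gradient belongs to $L^2_{\loc}(\Omega)$.

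The second step is to apply \cite[Theorem 8.1.49]{HKSTbook}. Since $\Omega\subset \R^2$ is Ahlfors $2$-regular and supports a $(1,2)$-Poincar\'e inequality, the cited theorem applied to the Newtonian Sobolev map $h$ yields a Lebesgue null set $N\subset \Omega$ together with a countable family of pairwise disjoint Borel sets $A_k\subset \Omega\setminus N$ covering $\Omega\setminus N$ such that $h|_{A_k}$ is $L_k$-Lipschitz for some finite $L_k$. To match the precise indexing of the lemma, set $G_0=N$ and use the trivial observation that an $L$-Lipschitz restriction is also $j$-Lipschitz whenever $j\ge L$; a straightforward relabeling of the collection $\{A_k\}$ (allowing some $G_j$ to be empty) produces measurable, pairwise disjoint sets $G_j$, $j\ge 1$, with $h|_{G_j}$ being $j$-Lipschitz and $\Omega=\bigcup_{j\ge 0} G_j$.

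The only substantive step is the first one, whose content is really the regularity of $f^{-1}$ already established in Section \ref{regoffmiinus1} via the reciprocality condition \eqref{alaraja}. The second step is a direct citation of a general principle in metric-valued analysis, and the combinatorial relabeling in the third step is routine bookkeeping.
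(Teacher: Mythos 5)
Your proposal is correct and is essentially the paper's proof: the paper's entire argument is the citation of \cite[Theorem 8.1.49]{HKSTbook}, with the needed Sobolev regularity of $h=f^{-1}$ supplied exactly as you say by Proposition \ref{sobofmiinus} (applicable because the existence of a QC parametrization forces reciprocality, hence \eqref{alaraja}) together with the local integrability of $J_h$ from \eqref{arsa}. One small caveat: a general domain $\Omega\subset\R^2$ need not be Ahlfors $2$-regular nor support a $(1,2)$-Poincar\'e inequality (outward cusps already break both), but since the conclusion is a countable decomposition this is repaired by exhausting $\Omega$ with countably many balls compactly contained in it, applying the cited theorem on each, and merging the resulting families.
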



Recall that every metric space $Z$ isometrically embeds to the Banach space $L^{\infty}(Z)$. Fix $j\geq 1$. Then $h|G_j$ can be extended to a Lipschitz map 
$$
h_j:\R^2 \to L^\infty(X). 
$$ 
By Kirchheim \cite[Theorem 2]{Ki}, 
$h_j$ is metrically differentiable; for almost every $x \in \R^2$ there exists a seminorm $MD(h_j,x)$ on $\R^2$ such that 
\begin{equation}
\label{valtioval}
||h_j(z)-h_j(y)||-MD(h_j,x)(z-y)=o(|z-x|+|y-x|). 
\end{equation}
We denote by $|MD(h_j,x)|$ the operator norm 
$$
\sup_{|z|=1} |MD(h_j,x)z|, 
$$
and define 
$$
g'(x)=\sum_j |MD(h_j,x)| \chi_{G_j}
$$

\begin{lemma}
\label{villeko}
\begin{itemize}
\item[(i)] The function $g'$ is a weak upper gradient of $h$.

\item[(ii)] $MD(h_j,x)$ is a non-zero norm for almost every $x \in \Omega$.  
\end{itemize}
\end{lemma}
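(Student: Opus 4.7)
My plan is to derive (i) from the metric differentiability of each Lipschitz extension $h_j$ and (ii) from Kirchheim's area formula combined with the change of variables of Proposition \ref{huuto}.

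For (i), each extension $h_j:\R^2 \to L^\infty(X)$ is Lipschitz, so by Kirchheim's theorem it is metrically differentiable almost everywhere, and the metric-valued analogue of the standard fact for Lipschitz functions (cf. \cite[Chapter 8]{HKSTbook}) yields that $|MD(h_j,\cdot)|$ is a weak upper gradient of $h_j$ on all of $\R^2$. Since $h|G_j = h_j|G_j$ and $|G_0|=0$, for modulus-almost every rectifiable $\gamma$ in $\Omega$ one has $\haus^1(\gamma^{-1}(G_0))=0$. I would partition the parameter interval of such a $\gamma$ according to which $G_j$ contains $\gamma(t)$, apply the upper gradient inequality for the relevant $h_j$ on each piece, and use the triangle inequality together with the continuity of $h$ to concatenate:
\[
d_X(h(\gamma(b)), h(\gamma(a))) \leq \sum_{j\geq 1} \int_{\gamma\cap G_j} |MD(h_j,\cdot)| \, ds = \int_\gamma g' \, ds.
\]
This is precisely the weak upper gradient property of $g'$.

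For (ii) I would argue by contradiction. Suppose that for some $j\geq 1$ the set
\[
E = \{x\in G_j : MD(h_j,x) \text{ is a degenerate seminorm}\}
\]
has positive Lebesgue measure. Kirchheim's area formula, applied to the Lipschitz map $h_j:\R^2 \to L^\infty(X)$, gives
\[
\haus^2(h_j(E)) = \int_E J(MD(h_j,x)) \, dx = 0,
\]
since the Jacobian $J$ of a degenerate seminorm is zero. Because $h$ is injective and agrees with $h_j$ on $G_j$, we have $h_j(E)=h(E)\subset X$, so $\haus^2(h(E))=0$ in $X$. Now invoke the change of variables formula from Proposition \ref{huuto} (or the global analog for our QC map $f$, obtained by exhausting $X$ with topological squares as in Section \ref{globalsection}) with the Borel function $\chi_E$:
\[
|E| = \int_{h(E)} \rho^2 \, d\haus^2,
\]
where $\rho$ is the corresponding density. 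Since $\rho^2$ is locally integrable on $X$ and $\haus^2(h(E))=0$, the right-hand side vanishes, contradicting $|E|>0$.

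The main obstacle is (i): the bookkeeping required to concatenate the upper gradient estimates across the boundaries between the sets $G_j$ while staying within the modulus-exceptional framework, together with the absolute continuity of $h\circ\gamma$ along almost every path. Part (ii), by contrast, reduces cleanly once the Kirchheim Jacobian formula is identified as zero on the degeneracy set and the change of variables is invoked.
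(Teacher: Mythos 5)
For part (i) the paper simply invokes \cite[Proposition 6.3.22]{HKSTbook}, and your sketch is in effect an attempt to reprove that result; as written the key step does not go through. The sets $\gamma^{-1}(G_j)$ are merely measurable, not subintervals, and $h$ agrees with $h_j$ only on $G_j$, so the upper gradient inequality for $h_j$ gives no control on $d(h(\gamma(s)),h(\gamma(t)))$ once one of $\gamma(s),\gamma(t)$ lies outside $G_j$. Hence the displayed inequality cannot be obtained by ``applying the upper gradient inequality on each piece and using the triangle inequality.'' The load-bearing ingredient is that $h=f^{-1}$ is already known to possess an $L^2$ weak upper gradient (Proposition \ref{sobofmiinus} together with Corollary \ref{modineq1}), so that $h\circ\gamma$ is absolutely continuous along almost every path; one then bounds the metric derivative of $h\circ\gamma$ at a.e.\ parameter $t$ that is a density point of $\gamma^{-1}(G_j)$ by $|MD(h_j,\gamma(t))|\,|\gamma'(t)|$ and integrates. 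You mention this absolute continuity only as an afterthought, but without it the claim fails for a map that is merely Lipschitz on each piece of a measurable decomposition; citing the HKST proposition, as the paper does, is the clean way to package exactly this argument.

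Part (ii) is correct and rests on the same input as the paper, though routed differently: the paper observes via Proposition \ref{huuto} that the volume derivative $J_h$ is positive almost everywhere and then runs a density point argument with \eqref{valtioval}, whereas you pass through Kirchheim's area formula to get $\hausk(h(E))=0$ on the degeneracy set $E$ and then apply the change of variables to force $|E|=0$. The two mechanisms are essentially equivalent, and your explicit remark that one needs the global analogue of Proposition \ref{huuto} (obtained by exhausting $X$ with topological squares) is a point the paper glosses over.
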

\begin{proof}
The first claim follows from \cite[Proposition 6.3.22]{HKSTbook}. Towards the second claim, recall that $J_h$ denotes the volume derivative of $h$. Then, by Proposition \ref{huuto}, $J_h(x)>0$ for almost every $x \in \Omega$. Now \eqref{valtioval} and a density 
point argument shows that $MD(h_j,x)$ has to be a non-zero norm at almost every $x \in \Omega$. 
\end{proof}


By Lemma \ref{villeko}, we can define in $\Omega$ a field $G=G_h$ of norms which are non-zero at almost every point, as follows. Let 
$$
G_x=MD(h_j,x)
$$
if $x$ is a point of metric differentiability for $h_j$ for which Lemma \ref{villeko} (ii) holds, and $G_x=0$ otherwise.

We would like to apply the measurable Riemann mapping theorem in order to make the distortion of $h$ smaller. To this end, recall that 
the unit ball in a norm in $\R^2$ is a symmetric convex body in the Euclidean plane. In the points $x$ where $G_x$ is a non-trivial norm, denote the unit ball by 
$$
C_x=\{y \in \R^2: G_x(y)\leq 1\}.
$$
Let $E_x$ be an ellipse, $E_x\subset C_x$, whose Lebesgue measure 
is maximized among all such ellipses. 
We can now define an ellipse field $\tilde{G}$ by setting 
$$
\tilde{G}_x=E_x 
$$
whenever defined, and $\tilde{G}_x=B(0,1)$ otherwise. Also, it follows from the construction that the complex dilatation associated to the ellipse field $\tilde{G}$ is measurable. Thus the measurable Riemann mapping theorem gives a QC homeomorphism 
$\nu:\Omega \to \Omega$ such that for almost every $x \in \Omega$ there exists some $r_x>0$ so that 
\begin{equation}
\label{dsiff}
D\nu(x)(E_x)=B(0,r_x)  
\end{equation}
We denote 
$$
H=h \circ \nu^{-1}. 
$$
Also, let $C'_x=D\nu(C_x)$. Then, since $D\nu(x)$ is linear, $B(0,r_x)$ has maximal Lebesgue area among all ellipses that are 
subsets of the symmetric convex set $C'_x$. 

We have now applied the measurable Riemann mapping theorem to construct a new map $H: \Omega \to X$. We decompose $H$ 
to Lipschitz pieces $H_j=H|D_j$ according to Lemma \ref{Lipschitzappr}. Then, replacing $h$ with $H$ in Lemma \ref{villeko}, 
we see that 
\begin{equation}
\label{aa1}
g(x)=\sum_j |MD(H_j,x)| \chi_{D_j}
\end{equation}
is a weak upper gradient of $H$. Moreover, since $\nu^{-1}$ is differentiable almost everywhere with non-zero Jacobian determinant, we can apply \eqref{dsiff} to estimate the dilatation of $H$.





Recall John's theorem (cf. \cite[Theorem 2.4.25]{HKSTbook}): if $V$ is a symmetric convex body in $\mathbb{R}^n$, and $D$ an ellipsoid contained in $V$ with maximal Lebesgue measure, then 
$$
D \subset V \subset \sqrt{n} D. 
$$
Combining John's theorem and the previous construction, we have 
\begin{equation}
\label{iita}
B(0,r_x) \subset C'_x \subset B(0, \sqrt{2} r_x). 
\end{equation}
Also, 
$$
C'_x:= \{y \in \R^2: MD(H_j,x)(y) \leq R_x \} 
$$
for some $R_x>0$. Then, by \eqref{iita}, 
\begin{equation}
\label{prinssi}
\frac{R_x^2}{r_x^2} \geq J_H(x)=\frac{\hausk_{MD}(C'_x)}{|C'_x|}=\frac{\pi R_x^2}{|C'_x|} \geq \frac{ R_x^2}{2r_x^2}, 
\end{equation}
where $\hausk_{MD}$ is the Hausdorff measure with respect to the norm $MD(H_j,x)$. That $\hausk_{MD}(C'_x)=\pi R_x^2$ is 
proved in \cite[Lemma 6]{Ki}. Also, recalling \eqref{dsiff}, we have 
$$
|MD(H_j,x)| = \frac{R_x}{r_x}. 
$$
Combining the estimates, we have 
$$
\frac{|MD(H_j,x)|^2}{J_H(x)}\leq 2. 
$$
This together with \eqref{aa1} and the proof of Corollary \ref{modineq1} gives the inequality 
$$
\modu(\Gamma)\leq 2\modu(H\Gamma) 
$$
for every path family $\Gamma$ in $\R^2$. 


For the reverse inequality, first notice that 
\begin{equation}
\label{vittuvittu}
\ell(MD(H_j,x)):=\inf_{|z|=1} |MD(H_j,x)z| \geq \frac{R_x}{\sqrt{2}r_x}.   
\end{equation}
Now let $\rho$ be admissible for $\Gamma$, and $\gamma \in \Gamma$. Removing an exceptional set of modulus zero if needed, we may assume 
that $H^{-1}$ is absolutely continuous on $H \circ \gamma$, and that the $H_j$ are differentiable with non-zero volume derivative $\haus^1$- almost everywhere on $|\gamma|$. Then we have  
$$
\int_{H \circ \gamma} \frac{\rho \circ H^{-1}}{\ell(MD(H_j,\cdot))\circ H^{-1}}\, ds \geq \int_\gamma \rho \, ds \geq 1, 
$$
showing that 
$$
 \frac{\rho \circ H^{-1}}{\ell(MD(H_j,\cdot))\circ H^{-1}}
$$
is admissible for $H \Gamma$. Applying Proposition \ref{huuto}, we have 
$$
\modu(H \Gamma) \leq \int_X  \frac{(\rho \circ H^{-1})^2}{\ell((MD(H_j,\cdot))\circ H^{-1})^2} \dhausk \leq 
\int_\Omega  \frac{\rho^2 J_H}{\ell(MD(H_j,\cdot))^2}\, dx. 
$$
On the other hand,  \eqref{prinssi} and \eqref{vittuvittu} imply 
$$
 \frac{J_H}{\ell(MD(H_j,\cdot))^2} \leq 2, 
$$
so 
$$
\modu(H \Gamma) \leq 2 \modu(\Gamma). 
$$
The proof of Theorem \ref{mainmini} is complete.


\section{Existence of QC maps under mass upper bound}
\label{massboundsection}

In this section we prove Theorem \ref{rectifi}. In other words, we show that the mass upper bound \eqref{upperbound} implies the three conditions of reciprocality. We prove each condition separately in Lemma \ref{ballmoduli}, Proposition \ref{upperboundforus} and Proposition \ref{lowerboundforus}, respectively. 

That \eqref{upperbound} implies \eqref{nollamoduli} is well-known, see \cite[Lemma 7.18]{Hei}. We give a proof for completeness.  
\begin{lemma}
\label{ballmoduli}
Suppose $X$ satisfies \eqref{upperbound}. Moreover, let $x \in X$ and $R>10 r>0$ such that $X \setminus B(x,R) \neq \emptyset$. Then 
$$
\modu(\overline{B}(x,r),X\setminus B(x,R); \overline{B}(x,R)) \leq 8C_U \log_2^{-1} \frac{R}{r}. 
$$  
In particular, \eqref{nollamoduli} holds. 
\end{lemma}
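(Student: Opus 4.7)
My plan is to use a standard dyadic annular decomposition of $B(x,R) \setminus B(x,r)$ together with the mass upper bound \eqref{upperbound}. Let $N$ be the largest integer with $2^N r \leq R/2$; since $R > 10r$ we have $N \geq \log_2(R/r) - 2 \geq 1$. Consider the disjoint dyadic annuli
\[
A_k = B(x, 2^{k+1} r) \setminus B(x, 2^k r), \qquad k = 0,1,\ldots, N-1,
\]
all contained in $\overline{B}(x,R)$.

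I would define the test function
\[
\rho(y) = \frac{1}{N \cdot 2^k r} \quad \text{if } y \in A_k, \qquad \rho(y)=0 \text{ otherwise}.
\]
For any locally rectifiable $\gamma$ joining $\overline{B}(x,r)$ to $X \setminus B(x,R)$, each annulus $A_k$ must be crossed by $\gamma$, and so $\gamma$ has length at least $2^{k+1} r - 2^k r = 2^k r$ inside $\overline{A}_k$. Summing,
\[
\int_\gamma \rho \, ds \geq \sum_{k=0}^{N-1} \frac{2^k r}{N \cdot 2^k r} = 1,
\]
so $\rho$ is admissible.

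Using the disjointness of the $A_k$ and the mass upper bound $\hausk(A_k) \leq \hausk(B(x, 2^{k+1}r)) \leq C_U (2^{k+1}r)^2 = 4 C_U (2^k r)^2$, I would compute
\[
\int_X \rho^2 \, d\hausk = \sum_{k=0}^{N-1} \frac{\hausk(A_k)}{N^2 (2^k r)^2} \leq \sum_{k=0}^{N-1} \frac{4 C_U}{N^2} = \frac{4 C_U}{N}.
\]
Since $R > 10r$ gives $\log_2(R/r) > 3$ and hence $N \geq \log_2(R/r) - 2 \geq \tfrac{1}{2}\log_2(R/r)$, I obtain
\[
\modu\bigl(\overline{B}(x,r), X \setminus B(x,R);\overline{B}(x,R)\bigr) \leq \frac{4 C_U}{N} \leq \frac{8 C_U}{\log_2(R/r)}.
\]

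There is no serious obstacle here: the only bookkeeping is verifying the stated constant $8C_U$, which needs the hypothesis $R > 10r$ to absorb the lower-order terms coming from the floor in the definition of $N$. The second assertion that \eqref{nollamoduli} follows is then immediate: for fixed $R$, the right-hand side tends to $0$ as $r \to 0$.
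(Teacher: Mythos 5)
Your argument is correct and is essentially the paper's: the paper takes the test function $y\mapsto \bigl(d(y,x)\log_2\tfrac{R}{r}\bigr)^{-1}$ on the ring and then estimates its energy over the same dyadic annuli via \eqref{upperbound}, while you use the piecewise-constant version of that function on disjoint dyadic annuli; the decomposition and the use of the mass bound are identical. One small slip in the bookkeeping: the chain $N \geq \log_2(R/r)-2 \geq \tfrac12\log_2(R/r)$ needs $\log_2(R/r)\geq 4$, i.e.\ $R\geq 16r$, which is not implied by $R>10r$. The endpoint $N\geq \tfrac12\log_2(R/r)$ is still true, because for $10<R/r<16$ one has $N=\lfloor\log_2(R/(2r))\rfloor=2>\tfrac12\log_2(R/r)$ directly, so the constant $8C_U$ survives; just replace the intermediate inequality by this case check.
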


\begin{proof}
Define 
\begin{equation}
\label{eikojo}
g(y)=\frac{1}{d(y,x) \log_2 \frac{R}{r}}  
\end{equation}
when $r \leq d(y,x) \leq R$, and $g=0$ elsewhere. Then $g$ is admissible for 
$$
\Delta(\overline{B}(x,r),X \setminus B(x,R);\overline{B}(x,R)).  
$$
We denote $T=\lceil \log_2 R/r \rceil$. Then, applying \eqref{upperbound} yields 
\begin{eqnarray*}
& & \modu(\overline{B}(x,r),X \setminus B(x,R);\overline{B}(x,R)) \\  
&\leq& \int_X g^2 \dhausk \leq \log_2^{-2} \frac{R}{r} \sum_{j=1}^{T}
 \int_{\overline{B}(x,2^{j}r) \setminus B(x,2^{j-1}r)} d(y,x)^{-2} \dhausk(y) \\
&\leq& 4C_U \log_2^{-2} \frac{R}{r} \sum_{j=1}^{T} 1 \leq 8C_U \log_2^{-1}\frac{R}{r}. 
\end{eqnarray*}
\end{proof}

Now we notice that the continuity of the energy minimizer $u$ holds under condition \eqref{upperbound}. We need a slight modification of Proposition \ref{coarea}. 

\begin{proposition}
\label{coareaa}
Let $A \subset X$ be Borel measurable. Assume that $w:A \to \mathbb{R}$ is Lipschitz, 
and $g:A \to [0,\infty]$ continuous such that 
$$
|w(a)-w(b)| \leq  \Big(\sup_{c \in B(a,d(a,b))}g(c)\Big) d(a,b)  
$$
for every $a,b \in A$. If $h:A \to [0,\infty]$ is Borel measurable, then 
$$
\int_{\mathbb{R}} \int_{A \cap w^{-1}(t)} h(s) \, d\haus^1(s)\, dt \leq  \frac{4 }{\pi}  \int_A g(x)h(x) \, d\hausk(x). 
$$
\end{proposition}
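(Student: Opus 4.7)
The strategy is to reduce to the standard coarea inequality (Proposition \ref{coarea}) by a localization argument, exploiting the continuity of $g$ to replace the pointwise ``Lipschitz constant'' appearing in the hypothesis by locally constant values. Fix $\epsilon>0$. By continuity of $g$, for each $x \in A$ there exists $r(x)>0$ with $|g(c)-g(x)|\leq \epsilon$ for every $c \in A \cap B(x,r(x))$. Separability of $A$ together with a Lindel\"of selection and successive set differences yields a countable Borel partition $A=\bigsqcup_i A_i$ with $A_i \subset B(x_i,r_i)$, $x_i \in A$, and $r_i \leq r(x_i)/4$.

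Given $a,b \in A_i$, one has $d(a,b)<2r_i$ and hence $B(a,d(a,b)) \subset B(x_i,3r_i) \subset B(x_i,r(x_i))$, so the hypothesis gives
$$
|w(a)-w(b)| \leq (g(x_i)+\epsilon)\, d(a,b).
$$
Thus $w|_{A_i}$ is $(g(x_i)+\epsilon)$-Lipschitz, and a McShane extension produces a $(g(x_i)+\epsilon)$-Lipschitz map $\tilde w_i: X \to \mathbb{R}$ agreeing with $w$ on $A_i$. Applying Proposition \ref{coarea} to $\tilde w_i$, the Borel set $A_i$, and the Borel function $h$ yields
$$
\int_{\mathbb{R}}\int_{A_i \cap w^{-1}(t)} h \, d\haus^1\, dt \leq \frac{4(g(x_i)+\epsilon)}{\pi} \int_{A_i} h\, d\hausk.
$$
Since $g(x_i) \leq g(y)+\epsilon$ for every $y \in A_i$, the pointwise bound $(g(x_i)+\epsilon)h(y) \leq (g(y)+2\epsilon)h(y)$ holds on $A_i$. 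Summing over $i$ and using the disjointness of the $A_i$ gives
$$
\int_{\mathbb{R}}\int_{A \cap w^{-1}(t)} h \, d\haus^1\, dt \leq \frac{4}{\pi} \int_{A} (g+2\epsilon) h \, d\hausk.
$$
Letting $\epsilon \to 0$ produces the desired inequality; if $\int_A h\, d\hausk = \infty$ one first replaces $A$ by an exhaustion by sets of finite measure (the conclusion being trivial whenever $\int_A gh\, d\hausk = \infty$), and then passes to the limit by monotone convergence.

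The only real obstacle is organizational: constructing the partition $\{A_i\}$ as genuine Borel sets on which both the ball-inclusion $B(a,d(a,b))\subset B(x_i,r(x_i))$ and the oscillation bound on $g$ hold simultaneously, and checking that the McShane extension preserves the local Lipschitz bound so that Proposition \ref{coarea} is applicable. These are routine in a separable metric space, and all the estimates above propagate cleanly through the countable sum.
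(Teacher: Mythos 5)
Your argument is correct. The paper does not actually write out a proof of Proposition \ref{coareaa}: it only remarks that the statement ``is proved almost exactly as Proposition \ref{coarea}'', i.e.\ one is expected to rerun the Ambrosio--Tilli covering argument behind Proposition \ref{coarea}, replacing the global Lipschitz constant $L$ by the local value $g(x)+\epsilon$ that continuity of $g$ provides on each small ball. You instead treat Proposition \ref{coarea} as a black box: the Lindel\"of partition $A=\bigsqcup_i A_i$ into pieces on which the hypothesis forces $w$ to be $(g(x_i)+\epsilon)$-Lipschitz, a McShane extension on each piece, the known coarea inequality applied to $\tilde w_i$ and $A_i$ (note $A_i\cap\tilde w_i^{-1}(t)=A_i\cap w^{-1}(t)$), and countable additivity of $\haus^1$ to reassemble the left-hand side. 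Both proofs rest on the same observation --- local near-constancy of $g$ --- but yours is a formal reduction rather than a reproof, which is cleaner and avoids reopening the internals of \cite[Proposition 3.1.5]{AT}; the ball-inclusion computation $B(a,d(a,b))\subset B(x_i,3r_i)\subset B(x_i,r(x_i))$ is exactly the point that makes the reduction work, and your treatment of the $\epsilon\to 0$ limit by exhausting $A$ with sets on which $h$ is integrable is fine. Two harmless caveats: the supremum in the hypothesis only makes sense over $B(a,d(a,b))\cap A$ (or with $g$ extended), and your construction of $r(x)$ and the McShane extension tacitly assume $g(x_i)<\infty$; neither matters in the paper's application, where $g$ is the finite continuous function \eqref{eikojo}.
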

Proposition \ref{coareaa} is proved almost exactly as Proposition \ref{coarea}. 

\begin{lemma}
\label{kovanalka}
Suppose $X$ satisfies \eqref{upperbound}. Then $u$ satisfies the conclusions of Theorem \ref{continuity}. 
\end{lemma}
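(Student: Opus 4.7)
The plan is to reduce the lemma to the existing proof of Theorem \ref{continuity}. That proof is obtained by combining Lemma \ref{nollayksi} (the trivial bound $0\le u\le 1$, which uses no reciprocality hypothesis) with Lemmas \ref{boundcont1}, \ref{oree}, and \ref{nytkosejo}. Inspecting their statements and proofs, the latter three lemmas only invoke \eqref{alaraja} and \eqref{nollamoduli} through the single conclusion \eqref{needforcont} of Lemma \ref{dualnollamoduli}. Consequently, to establish Lemma \ref{kovanalka} it suffices to verify \eqref{needforcont} under the sole assumption \eqref{upperbound}; the rest of the argument for Theorem \ref{continuity} is then literally the same.

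So the one substantive step is to show that every half-annulus $D\subset X$ with inner circle $T_r\subset B(x,r)$, outer circle $T_R\subset X\setminus B(x,R)$ and flat components $I,J$ satisfies $\modu(I,J;D)\ge c\log(R/r)$, with $c$ depending only on $C_U$. Fix an admissible function $\rho$ for $\Delta(I,J;D)$. The distance $d(\cdot,x)$ is $1$-Lipschitz, and for each $\rho_0\in(r,R)$ the level set $S(x,\rho_0)\cap D$ separates $T_r$ from $T_R$ in the topological half-annulus $D$; hence by Lemma \ref{separates} it contains a continuum that separates them, and by the topology of a half-annulus such a continuum must meet both flat sides $I$ and $J$. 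Proposition \ref{coarea} (applied with $m=d(\cdot,x)$ and $g=1$) together with \eqref{upperbound} shows that $\haus^1(S(x,\rho_0)\cap D)<\infty$ for almost every $\rho_0\in(r,R)$, so Proposition \ref{findcurves} yields a rectifiable path $\gamma_{\rho_0}\subset S(x,\rho_0)\cap D$ joining $I$ and $J$. Admissibility gives $1\le \int_{\gamma_{\rho_0}}\rho\,ds\le \int_{S(x,\rho_0)\cap D}\rho\,d\haus^1$. Dividing by $\rho_0$, integrating over $\rho_0\in(r,R)$, and applying Proposition \ref{coarea} once more (now with $g=\rho/d(\cdot,x)$), we obtain
\begin{equation*}
\log(R/r)=\int_r^R \frac{d\rho_0}{\rho_0} \le \frac{4}{\pi}\int_{D\cap(B(x,R)\setminus B(x,r))} \frac{\rho(y)}{d(y,x)}\,d\hausk(y).
\end{equation*}
Cauchy--Schwarz together with the standard dyadic estimate
$\int_{B(x,R)\setminus B(x,r)} d(y,x)^{-2}\,d\hausk \le C\,C_U\log_2(R/r)$
(identical to the one used in the proof of Lemma \ref{ballmoduli}) now gives $\log(R/r)\le C'\bigl(\int_D\rho^2\,d\hausk\bigr)^{1/2}\log(R/r)^{1/2}$. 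Squaring and taking infimum over admissible $\rho$ produces $\modu(I,J;D)\ge c\log(R/r)$, which is \eqref{needforcont} with $\Phi(r)=c\log(R/r)\to\infty$ as $r\to 0$.

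With \eqref{needforcont} in hand, Lemmas \ref{boundcont1}, \ref{oree} and \ref{nytkosejo} apply verbatim, so together with Lemma \ref{nollayksi} they deliver all the conclusions of Theorem \ref{continuity}. The delicate point is the Cauchy--Schwarz step: the half-annulus $D$ need not lie in $B(x,R)$, so $\int_D d(y,x)^{-2}\,d\hausk$ is not directly controlled. The resolution is that the coarea inequality only introduces level sets $S(x,\rho_0)$ with $\rho_0\in(r,R)$, so one may legitimately restrict the right-hand side to $D\cap(B(x,R)\setminus B(x,r))$, where the mass upper bound gives a logarithmic bound on $\int d^{-2}$.
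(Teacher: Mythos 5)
Your proposal is correct and follows essentially the same route as the paper: the paper likewise reduces the lemma to verifying \eqref{needforcont} under \eqref{upperbound} alone (the continuity lemmas only use \eqref{alaraja} and \eqref{nollamoduli} through \eqref{needforcont}), and establishes the same lower bound $\modu(I,J;D)\geq c\,C_U^{-1}\log(R/r)$ by a coarea argument over the spheres $S(x,\rho_0)$ combined with the dyadic estimate from Lemma \ref{ballmoduli}. The only cosmetic difference is that the paper runs the coarea step through the family of separating curves and the auxiliary function $w(y)=\log(R/d(y,x))/\log(R/r)$ via Proposition \ref{coareaa}, whereas you apply Proposition \ref{coarea} directly to the $1$-Lipschitz function $d(\cdot,x)$ with the weight $1/d(\cdot,x)$ placed in the integrand; the resulting computations are identical.
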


\begin{proof}
The proof of Theorem \ref{continuity} shows that it suffices to establish \eqref{needforcont}. Let $x \in X$ and $R>0$ 
such that $X \setminus B(x,R) \neq \emptyset$. 
For $0<r<R$, consider the family $\Lambda_r$ of all rectifiable paths separating $B(x,r)$ and $X \setminus B(x,R)$ in $X$. Then 
\eqref{needforcont} follows if we can show that 
\begin{equation}
\label{nalka}
\modu(\Lambda_r) \to \infty \quad \text{as } r \to 0. 
\end{equation}
Consider the function 
$$
w(y)= \frac{\log \frac{R}{d(y,x)}}{\log \frac{R}{r}} \chi_{\overline{B}(x,R)\setminus B(x,r)}. 
$$
Then $w=0$ on $S(x,R)$, $w=1$ on $S(x,r)$, and $w$ is Lipschitz continuous. More precisely, \begin{equation}
\label{olisikopi}
|w(a)-w(b)| \leq  \Big(\sup_{c \in B(a,d(a,b))}g(c)\Big) d(a,b) 
\end{equation}
for every $a,b \in \overline{B}(x,R)\setminus B(x,r)$, where $g$ is the continuous function in \eqref{eikojo}. 
Notice that the level sets of $w$ separate $B(x,r)$ and $X \setminus B(x,R)$ in $X$. Moreover, by \eqref{olisikopi} together with Proposition \ref{coareaa}, and 
Proposition \ref{findcurves}, $w^{-1}(t)$ contains a separating rectifiable curve $\eta_t \in \Lambda_r$ for almost every $0<t<1$. 

Now let $h$ be admissible for $\Lambda_r$. Then, by \eqref{olisikopi} together with Proposition \ref{coareaa} applied to $w$, and H\"older's inequality, we have 
\begin{eqnarray*}
1  \leq  \int_0^1 \int_{\eta_t} h \, d\haus^1 \, dt \leq  \frac{4}{\pi} \int_Q h g \, d\hausk 
\leq \frac{4}{\pi} \Big(\int_Q h^2 \, d\hausk  \Big)^{1/2} \Big(\int_Q g^2 \dhausk \Big)^{1/2}. 
\end{eqnarray*}
By Lemma \ref{ballmoduli}, the latter integral tends to zero when $r \to 0$. Minimizing over $h$ 
gives \eqref{nalka}. The proof is complete. 
\end{proof}



We use a simple modification of the Hardy-Littlewood maximal function. 

\begin{lemma}
\label{maxx}
Let $g \in L^2(Q)$, and define 
$$
\mathcal{M}g(x)=\sup_{r>0} \frac{1}{\hausk(B(x,5r))} \int_{Q\cap B(x,r)} g(y) \, d\hausk (y). 
$$
Then 
$$
\int_Q \mathcal{M}g(y)^2 \, d\hausk(y) \leq 8 \int_Q g(y)^2 \, d\hausk(y). 
$$
\end{lemma}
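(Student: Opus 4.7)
The plan is to follow the classical Hardy--Littlewood scheme, exploiting the fact that the factor $\hausk(B(x,5r))$ in the denominator removes any need for a doubling hypothesis on $X$.

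First I would establish the weak-type $(1,1)$ estimate
\[
\hausk(\{x \in Q : \mathcal{M}g(x) > t\}) \;\leq\; t^{-1} \int_Q g \, d\hausk
\]
for every $t>0$ and every nonnegative $g \in L^1(Q)$. Fix $t$ and let $E_t = \{\mathcal{M}g > t\}$. For each $x \in E_t$ choose $r_x>0$ with
\[
\int_{Q \cap B(x,r_x)} g \, d\hausk \;>\; t\,\hausk(B(x,5r_x)).
\]
Apply the standard $5r$-covering lemma to the collection $\{B(x,r_x)\}_{x \in E_t}$ to extract a finite or countable pairwise disjoint subcollection $\{B(x_j,r_{x_j})\}$ whose $5$-fold enlargements cover $E_t$. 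Summing the defining inequalities over the disjoint balls gives
\[
\hausk(E_t) \;\leq\; \sum_j \hausk(B(x_j,5r_{x_j})) \;<\; t^{-1} \sum_j \int_{Q \cap B(x_j,r_{x_j})} g\,d\hausk \;\leq\; t^{-1}\int_Q g\,d\hausk,
\]
as desired. Alongside this I note the trivial $L^\infty$ bound $\mathcal{M}g(x) \leq \|g\|_\infty$, which follows immediately from the monotonicity $\hausk(Q \cap B(x,r)) \leq \hausk(B(x,5r))$.

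Then I would interpolate by the standard Marcinkiewicz truncation. Given $g \in L^2(Q)$ and $t>0$, split $g = g_t^{\,\flat} + g_t^{\,\sharp}$ where $g_t^{\,\flat} = g\chi_{\{g > t/2\}}$. Since $\|g_t^{\,\sharp}\|_\infty \leq t/2$, sublinearity of $\mathcal{M}$ gives $\mathcal{M}g_t^{\,\sharp} \leq t/2$, hence
\[
\{\mathcal{M}g > t\} \;\subset\; \{\mathcal{M}g_t^{\,\flat} > t/2\},
\]
and the weak-$(1,1)$ estimate applied to $g_t^{\,\flat}$ yields
\[
\hausk(\{\mathcal{M}g > t\}) \;\leq\; \frac{2}{t}\int_{\{g > t/2\}} g \, d\hausk.
\]
Integrating the layer-cake identity $\|\mathcal{M}g\|_2^2 = 2\int_0^\infty t\,\hausk(\{\mathcal{M}g > t\})\,dt$ and using Fubini to compute $\int_0^\infty \chi_{\{g(y) > t/2\}}\,dt = 2g(y)$ gives
\[
\int_Q (\mathcal{M}g)^2 \, d\hausk \;\leq\; 4\int_Q g(y) \int_0^{2g(y)} dt\, d\hausk(y) \;=\; 8\int_Q g^2 \, d\hausk,
\]
which is the stated bound.

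There is no real obstacle: the only place a doubling assumption would ordinarily appear is in extracting $E_t \subset \bigcup_j B(x_j,5r_{x_j})$ from the disjoint family and then passing back to $\hausk(B(x_j,r_{x_j}))$, and this is precisely what the built-in factor $\hausk(B(x,5r))$ in the definition of $\mathcal{M}$ circumvents. The constant $8$ comes out of the Marcinkiewicz bookkeeping above with no slack worth optimizing.
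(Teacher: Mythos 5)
Your argument is correct and is precisely the route the paper takes: its proof of this lemma simply tells the reader to follow the standard Hardy--Littlewood argument from \cite[Theorem 2.2]{Hei}, replacing the doubling property by the built-in factor $\hausk(B(x,5r))$ in the denominator, which is exactly the weak-$(1,1)$-plus-Marcinkiewicz scheme you carry out, and your bookkeeping does yield the constant $8$. The one point worth tightening is that the $5r$-covering lemma requires uniformly bounded radii, so one should first prove the weak-type bound for the restricted operator $\mathcal{M}_R$ (suprema over $r<R$ only) and then let $R\to\infty$ by monotone convergence, exactly as in the cited proof.
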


The lemma is proved by slightly modifying the standard proof for the Hardy-Littlewood maximal function on doubling spaces. More precisely, one can follow the proof given in \cite[Theorem 2.2]{Hei} step by step, but when the doubling property is used there we apply our current definition of the maximal function instead. 

\begin{proposition}
\label{upperboundforus}
Suppose $X$ satisfies \eqref{upperbound}. Then $X$ satisfies \eqref{ylaraja}. 
\end{proposition}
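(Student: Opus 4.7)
The goal is to prove $\modu(\zeta_1,\zeta_3;Q) \cdot \modu(\zeta_2,\zeta_4;Q) \leq \kappa$ for every topological closed square $Q \subset X$ with edges $\zeta_1,\ldots,\zeta_4$, with $\kappa$ depending only on $C_U$. Writing $M_1 := \modu(\zeta_1,\zeta_3;Q)$, my plan is to construct a weakly admissible function $g$ for $\Delta(\zeta_2,\zeta_4;Q)$ satisfying $\int_Q g^2 \, d\hausk \leq \kappa/M_1$, which immediately yields \eqref{ylaraja}. First I would invoke Section \ref{minimizer} to obtain the energy minimizer $u:Q\to[0,1]$ realizing $M_1$ with minimizing weak upper gradient $\rho$, so that $\int_Q \rho^2 \, d\hausk = M_1$; by Lemma \ref{kovanalka} (which uses only \eqref{upperbound}), $u$ is continuous with $u=0$ on $\zeta_1$ and $u=1$ on $\zeta_3$. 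The candidate test function is then $g := C\,\mathcal{M}\rho / M_1$, where $\mathcal{M}$ is the maximal operator from Lemma \ref{maxx} and $C=C(C_U)$ is a constant to be determined; Lemma \ref{maxx} immediately gives $\int_Q g^2 \, d\hausk \leq 8C^2/M_1$, which is the required $L^2$-bound.

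The heart of the argument is weak admissibility of $g$: for almost every $\gamma \in \Delta(\zeta_2,\zeta_4;Q) \setminus \Gamma_0$, show $\int_\gamma g \, ds \geq 1$ for a suitable $C$. I would discretize $(0,1)$ into $N$ consecutive $u$-level subintervals and, at each level, pair a point of $|\gamma|$ with a short transversal rectifiable arc realizing a $u$-increment of roughly $1/N$; such arcs will be supplied by Proposition \ref{findcurves} applied to level-set continua constructed via a co-area/level-set argument of the type used in Lemma \ref{kovanalka}. The weak upper gradient inequality on each transversal arc bounds $1/N$ by an integral of $\rho$ over a small ball $B(x_i,r_i)$ containing the arc; via Cauchy--Schwarz and the mass upper bound $\hausk(B(x_i,r_i)) \leq C_U r_i^2$, this integral is at most $C_U^{1/2}\, r_i\, \mathcal{M}(\rho^2)(x_i)^{1/2}$, and summing the resulting chain telescopes to $\int_\gamma g \, ds \geq 1$ provided the $r_i$ are chosen to partition arclength along $\gamma$.

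The main obstacle is this transversal chaining without condition \eqref{alaraja}: Proposition \ref{levelsets} is not available to us, so there are no a priori rectifiable level curves of $u$ to work with. I expect to circumvent this by working only at small scales, using Proposition \ref{coareaa} applied to Lipschitz surrogates of $u$ (or directly to distance functions from cross-sections supplied by Lemma \ref{separates}) to produce short rectifiable transversal arcs through almost every point of $|\gamma|$, and then taking a scale limit at the end to conclude.
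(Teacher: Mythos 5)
Your choice of test function $C\,\mathcal{M}\rho/M_1$ and the $L^2$-bound via Lemma \ref{maxx} coincide with the paper's, but your admissibility argument---the heart of the matter---has a genuine gap. A path $\gamma\in\Delta(\zeta_2,\zeta_4;Q)$ runs \emph{along}, not across, the level sets of $u$: in the model case of the Euclidean square with $u(x,y)=x$, a vertical segment joining $\zeta_2$ and $\zeta_4$ meets only one level set, and $u$ is constant on it. So there is no discretization of $(0,1)$ into $u$-level subintervals ``visited'' by $\gamma$, and the transversal increments $\operatorname{osc}_{B(x_i,r_i)}u$ do not telescope to anything; in particular, nothing in your chain produces the factor $M_1$ that must appear in the required lower bound $\int_\gamma \mathcal{M}\rho\,ds\geq M_1/C$. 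Two further technical problems: the upper gradient inequality on a single transversal arc gives a \emph{line} integral of $\rho$, which cannot be dominated by an area integral over $B(x_i,r_i)$ without a coarea averaging over a whole family of arcs; and $\mathcal{M}(\rho^2)^{1/2}$ need not lie in $L^2$ (the maximal function of an $L^1$ function is only weak-$L^1$), so that variant of the test function fails the $L^2$-bound you need.

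The missing idea is to exploit the \emph{minimality} of $\rho$ rather than the level-set structure of $u$. Since $|\gamma|$ separates $\zeta_1$ from $\zeta_3$ in $Q$, the function $\epsilon^{-1}\chi_{N_\epsilon(\gamma)}$ is (essentially) admissible for $\Delta(\zeta_1,\zeta_3;Q)$ for small $\epsilon$, and the first-variation inequality $\int_Q\rho g\,d\hausk\geq\int_Q\rho^2\,d\hausk=M_1$, valid for every admissible $g$ (test the minimality with $(1-m)\rho+mg$ and let $m\to0$, as in the proof of Proposition \ref{constbound}), yields the flux estimate $\int_{N_\epsilon(\gamma)}\rho\,d\hausk\geq\epsilon M_1$. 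Covering $N_\epsilon(\gamma)$ by balls $B(x_j,5\epsilon)$ with $x_j\in|\gamma|$ and the $B(x_j,\epsilon)$ pairwise disjoint, and using \eqref{upperbound} together with $\haus^1(|\gamma|\cap B(x_j,\epsilon))\geq\epsilon$, one converts this into $\int_\gamma\mathcal{M}\rho\,ds\geq M_1/(961\,C_U)$, which is exactly the admissibility you want. Note that neither the continuity of $u$ nor any rectifiable level curves are needed here; only the separation property of $\gamma$ and the Euler--Lagrange inequality for $\rho$ enter. You should rebuild the admissibility step around this flux estimate.
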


\begin{proof}
Fix $Q$, the boundary paths $\zeta_1,\ldots, \zeta_4$, and the minimizer $\rho$ as in Section \ref{minimizer}. Recall  
$$
\int_Q \rho^2 \, d\hausk =\modu(\zeta_1,\zeta_3;Q)=M_1.
$$ 
We would like to show that $M_2=\modu(\zeta_2,\zeta_4;Q)\leq C /M_1$. In view of Lemma \ref{maxx}, it is sufficient to show that the function $C_1 (\mathcal{M}\rho)/M_1$ is admissible for $M_2$, for large enough $C_1$ depending only on the constant $C_U$ in \eqref{upperbound}. Let $\gamma$ be a rectifiable path in $Q$ joining $\zeta_2$ and $\zeta_4$, and $\epsilon>0$. We may assume that $\gamma$ is simple. Then, testing the modulus of $\Delta(\zeta_1,\zeta_3;Q)$ with the function 
$$
x \mapsto \epsilon^{-1} \operatorname{dist}(x,|\gamma|) \chi_{N_{\epsilon}(\gamma)}
$$
as in the proof of Proposition \ref{constbound}, we notice that 
\begin{equation}
\label{mede}
\int_{N_{\epsilon}(\gamma)} \rho \, d\hausk \geq \epsilon M_1. 
\end{equation}
Here $N_{\epsilon}(\gamma)$ is the closed $\epsilon$-neighborhood of $|\gamma|$ as before. We now choose a covering of $N_{\epsilon}(\gamma)$ by balls $B(x_j,5 \epsilon)$ such that each $x_j \in |\gamma|$ and the balls $B(x_j,\epsilon)$ are pairwise disjoint. By \eqref{mede} and \eqref{upperbound}, we have 
\begin{eqnarray*}
\epsilon M_1 &\leq& \sum_j \int_{B(x_j,5\epsilon)} \rho \, d\hausk = 
\sum_j \frac{\hausk(B(x_j,31\epsilon))}{\hausk(B(x_j,31\epsilon))} \int_{B(x_j,5\epsilon)} \rho \, d\hausk 
\\
&\leq& 961 C_U \epsilon^2 \sum_j  \inf_{x \in B(x_j,\epsilon)} \mathcal{M}\rho(x)  
\leq 961 C_U \epsilon \sum_j \int_{|\gamma|\cap B(x_j,\epsilon)} \mathcal{M}\rho \, d\haus^1 \\
&\leq & 961 C_U \epsilon \int_{\gamma} \mathcal{M}\rho \, ds. 
\end{eqnarray*}
We conclude that $961 C_U(\mathcal{M}\rho)/M_1$ is admissible for $M_2$, as desired. 
\end{proof}


To conclude the proof of Theorem \ref{rectifi}, we show that the lower bound \eqref{alaraja} follows from \eqref{upperbound}. The proof is an application of Proposition \ref{coarea} and the following estimate for the minimizer $u$. 

\begin{lemma}
\label{hest}
Suppose $B(x,2r) \subset Q$. Then 
$$
r\haus^1(u(B(x,r))) \leq  \int_{B(x,2r)} \rho \, d\hausk. 
$$
\end{lemma}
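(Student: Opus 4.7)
The plan is to replace $\haus^1(u(B(x,r)))$ by the oscillation $\Delta := \sup_{B(x,r)} u - \inf_{B(x,r)} u$, then for each $s \in (r,2r)$ squeeze a separating continuum $G_s \subset S(x,s)$ between $B(x,r)$ and $X \setminus B(x,2r)$, bound $\Delta$ by the oscillation of $u$ on $G_s$ via the maximum principle, bound the latter by $\int_{S(x,s)} \rho \, d\haus^1$ via the weak upper gradient inequality, and finally integrate the resulting estimate in $s \in (r,2r)$ using the coarea inequality (Proposition \ref{coarea}) applied to the 1-Lipschitz function $d(\cdot,x)$. The trivial estimate $\haus^1(u(B(x,r))) \leq \Delta$ (valid because $u(B(x,r)) \subset \R$) then gives the claim.

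In more detail: by Proposition \ref{coarea} applied to $m = d(\cdot,x)$ with $g=1$, the sphere $S(x,s)$ has finite $\haus^1$-measure for almost every $s \in (r,2r)$. Since $B(x,2r) \subset Q$ is homeomorphic to a planar set, Lemma \ref{separates} then produces a continuum $G_s \subset S(x,s)$ separating $B(x,r)$ from $X \setminus \overline{B}(x,2r)$. Let $\Omega_s$ be the component of $X \setminus G_s$ containing $x$; then $B(x,r) \subset \Omega_s \subset B(x,2r) \subset Q$ and $\partial_* \Omega_s \subset G_s$, so Lemma \ref{epatoivo} gives $\Delta \leq \sup_{G_s} u - \inf_{G_s} u$. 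For $\epsilon > 0$, pick $a_s, b_s \in G_s$ approximating the extremes; Proposition \ref{findcurves} yields a simple rectifiable curve $\eta_s \subset G_s$ from $b_s$ to $a_s$, and Lemma \ref{kipu} gives $u(a_s) - u(b_s) \leq \int_{\eta_s} \rho \, d\haus^1 \leq \int_{S(x,s)} \rho \, d\haus^1$ provided $\eta_s \notin \Gamma_0$. Taking $\epsilon \to 0$, integrating in $s$, and invoking Proposition \ref{coarea} once more (applied to $g = \rho$ on $A = B(x,2r)$) yields
$$
r\Delta \;\leq\; \int_r^{2r} \!\!\int_{S(x,s)} \rho \, d\haus^1 \, ds \;\leq\; \frac{4}{\pi} \int_{B(x,2r)} \rho \, d\hausk,
$$
from which the lemma follows (with the admissible universal constant swallowed by the proposition's coarea factor).

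The main obstacle is the technical verification that $\eta_s \notin \Gamma_0$ for almost every $s \in (r,2r)$, since Lemma \ref{kipu} only provides the upper gradient inequality outside the exceptional family. This is handled by the now-standard Fuglede-via-coarea trick: fix any Borel $\phi \geq 0$ with $\int_X \phi^2 \, d\hausk < \infty$ and $\int_{\gamma} \phi \, ds = \infty$ for every $\gamma$ in the underlying Fuglede-exceptional family; applying Proposition \ref{coarea} to $m = d(\cdot,x)$ and $g = \phi$ forces $\int_{S(x,s)} \phi \, d\haus^1 < \infty$ for almost every $s$, and hence no simple curve with image in such $S(x,s)$ (nor any of its subpaths, which share the property) can belong to $\Gamma_0$. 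Everything else is routine: continuity of $u$ (available under \eqref{upperbound} by Lemma \ref{kovanalka}) is used only through the bound $\haus^1(u(B(x,r))) \leq \Delta$, which is valid for any real-valued function regardless of continuity.
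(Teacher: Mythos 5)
Your reduction of $\haus^1(u(B(x,r)))$ to the oscillation $\Delta=\sup_{B(x,r)}u-\inf_{B(x,r)}u$ is where the argument breaks. In this generality neither $B(x,r)$ nor $X\setminus\overline{B}(x,2r)$ need be connected, so Lemma \ref{separates} --- which requires two disjoint \emph{continua} --- does not produce a single continuum $G_s\subset S(x,s)$ separating all of $B(x,r)$ from all of $X\setminus\overline{B}(x,2r)$. In general $S(x,s)$ has several separating components $E_j(s)$, and $B(x,r)$ is only covered by the union of the corresponding Jordan domains $U_j(s)$. The maximum principle applied in each $U_j(s)$ controls $\operatorname{diam}u(U_j(s))$ by the oscillation of $u$ on a curve $\gamma_j\subset E_j(s)$, but the \emph{total} oscillation $\Delta$ over the ball is not bounded by $\sum_j\operatorname{osc}_{\gamma_j}u$: if $u$ is near $1$ on one $U_j$ meeting $B(x,r)$ and near $0$ on another, then $\Delta\approx 1$ while each $\operatorname{osc}_{\gamma_j}u$ is tiny, and the transition of $u$ between the two values can occur entirely outside $B(x,2r)$, so $\int_{B(x,2r)}\rho\,d\hausk$ gives no control on $\Delta$. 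In that situation the quantity you are estimating is genuinely larger than the left-hand side of the lemma, which stays small because $u(B(x,r))$ is then a union of short intervals; your chain of inequalities simply does not close.

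The paper's proof avoids this by never passing to the oscillation. It takes all separating components $E_j(s)$ of $S(x,s)$, covers $B(x,r)$ by the Jordan domains $U_j(s)$ bounded by curves $\gamma_j\subset E_j(s)$, and uses countable subadditivity of $\haus^1$ on $\R$ to get $\haus^1(u(B(x,r)))\le\sum_j\operatorname{diam}u(U_j(s))$; each term is then bounded by $\int_{\gamma_j}\rho\,d\haus^1$ via the maximum principle and the upper gradient inequality, and the sum is at most $\int_{S(x,s)}\rho\,d\haus^1$ because the $\gamma_j$ lie in distinct components of $S(x,s)$. The remaining ingredients of your write-up --- the a.e.\ choice of $s$ avoiding the exceptional family via the Fuglede/coarea trick, and the final integration in $s$ using Proposition \ref{coarea} --- match the paper and are fine; the fix is to run the separation argument component by component and let subadditivity of $\haus^1(u(\cdot))$, rather than the oscillation, do the bookkeeping.
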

\begin{proof}
Applying Proposition \ref{coarea} with the distance function from $x$ shows that $\haus^1(S(x,s))< \infty$ for almost every $r<s<2r$. Similarly, the upper gradient inequality for $u$ and $\rho$ holds for every path $\eta$ with $|\eta| \subset S(x,s)$, for almost every $s$. Fix such $s$, and let $E_j(s)$ be a connected component of $S(x,s)$ such that $E_j(s)$ separates $X$.  Then $E_j(s)$ contains a curve $\gamma_j$ that bounds a domain $U_j(s)$. Moreover, $B(x,r) \subset \cup_j U_j(s)$, so 
$$
\haus^1(u(B(x,r))) \leq \sum_j \operatorname{diam} u(U_j(s)). 
$$
By the maximum principle (Lemma \ref{epatoivo}), 
$$
\operatorname{diam} u(U_j(s)) \leq \max_{y,z \in \gamma_j}|u(y)-u(z)| 
$$
for every $j$. On the other hand, the upper gradient inequality gives 
$$
\max_{y,z \in \gamma_j}|u(y)-u(z)| \leq \int_{\gamma_j} \rho \, d\haus^1. 
$$ 
Combining the estimates, we have 
$$
\haus^1(u(B(x,r))) \leq \sum_j \operatorname{diam} u(U_j(s)) \leq \int_{\cup_j \gamma_j(s)} \rho \, d\haus^1 
\leq \int_{S(x,s)} \rho \, d\haus^1. 
$$
Integrating over $s$ and applying Proposition \ref{coarea} again gives the desired estimate. 
\end{proof}


We need a version of the coarea inequality for our minimizer $u$. We will follow the proof of Proposition \ref{coarea} given in \cite[Proposition 3.1.5]{AT}, replacing the Lipschitz condition assumed there with Lemma \ref{hest}. 

\begin{proposition}
\label{minicoarea}
Suppose $X$ satisfies \eqref{upperbound}. Let $g:Q \to [0,\infty]$ be a Borel function. Then the function 
$t \mapsto  \int_{u^{-1}(t)} g \, d\haus^1$ is measurable, and 
\begin{equation}
\label{bowie}
\int_0^1\int_{u^{-1}(t)} g \, d\haus^1 \, dt \leq 8000 C_U \int_Q g (\mathcal{M}\rho) \, d\hausk. 
\end{equation}
\end{proposition}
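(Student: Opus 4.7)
The plan is to combine the pointwise oscillation bound supplied by Lemma \ref{hest} with the Carath\'eodory covering argument that underlies the classical coarea inequality for Lipschitz functions. First I would establish a pointwise ``variable Lipschitz'' bound for $u$: for every $x \in Q$ and $r > 0$ with $B(x, 2r) \subset Q$, and every $y \in B(x, r)$,
\[
\haus^1(u(B(x, r))) \leq 300\, C_U\, r\, \mathcal{M}\rho(y).
\]
Indeed, the inclusion $B(x, 2r) \subset B(y, 3r)$, the definition of $\mathcal{M}\rho$, and \eqref{upperbound} give $\int_{B(x, 2r)} \rho\, d\hausk \leq \hausk(B(y, 15r))\, \mathcal{M}\rho(y) \leq 225\, C_U\, r^2\, \mathcal{M}\rho(y)$, and Lemma \ref{hest} supplies the missing factor $1/r$.

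Next I would prove, for every Borel $E \subset Q$ with $\operatorname{dist}(E, \partial Q) > 0$ on which $\mathcal{M}\rho \leq M$, the bound
\[
\int_0^1 \haus^1(u^{-1}(t) \cap E)\, dt \leq C\, C_U\, M\, \hausk(E).
\]
For $\delta, \epsilon > 0$, I would cover $E$ by balls $B_i = B(x_i, r_i)$ with $x_i \in E$, $r_i < \delta$, and $B(x_i, 2r_i) \subset Q$, chosen to nearly realize the Carath\'eodory infimum $\sum_i (\pi/4)(2 r_i)^2 \leq \hausk(E) + \epsilon$. For each $t$, the slice $u^{-1}(t) \cap E$ is covered by those $B_i$ with $t \in u(B_i)$, hence
\[
\haus^1_{2\delta}(u^{-1}(t) \cap E) \leq \sum_i 2 r_i\, \chi_{u(B_i)}(t).
\]
Integrating in $t$ and applying the oscillation bound yields $\int_0^1 \haus^1_{2\delta}(u^{-1}(t) \cap E)\, dt \leq 600\, C_U\, M \sum_i r_i^2$, and the constraint on the cover together with $\delta, \epsilon \to 0$ and monotone convergence $\haus^1_{2\delta} \nearrow \haus^1$ gives the claim.

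For general Borel $A \subset Q$, I would decompose dyadically by $E_k = \{x \in A : 2^k \leq \mathcal{M}\rho(x) < 2^{k+1}\}$ for $k \in \mathbb{Z}$; the set $\{\mathcal{M}\rho = 0\}$ consists of points near which $\rho = 0$ almost everywhere, so $u$ is locally constant there and contributes zero. Applying the previous bound to each $E_k$ with $M = 2^{k+1}$ and using $2^{k+1} \hausk(E_k) \leq 2 \int_{E_k} \mathcal{M}\rho\, d\hausk$, summation in $k$ gives
\[
\int_0^1 \haus^1(u^{-1}(t) \cap A)\, dt \leq C'\, C_U \int_A \mathcal{M}\rho\, d\hausk,
\]
with points near $\partial Q$ handled by the exhaustion $A_n = \{x \in A : \operatorname{dist}(x, \partial Q) > 1/n\}$ and monotone convergence. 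Finally, the full inequality \eqref{bowie} for Borel $g \geq 0$ follows by Cavalieri and Fubini, reducing to the indicator case via $\int_0^1 \int_{u^{-1}(t)} g\, d\haus^1\, dt = \int_0^\infty \int_0^1 \haus^1(u^{-1}(t) \cap \{g > s\})\, dt\, ds$.

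The main obstacle is the measurability of $t \mapsto \int_{u^{-1}(t)} g\, d\haus^1$ and a careful treatment of the monotone passage $\haus^1_{2\delta} \nearrow \haus^1$ on level sets; both are standard but technical. A secondary issue is tracking constants through the dyadic summation, which the estimates above accommodate comfortably below the stated bound $8000\, C_U$.
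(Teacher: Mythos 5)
Your argument is correct and follows essentially the same route as the paper: Lemma \ref{hest} combined with the mass upper bound \eqref{upperbound} to control $\haus^1(u(B_i))$, a covering of each dyadic level set of $\mathcal{M}\rho$ by small balls with $\sum_i r_i^2$ controlled by the measure of that set, the slicewise bound $\haus^1_{2\delta}(u^{-1}(t)\cap E)\le \sum_i 2r_i\chi_{u(B_i)}(t)$, and integration in $t$ (the paper reduces general $g$ to indicators by a dyadic decomposition of $g$ rather than the layer-cake formula, which is immaterial). The only quibble is that a covering by balls centered in $E$ realizes the \emph{spherical} Hausdorff measure, which may exceed $\hausk(E)$ by a bounded factor, so your normalization $\sum_i \pi r_i^2\le \hausk(E)+\epsilon$ is slightly too optimistic — but, as you anticipate, the slack below $8000\,C_U$ absorbs this (the paper simply takes $\sum_i r_i^2\le 10\,\hausk(E_j)$).
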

\begin{proof}
Fix $\ell \in \mathbb{Z}$, and denote $E=\{x \in Q: 2^{\ell} < g(x) \leq 2^{\ell+1}\}$. Then, for $j \in \mathbb{Z}$, let 
$$
E_j=\{x \in E: 2^j < \mathcal{M}\rho(x) \leq 2^{j+1}\}. 
$$
It then suffices to show that 
\begin{equation}
\label{teero}
\int_0^1 \haus^1(u^{-1}(t) \cap E_j) \, dt \leq 4000C_U \int_{E_j} \mathcal{M}\rho \, d\hausk. 
\end{equation}

Let $\epsilon >0$ and choose a finite or countable covering of $E_j$ 
by balls $B_i=B(x_i,r_i)$, $x_i \in E_j$, such that $2r_i<\epsilon$ for every $i$, and 
\begin{equation}
\label{vittupaat}
\sum_i r_i^2 \leq 10 \hausk(E_j). 
\end{equation}
We denote $\lambda B_i=B(x_i,\lambda r_i)$. Notice that removing $\partial Q$ does not affect the left side of \eqref{bowie}, so we may and will assume that $2B_i \subset Q$ for every $i$. Now, by Lemma \ref{hest}, 
$$
\sum_i r_i \haus^1(u(B_i)) \leq \sum_i \int_{2B_i} \rho \, d\hausk. 
$$
On the other hand, by \eqref{upperbound} and \eqref{vittupaat},  
\begin{eqnarray*}
\sum_i \int_{2B_i} \rho \, d\hausk &\leq& 100 C_U \sum_i r_i^2 \frac{1}{\hausk(10 B_i)} \int_{2B_i} \rho \, d\hausk \\
&\leq& 100 C_U \sum_i r_i^2 \mathcal{M}\rho(x_i) \leq 1000 C_U 2^{j+1} \hausk(E_j) \\ 
&\leq &2000 C_U \int_{E_j}\mathcal{M} \rho \, d\hausk. 
\end{eqnarray*}
Combining the estimates yields 
\begin{equation}
\label{rokka}
\sum_i r_i \haus^1(u(B_i)) \leq 2000 C_U \int_{E_j}\mathcal{M} \rho \, d\hausk.
\end{equation}
We now define 
$$
g_{\epsilon}(t)=\sum_i r_i \chi_{u(B_i)}(t).  
$$
Integrating over $t$ and taking the integral inside the sum then yields 
$$
\int_0^1 g_{\epsilon}(t) \, dt \leq \sum_i r_i \haus^1(u(B_i)). 
$$
On the other hand, by the definition of $\epsilon$-content,   
$$
\haus^1_{\epsilon}(u^{-1}(t)\cap E_j) \leq 2g_{\epsilon}(t) 
$$
for every $0<t<1$. Combining the estimates with \eqref{rokka} yields 
$$
\int_0^1 \haus_{\epsilon}^1(u^{-1}(t)\cap E_j)\, dt \leq 4000 C_U \int_{E_j}\mathcal{M} \rho \, d\hausk 
$$
(measurability follows by standard real analysis). Letting $\epsilon \to 0$, \eqref{teero} follows by monotone convergence. 
\end{proof}


\begin{proposition}
\label{lowerboundforus}
Suppose $X$ satisfies \eqref{upperbound}. Then $X$ satisfies \eqref{alaraja}. 
\end{proposition}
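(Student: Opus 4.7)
The plan is to establish the lower bound $M_1 M_2 \geq 1/c(C_U)$, where $M_1 = \modu(\zeta_1,\zeta_3;Q)$ and $M_2 = \modu(\zeta_2,\zeta_4;Q)$. I will use the energy minimizer $u$ of $M_1$ constructed in Section \ref{minimizer}, with minimizing weak upper gradient $\rho$ (so $\int_Q \rho^2 \, d\hausk = M_1$). By Lemma \ref{kovanalka}, the conclusions of Theorem \ref{continuity} hold under \eqref{upperbound}, so $u:Q \to [0,1]$ is continuous with $u \equiv 0$ on $\zeta_1$ and $u \equiv 1$ on $\zeta_3$. The strategy is to show that every Borel admissible $g$ for $\Delta(\zeta_2,\zeta_4;Q)$ satisfies
$$
\int_Q g^2 \, d\hausk \geq \frac{c}{M_1}
$$
with $c=c(C_U)>0$; infimizing over $g$ will then give \eqref{alaraja}.

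The key geometric step is to show that for almost every $t \in (0,1)$ the level set $u^{-1}(t)$ contains a simple rectifiable curve $\gamma_t$ joining $\zeta_2$ and $\zeta_4$. Since $\{u<t\}$ and $\{u>t\}$ are disjoint open sets in $Q$ containing $\zeta_1$ and $\zeta_3$ respectively, the compact set $u^{-1}(t)$ separates $\zeta_1$ from $\zeta_3$ in $Q$. By Lemma \ref{separates}, $u^{-1}(t)$ contains a separating continuum $G_t$; since $G_t$ is disjoint from $\zeta_1 \cup \zeta_3$ and $Q$ is a topological closed disk with cyclic boundary arcs $\zeta_1,\zeta_2,\zeta_3,\zeta_4$, any such separating continuum must meet both $\zeta_2$ and $\zeta_4$ (otherwise $\zeta_1$ and $\zeta_3$ could be joined by a path along $\partial Q$ avoiding $G_t$), so $G_t$ joins them. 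Proposition \ref{minicoarea} applied with integrand $1$ ensures $\haus^1(u^{-1}(t)) < \infty$ for a.e.\ $t$, and for such $t$ Proposition \ref{findcurves} extracts the desired simple rectifiable path $\gamma_t \subset G_t$.

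Having $\gamma_t$, admissibility of $g$ gives $\int_{\gamma_t} g \, ds \geq 1$, and since $\gamma_t$ is parametrized by arclength inside $u^{-1}(t)$ we obtain $\int_{u^{-1}(t)} g \, d\haus^1 \geq 1$ for a.e.\ $t$. Integrating over $t \in (0,1)$ and applying the coarea inequality Proposition \ref{minicoarea} tailored to $u$,
$$
1 \leq \int_0^1 \int_{u^{-1}(t)} g \, d\haus^1 \, dt \leq 8000\,C_U \int_Q g \, \mathcal{M}\rho \, d\hausk.
$$
Cauchy--Schwarz together with Lemma \ref{maxx} and $\int_Q \rho^2 \, d\hausk = M_1$ then give
$$
1 \leq 8000\,C_U \Big(\int_Q g^2 \, d\hausk\Big)^{1/2} (8 M_1)^{1/2},
$$
which rearranges to $\int_Q g^2 \, d\hausk \geq c/M_1$ with an explicit $c$ depending only on $C_U$. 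Taking the infimum over admissible $g$ yields $M_1 M_2 \geq c$, completing the proof.

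The only substantive point is the geometric assertion that almost every level set of $u$ contains a curve from $\zeta_2$ to $\zeta_4$; this reduces cleanly to Lemma \ref{separates}, Proposition \ref{findcurves}, and Proposition \ref{minicoarea}. The rest is a dualization of the argument of Proposition \ref{upperboundforus}: there one used an arbitrary test path crossing $Q$ to estimate a $\rho$-neighborhood, whereas here the level sets of $u$ themselves serve as the natural family of test curves spanning $Q$ between $\zeta_2$ and $\zeta_4$, and the coarea inequality for $u$ replaces the covering argument used before.
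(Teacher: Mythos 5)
Your proof is correct and follows essentially the same route as the paper's: the level sets of the energy minimizer $u$ serve as test curves for $\modu(\zeta_2,\zeta_4;Q)$, combined with the coarea inequality of Proposition \ref{minicoarea} and the maximal function bound of Lemma \ref{maxx}, then H\"older and minimization over $g$. The only cosmetic difference is that you re-derive the fact that $u^{-1}(t)$ contains a continuum joining $\zeta_2$ and $\zeta_4$ via a separation argument and Lemma \ref{separates}, where the paper simply cites Lemma \ref{simpcon} (together with Lemma \ref{kovanalka}).
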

\begin{proof}
By Proposition \ref{minicoarea} applied to the constant function $1$, we know that $\haus^1(u^{-1}(t)) < \infty$ for almost every $t$. 
Also, by Lemma \ref{kovanalka} and Lemma \ref{simpcon}, $u^{-1}(t)$ is connected for all $t$. Therefore, by Proposition \ref{findcurves}, 
$u^{-1}(t)$ contains a simple path $\gamma_t$ joining $\zeta_2$ and $\zeta_4$ in $Q$ for almost every $t$. 
Now, if $g$ is admissible for $\modu(\zeta_2,\zeta_4;Q)$, then $\int_{\gamma_t} g \, d\haus^1 \geq 1$
for almost every $0<t<1$. Integrating over $t$ and applying Proposition \ref{minicoarea}, we have 
$$
1 \leq \int_0^1 \int_{\gamma_t} g \, d\haus^1 \, dt \leq 8000C_U \int_Q g (\mathcal{M}\rho) \, d\hausk. 
$$
Moreover, by H\"older's inequality and Lemma \ref{maxx}, 
\begin{eqnarray*}
\int_Q g (\mathcal{M}\rho) \, d\hausk &\leq& 3 \Big(\int_Q g^2 \, d\hausk\Big)^{1/2}\cdot \Big( \int_Q \rho^2 \, d\hausk \Big)^{1/2}\\
&=& 3 \Big(\int_Q g^2 \, d\hausk \Big)^{1/2} \cdot \modu(\zeta_1,\zeta_3;Q)^{1/2}. 
\end{eqnarray*}
Minimizing over $g$ gives the claim. 
\end{proof}


\section{Existence of QS maps} \label{quasisym}

In this section we prove Corollary \ref{bonkkleiner} as an application of Theorems \ref{main} and \ref{rectifi}. Recall that Corollary \ref{bonkkleiner} is proved in \cite{BK} using different methods. Theorem \ref{rectifi} can be seen as a generalization of Corollary \ref{bonkkleiner}. In \cite{BK} another generalization of Corollary \ref{bonkkleiner} is given for quasisymmetric (QS) maps in general, possibly fractal, topological spheres. Wildrick \cite{Wil} extended Corollary \ref{bonkkleiner} to surfaces homeomorphic to $\mathbb{R}^2$. 


\begin{definition}
\label{soad}
Suppose $\eta:[0,\infty)\to [0,\infty)$ is a homeomorphism. A homeomorphism $F:(Y,d) \to (Z,d')$ between metric spaces is 
\emph{$\eta$-quasisym\-met\-ric}, if 
\begin{equation}
\label{ankkk}
\frac{d'(F(x_1),F(x_2))}{d'(F(x_1),F(x_3))} \leq \eta(t)  \quad \text{whenever} \quad   \frac{d(x_1,x_2)}{d(x_1,x_3)} \leq t   
\end{equation}
for distinct points $x_1,x_2,x_3$. 
\end{definition}

\begin{remark}
\label{intti}
\begin{enumerate}
\item The \emph{metric definition of quasiconformality} requires for \eqref{ankkk} to hold with $t=1$ infinitesimally at every point $x_1 \in Y$. 
\item Notice that if $F$ is $\eta$-quasisymmetric, then the inverse $F^{-1}$ is $\eta'$-qua\-si\-sym\-met\-ric, where 
$$
\eta'(s)=\frac{1}{\eta^{-1}(\frac{1}{s})}. 
$$
\end{enumerate}
\end{remark}

\begin{definition}
A metric space $Y$ is \emph{$\lambda'$-linearly locally contractible}, if every ball $B(a,r)$ in $Y$ with radius 
$0<r<\operatorname{diam}(Y) /\lambda'$ is contractible inside $B(a,\lambda' r)$, i.e., there exists a continuous map 
$H:B(a,r) \times [0,1] \to B(a,\lambda' r)$ such that $H(\cdot,0)$ is the identity and $H(\cdot,1)$ is a constant map. 
\end{definition}


We use the chordal distance $d(a,b)=|a-b|$ in $\mathbb{S}^2$. We have now defined the concepts in the statement of Corollary \ref{bonkkleiner}. The method in the proof of Corollary \ref{bonkkleiner} assuming Theorem \ref{main} is nowadays standard in QC mapping theory and can be found 
in \cite{HeKo}. The argument is repeated here for completeness. We need three facts. First, if $E$ and $F$ are disjoint continua in $\mathbb{S}^2$ and 
$$
\operatorname{dist}(E,F) \leq T \min\{\operatorname{diam} E, \operatorname{diam} F\}, \quad 0<T<\infty, 
$$ 
then 
\begin{equation}
\label{loe}
\modu(E,F;\mathbb{S}^2) \geq \phi(T)>0, \quad \phi(T) \to \infty \text{ as } T \to 0. 
\end{equation}
This estimate is proved integrating a given admissible function over suitably chosen concentric circles intersecting both $E$ and $F$, 
and then integrating over the radius, cf. \cite[Section 10]{Vabook}, and \cite[Section 3]{HeKo}. Metric measure spaces satisfying  
\eqref{loe} are called \emph{Loewner spaces}, see \cite{HeKo}.  

Secondly, if $X$ is $\lambda'$-linearly locally contractible, then it satisfies the so-called $LLC$-conditions for all $\lambda>\lambda'$, cf. \cite{BK}: 
\begin{itemize}
\item[(1)] if $B(x,r)$ is a ball in $X$ and $a,b \in B(x,r)$, then there exists a continuum $E \subset  B(a,\lambda r)$ 
joining $a$ and $b$. 

\item[(2)] if $B(x,r)$ is a ball in $X$ and $a,b \in X \setminus B(x,r)$, then there exists a continuum $F \subset X \setminus B(a,r/\lambda)$ 
joining $a$ and $b$. 
\end{itemize}

Finally, applying Proposition \ref{coarea} and linear local contractibility as in Remark \ref{nakuttaja}, we get the lower bound
$$
C_L r^2 \leq \hausk (B(x,r)) \quad \text{whenever }  r \leq \operatorname{diam}(X), 
$$
 for measures of balls, where $C_L$ depends only on $\lambda'$. Combining with \eqref{upperbound}, we see that a space $X$ satisfying the conditions of Corollary \ref{bonkkleiner} is \emph{Ahlfors $2$-regular}.  In particular, $X$ is then a 
\emph{doubling metric space}. 

\begin{proof}[Proof of Corollary \ref{bonkkleiner}]
By Theorem \ref{rectifi} and the proof of Theorem \ref{qcsphere}, there exists a $2$-quasi\-con\-formal map $f:Y \to \mathbb{S}^2$. Then by Remark \ref{intti} it suffices to show that $f^{-1}$ is $\eta$-QS with $\eta$ depending only on $C_U$ and $\lambda'$. Moreover by Ahlfors regularity and a theorem of V\"ais\"al\"a, see \cite[Theorem 10.19]{Hei}, it suffices to show that $f^{-1}$ is 
\emph{weakly QS}, i.e., that \eqref{ankkk} holds with $t=1$. 

We first choose points $a_1,a_2,a_3 \in Y$ such that 
$$
d(a_i,a_j) \geq \frac{\operatorname{diam}(Y)}{ 2} \quad \text{for all } i \neq j. 
$$
We denote $b_j=f(a_j)$. Precomposing $f^{-1}$ with a M\"obius transformation, if necessary, we may then assume that 
$$
|b_i-b_j| \geq \frac{1}{4} \quad \text{for all } i,j. 
$$

Now take distinct points $y_1,y_2,y_3 \in \mathbb{S}^2$ such that 
\begin{equation}
\label{ade}
|y_1-y_2| \leq |y_1-y_3|. 
\end{equation}
We denote $x_k=f^{-1}(y_k)$, $k=1,2,3$. Then, by triangle inequality, 
$$
d(x_1,a_j)\geq \frac{\operatorname{diam}(Y)}{4}
$$ 
for at least two indices $j$. Among them we can then choose one of the indices, say $j=1$, such that also 
\begin{equation}
\label{eda}
|y_2-b_1| \geq \frac{1}{8}. 
\end{equation}

The $LLC$-conditions now guarantee the existence of a continuum 
$$
E \subset B(x_1,\lambda d(x_1,x_3))
$$ 
joining $x_1$ and $x_3$, and a continuum 
$$
F \subset Y \setminus B(x_1,A/\lambda), \quad A=\min\{d(x_1,x_2), \operatorname{diam}(Y)/4\}, 
$$ 
joining $a_1$ and $x_2$. 

Then, by \eqref{ade} and \eqref{eda}, the continua $f(E)$ and $f(F)$ satisfy the conditions in \eqref{loe} with $T=16$, so 
\begin{equation}
\label{ssj}
\phi(16) \leq \modu(f(E),f(F);\mathbb{S}^2) \leq 2 \modu(E,F;Y). 
\end{equation}
We may assume that 
$$
2\lambda d(x_1,x_3) \leq A/\lambda, 
$$
since otherwise there is nothing to prove. Then, by \eqref{upperbound} and Lemma \ref{ballmoduli}, 
\begin{eqnarray}
\nonumber \modu(E,F;Y) & \leq & \modu(\overline{B}(x_1,\lambda d(x_1,x_3)), Y \setminus B(x_1,A/\lambda); \overline{B}(x_1,A/\lambda)\\
\label{ssj2} & \leq &8 C_U \Big(\log \frac{A}{2\lambda^2 d(x_1,x_3)} \Big)^{-1}, 
\end{eqnarray}
where $C_U$ is the constant in \eqref{upperbound}. Combining \eqref{ssj} and \eqref{ssj2} gives 
$$
\frac{d(x_1,x_2)}{d(x_1,x_3)} \leq \frac{4A}{d(x_1,x_3)} \leq 8 \lambda^2 \exp(16C_U/\phi(16)). 
$$
We conclude that $f^{-1}$ is QS. 
\end{proof}





\section{Concluding remarks}
\label{huri}

We briefly discuss the absolute continuity properties of QC maps between $X$ and $\R^2$. It follows from Proposition \ref{huuto}, and the fact that planar QC maps satisfy Condition $(N)$, that every QC map $f:X \to \R^2$ 
satisfies condition $(N)$. One could hope for Condition $(N)$ to hold also for the inverse. Then it would follow from Lemma 
\ref{Lipschitzappr} that a reciprocal $X$ is always countably $2$-rectifiable. However, we show that this is not the case in general. 

\begin{proposition}
There exists a reciprocal $X \subset \R^3$ that is not countably $2$-rectifiable. In fact, $X$ satisfies \eqref{upperbound}. 
\end{proposition}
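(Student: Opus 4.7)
The plan is to construct $X$ as a Hausdorff limit $X = \lim_n X_n$ of a sequence of piecewise-linear topological disks $X_n \subset \R^3$. I would start from the flat disk $X_0 = [0,1]^2 \times \{0\}$ and, at each step $n$, produce $X_{n+1}$ from $X_n$ by replacing the interior of a chosen collection of dyadic cells of $X_n$ (at scale $2^{-n}$) with small ``folded'' or ``pleated'' patches whose fold angles, orientations, and supports are carefully tuned. The limit $X$ will be the desired example, and reciprocality will be obtained for free from Theorem \ref{rectifi} once the upper mass bound is in hand.

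Three properties have to be verified. First, each $X_n$ must be a topological disk and the limit $X$ must remain homeomorphic to $\R^2$; this forces the folds at scale $n$ to have diameter comparable to $2^{-n}$, to be disjoint from previously modified regions at coarser scales, and to carry a fixed parametrization that converges uniformly. Second, the upper mass bound $\haus^2(B(x,r) \cap X) \leq C_U r^2$ has to hold uniformly; this will be checked by bounding the extra area added at step $n$ via a self-similar scaling estimate at each dyadic level, and summing a geometric-type series. Third, there must exist a ``bad set'' $E \subset X$ of positive $\haus^2$-measure consisting of points at which folds of non-negligible influence occur at infinitely many scales, so that no approximate tangent $2$-plane exists at any point of $E$. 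Granted such $X$, the absence of approximate tangent planes on a positive-measure set rules out countable $2$-rectifiability by the standard characterization of rectifiable sets of locally finite $\haus^2$-measure (Mattila, \emph{Geometry of Sets and Measures in Euclidean Spaces}, Theorem 15.19), while Theorem \ref{rectifi} applied to \eqref{upperbound} gives reciprocality.

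The main obstacle is reconciling the second and third requirements, as they stand in tension. A naive construction in which folds at scale $n$ are placed independently in a fixed fraction of dyadic cells forces a Borel--Cantelli dichotomy: either the total added area diverges, or the set of points seeing folds at infinitely many scales has measure zero. The resolution is two-fold. On the geometric side, the folds at scale $n$ should be placed inside a \emph{nested} family of cells, so that the bad set $E$ is automatically a monotone intersection of positive measure. On the directional side, the fold orientations at successive scales should be chosen from a direction-varying pattern so that the tangent-cone analysis at a point of $E$ forces the blow-ups of $X$ to have positive lower density outside every candidate $2$-plane, even though the individual fold contributions to tangent deviation may shrink to zero. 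The area bookkeeping then reduces to a summable series, and the remaining topological and measure-theoretic verifications follow from the scale-invariant structure of the construction.
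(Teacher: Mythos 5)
Your reduction of the problem to ``positive-measure set with no approximate tangent plane, plus the mass bound \eqref{upperbound}, plus Theorem \ref{rectifi}'' is the right frame, and you correctly sense the tension between finiteness of area and failure of rectifiability. But the proposed resolution does not overcome that tension, and this is a genuine gap rather than a technicality. Quantitatively: a fold of relative amplitude $\epsilon$ at scale $r$ increases area only by a factor $1+c\epsilon^2$, and conversely it must increase area by at least such a factor on the cell it occupies if it is to tilt the surface by an angle comparable to $\epsilon$ there. Hence at any point $x$ of your bad set $E$, the bound $\hausk(B(x,r))\leq C_U r^2$ applied along the nested scales forces $\prod_n\bigl(1+c\,\epsilon_n(x)^2\bigr)<\infty$, i.e.\ $\sum_n\epsilon_n(x)^2<\infty$; nesting the cells does not help, because the ball $B(x,r_n)$ already contains all folds through $x$ at scales finer than $r_n$. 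In this square-summable regime the limit parametrization $h=\lim h_n$ is (locally) a Sobolev map of class $N^{1,2}$, it is approximately differentiable almost everywhere, and by the Lusin-type Lipschitz approximation of Sobolev maps its image is covered by countably many Lipschitz images up to the image of a Lebesgue-null base set, which the area bound then also kills. In particular the \emph{approximate} tangent plane of Mattila's Theorem 15.19 exists a.e.\ even when your Hausdorff blow-up planes spiral and fail to converge: that criterion only requires convergence in density, which approximate differentiability supplies. So small-amplitude folding produces a rectifiable surface, and the only escape --- non-square-summable amplitudes on a positive-measure set --- destroys \eqref{upperbound}. Your ``direction-varying pattern'' does not address this, and the proposal explicitly leaves the decisive step (``blow-ups have positive lower density outside every candidate plane even though the fold contributions shrink to zero'') unproved; that step is precisely where the construction breaks.

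The paper sidesteps the obstruction by not perturbing a plane at all. It fixes a self-similar Cantor set $\mathcal{C}\subset[0,1]^3$ that is Ahlfors $2$-regular, $C^{-1}r^2\leq\haus^2(B(x,r)\cap\mathcal{C})\leq Cr^2$, hence of positive finite measure and purely $2$-unrectifiable (by self-similarity, or the Besicovitch--Federer projection theorem; see \cite[pp.\ 65--67]{Ma}), and then realizes $X$ as a tree of thin tubes branching along the generations of $\mathcal{C}$ so that $\mathcal{C}\subset X$ and the tubes do not overlap. Non-rectifiability is then automatic, since $X$ contains a purely unrectifiable subset of positive $\haus^2$-measure; the bound \eqref{upperbound} follows from the $2$-regularity of $\mathcal{C}$ together with the fact that the tubes' total area can be made as small as one wishes --- crucially, their area is decoupled from the measure of $\mathcal{C}$, which is exactly the decoupling your folding scheme cannot achieve; and the tree structure keeps $X$ homeomorphic to $\R^2$. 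If you revise, the missing idea to import is that the unrectifiable mass must be carried by a set that is genuinely spread in all three coordinate directions at every scale, not by accumulated small tilts of a plane.
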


\begin{proof}
We only briefly describe the construction of $X$ and leave the details to the interested reader. We choose a self-similar Cantor set 
$\mathcal{C} \subset [0,1]^3$ such that 
\begin{equation}
\label{eidet}
C^{-1} r^2 \leq \haus^2(B(x,r)\cap \mathcal{C})\leq Cr^2
\end{equation}
for all $x \in \mathcal{C}$ and $0<r<1$, cf. \cite[pp. 65--67]{Ma}. Then, we construct a ``tree" consisting of tubes that follow the construction of the set $\mathcal{C}$. More precisely, each new step in the construction of $\mathcal{C}$ corresponds to a branching of the tree such that several tubes grow from every already existing tube. We can arrange the tubes such that the limiting set $X$ includes the whole set $\mathcal{C}$ so that $X$ is not rectifiable, and such that there is no overlapping so $X$ is homeomorphic to $\R^2$. Also, we can choose the area of each tube to be as small as we wish. Therefore, combining with \eqref{eidet} we can guarantee that the mass upper bound \eqref{upperbound} holds. Reciprocality then follows from Theorem \ref{rectifi}. 
\end{proof}


Our discussion is related to the so-called inverse absolute continuity problem for QS maps: 
if $f:X \to \R^2$ is QS, does $f$ satisfy condition $(N)$? See \cite{Ge1}, \cite{Ge2}, \cite{HS}, \cite{V}. There are several similar unsolved problems in QC mapping theory, see \cite{ABH} for an overview. From Theorem \ref{main} and the fact 
that planar QC maps preserve sets of measure zero, it follows that the answer is affirmative if $X$ is reciprocal. This fact 
can be also proved directly employing condition \eqref{ylaraja}, as we now demonstrate. 

\begin{proposition}
\label{invabs}
Suppose $X$ satisfies \eqref{ylaraja}, and let $f:X \to \R^2$ be QS. If $E \subset X$, then $\haus^2(E)=0$ if and 
only if $|f(E)|=0$. 
\end{proposition}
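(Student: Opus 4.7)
The plan is to prove mutual absolute continuity between $\hausk$ and the pullback Borel measure $\nu_f(A) := |f(A)|$ on $X$, which immediately yields both implications of the proposition. Since $f^{-1}$ is also QS (Remark \ref{intti}(2)), and the QS \emph{roundness} property sandwiches every image $f(B(x, r))$ between two concentric Euclidean balls of comparable radii, one has $\nu_f(B(x, r)) \asymp L_f(x, r)^2$ where $L_f(x, r) := \sup_{d(x, y) \leq r} |f(x) - f(y)|$. The crux is therefore to prove the local area comparison $\hausk(B(x, r)) \asymp L_f(x, r)^2$ for small $r$, from which the conclusion follows via a standard Vitali-type covering argument.

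This pointwise comparison reduces to a two-sided area estimate for topological squares: for any $Q \subset X$ whose image $f(Q) \subset \R^2$ is a Euclidean square $R$ of side $a$, one aims for
$$c(\eta, \kappa)\, a^2 \;\leq\; \hausk(Q) \;\leq\; C(\eta, \kappa)\, a^2.$$
The lower bound relies on the elementary modulus inequality $\hausk(Q) \geq d(\zeta_1, \zeta_3)^2 \modu(\zeta_1,\zeta_3;Q)$ (obtained by testing with the constant $1/d(\zeta_1,\zeta_3)$), together with a QS-controlled lower estimate $d(\zeta_i, \zeta_{i+2}) \gtrsim a$; condition \eqref{ylaraja} then guarantees that $\modu(\zeta_1,\zeta_3;Q)$ and $\modu(\zeta_2,\zeta_4;Q)$ cannot simultaneously be too small, which combined with the Euclidean moduli $\modu(R)=1$ and QS length control gives the desired quantitative positivity.

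The upper bound $\hausk(Q) \leq C a^2$ is the main technical step, and the one where \eqref{ylaraja} enters decisively. Since \eqref{ylaraja} bounds only the \emph{product} of the transverse moduli, the elementary admissibility inequality above cannot be inverted into an upper estimate on $\hausk(Q)$ directly. I would proceed by adapting the Hardy--Littlewood maximal-function strategy from Proposition \ref{upperboundforus}: starting from the $\modu(\zeta_1,\zeta_3;Q)$-minimizer $\rho$ on $Q$ (which exists with no further hypotheses on $X$; cf. Section \ref{minimizer}), the maximal operator $\mathcal{M}$ combined with Lemma \ref{maxx} produces an admissible function for the transverse path family whose $L^2$ norm is controlled by that of $\rho$; coupling this with \eqref{ylaraja} and with the QS-controlled distances between the sides of $Q$ then forces the desired upper area estimate on $\hausk(Q)$ in terms of $a^2$.

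With the square-by-square comparison in hand, both implications follow by standard covering: given $|f(E)| = 0$, cover $f(E)$ by Euclidean squares $R_i$ with $\sum |R_i| < \epsilon$, and apply the upper bound to the topological squares $Q_i = f^{-1}(R_i)$ to obtain $\hausk(E) \leq \sum \hausk(Q_i) \leq C \sum |R_i| \leq C\epsilon$; the reverse direction proceeds identically, covering $E$ by balls, sandwiching each inside a topological square whose image is a Euclidean square via QS roundness of $f$, and invoking the lower area bound. The principal obstacle is thus concentrated in the upper estimate $\hausk(Q) \leq C a^2$, which must exploit \eqref{ylaraja} in a way that substitutes for the missing reverse modulus-product inequality \eqref{alaraja}.
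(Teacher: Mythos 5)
Your plan hinges on a two--sided quantitative comparison $\hausk(Q)\asymp a^2$ for topological squares with $f(Q)$ a Euclidean square of side $a$, and the decisive half of it --- the upper bound $\hausk(Q)\leq C a^2$ --- is not actually proved. The proposed adaptation of Proposition \ref{upperboundforus} and Lemma \ref{maxx} does not go through: the chain of estimates in Proposition \ref{upperboundforus} uses the mass upper bound \eqref{upperbound} in an essential way (to compare $\hausk(B(x_j,31\epsilon))$ with $\epsilon^2$), i.e.\ it consumes exactly the kind of volume control you are trying to produce, so the adaptation is circular. There is also a sign error in the lower bound: you assert that \eqref{ylaraja} ``guarantees that $\modu(\zeta_1,\zeta_3;Q)$ and $\modu(\zeta_2,\zeta_4;Q)$ cannot simultaneously be too small,'' but \eqref{ylaraja} is an \emph{upper} bound on the product of the transverse moduli and prevents them from being simultaneously large; the statement you want is \eqref{alaraja}, which is not assumed in this proposition. (A correct lower bound on $\modu(\zeta_1,\zeta_3;Q)$ is available, but it comes from Tyson's modulus inequality for quasisymmetric maps \cite{Ty}, not from \eqref{ylaraja}.) Finally, the covering arguments at the end implicitly need bounded--overlap coverings of $E$ by balls of controlled total measure, which in a general metric space again leans on doubling--type information you do not have.

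The paper's proof avoids any quantitative density comparison. For the direction $|f(E)|=0\Rightarrow\hausk(E)=0$ it simply cites the classical Gehring--V\"ais\"al\"a--Tyson result, which needs no hypothesis on $X$. For the converse it argues by contradiction at a density point $x_0$ of $f(E)$ (assuming $\hausk(E)=0$ but $|f(E)|>0$): Tyson's inequality $\modu(\Gamma)\leq K\modu(f^{-1}\Gamma)$ in fact holds with the left side replaced by a restricted modulus $\overline{\modu}(\Gamma)$ in which admissible functions must vanish a.e.\ on $f(E)$. On a small square $Q(x_0,r)$ with $|Q\setminus f(E)|\leq 4\epsilon r^2$, Fubini and H\"older force $\overline{\modu}$ of both the horizontal and the vertical segment families to be at least of order $\epsilon^{-1}$; transferring both estimates to $X$ via Tyson's inequality makes the product $\modu(f^{-1}\Gamma)\cdot\modu(f^{-1}\Lambda)$ arbitrarily large, contradicting \eqref{ylaraja}. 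If you want to salvage your route, you would need to either prove the upper area bound by some genuinely new mechanism or, more realistically, switch to this duality-at-a-density-point argument, which uses \eqref{ylaraja} exactly in the form in which it is given.
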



\begin{proof}
That $|f(E)|=0$ implies $\hausk(E)=0$ is well-known to hold even without assumption \eqref{ylaraja} by the works of Gehring, 
V\"ais\"al\"a and Tyson, see \cite{Ty}. Suppose 
$\hausk(E)=0$ and $|f(E)|>0$. Since $f$ is QS, by \cite{Ty} we know that there exists $K \geq 1$ such that 
$$
\modu(\Gamma) \leq K \modu(f^{-1} \Gamma) 
$$
for every path family $\Gamma$ in $\R^2$. Moreover, an examination of the proof given there shows that in fact 
\begin{equation}
\label{rampa}
\overline{\modu}(\Gamma)  \leq K \modu(f^{-1} \Gamma), 
\end{equation}
where $\overline{\modu}(\Gamma)$ is defined as $\modu(\Gamma)$ except for the definition of admissibility; we say that a Borel function $g$ is admissible for $\overline{\modu}(\Gamma)$ if $g$ is admissible for $\Gamma$ and $g=0$ almost everywhere on the set $f(E)$. 

Fix a density point $x_0$ of $f(E)$ and $\epsilon>0$. Then choose a square $Q=Q(x_0,r)$ such that 
\begin{equation}
\label{kyll}
\frac{|Q\setminus f(E)|}{4r^2} < \epsilon. 
\end{equation}
We may assume $x_0=0$. Let $\Gamma=\{\gamma_t\}$ be the family of horizontal segments joining the vertical sides of $Q$, and similarly let $\Lambda$ be the family of vertical segments joining the horizontal sides of $Q$. Then, if $g$ is admissible for $\overline{\modu}(\Gamma)$, Fubini's theorem gives 
$$
2r \leq \int_{-r}^r \int_{\gamma_t} g(s,t) \, ds dt= \int_Q g(x)\, dx. 
$$
Also, since $g=0$ almost everywhere in $f(E)$, \eqref{kyll} and H\"older's inequality give  
$$
 \int_Q g(x)\, dx \leq |Q\setminus f(E)|^{1/2}  \Big(\int_Q g(x)^2\, dx\Big)^{1/2} \leq 2 \epsilon r  \Big(\int_Q g(x)^2\, dx\Big)^{1/2}. 
$$
Combining the estimates and minimizing over admissible functions, we conclude 
$$
\overline{\modu}(\Gamma) \geq \epsilon^{-2}. 
$$
The same estimate holds with $\Gamma$ replaced by $\Lambda$. Now, by \eqref{rampa},  
$$
\epsilon^{-4} \leq \overline{\modu}(\Gamma) \cdot  \overline{\modu}(\Lambda) \leq K^2 
\modu(f^{-1}\Gamma) \cdot \modu(f^{-1}\Lambda). 
$$
But this contradicts \eqref{ylaraja} when $\epsilon>0$ is small enough. The proof is complete. 
\end{proof}


The following related question immediately arises from \cite{Ge1} and \cite{Ge2}. 
\begin{question}
\label{juhapekka}
Suppose $f:X \to \R^2$ is QS. Is $f$ QC (in the sense of Definition \ref{qcdef})?
\end{question}
Notice that a QS $f$ is automatically QC in the sense of the metric definition. The answer to Question \ref{juhapekka} is affirmative if in addition $X$ satisfies \eqref{upperbound}; this follows from \cite{HeKo} and also from Theorems 
\ref{main} and \ref{rectifi}. 

To finish, we discuss the three conditions in the definition of reciprocality. Although used only once in the proof of Theorem \ref{main}, we feel that the most important of the conditions is \eqref{ylaraja}. For instance, it is the failure of \eqref{ylaraja} that prevents the existence of a QC map in 
Example \ref{decomp}. 

\begin{question}
Does condition \eqref{ylaraja}, or some modification of it, imply \eqref{alaraja} and/or \eqref{nollamoduli}? 
\end{question}

It is not difficult to give examples of spaces that do not satisfy \eqref{nollamoduli}, but we do not know 
if such an example satisfying \eqref{ylaraja} exists. It can be proved that if $X$ satisfies \eqref{ylaraja}, then $\modu(\{x\},E;Q)=0$ for every $Q \in X$, $x \in Q$ and every compact $E \subset Q \setminus \{x\}$. Concerning condition \eqref{alaraja}, it seems that this condition should hold in great generality. 

\begin{question}
\label{aakkeli}
Does condition \eqref{alaraja} hold for all $X$? 
\end{question}


\vskip1cm 

\noindent Department of Mathematics and Statistics, University of
Jyv\"askyl\"a, P.O. Box 35 (MaD), FI-40014, University of Jyv\"askyl\"a, Finland.
\newline
{\it E-mail address:} {\bf kai.i.rajala@jyu.fi}

\end{document}